\renewcommand{\subsectionmark}[1]{\markright{}{}}
\newcommand{\CC}{\mathbb{C}}
\newcommand{\NN}{\mathbb{N}}
\newcommand{\RR}{\mathbb{R}}
\newcommand{\ZZ}{\mathbb{Z}}
\newcommand{\g}{\mathfrak{g}}
\newcommand{\gaff}{\widehat{\mathfrak{g}}}
\renewcommand{\b}{\mathfrak{b}}
\newcommand{\baff}{\widehat{\mathfrak{b}}}
\newcommand{\h}{\mathfrak{h}}
\newcommand{\haff}{\widehat{\mathfrak{h}}}
\newcommand{\coQ}{\check{Q}}
\newcommand{\coX}{\check{X}}
\newcommand{\Dre}{\widehat{\Delta}^{\text{re}}}
\newcommand{\al}{\alpha}
\newcommand{\Dw}{\Delta^{w}_{+}}
\newcommand{\A}{\mathcal{A}}
\newcommand{\C}{\mathcal{C}}
\newcommand{\SR}{\mathcal{S}}
\newcommand{\SRA}{\widehat{\mathcal{S}}}
\newcommand{\T}{\widehat{\mathcal{T}}}
\newcommand{\W}{\mathcal{W}}
\newcommand{\WA}{\widehat{\mathcal{W}}}
\newcommand{\WAp}{\widehat{\mathcal{W}}^{\text{par}}}
\newcommand{\He}{\mathbf{H}}
\newcommand{\Me}{\mathbf{M}}
\newcommand{\E}{\mathcal{E}}
\newcommand{\MG}{\mathcal{G}}
\newcommand{\MGa}{\widehat{\mathcal{G}}}
\newcommand{\MGJ}{\widehat{\mathcal{G}}^J}
\newcommand{\MGp}{\MGa^{\text{par}}}
\newcommand{\MGper}{\MGa^{\text{per}}}
\newcommand{\MGst}{\MGa^{\text{stab}}}
\newcommand{\MGYk}{\textbf{MG}(Y_k)}
\newcommand{\V}{\mathcal{V}}
\newcommand{\BMP}{\mathscr{B}}
\newcommand{\BMPp}{\mathscr{B}^{\text{par}}}
\newcommand{\F}{\mathcal{F}}
\renewcommand{\H}{\mathcal{H}}
\newcommand{\I}{\mathcal{I}}
\newcommand{\ShMGk}{\textbf{Sh}_k(\MG)}
\newcommand{\ShMGJk}{\textbf{Sh}_k(\MG^J)}
\newcommand{\ShMGI}[1]{\textbf{Sh}_k(\MG^{\text{#1}}_{|\I})}
\newcommand{\ShZ}{\mathscr{Z}}
\newcommand{\st}[1]{#1^{\text{stab}}}
\newcommand{\stab}[2]{{#1}^{\text{stab},#2}}
\newcommand{\ma}{m_{\alpha}}
\newcommand{\mc}{m_{c}}
\newcommand{\Z}{\mathcal{Z}}
\newcommand{\Zp}{\mathcal{Z}^{\text{par}}}
\newcommand{\Zs}{\mathcal{Z}^s}
\newcommand{\sZ}{{}^s\!\mathcal{Z}}
\newcommand{\Zmod}{\mathcal{Z}-\text{mod}^{\text{f}}}
\newcommand{\Zsmod}{\mathcal{Z}^s-\text{mod}^{\text{f}}}
\newcommand{\sZmod}{{}^s\mathcal{Z}-\text{mod}^{\text{f}}}
\newcommand{\Zpmod}{\mathcal{Z}^{\text{par}}\!-\!\text{mod}^{\text{f}}}
\newcommand{\Zpermod}{\mathcal{Z}^{\text{per}}-\text{mod}^{\text{f}}}
\renewcommand{\H}{\mathcal{H}}
\newcommand{\Hp}{\mathcal{H}^{\text{par}}}
\newcommand{\sth}{{}^s\theta^{\text{par}}}
\newcommand{\tle}{\trianglelefteq}
\newcommand{\dottl}{\stackrel{\centerdot}{\triangleleft}}
\renewcommand{\check}[1]{{#1}^{\vee}}
\newcommand{\Hom}{\text{Hom}}
\newcommand{\im}{\text{im}}
\newcommand{\rk}{\underline{\text{rk}}}
\newcommand{\id}{\text{id}}
\newtheorem{prop}{Proposition}[section]
\newtheorem{theor}{Theorem}[section]
\newtheorem{defin}{Definition}[section]
\newtheorem{cor}{Corollary}[section]
\newtheorem{lem}{Lemma}[section]
\newtheorem{expl}{Example}[section]
\newtheorem{rem}{Remark}[section]
\newtheorem{quest}{Question}[section]
\begin{document}
\title{On the stable moment graph of an affine Kac--Moody algebra}
\author{Martina Lanini}
\address{
Department of  Mathematics \& Statistics, University of Melbourne, Parkville, VIC 3010, Australia}
\email{martina.lanini@unimelb.edu.au}
\maketitle
\vspace{-9mm}
\begin{abstract}
In 1980 Lusztig proved a stabilisation property of the affine Kazhdan-Lusztig polynomials. In this paper we give a categorical 
version of such a result using the theory of sheaves on moment graphs.  This leads us to associate with any Kac-Moody algebra its stable moment graph. 
\end{abstract}

\section{Introduction}

Finding the characters of the irreducible modular representations of semisimple algebraic groups is a fundamental problem in 
the representation theory of finite groups. In \cite{Lu80c} Lusztig stated a conjecture which
related these characters to the affine Kazhdan-Lusztig polynomials and which was similar to a conjecture by Kazhdan and Lusztig
(cf. \cite{KL}) on the composition series of Verma modules for semisimple complex Lie algebras. Kazhdan-Lusztig's conjecture has
 been solved using geometric methods and the theory of $\mathcal{D}$-modules (cf. \cite{BeBe}, \cite{BK}), while Lusztig's conjecture
 has not been completely settled yet. Indeed, by  \cite{AJS}, \cite{KL93}, \cite{KT} it is possible to recover it as a limit from the
 characteristic zero case, but this means that Lusztig's conjecture is not known  to be true yet for all the characteristics
 it is supposed to hold.

The attempt of finding a new approach to these character conjectures brought Fiebig to apply moment graph techniques in 
representation theory. In particular, he associated a particular moment graph to any complex symmetrizable Kac-Moody algebra $\g$: its
 Bruhat graph, that is an oriented graph with labeled edges and whose set of vertices is given by the elements of the Weyl group $W$ 
of $\g$. 
A central role in this theory is played by indecomposable Braden-MacPherson sheaves on the Bruhat graph of $\g$, 
which correspond to indecomposable projective objects admitting a Verma flag in an equivariant version of the representation category 
$\mathscr{O}$ of $\g$ (cf. \cite{Fie08a}). Moreover, moment graphs and Braden-MacPherson sheaves can be constructed also in the 
positive characteristic setting, in which case the moment graph is attached to a semisimple reductive algebraic group $G$.  In this way, 
Fiebig could relate Kazhdan-Lusztig and Lusztig conjectures to a question on the
 graded rank of the stalks of indecomposable Braden-MacPherson sheaves. In particular, he stated a conjectural formula connecting 
this rank explicitly to Kazhdan-Lusztig polynomials. Such a formula is known to hold in characteristic zero and, in this case, to 
be equivalent to the character formula conjectured by Kazhdan and Lusztig (cf. \cite{Fie08a}), while in positive characteristic it 
is expected to be true for characteristics bigger than the Coxeter number of $G$. 

Fiebig's conjectural formula motivated our paper \cite{L11}, where we lifted at the categorical level of sheaves on moment graphs 
certain properties of Kazhdan-Lusztig polynomials and parabolic Kazhdan-Lusztig polynomials. The methods we developed in \cite{L11} 
work in any characteristic under certain technical assumptions, even in cases in which the formula has been proven to fail. In \cite{FW}
Fiebig and Williamson related indecomposable Braden-MacPherson sheaves on a Bruhat graph to parity sheaves on the corresponding flag 
variety, a modular counterpart of intersection cohomology complexes introduced by  Juteau, Mautner and Williamson (cf. \cite{JMW}).
 Once this connection
 is established, by \cite[ Theorem 9.2]{FW} the failure of the formula corresponds to the failure of a parity sheaf to be perverse. 
In this setting, our results proved that even in the cases in which parity sheaves are not perverse, they still satisfy
certain elementary properties that intersection cohomology complexes have in characteristic zero. Anyway, giving a categorical version of properties of Kazhdan-Lusztig 
polynomials is not at all trivial and rather interesting, also in the case in which the connection between Braden-MacPherson sheaves 
and these polynomials
 is known. Indeed, the lifting at a categorical level provides us with extra structure which may help in the understanding of
 objects related to these ubiquitous polynomials.

While in \cite{L11} we considered rather basic equalities involving Kazhdan-Lusztig polynomials, in this paper the result we want to deal
with is much more complex. It concerns a stabilisation property of the affine Kazhdan-Lusztig polynomials, which has been proven by Lusztig in 
\cite{Lu80}. More precisely, for any affine Weyl group $\WA$, Lusztig defined in \cite{Lu80} its \emph{periodic module} $\Me$, that is a free 
$\ZZ[q^{\pm \frac{1}{2}}]$-module with a standard basis indexed by the set $\A$ of alcoves of $\WA$ and also equipped with a structure of
 a module over the Hecke algebra $\He$ of $\WA$. By extending methods of \cite{KL}, he found a nice basis of $\Me$ and showed that
 the change of
basis matrix from the standard basis to this one is given by certain polynomials in $\ZZ[q]$: the generic polynomials, which we will 
denote by $\{Q_{A,B}\}_{A,B\in \A}$. A fundamental property of these polynomials is that they are invariant under finite coweight translation,
 that is $Q_{A,B}=Q_{A+\lambda,B+\lambda}$ for any finite coweight $\lambda$.
 On the other hand, we have another family of polynomials which are attached to the affine Hecke 
algebra: the Kazhdan-Lusztig polynomials $\{P_{x,y}\}_{x,y\in\WA}$. Since it is possible to identify the Weyl group
 $\WA$ with its set of alcoves, we may index any affine Kazhdan-Lusztig polynomial by a pair of alcoves and ask whether $P_{A,B}$ 
and $Q_{A,B}$ are related. This is precisely the point of the  theorem by Lusztig we want to lift at the level of sheaves on moment 
graphs. Indeed, what he proved is that if the alcoves $A,B$ are \emph{far enough} in the fundamental chamber $\C^{+}$, the generic polynomial
 $Q_{A,B}$ coincides with the corresponding affine Kazhdan-Lusztig $P_{A,B}$. The goal of this paper is hence to describe how the
 stalks of Braden-MacPherson sheaves on certain finite intervals of the Bruhat graph behave and in particular to show this
 stabilisation property.

Before outlining the structure of the paper and illustrating more in details in what way we obtain this categorical analogue
 of Lusztig's theorem, we want to motivate our interest in such a result. By linking the affine Hecke algebra and its periodic module,
 the moment graph version of the stabilisation property we mentioned above should allow us to investigate both $\He$ and $\Me$ from a 
categorical point of view. Let $\gaff$ be an affine Kac-Moody algebra and $\WA$ the corresponding (affine) Weyl group. The affine 
Hecke algebra  of $\WA$ controls the representation theory of $\gaff$ at a non-critical level and in this case moment graph
 techniques have already been applied (cf. \cite{Fie08a}), while the periodic module, according to a conjecture by Feigin and Frenkel 
(cf. \cite{FF92}), governs the  the representation theory of $\gaff$ at a critical level, where a moment graph picture has been 
 missing so far.  We believe that the \emph{stable moment graph} $\MGst$ we associate with $\gaff$ in this paper  will be relevant in 
this direction. From the geometric point of view, $\Me$ is related to the semi-infinite flags (cf. \cite{FF}) --also referred to as 
periodic affine Schubert varieties by Lusztig (cf. \cite{Lu90})-- and we expect $\MGst$ to be connected to these objects too. It would 
be very interesting to understand the link between the stable moment graph and this geometric counterpart. Finally, we want to mention
that the definition of $\MGst$ arises from rather intriguing combinatorics, which have been our starting point and which we are
 going to address in this work. 

The paper is organized as follows.

 The aim of the second section is to recollect the basics of the theory of affine Kac-Moody algebras,
 in order to fix the notation. 

In Section 3 we recall the definition of the category of $k$-moment graphs on a lattice,
 stand $k$ for a local ring with $2\in k^{\times}$, we introduced in \cite{L12}.  Let us fix once and for all  an affine Kac-Moody
 algebra $\gaff$, a Borel subalgebra $\baff$ and a Cartan subalgebra $\haff$. Let us moreover denote by $\WA$ the Weyl 
group of $\gaff$ and $\SRA$ the set of simple reflections corresponding to the data we fixed. For any quadruple
 $(\gaff\supseteq \baff\supseteq \haff, J)$, where $J\subseteq \SRA$, we define the Bruhat moment graph $\MGJ$ (see \S\ref{bruhatMG}). 
Moreover, we associate with $\gaff\supseteq \baff\supseteq \haff$ another $k$-moment graph: the periodic one, that we denote by $\MGper$  
and which coincides with $\MGa^{\emptyset}$ up to the orientation of the edges (see \S\ref{ssec_perMG}).

In Section 4 we focus on the parabolic Bruhat graph $\MGp$, that is the one corresponding to the choice of $J$ being $\SR$, 
the set of finite simple reflections. The vertices of this graph are given by the alcoves in the fundamental 
chamber and the goal of this section is to study the behaviour of intervals $\MGp_{|[A,B]}$, with $A$ and $B$ \emph{far enough} 
in  $\C^{+}$.  Our first hope was that the stabilisation phenomenon described by Lusztig would have been visible also at the moment 
graph level. This is unluckily not completely true. Indeed, for any fixed pair of alcoves $A,B$ in the fundamental chamber it is 
possible to translate them in such a way that $\MGp$ restricted to the corresponding interval and considered as \emph{unlabeled} 
graph is invariant under translation by positive multiples of $\rho$, the coweight defined as half the sum of the finite positive
 coroots (see Lemma \ref{iso_Gp}). 
 Since the label function is a fundamental data a moment graph comes with, next step is to discuss the labeling of the edges of
 such intervals. It turns out that the set of edges is in this case bipartite in \emph{stable} and \emph{non-stable} edges, whose
 labels have a good and a bad behaviour, respectively (see Lemma \ref{lem_refl_label_root} and Lemma \ref{lem_transl_label}). 
This gives rise to the definition of a new $k$-moment graph attached to $\gaff$: the \emph{stable} moment graph $\MGst$
(see \S\ref{ssec_stableMG}), that is a subgraph of $\MGp$ having the stabilisation property we were looking for. It is
 the main character of this paper. 

Section 5 deals with the category of sheaves on a $k$-moment graph. After recalling definitions and basic results concerning 
these objects, we introduced the new notion of pushforward functor and we prove that it is right adjoint to the pullback functor 
we introduced in \cite{L11}. Braden-MacPherson sheaves make their appearance in the last part of this section. 
We recall a lemma of 
\cite{L11} telling us that the pullback of isomorphism preserves indecomposable Braden Mac-Pherson sheaves 
(see Lemma \ref{pullbackBMP}). We will refer to this result as ``property of the pullback'' later, since it will be an important tool in the proof
 of our main theorem.  

In the following section we are finally able to state  the moment graph analogue of Lusztig's theorem. Roughly speaking,  our claim is 
that the stalks of indecomposable Braden-MacPherson sheaves on finite intervals of $\MGp$ far enough in $\C^{+}$  stabilise too. Here 
the stable moment graph comes into play.  Suppose that $A,B$ are alcoves far enough in the fundamental chamber. 
Then  results of Section 4 show that the two $k$-moment graphs $\MGp_{|_{[A,B]}}$ and $\MGp_{|_{[A+\mu,B+\mu]}}$ are in general not
 isomorphic, while there is always an isomorphism of  $k$-moment graphs between $\MGst_{|_{[A,B]}}$ and $\MGst_{|_{[A+\mu,B+\mu]}}$,
 for $\mu\in\NN\rho$.  Since the stable moment graph is a subgraph of $\MGp$, there is a morphism $\MGst\hookrightarrow \MGp$. 
The following diagram summarises this situation:
\begin{displaymath}
\begin{xymatrix}{
\MGp_{|_{[A,B]}}&\MGp_{|_{[A+\mu,B+\mu]}}\\
\MGst_{|_{[A,B]}}\ar@{^{(}->}^{g}[u]\ar[r]&\MGst_{|_{[A+\mu,B+\mu]}}\ar@{^{(}->}_{g_{\mu}}[u]
}\end{xymatrix}
\end{displaymath}

We then get a functor ${\cdot}^{\text{stab}}:=g^*:\mathbf{Sh}_{\MGp_{|_{[A,B]}}}\rightarrow \mathbf{Sh}_{\MGst_{|_{[A,B]}}}$.
 The main theorem of the last two sections is the following one.

\begin{theor}The functor ${\cdot}^{\text{stab}}:\textbf{Sh}_k(\MGp_{|_{[A,B]}})\rightarrow \textbf{Sh}_k(\MGst_{|_{[A,B]}})$ preserves indecomposable Braden-MacPherson sheaves.
\end{theor}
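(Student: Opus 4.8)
The plan is to reduce the statement to the "property of the pullback" (Lemma \ref{pullbackBMP}), which says that the pullback along an isomorphism of $k$-moment graphs sends indecomposable Braden--MacPherson sheaves to indecomposable Braden--MacPherson sheaves. The morphism $g\colon\MGst_{|_{[A,B]}}\hookrightarrow\MGp_{|_{[A,B]}}$ is not an isomorphism (the stable moment graph is a proper subgraph), so the strategy is to combine the translation isomorphism of Lemma \ref{iso_Gp} with the commutative square in the Introduction. Concretely, fix alcoves $A,B$ far enough in $\C^+$, and pick $\mu\in\NN\rho$ large enough that $[A+\mu,B+\mu]$ still lies far enough in the fundamental chamber and that both halves of the diagram are available. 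By Lemma \ref{iso_Gp} the unlabelled graphs $\MGp_{|_{[A,B]}}$ and $\MGp_{|_{[A+\mu,B+\mu]}}$ agree, and by the stable/non-stable dichotomy of Lemmas \ref{lem_refl_label_root} and \ref{lem_transl_label} this unlabelled isomorphism is an isomorphism of $k$-moment graphs once restricted to the stable edges; that is precisely the bottom horizontal arrow $\MGst_{|_{[A,B]}}\xrightarrow{\sim}\MGst_{|_{[A+\mu,B+\mu]}}$ in the square, and it is a genuine isomorphism of $k$-moment graphs. Call it $t$.

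The key step is then to show that the diagram of moment graphs commutes up to the translation isomorphism, i.e.\ that there is an isomorphism of $k$-moment graphs $T\colon\MGp_{|_{[A,B]}}\to\MGp_{|_{[A+\mu,B+\mu]}}$ extending $t$ along $g$ and $g_\mu$, so that $g_\mu\circ t = T\circ g$. Here one uses that translation by $\mu$ acts on vertices (alcoves in $\C^+$) and that, after restricting to intervals far enough in the chamber, all the edges involved are stable in the sense of Section 4 — this is the content that makes Lemma \ref{iso_Gp} upgrade from an unlabelled statement to a labelled one on the relevant sub-moment-graph. Passing to sheaf categories and using functoriality of pullback, $g^* = t^*\circ g_\mu^*\circ (T^*)^{-1}$; equivalently ${\cdot}^{\text{stab}}$ on $\MGp_{|_{[A,B]}}$ is identified, via the two pullback isomorphisms $T^*$ and $t^*$, with ${\cdot}^{\text{stab}}$ on $\MGp_{|_{[A+\mu,B+\mu]}}$. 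Since $T$ and $t$ are isomorphisms of $k$-moment graphs, $T^*$ and $t^*$ preserve indecomposable Braden--MacPherson sheaves by Lemma \ref{pullbackBMP}; hence it suffices to prove that $g_\mu^*$ preserves them for a single sufficiently large $\mu$.

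To dispose of that last point I would argue by stability: for $\mu$ large the interval $[A+\mu,B+\mu]$ sits deep enough in $\C^+$ that the inclusion $g_\mu\colon\MGst_{|_{[A+\mu,B+\mu]}}\hookrightarrow\MGp_{|_{[A+\mu,B+\mu]}}$ is, on the nose, an isomorphism onto the full sub-moment-graph spanned by the same vertex set — in other words, once you are far enough in the chamber every edge of $\MGp$ between vertices of the interval is already stable. (This is where the "far enough" hypothesis does its real work; it should follow from the combinatorial analysis of labels in Section 4, since the non-stable edges are exactly those whose labels degenerate near the walls.) Then $g_\mu$ is an isomorphism of $k$-moment graphs and $g_\mu^*$ preserves indecomposable Braden--MacPherson sheaves by Lemma \ref{pullbackBMP} again, completing the proof. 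The main obstacle I anticipate is precisely making the square commute at the level of $k$-moment graphs rather than just unlabelled graphs: one must check that the translation map respects edge labels exactly on the stable part, and that the restriction of the Braden--MacPherson construction to these intervals is insensitive to the non-stable edges that get discarded — i.e.\ that forming stalks in $\MGst$ versus $\MGp$ over the relevant vertex set really does agree, which is the true content of "the stable moment graph has the stabilisation property we were looking for."
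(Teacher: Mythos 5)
Your argument rests on two claims that are false, and both contradict the combinatorial analysis of Section~4. First, you posit an isomorphism of $k$-moment graphs $T\colon\MGp_{|_{[A,B]}}\to\MGp_{|_{[A+\mu,B+\mu]}}$ extending the translation. Lemma~\ref{iso_Gp} only gives an isomorphism of \emph{unlabelled} oriented graphs, and the Remark following it, together with Lemma~\ref{lem_transl_label}, shows precisely that no such labelled isomorphism exists: the labels of the non-stable edges pick up the extra term $\langle\gamma,\check{\al}\rangle\sigma_{\mu}(c)$ under translation, so $\MGp_{|_{[A,B]}}$ and $\MGp_{|_{[A+\mu,B+\mu]}}$ are in general not isomorphic as $k$-moment graphs. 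Second, and more fatally, you claim that for $\mu$ large the inclusion $g_{\mu}\colon\MGst_{|_{[A+\mu,B+\mu]}}\hookrightarrow\MGp_{|_{[A+\mu,B+\mu]}}$ is an isomorphism because ``every edge of $\MGp$ between vertices of the interval is already stable.'' This is not what ``far enough'' buys: Lemma~\ref{PMGlimit} says that deep in $\C^{+}$ every edge is of type (i) (stable, coming from a reflection) \emph{or} of type (ii) (non-stable, coming from a translation $E=D+a\al$), and the type (ii) edges persist no matter how far one translates --- already in the subgeneric case $\MGp$ is a complete graph on its vertices while $\MGst$ retains only the edges $h\al-\!\!\!-\!\!\!-k\al$ with $\mathrm{sgn}(h)=-\mathrm{sgn}(k)$. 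So $g_{\mu}$ is never an isomorphism on a nontrivial interval, Lemma~\ref{pullbackBMP} does not apply to it, and your reduction collapses. If either of your two claims were true the theorem would be essentially formal; its actual content is that discarding the non-stable edges, which are genuinely present, does not destroy the Braden--MacPherson property.

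The paper's route is entirely different and cannot be bypassed this way. One first reduces (Lemma~\ref{lem_reduction}) to showing that $\st{\BMP}$ is flabby and indecomposable, since (BMP1), (BMP2), (BMP4) survive pullback along the inclusion automatically. Flabbiness is obtained by factoring ${\cdot}^{\text{stab}}$ through the regular and periodic graphs as $i_{*}$, $p_{\text{par}}^{*}$, ${\cdot}^{\text{per}}$, $j^{*}$, ${\cdot}^{\text{opp}}$, and proving via translation functors that ${\cdot}^{\text{per}}$ sends Braden--MacPherson sheaves to flabby sheaves (Theorem~\ref{theor_sthper_flabby}, Proposition~\ref{prop_flabbyStab}); this uses that the generic order satisfies (PO1)--(PO3). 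Indecomposability is then proved by localising the special module $\Gamma(\BMP)$ at a finite root $\beta$, which splits it into subgeneric pieces (Lemma~\ref{lem_locPSM}), and invoking the explicit $\widehat{\mathfrak{sl}_2}$ computation of Section~6 to control the non-stable directions (Proposition~\ref{prop_indp_BMPst}). The role of Lemma~\ref{pullbackBMP} is the opposite of the one you assign it: it is applied \emph{after} the theorem, together with Lemma~\ref{lem_stableMG}, to deduce the stabilisation of stalks $\BMP(B)^{C}\cong\BMP(B+\mu)^{C+\mu}$, not to prove the theorem itself.
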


Once this result is proven, the stabilisation property of the stalks of indecomposable Braden-MacPherson sheaves follows by applying
 the  property of the pullback we mentioned before.

We conclude the section by demonstrating the theorem above in the subgeneric case, that is the one corresponding to
 $\gaff=\widehat{\mathfrak{sl}_2}$. In particular, we show that for any finite interval of $\MGst$, the associated indecomposable 
Braden-MacPherson sheaf is isomorphic to its structure sheaf $\mathscr{Z}$ (see \ref{sssec_ShZ_Z} for the definition of structure sheaf).
This is done by proving the equivalent  claim (see Proposition \ref{prop_StrucShflabby}) that $\mathscr{Z}$ is flabby. On the other hand,
by Fiebig's multiplicity one result (cf. \cite{Fie06}), it is known that indecomposable Braden-MacPherson sheaves on finite intervals
 of $\MGp$ are isomorphic to the corresponding structure sheaves (in Appendix A we give an alternative proof of this fact). 
This finishes the proof of the subgeneric case.

In Section 7  we address the general case. Easily, we reduce us (see Lemma \ref{lem_reduction}) to prove that the image under ${\cdot}^{\text{stab}}$  of an indecomposable
Braden-MacPherson sheaf is flabby and indecomposable. In order to show that, we use a local-global approach,
 developed by Fiebig in \cite{Fie08a}, which enables us to translate our claim in terms of certain modules over the struc
(5,50\halfrootthree)**\dir{.};ture algebra $\Z$, 
that is the space of global sections of the structure sheaf. More precisely, we write the functor  ${\cdot}^{\text{stab}}$ as composition of 
five functors:
\begin{displaymath}\begin{xymatrix}{
\textbf{Sh}_{\MGp_{|_{\I}}}\ar[r]^{i_{*}}\ar@/_1.6pc/[rrrrr]<-1.2ex>_{{\cdot}^{\text{stab}}}&\textbf{Sh}_{\MGp}
\ar[r]^{p_{\text{par}}^{*}}&\textbf{Sh}_{\MGa} \ar[r]^{{\cdot}^\text{per}}&\textbf{Sh}_{\MGper}\ar[r]^{j^*}&\textbf{Sh}_{\MGper_{|_{\I}}}
\ar[r]^{{\cdot}^{\text{opp}}}&\textbf{Sh}_{\MGst_{|_{\I}}}\\
}\end{xymatrix}
\end{displaymath}

The functors $i_{*}$, $p_{\text{par}}^{*}$, $j^*$ and ${\cdot}^{\text{opp}}$ preserve indecomposable Braden-MacPherson sheaves 
(and hence flabbiness), by results of \cite{L11}. Therefore it is missing to prove that the functor ${\cdot}^\text{per}$ maps 
Braden-MacPherson sheaves to flabby sheaves. To get such a result, translation functor techniques are needed. Once recalled some definitions
 from \cite{Fie08a} and \cite{L12}, we consider any partial order $\trianglelefteq$ on $\WA$ satisfying certain properties 
(see \S\ref{ssec_propertiesBruhat}), we define the notion of $\trianglelefteq$-flabbiness (see Definition \ref{def_tle_flabby}) and,
generalising result of \cite{Fie08a}, we prove that translation functors preserve $\tle$-flabby objects (see Theorem 
\ref{theor_sthper_flabby}). The desired property of the functor ${\cdot}^\text{per}$ follows now by a rather easy inductive procedure (see
 Proposition \ref{prop_flabbyStab}).

Finally, we obtain the indecomposability by considering localisation at certain ideals of \emph{special} $\Z$-modules (see \S\ref{sssec_loc_SM}), which
 allows us to use the subgeneric case -which has been proven in Section 6- and hence to conclude (see Proposition \ref{prop_indp_BMPst}).

Appendix A deals with the subgeneric case. In particular, we prove in a very explicit way that indecomposable Braden-MacPherson sheaves
 on any finite interval of $\MGp$, satisfying the GKM-condition (cf. Definition \ref{Def_GKM}), are isomorphic to the corresponding structure sheaves.
As mentioned before, this result could be obtained by applying \cite[Theorem 6.4]{Fie06}, or --for $k=\mathbb{Q}$-- it could also be derived
 from topological facts, such as the rationally smoothness of the underlying varieties together with \cite[Theorem 1.6]{BM01}. The advantage of our
 proof is that it only uses the definition of Braden-MacPherson sheaves, together with the explicit description of finite intervals of $\MGp$
 given in Section 3.

\subsection{Acknowledgements} I wish to thank Peter Fiebig for encouraging me to look at the stable moment graph and for sharing
his ideas with me. I would like to acknowledge Rocco Chiriv\`i, whose suggestions were always very useful. I owe many thanks to
 Vladimir Shchigolev too, for his careful reading of a very preliminary version of this paper.

\section{Affine Kac-Moody algebras} We want to fix some notation relative to affine Kac-Moody algebras. The main reference is
 \cite[\S6]{Kac}.

\subsection{Basic notation}
Let us consider a finite-dimensional simple complex Lie algebra $\g$ and $\b\supseteq \h$ a Borel and a Cartan subalgebras. This data uniquely determines
 a set of simple roots $\Pi=\Pi(\b,\h)$ and a root system $\Delta=\Delta_{+} \sqcup \Delta_{-}$, where $\Delta_{+}$, resp. $\Delta_{-}$, denotes the 
set of positive, resp. negative, roots. 

We want now to consider the (untwisted) affine Kac-Moody algebra $\gaff$. As a vector space, it is defined as 
\begin{equation*}
\gaff=\g\otimes_{\CC}\CC[t^{\pm1}]\oplus \CC c\oplus \CC D.
\end{equation*}
If $\kappa:\g\times \g\rightarrow \CC$ denotes the Killing form of $\g$, then the commutation relations are as follows
\begin{align*}
 [c,\gaff] & = 0, \\
[D,x\otimes t^n] &  = n x\otimes t^n, \\
[x\otimes t^n, y\otimes t^m] & = [x,y]\otimes t^{m+n}+
n\delta_{m,-n}\kappa(x,y) c
\end{align*}
for $x,y\in \g$, $n,m\in \ZZ$.

Moreover, there is a Cartan subalgebra of $\gaff$,\ which corresponds to $\h$, namely
\begin{equation*}
\haff  = \h\oplus\CC c\oplus \CC D
\end{equation*}
and we also have
\begin{equation*}
\haff^{*}  = \h^{*}\oplus\CC \delta \oplus \CC \Lambda_0,
\end{equation*}
where, if $\langle \cdot,\cdot \rangle: \haff\times \haff^{*}\rightarrow \CC$ is the canonical pairing, it holds
\begin{align*}
\langle\delta,\haff\oplus \CC c\rangle & = \{0\}, \\ 
\langle\delta,D\rangle & = 1, \\
\langle\Lambda_0,\haff\oplus \CC D\rangle & = \{0\},\\
\langle\Lambda_0,c\rangle & = 1.
\end{align*}

Thus the set of real roots $\Dre$ of $\gaff$ has a nice description in terms of the root system of $\g$. Indeed, we have 
(cf. \cite[Proposition 6.3]{Kac})
\begin{equation*}
 \Dre=\left\{ \alpha+n\delta \mid \alpha\in \Delta, \, n\in\ZZ \right\}
\end{equation*}
while the set of positive real roots is given by
\begin{equation*}
 \Dre_{+}=\left\{ \alpha+n\delta \mid \alpha\in \Delta, \, n\in\ZZ_{>0} \right\}\cup \Delta_{+}.
\end{equation*}

Finally, if $\theta\in \Delta_{+}$ is the highest root, then 
\begin{equation*}
 \widehat{\Pi}=\Pi\cup \{-\theta+\delta\}
\end{equation*}
is the set of (affine) simple roots.

\subsection{The affine Weyl group and its set of alcoves}\label{ssec_WA_alcoves} For any $\alpha\in\Dre$, let us denote by $s_{\alpha}\in GL(\haff^{*})$
 the reflection whose action on $v\in \haff^{*}$ is defined by
\begin{equation*} s_{\alpha}(v)=v-\langle v, \check{\alpha} \rangle \alpha.
\end{equation*}
The affine Weyl group $\WA$ is then generated by the $s_{\alpha}$'s with $\alpha\in \Dre_{+}$, while we may identify 
the finite Weyl group $\W$ with the subgroup of $\WA$ generated by reflections which are indexed by finite positive real  roots.

 Let us denote by 
\begin{equation*}
\SRA=\left\{ s_{\alpha}\mid \alpha\in \widehat{\Pi}\right\}.
 \end{equation*}
the set of \emph{simple reflections} of $\WA$. Then $(\WA, \SRA)$ is a Coxeter system and the set $\T$ of \emph{reflections} of $\WA$ can be also obtained by conjugating $\SRA$, i.e.
\begin{equation*}
\T=\left\{s_{\alpha}\mid \alpha\in \Dre_{+}\right\}=\left\{wsw^{-1}\mid w\in\WA,\, s\in\SRA\right\}
\end{equation*}
Moreover, we denote by 
$\ell:\WA\rightarrow \mathbb{Z}_{\geq 0}$ the length function and by $\leq$ the Bruhat order on $\WA$.

Let $\haff^{*}_{\RR}$ and $\h^{*}_{\RR}$ be the $\RR$-span of $\widehat{\Pi}$ and of $\Pi$, respectively. We shall now recall another realization
 of $\WA$, as group of affine transformations of $\h^{*}_{\RR}$. This is obtained by identifying $\h^{*}_{\RR}$ with the affine space $\haff^{*}_{1}$ mod $\RR \delta$,
 where  
\begin{equation*}\haff^{*}_{1}:=\left\{\lambda\in\h^*_{\mathbb{R}}\,\big\vert \langle\lambda, c \rangle=1\right\}
\end{equation*}
Because $\g$ is symmetrisable, by \cite[Lemma 2.1]{Kac}, there is a bilinear form $(\cdot, \cdot):\gaff\times \gaff \rightarrow \CC$ 
that induces an isomorphism $\nu:\h \rightarrow \h^{*}$ such that we may identify $\check{\al}$ and $\frac{2\al}{(\al,\al)}$. Then, 
the action of $\WA$ on
 $\lambda\in \h^{*}$, is given by 
\begin{equation}\label{eqn_aff_refl_h}
 s_{\alpha+n\delta}(\lambda)=\lambda-\left(\langle\lambda, \check{\alpha}\rangle+\frac{2n}{(\al,\al)}\right)\alpha=s_{\al}(\lambda)-n\check{\al}
\end{equation}

 Denote by $\coQ$ the coroot lattice of $\g$ and  by $T_{\mu}$ the translation by $\mu\in\coQ$, that is the transformation
 defined as $T_{\mu}(\lambda)=\lambda+\mu$ for any $\lambda\in \h^{*}_{\mathbb{R}}$. This is an element of the affine Weyl group, since
 $T_{-n\check{\al}}=s_{\alpha+n\delta}s_{\alpha}$. It is easy to check that for any $w\in \WA$ and  for any $\mu\in\coQ$ we have $wT_{\mu}w^{-1}=T_{w(\mu)}$, so the group  of translations by an element of the
 coroot lattice turns out to be a normal subgroup. A well known fact is that $\WA=\W\ltimes\coQ$ (cf. \cite[Proposition 4.2]{Humph}).

Denote by  
\begin{equation*}H_{\alpha,n}:=\left\{\lambda \in \h^{*}_{\mathbb{R}}\,|\,\langle \lambda,\check{\al}\rangle=
-2\frac{n}{(\al,\al)}\right\}=\left\{\lambda \in \h^{*}_{\mathbb{R}}\,|\,( \lambda,\al)=-n\right\}
\end{equation*}
and observe that the affine reflection $s_{\alpha+n\delta}$ fixes pointwise such a hyperplane.  We call \emph{alcoves} 
the connected components of
 \begin{equation*}\h^{*}_{\mathbb{R}}\setminus \bigcup_{\substack{\alpha+n\delta\in\Dre_{+}}} H_{\alpha,n}
\end{equation*}
and write $\A$ for the set of all alcoves.

 The \emph{fundamental (Weyl) chamber} is
\begin{equation*}\C^+:=\{  \lambda\!\in\!\h_{\mathbb{R}}^{*}\,\,|\,\, \langle \lambda,\check{\al}\rangle>0 \,\,\forall \al\in\Pi \}
\end{equation*} and an element $\lambda\in\C^+$ is called \emph{dominant weight}.
We denote by $\mathcal{A}^+$ the set of all  alcoves contained in $\C^+$.

We state now a 1-1 correspondence between $\WA$ and $\A$ (cf. \cite[Theorem 4.8]{Humph}). In order to do that, we fix an alcove $A^+$, 
that is the unique alcove in $\A^+$ which contains the null vector in its closure. $A^+$ is usually called \textit{the fundamental alcove}
 and it has the property that every element $\lambda\in A^+$ is such that $0<( \lambda, \al) < 1$ for all $\alpha\in\Delta_{+}$ 
(cf. \cite[\S4.3]{Humph}).

The affine Weyl group $\WA$ acts on the left (by (\ref{eqn_aff_refl_h})) simply transitively on $\A$ (cf. \cite[\S 4.5]{Humph}) and so we obtain
\begin{equation}\label{corr}
\begin{array}{ccl}
\WA &\longrightarrow\!\!\!\!\!\!\!\!\!\!^{1\!-\!1}  &\A\\
w          &   \mapsto          & wA^+.\\
\end{array}
\end{equation}

Let us observe that each wall of $A^+$ is fixed by exactly one reflection $s \in \SRA$. We say that such a wall is the $s$-wall 
of $A^+$. In general every $A\in \A$ has one and only one wall in the $\WA$-orbit of the $s$-wall of $A^+$. This is called
 \emph{$s$-wall} of $A$.

The affine Weyl group acts on itself by right multiplication, so it makes sense to define a right action of $\WA$ on $\A$.
 It is of course enough to define such an action for the generators of the group. Thus for each alcove $A$ let $As$ be the unique
 alcove having in common with $A$ the $s$-wall.

\section{Moment graphs}

Let us fix once and for all a local ring $k$ such that $2$ is an invertible element. In the first part of this section we recall the 
definition of the category of $k$-moment graphs on a lattice, while in the second part we 
 focus on certain moment graphs, which are relevant for the representation theory of affine Kac-Moody algebras.

\subsection{Category of $k$-moment graphs on a lattice}

\begin{defin}[cf. \cite{Fie07b}]\label{Def_MG} Let  $Y$ be a lattice of finite rank. A \emph{moment graph on the lattice} $Y$ is given  by $(\V,\E, \trianglelefteq, l)$, where:
\item[(MG1)] $(\V,\E)$ is a directed graph without directed cycles nor multiple edges,
\item[(MG2)] $ \trianglelefteq$ is a partial order on $\V$ such that if $x,y\in \V$ and $E:x\rightarrow y\,\in \E$, then $x \trianglelefteq y$,
\item[(MG3)] $l:\E\rightarrow Y\setminus \!\{0\}\,$  is a map  called the \emph{label function}.
\end{defin}

Following Fiebig's notation (cf. \cite{Fie07b}), we will write $x-\!\!\!-\!\!\!-y$ if we are forgetting about the orientation of 
the edge.

For any lattice $Y$ of finite rank, let us denote the extended lattice by  $Y_k=Y\otimes_{\mathbb{Z}} k$.

\begin{defin}[cf. \cite{L11}]\label{Def_kMG} Let $\MG$ be  a moment graph on the lattice $Y$. We say that $\MG$ is a
 \emph{$k$-moment graph} on $Y$ if all labels are non-zero in $Y_k$, that is the image $\overline{l(E)}:=l(E)\otimes 1$ is a
 non-zero element of $Y_k$.
\end{defin}

By abuse of notation, we will often denote by $l(E)$ also its image in $Y_k$.

\begin{defin}[cf. \cite{FW}]\label{Def_GKM} The pair $(\MG, k)$ is called a \emph{ $GKM$-pair}  if all pairs $E_1$, $E_2$ of 
distinct edges containing a common vertex are such that $k\cdot l(E_1)\cap k\cdot l(E_2)=\{0\}$.
\end{defin}

 This  property is very important and, in the next sections, it  will give a restriction on the ring $k$.

\subsubsection{Morphisms of $k$-moment graphs}\label{sec_morphMG}

\begin{defin}[cf. \cite{L11}]\label{Def_morphMG}A morphism between two $k$-moment graphs 
\begin{equation*}f:(\V,\E, \trianglelefteq, l)\rightarrow (\V',\E', \trianglelefteq', l')
\end{equation*} is given by $(f_{\V}, \{f_{l,x}\}_{x\in \V})$, where
\item[(MORPH1)] $f_{\V}:\V \rightarrow \V'$ is any  map of posets such that, if $x-\!\!\!-\!\!\!-y\in\E$, then 
either $f_{\V}(x)-\!\!\!-\!\!\!-f_{\V}(y)\in \E' $, or $f_{\V}(x)=f_{\V}(y)$.

For an edge $E:x\rightarrow y\in \E$ such that $f_{\V}(x)\neq f_{\V}(y)$, we will denote
 $f_{\E}(E):= f_{\V}(x)-\!\!\!-\!\!\!-f_{\V}(y)$.
\item[(MORPH2)] For all $x\in \V$, $f_{l,x}:Y_k\rightarrow Y_k\in\text{Aut}_k(Y_k)$  is such that,
 if $E:x-\!\!\!-\!\!\!-y\in \E$ and $f_{\V}(x)\neq f_{\V}(y)$, the following two conditions are verified:
\hspace{5mm}\item[(MORPH2a)] $f_{l,x}(l(E))=h\cdot l'(f_{\E}(E)) $, for some $h\in k^{\times}$
\hspace{5mm}\item[(MORPH2b)]  $\pi \circ f_{l,x} =\pi \circ f_{l,y}$,
where $\pi$ is the canonical quotient map $\pi:Y_k\rightarrow Y_k/l'(f_{\E}(E))Y_k$.
\end{defin}

Given two morphisms of $k$-moment graphs
\begin{equation*}f:\MG=(\V, \E, \tle, l)\rightarrow \MG'=(\V', \E', \tle', l')
\end{equation*}
\begin{equation*}
 g:\MG'\rightarrow \MG''=(\V'', \E'', \tle'', l'')
\end{equation*}
 We may set 
\begin{equation*}g\circ f:=(g_{\V'}\circ f_{\V}, \{g_{l', f_{\V}(x)}\circ f_{l,x}\}_{x\in \V}).
\end{equation*}

It is an easy check that this composition is well--defined and associative.

\begin{defin}\label{Def_catkMG}  We denote by $\MGYk$ the category whose objects are the $k$-moment graphs on $Y$ and corresponding
 morphisms. 
\end{defin}

\subsubsection{Isomorphisms of $k$-moment graphs}  In \cite{L11}, we gave an abstract definition of isomorphisms but we 
did not notice that the notion we were considering actually coincided with the one of invertible morphisms. 
The following lemma proves this fact.

\begin{lem}\label{Lem_IsoMG}Let us consider $\MG=(\V, \E, \tle, l), \MG'=(\V', \E', \tle', l')\in\MGYk$, and a morphism between them $f=(f_{\V}, \{f_{l,x}\}_{x\in\V}) \in\Hom_{\MGYk}(\MG, \MG')$. $f$ is an isomorphism in the categorical sense if and only if the following two conditions hold:
\item[(ISO1)] $f_{\V}$ is bijective
\item[(ISO2)] for all $u\rightarrow w\in \E'$, there exists exactly one $x\rightarrow y\in \E$ such that $f_{\V}(x)=u$ and $ f_{\V}(y)=w $. 
\end{lem}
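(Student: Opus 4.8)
The plan is to prove both implications of the biconditional. First I would handle the ``only if'' direction: assume $f$ is an isomorphism in $\MGYk$, so there is a morphism $g = (g_{\V'}, \{g_{l',u}\}_{u \in \V'})$ with $g \circ f = \id_{\MG}$ and $f \circ g = \id_{\MG'}$. Unwinding the composition formula, $g_{\V'} \circ f_{\V} = \id_{\V}$ and $f_{\V} \circ g_{\V'} = \id_{\V'}$, so $f_{\V}$ is a bijection of posets, giving (ISO1). For (ISO2), I would use the fact that both $f$ and $g$ satisfy (MORPH1): since $g_{\V'} = f_{\V}^{-1}$, for any edge $u \to w \in \E'$ the vertices $g_{\V'}(u) = f_{\V}^{-1}(u)$ and $g_{\V'}(w) = f_{\V}^{-1}(w)$ are distinct (as $u \neq w$ because the graph has no directed cycles, hence an edge has distinct endpoints), and (MORPH1) applied to $g$ forces $f_{\V}^{-1}(u) -\!\!\!-\!\!\!- f_{\V}^{-1}(w) \in \E$. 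Bijectivity of $f_{\V}$ and the absence of multiple edges then guarantee this preimage edge is unique, and its image under $f_\E$ is the original edge; this establishes (ISO2).

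For the ``if'' direction, assume (ISO1) and (ISO2). I would construct a candidate inverse $g = (g_{\V'}, \{g_{l',u}\}_{u \in \V'})$ by setting $g_{\V'} := f_{\V}^{-1}$ and, for each $u \in \V'$, $g_{l',u} := f_{l, f_{\V}^{-1}(u)}^{-1} \in \Aut_k(Y_k)$. The main work is to check that $g$ is a genuine morphism of $k$-moment graphs, i.e. verifies (MORPH1) and (MORPH2). For (MORPH1): $g_{\V'}$ is a poset map because $f_{\V}$ is an order isomorphism (an order-preserving bijection between posets whose inverse we must check is order-preserving — here one needs that (ISO2) together with (MG2) and the structure forces $f_\V$ to reflect the order, or one argues directly). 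Actually the cleaner route: (ISO2) says edges of $\E'$ pull back bijectively to edges of $\E$, and since $f_\V$ is a bijection of posets, $f_\V^{-1}$ maps each edge $u\to w$ to the unique edge $x\to y$ with $x \trianglelefteq y$; that such an edge maps to an edge (not a collapsed pair) is exactly (ISO2), giving (MORPH1) for $g$. For (MORPH2a) and (MORPH2b): given an edge $u -\!\!\!-\!\!\!- w \in \E'$ with preimage edge $E: x -\!\!\!-\!\!\!- y \in \E$, we have $f_{l,x}(l(E)) = h \cdot l'(u -\!\!\!-\!\!\!- w)$ for some $h \in k^\times$, hence $g_{l',u}(l'(u -\!\!\!-\!\!\!- w)) = f_{l,x}^{-1}(l'(u -\!\!\!-\!\!\!- w)) = h^{-1} l(E)$, which is (MORPH2a) for $g$ with unit $h^{-1}$. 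Condition (MORPH2b) for $g$ asks $\pi' \circ g_{l',u} = \pi' \circ g_{l',w}$ where $\pi'$ is reduction mod $l(E)Y_k$; applying $f_{l,x}$ on one side, this is equivalent to the statement that $f_{l,x}$ and $f_{l,y}$ induce the same map after identifying the relevant quotients, which follows from (MORPH2b) for $f$ by a short diagram chase using that $f_{l,x}, f_{l,y}$ both carry $l(E)Y_k$ onto $l'(u -\!\!\!-\!\!\!- w)Y_k$ up to a unit.

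Finally I would verify $g \circ f = \id_{\MG}$ and $f \circ g = \id_{\MG'}$: on vertices this is clear, and on the collection of automorphisms of $Y_k$ it is the computation $g_{l', f_{\V}(x)} \circ f_{l,x} = f_{l,x}^{-1} \circ f_{l,x} = \id$ and symmetrically, using the composition formula for morphisms of $k$-moment graphs.

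The step I expect to be the main obstacle is (MORPH2b) for the candidate inverse $g$: one must be careful that the quotient $Y_k / l'(f_\E(E)) Y_k$ against which $f$'s compatibility is phrased is related, via the automorphisms $f_{l,x}^{-1}$ (up to the unit $h$), to the quotient $Y_k / l(E) Y_k$ against which $g$'s compatibility must be phrased, and that multiplying the label by a unit $h \in k^\times$ does not change the submodule $l(E)Y_k$, so the quotients genuinely agree. A secondary subtlety is checking that $f_{\V}$ being a bijective morphism of posets, together with (ISO2), really does make $f_{\V}^{-1}$ order-preserving — this uses that in a moment graph the partial order is constrained by (MG2) but is a priori richer than the edge relation, so one should phrase (MORPH1) for $g$ carefully rather than assuming $f_\V$ is automatically a poset isomorphism; in fact the definition of morphism only requires $g_{\V'}$ to be a map of posets, and this is part of what (ISO1) plus the hypotheses must be shown to give, or else it should be read as an implicit requirement already packaged in ``$f$ a morphism with $f_\V$ bijective'' in the intended reading of the lemma.
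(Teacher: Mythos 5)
Your proposal is correct and follows essentially the same route as the paper: you build the candidate inverse with $g_{\V'}=f_{\V}^{-1}$ and $g_{l',u}=f_{l,f_{\V}^{-1}(u)}^{-1}$, verify \textit{(MORPH2a)} and \textit{(MORPH2b)} by exactly the computation the paper carries out (transporting congruences mod $l'(f_{\E}(E))$ to congruences mod $l(E)$ via the unit $h$), and for the converse you use that the inverse morphism must satisfy \textit{(MORPH1)}, which is the same idea the paper illustrates with its counterexample diagram. The subtlety you flag about $f_{\V}^{-1}$ being a poset map is likewise left implicit in the paper's own proof, so it does not separate your argument from theirs.
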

\proof To begin with we show that a morphism satisfying \textit{(ISO1)} and \textit{(ISO2)} is invertible. 
Denote by $f^{-1}:=(f'_{\V'}, \{f'_{l',u}\}_{u\in\V'})$, where we set $f_{\V'}':=f_{\V}^{-1}$ and $f_{l',u}':=f_{l,f_{\V}^{-1}(u)}^{-1}$.
 We have to verify that $f^{-1}$ is well-defined, that is we have to check conditions \textit{(MORPH2a)} and \textit{(MORPH2b)}. 
Suppose there exists an edge $F:u\rightarrow w\in \E'$, then, by (ISO2), there is an edge $E:x\rightarrow y\in \E$ 
such that $f_{\V}(x)=u$ and $f_{\V}(y)=w$. Since $f$ satisfies \textit{(MORPH2a)}, $f_{l,x}(l(E))=h\cdot l'(F)$ for $h\in k^{\times}$ and we get
\begin{equation*}\begin{array}{c}
f_{l',u}'(l'(F))=f_{l,f_{\V}^{-1}(u)}^{-1}(l'(F))=f_{l,x}^{-1}(l'(F))=h^{-1}\cdot l(E)
\end{array}
\end{equation*}
Now, let $\mu\in Y_K$ and take $\lambda:=f_{l,y}^{-1}(\mu)$. By \textit{(MORPH2a)}, $\mu=f_{l,y}(\lambda)=f_{l,x}(\lambda)+r\cdot l'(F)$ for some $r\in k$. It follows
\begin{align*}
f_{l',u}'(\mu)&=f_{l,x}^{-1}(\mu))\\
&=f_{l,x}^{-1}(f_{l,x}(\lambda)+rl'(F))\\
&=\lambda+r\cdot f_{l,x}^{-1}(l'(F))\\
&=f_{l,y}^{-1}(\mu)+ r\cdot h^{-1}\cdot l(E)\\
&=f_{l',w}'(\mu)+ r'\cdot l(E)
\end{align*}

Suppose $f$ is an isomorphism. If  \textit{(ISO1)} is not satisfied,  then $f_{\V}$, and hence
  $f$, is not  invertible. Moreover, \textit{(ISO1)} implies that  for all $u\rightarrow v\in \E'$, there exists at most 
one $x\rightarrow y\in \E$ such that $f_{\V}(x)=u$ and $ f_{\V}(y)=v$ (otherwise $f_{\V}$ would not be injective).
 Now, let $f$ be the following homomorphism (we do not care about the $f_{l,x}$'s):
\begin{displaymath}
\xymatrix{   &  \!\!y\, \bullet \ar@{-->}[rr] & & \bullet w \\
x\bullet  \ar@{-->}[rr] &  & \bullet  \ar[ru]^{\alpha} u \\
}
\end{displaymath}
 Condition \textit{(ISO1)} holds, but $f$ is not invertible, since $f_{\V}^{-1}(u)\neq f_{\V}^{-1}(w)$ but $f_{\V}^{-1}(u)-\!\!\!-\!\!\!-f_{\V}^{-1}(w)\not\in \E$ (this contradicts \textit{(MORPH1)}). 
\endproof

Isomorphisms between $k$-moment graphs were an important tool in \cite{L11} in order to obtain a categorical 
analogue of certain equalities between Kazhdan-Lusztig polynomials.

\subsection{Bruhat moment graphs}\label{bruhatMG}

From now on we fix $\gaff\supseteq \baff \supseteq \haff$ and keep the notation of Section 2. For any subset $J\subset \SRA$ we define 
the  \emph{(affine) Bruhat moment graph}
 \begin{equation*}\MGJ=\MGa(\gaff\supseteq \baff\supseteq \haff,J)=(\V, \E, \leq, l)
 \end{equation*}
 associated to the data $(\gaff\supseteq \baff\supseteq \haff, J)$ as follows.

\begin{defin}\label{def_parBruhatMG} $\MGJ$ is  the moment graph on the affine coroot lattice $\widehat{\coQ}$ which is given by
\item[(i)] $\V=\WA^J$, stand $\WA^J$ for the set of minimal representatives of the equivalence classes of $\WA/\langle J \rangle$
\item[(ii)] $\E=\left\{ x\rightarrow y\,\vert\, x< y\,,\, \exists\, \alpha\!\in\!\Dre_+, \, \exists  w\in\langle J \rangle 
\text{ such that } ywx^{-1}=s_{\alpha} \right\}$
\item[(iii)] $l(x\rightarrow s_{\alpha}xw^{-1}):=\check{\alpha}$
\end{defin}

If $J=\emptyset$, we will simply write $\MGa$ instead of $\MGa^{\emptyset}$ and call it the \emph{regular Bruhat graph} of $\gaff$, while, 
if $J=\SR:=\{ s_{\alpha}\mid \alpha\in \Pi\}$, we will denote $\MGp:=\MGa^{\SR}$ and call it the \emph{parabolic Bruhat graph}.

\begin{expl}\label{expl_sl2_MGa}Let $\gaff=\widehat{\mathfrak{sl}_2}$. In this case, 
$\Dre_{+}=\{\pm\alpha+n\delta\,\vert\, n\in\mathbb{Z}_{>0}\}\cup \{\al\}$, where $\alpha$ is the (unique) positive root of 
$\mathfrak{sl}_2$ and $(\al,\al)=2$.  The corresponding regular Bruhat graph $\MGa$ is an infinite graph, whose vertices are given by 
the words in two letters ($s_1:=s_{\alpha}$ and $s_0$) without repetitions. Two elements are connected if and only if the difference
 between  their lengths is odd and in this case the edge is oriented from the shorter to the longer one. Thanks to the 
correspondence (\ref{corr}), we  may identify the set of vertices with the set of alcoves of $\gaff$. If we restrict $\MGa$  
to the interval $[A^+, s_1s_0s_1A^{+}]$, we get the following graph
\begin{displaymath}
\begin{xymatrix}{
\ar@{}[rr]|{\,\,\,|}&
\ar@{-}[rrrr]|{|}\txt<-10pc>{\\ \\ $\hspace{4mm}s_1s_0s_1A^+$}&&\ar@{-}[rrrr]|{|}\txt<-10pc>{\\ \\ $\hspace{6mm}s_1s_0A^+$}&\ar@/_1.25pc/[ll]_(.41){-\al+2c}&\ar@{-}[rrrr]|{|}\txt<-10pc>{\\ \\ $\hspace{6mm}\,\,\,\,s_1A^+$}&\ar@/_1.25pc/[ll]_{-\al+c}\ar@/_2.35pc/[rrrrrr]<-3.3ex>_{\al+c}&\ar@{-}[rrrr]|{|}\txt<-10pc>{\\ \\ $\hspace{6mm}\,\,\,\,\,\,A^+$}&\ar@/_1.25pc/[ll]_{\al}\ar@/^1.25pc/[rr]^{\al+c}\ar@/^2.35pc/[llllll]<3.3ex>^{-\al+c}&\ar@{-}[rrrr]|{|}\txt<-10pc>{\\ \\ $\hspace{6mm}\,\,\,\,s_0A^+$}&\ar@/^1.25pc/[rr]^(.41){\al+2c}\ar@/^2.35pc/[llllll]<3.3ex>^{\al}&\ar@{}[rrrr]|{\!\!\!|}\txt<-10pc>{\\ \\ $\hspace{6mm}s_0s_1A^+$}&\ar@/_4.3pc/[llllllllll]<-1.5ex>_{\al}&&&
}
\end{xymatrix}
\end{displaymath}
\end{expl}

\subsection{The periodic moment graph}\label{ssec_perMG}
 We want now to associate another moment graph with $\gaff$. In order to do this, we need to recall the notion of generic order on 
the set of alcoves.

\subsubsection{Two partial orders on the set of alcoves}\label{ssec_2orders}

Following \cite{Lu80}, we provide the set of alcoves with two partial orders.

First of all, the Bruhat order on $\WA$ induces a partial order on  $\A$. Indeed, for all alcoves 
$A,B\in \A $ with $A=xA^+,\, B=yA^+$, $x,y\in \WA$ we may set
$$A\leq B\,\, \iff\,\, x\leq y.$$
We still call it \emph{Bruhat order}. 

 We observe that in general if we look at two alcoves it is not obvious at all if they are comparable with respect to the Bruhat order 
without knowing the corresponding elements in $\WA$.

Next, we recall Lusztig's definition of a nicer partial order $\preccurlyeq$ on  $\A$, in the sense that for all pair of alcoves
 we will be able to say whether they are comparable and, in case, to establish which one is the bigger one. 

Each $H\in \bigcup_{\substack{\alpha+n\delta\in \Dre_{+}}} H_{\alpha,n}$ divides $\h^{*}_{\RR}$ in two half spaces:
 $\h^{*}_{\RR}=H^+ \sqcup H \sqcup H^-$, where $H^+$ is the half space that intersects every translate of $\C^+$.
 Let $A\in \A$, if $H$ is the reflecting hyperplane between $A$ and $As$, $s\in\SRA$, we consider the partial order generated by 
 \begin{equation*}A\preccurlyeq As \,\,\,\,\text{ if }A\in H^-.
\end{equation*}

Notice that it is not clear in general how $\leq $ and $\preccurlyeq$ are related.
Let us denote by $\coX$ the \emph{lattice of  (finite) integral coweights}, that is (again under the identification of $\h_{\RR}$ and $\h^{*}_{\RR}$)
\begin{equation}\coX:=\left\{\lambda\in \h^{*}_{\RR}\mid ( \lambda, \alpha) \in \ZZ \,\, \forall \alpha\in \Delta\right\}.
\end{equation}

\begin{prop}[\cite{Soe97}, Claim 4.4] \label{gen_bruhat}Far enough inside $\A^+$, $\leq$ and $\preccurlyeq$ coincide, that is for all
 $\lambda\in \check{X}\cap \C^+$, $A,B\in \A$ the following are equivalent:
\begin{enumerate}
\item $A\preccurlyeq B$;
\item $n\lambda+A\leq n\lambda +B$ for $n>>0$.
\end{enumerate}
\end{prop}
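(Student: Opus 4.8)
The plan is to read both orders off their elementary generating relations and to use that translating an alcove far into $\C^+$ makes Lusztig's order $\preccurlyeq$ locally indistinguishable from the Bruhat order. First I would set up two reductions. Translation by $\coX$ preserves the hyperplane arrangement, so each $A+\mu$ with $\mu\in\coX$ is again an alcove, and $\preccurlyeq$ is translation invariant: if $H$ is the wall shared by $A$ and $As$ ($s\in\SRA$), then $H+\mu$ is the wall shared by $A+\mu$ and $As+\mu$, while $(H+\mu)^+=H^++\mu$ because a half-space meets every translate of $\C^+$ if and only if one of its translates does. Hence (1) is equivalent to ``$A+n\lambda\preccurlyeq B+n\lambda$ for $n\gg0$'', and it suffices to prove that $\le$ and $\preccurlyeq$ agree on alcoves lying inside $N\lambda+\C^+$ for $N$ large. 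The local mechanism I would exploit is a dichotomy: for $H=H_{\beta,m}$ with $\beta\in\Delta_+$ one computes $H^+=\{\nu:(\nu,\beta)>-m\}$, so since $0<(\nu,\beta)<1$ on $A^+$ the fundamental alcove lies in $H^-$ exactly when $m\le-1$; call such $H$ \emph{good}. For an alcove $D$ sharing a wall with $s_H(D)$, passing to $s_H(D)$ is a $\preccurlyeq$-increase precisely when it goes from $H^-$ to $H^+$, while it raises the length $\ell$ by one precisely when $D$ and $A^+$ lie on the same side of $H$; thus for good $H$ the two increasing directions coincide, and for non-good $H$ they are opposite. Finally, any hyperplane separating two alcoves of $N\lambda+\C^+$ is good once $N$ is large, since its level in a direction $\beta\in\Delta_+$ exceeds $N(\lambda,\beta)>0$.

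For $(1)\Rightarrow(2)$ I would take a chain $A=C_0,\dots,C_r=B$ with $C_i=C_{i-1}s_i$, $C_{i-1}$ on the $H_i^-$ side of the wall $H_i=H_{\beta_i,m_i}$ it shares with $C_i$. Translation by $n\lambda$ turns $H_i$ into $H_{\beta_i,\,m_i-n(\lambda,\beta_i)}$, whose level tends to $-\infty$ because $(\lambda,\beta_i)>0$; so past an explicit $i$-dependent threshold $H_i+n\lambda$ is good and $A^+$ sits on its $H^-$ side, the side of $C_{i-1}+n\lambda$. The alcoves $C_{i-1}+n\lambda$ and $C_i+n\lambda$ share the wall on $H_i+n\lambda$, so by the dichotomy $C_{i-1}+n\lambda\lessdot C_i+n\lambda$; choosing $n$ above the finitely many thresholds gives $A+n\lambda\le B+n\lambda$, and since the constraint on $n$ is upward closed this persists for all $n\gg0$.

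The converse $(2)\Rightarrow(1)$ is the step I expect to be the real obstacle. The claim I would try to establish is that, for $N$ large, the covering relations of the Bruhat order among the alcoves of $N\lambda+\C^+$ are exactly the single-hyperplane crossings $D\lessdot s_H(D)$ with $H$ good --- this has genuine content, since the whole Bruhat interval $[A+n\lambda,B+n\lambda]$ need not lie in $N\lambda+\C^+$ --- and by the dichotomy these coincide with the elementary $\preccurlyeq$-relations there. Granting it, from $A+n\lambda\le B+n\lambda$ I would pick a saturated chain for $\le$ within the deep region, read it off as a chain of elementary $\preccurlyeq$-steps to get $A+n\lambda\preccurlyeq B+n\lambda$, and conclude $A\preccurlyeq B$ by translation invariance. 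Proving that claim --- i.e.\ pinning down the Bruhat order after a large dominant translation --- is the hard part; I would attack it via additivity of the length function under translation by dominant coweights (which is where regular dominance of $\lambda$ enters) together with the stabilisation of the relevant Bruhat intervals, up to translation, as $n\to\infty$. The two implications together yield the equivalence.
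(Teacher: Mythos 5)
The paper itself contains no proof of this proposition: it is quoted from Soergel (\cite{Soe97}, Claim 4.4), so there is no internal argument to compare you with, and your proposal has to stand on its own. Its first half does. The implication (1)$\Rightarrow$(2) is correct and complete: translation invariance of $\preccurlyeq$, the identification of the $\preccurlyeq$-increasing side of $H_{\beta,m}$ ($\beta\in\Delta_+$) with $\{\nu:(\nu,\beta)>-m\}$, the good/non-good dichotomy according to the position of $A^+$, and the observation that each wall occurring in a generating chain for $A\preccurlyeq B$ becomes good, with $A^+$ on the same side as the lower alcove, after a sufficiently large dominant translation, so that every elementary $\preccurlyeq$-step turns into a length-one Bruhat step; the finitely-many-thresholds argument and the upward closedness in $n$ are fine.

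The converse, which is the substantive half of the statement, is left with a genuine gap, as you acknowledge. Two things are missing. First, the key claim that for alcoves deep in $\C^+$ every Bruhat covering $D\lessdot s_H(D)$ is an adjacent crossing of a (necessarily good) wall is plausible --- reflecting a deep alcove in a good hyperplane that is not one of its walls changes the length by more than $1$, as one can see by estimating the signed number of hyperplanes crossed --- but you do not prove it, and ``additivity of the length function under dominant translation'' is a pointer, not an argument. Second, and more seriously, even granting that claim the deduction does not close: from $A+n\lambda\le B+n\lambda$ you propose to ``pick a saturated chain for $\le$ within the deep region'', but nothing you have established produces such a chain. As you note yourself, the Bruhat interval $[A+n\lambda,B+n\lambda]$ need not lie in $N\lambda+\C^+$, and a description of the coverings \emph{between pairs of deep alcoves} does not yield a chain of deep alcoves joining the endpoints; a saturated chain may leave the deep region, where across non-good walls the Bruhat-increasing and $\preccurlyeq$-increasing directions are opposite and your dictionary fails. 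Note also what exactly remains: if $B\prec A$, your first direction together with antisymmetry already contradicts (2), so the open case is that of $\preccurlyeq$-incomparable $A,B$, for which one must show $A+n\lambda\not\le B+n\lambda$ for $n\gg0$; the outline does not address this. A more standard way to finish is an induction on the length of $B$ using the lifting properties (PO1)--(PO3) recalled in Section 7, which hold for $\le$ and, by Lusztig, for $\preccurlyeq$, rather than an analysis of covering relations.
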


Because of this result Lusztig called $\preccurlyeq$ \emph{generic Bruhat order}.  Remark that $\preccurlyeq$ is invariant 
under translation by finite coweights.

\begin{defin}The \emph{periodic moment graph} $\MGper=\MGper(\gaff \supseteq \baff \supseteq \haff)=(\V, \E, \preccurlyeq, l)$
 is a moment graph on $\widehat{\check{Q}}$ and it is given by
\item[(i)] $\V=\A$, the set of alcoves of $\WA$
\item[(ii)] $\E=\left\{ xA^+\rightarrow yA^+\mid xA^+\preccurlyeq yA^+\,,\, \exists\, \alpha\!\in\!\Dre_{+} \text{ such that } y=s_{\alpha}x \right\}$
\item[(iii)] $l(xA^+\rightarrow s_{\alpha}xA^+):=\check{\alpha}$
\end{defin}

%Observe that, if we forget  $\MG(\W)$ and $\MGper(\W)$ are isomorphic as unoriented graphs
\begin{rem}Let us observe that we identified $\WA$ and $\A$ by (\ref{corr}) and therefore $\MGa$ and $\MGper$ coincide as labeled 
 \emph{unoriented} graphs, as we can see by comparing Example \ref{expl_sl2_MGa} with the following one.
\end{rem}

\begin{expl}Let $\gaff=\widehat{\mathfrak{sl}_2}$. If we restrict the corresponding periodic moment  graph  to the interval
 $[s_1s_0s_1A^{+}, s_0s_1A^+]$, we get the following moment graph.
\begin{displaymath}
\begin{xymatrix}{
\ar@{}[rr]|{\,\,\,|}&
\ar@{-}[rrrr]|{|}\txt<-10pc>{\\ \\ $\hspace{4mm}s_1s_0s_1A^+$}&\ar@/^4.5pc/[rrrrrrrrrr]<1.5ex>^{\al}\ar@/^1.25pc/[rr]^(.57){-\al+2c}\ar@/_2.35pc/[rrrrrr]<-3.3ex>_{-\al+c}&\ar@{-}[rrrr]|{|}\txt<-10pc>{\\ \\ $\hspace{6mm}s_1s_0A^+$}&\ar@/^1.25pc/[rr]^{-\al+c}\ar@/_2.35pc/[rrrrrr]<-3.3ex>_{\al}&\ar@{-}[rrrr]|{|}\txt<-10pc>{\\ \\ $\hspace{6mm}\,\,\,\,s_1A^+$}&\ar@/_2.35pc/[rrrrrr]<-3.3ex>_{\al+c}\ar@/^1.25pc/[rr]^{\al}&\ar@{-}[rrrr]|{|}\txt<-10pc>{\\ \\ $\hspace{6mm}\,\,\,\,\,\,A^+$}&\ar@/^1.25pc/[rr]^{\al+c}&\ar@{-}[rrrr]|{|}\txt<-10pc>{\\ \\ $\hspace{6mm}\,\,\,\,s_0A^+$}&\ar@/^1.25pc/[rr]^(.41){\al+2c}&\ar@{}[rrrr]|{\!\!\!|}\txt<-10pc>{\\ \\ $\hspace{6mm}s_0s_1A^+$}&&&&
}
\end{xymatrix}
\end{displaymath}
\end{expl}

\section{Finite intervals of $\MGp$ far enough in the fundamental chamber}\label{ssec_stabMG}

This section is devoted to the description of certain intervals of the parabolic Bruhat graph $\MGp$, that is the Bruhat graph 
corresponding to the data $(\gaff\supseteq \baff\supseteq \haff, \SR)$.

\subsection{Two descriptions} There are actually two descriptions of the  graph $\MGp$: one identifies the set of vertices
 with the finite coroot lattice $\coQ$, while the other identifies the 
set of vertices with $\A^+$, the set of alcoves in the fundamental chamber. 

As the affine Weyl group acts on $\h^{*}_{\RR}$, we can consider the $\WA$-orbit of $0$, that is the finite coroot lattice $\coQ$. Moreover
  $\text{Stab}_{\W}(0)=\W=\langle\SR \rangle$, the finite Weyl group, and hence $\WA/ \W$ is in bijection with the coroot lattice via the 
mapping $w\rightarrow w(0)$. Clearly, for any pair of minimal length representatives $x,y\in \WA/ \W$, there exist $w\in \W$ and 
$\alpha+n\delta \in \Dre_{+}$ such that $y=s_{\alpha+n\delta}x w$ if and only if $y(0)=s_{\alpha+ n \delta}x(0)$, 
that is $y(0)=x(0)-n\check{\alpha}$.

On the other hand,  $\W\setminus \WA$ is clearly  in bijection with $\WA/\W$  via the mapping $x\mapsto x^{-1}$. The set of minimal representatives for the equivalence classes, under the correspondence (\ref{corr}), is given by the
 set $\A^+$ of the alcoves in the fundamental chamber. Moreover, we will connect  $xA^+,yA^+\in\A^+$ if and only if  there exist
 an element of the finite Weyl group $w\in\W$ and an affine positive root $\alpha\in\Dre_{+}$ such that $x=wys_{\alpha}$, that is
 $x^{-1}=s_{\alpha}y^{-1}w^{-1}$.

\begin{expl}Let $\gaff=\widehat{\mathfrak{sl}_3}$ and let $\W^{\text{par}}$ be the set of minimal length representatives of $\WA/ \W$.
 Let us consider the interval $[e,s_{\beta}s_{\al}s_{\beta}s_{0}]\subset \W^{\text{par}}$ then the two descriptions 
of $\MGp$ are as follows (we omit the labels).
\item[(i)]Description via the finite coroot lattice
\begin{displaymath}
\begin{xy}0;/r.26pc/:
(-15,10\halfrootthree);(15,10\halfrootthree)**\dir{.};
(-10,20\halfrootthree);(10,20\halfrootthree)**\dir{.};
(-15,30\halfrootthree);(15,30\halfrootthree)**\dir{.};
(-10,0);(-15,10\halfrootthree)**\dir{.};
(0,0);(-15,30\halfrootthree)**\dir{.};
(10,0);(-10,40\halfrootthree)**\dir{.};
(10,0);(15,10\halfrootthree)**\dir{.};
(0,0);(15,30\halfrootthree)**\dir{.};
(-10,0);(10,40\halfrootthree)**\dir{.};
(-15,10\halfrootthree);(0,40\halfrootthree)**\dir{.};
(-15,30\halfrootthree);(-10,40\halfrootthree)**\dir{.};
(15,10\halfrootthree);(0,40\halfrootthree)**\dir{.};
(15,30\halfrootthree);(10,40\halfrootthree)**\dir{.};
(0,20\halfrootthree)*{\bullet}="A";
(0,40\halfrootthree)*{\bullet}="A1";
(-15,30\halfrootthree)*{\bullet}="A2";
(15,30\halfrootthree)*{\bullet}="A3";
(-15,10\halfrootthree)*{\bullet}="A4";
(15,10\halfrootthree)*{\bullet}="A5";
(0,0)*{\bullet}="B";
"A",{\ar"A1"};
"A",{\ar"A5"};
"A",{\ar"A4"};
"A",{\ar"A2"};
"A",{\ar"A3"};
"A",{\ar"B"};
"A1"+<-.25pc,-.05pc>,{\ar"A2"+<.2pc,.22pc>};
"A1"+<.25pc,-.05pc>,{\ar"A3"+<-.2pc,.22pc>};
"A1"+<.18pc,-.15pc>,{\ar@/^12pt/"B"+<.2pc,.3pc>};
"A2"+<.27pc,.08pc>,{\ar@/^12pt/"A5"+<-.2pc,.3pc>};
"A3"+<-.27pc,.08pc>,{\ar@/_12pt/"A4"+<.2pc,.3pc>};
"A4"+<.2pc,-.15pc>,{\ar"B"+<-.25pc,.1pc>};
"A5"+<-.2pc,-.15pc>,{\ar"B"+<.25pc,.1pc>};
"A3"+<.0pc,-.25pc>,{\ar"A5"+<.0pc,.25pc>};
"A2"+<.0pc,-.25pc>,{\ar"A4"+<.0pc,.25pc>};
\end{xy}
\end{displaymath}
\item[(ii)] Description via $\A^{+}$, the set of alcoves in $\C^+$
\begin{displaymath}
\begin{xy}0;/r.36pc/:
(-10,20\halfrootthree);(-15,30\halfrootthree)**\dir{.};
(-15,30\halfrootthree);(-5,30\halfrootthree)**\dir{.};
(10,20\halfrootthree);(15,30\halfrootthree)**\dir{.};
(15,30\halfrootthree);(5,30\halfrootthree)**\dir{.};
  (0,0);(-10,20\halfrootthree)**\dir{-};
   (0,0);(10,20\halfrootthree)**\dir{-};
    (-10,20\halfrootthree);(-5,30\halfrootthree)**\dir{-};
  (10,20\halfrootthree);(5,30\halfrootthree)**\dir{-};
   (-10,20\halfrootthree);(10,20\halfrootthree)**\dir{-};
   (-5,10\halfrootthree);(5,10\halfrootthree)**\dir{-};
(-5,30\halfrootthree);(5,30\halfrootthree)**\dir{-};
(-5,30\halfrootthree);(5,10\halfrootthree)**\dir{-};
(5,30\halfrootthree);(-5,10\halfrootthree)**\dir{-};
(0,6\halfrootthree)="A";(0,13\halfrootthree)="A1";
(-5,16\halfrootthree)="A2";(5,16\halfrootthree)="A3";
(-5,22\halfrootthree)="A5";(5,22\halfrootthree)="A4";
(0,26\halfrootthree)="B";
"A"+<0pc,.1pc>,{\ar"A1"+<0pc,-.1pc>};
"A"+<-.2pc,-.1pc>,{\ar@/^17pt/"A5"+<-.4pc,-.1pc>};
"A"+<.2pc,-.1pc>,{\ar@/_17pt/"A4"+<.4pc,-.1pc>};
"A"+<-.1pc,.0pc>,{\ar@/^7pt/"A2"+<-.1pc,-.2pc>};
"A"+<.1pc,.pc>,{\ar@/_7pt/"A3"+<.1pc,-.2pc>};
"A"+<.13pc,.15pc>,{\ar@/_11pt/"B"+<.1pc,-.08pc>};
"A1"+<-.2pc,.2pc>,{\ar"A2"+<.2pc,0pc>};
"A1"+<.2pc,.2pc>,{\ar"A3"+<-.2pc,0pc>};
"A1"+<0pc,.4pc>,{\ar"B"+<0pc,-.2pc>};
"A2"+<0pc,.4pc>,{\ar"A5"+<0pc,.pc>};
"A3"+<0pc,.4pc>,{\ar"A4"+<0pc,.pc>};
"A5"+<.2pc,.35pc>,{\ar"B"+<-.27pc,.1pc>};
"A4"+<-.2pc,.35pc>,{\ar"B"+<.27pc,.1pc>};
"A3"+<-.3pc,.4pc>,{\ar"A5"+<.3pc,.23pc>};
"A2"+<.3pc,.4pc>,{\ar"A4"+<-.3pc,.23pc>};
\end{xy}
\end{displaymath}

\end{expl}

As we can see in the previous example, in the description of $\MGp$ via the alcoves in the fundamental chamber, the set of edges 
seems to have a very complex structure, while in the other one the order on the set of vertices is hard to understand.
 Since we are interested in the study of intervals, the description via the finite coroot lattice turns out to be not that useful for 
our purposes, unless $\gaff=\widehat{\mathfrak{sl}_2}$.

\subsubsection{The $\widehat{\mathfrak{sl}_2}$ case}\label{sssec_sl2PMG}

If $\gaff=\widehat{\mathfrak{sl}_2}$, it is actually possible to give a very explicit description of $\MGp$.  In this
 case we  may identify the finite root lattice with the finite coroot lattice and then the set of vertices is $\V=\mathbb{Z}\al$. 
For any pair  $n, m\in\mathbb{Z}$, it is immediate to check that 
\begin{equation}\label{eqn_A1_edges}s_{\alpha-(n+m)\delta}(n\alpha)=m\alpha,
\end{equation}
then $\MGp$ is a fully connected graph. Notice that, even if (\ref{eqn_A1_edges}) holds for any pair of integers $n$ and $m$, we
 do not allow loops, so $n\neq m$ always. It follows
\begin{equation}\label{eqn_A1_label}l(n\al-\!\!\!-\!\!\!-m\al)=\left\{\begin{array}{ll}
-\al+(n+m)c&\text{if }n+m\geq 0\\
\al-(n+m)c&\text{if }n+m< 0
\end{array}
\right.
\end{equation}

Finally, observe that $\al=s_{0}(0)$ and $-\al=s_{\al}s_0(0)$; so, for any pair of $n\neq m\in\mathbb{Z}$, $n\al<m\al$ if and only if
either $|n|<|m|$ or $n=-m>0$.

\begin{expl}The interval $[0,-2\al]$ of $\MGp$ looks like in the following picture
\begin{displaymath}
\begin{xymatrix}{
\bullet\txt<6pt>{\\ \\ \hspace{-5mm}$-2\al$}
%\hspace{-3.3mm}\ar@{-}[rrrrrrrr]
&&\bullet\ar@/_1.25pc/@< 6pt>[ll]^{\al+3c}\txt<6pt>{\\ \\ \hspace{-5mm}$-\al$}\ar@/_3.45pc/@<-1ex>[rrrrrr]^(.61){-\al+c}&&
\bullet\ar@/^1.25pc/@<-6pt>[rr]_{-\al+c}\txt<6pt>{\\ \\ \hspace{-4mm}$0$}\ar@/_1.25pc/@< 6pt>[ll]^{\al+c}\ar@/_2.6pc/[llll]_{\al+2c}
\ar@/^2.6pc/[rrrr]^{-\al+2c}&& \bullet\txt<6pt>{\\ \\ \hspace{-3mm}$\al$}\ar@/^1.25pc/@<-6pt>[rr]_{-\al+3c}
\ar@/^2.6pc/[llll]_{\al}\ar@/^3.45pc/@<1ex>[llllll]_(.61){\al+c}&&
%\hspace{-2mm}
%\ar@{-}[l]
\bullet\txt<6pt>{\\ \\ \hspace{-3mm}$2\al$}\ar@/_4.2pc/@<-1.75ex>[llllllll]^{\al}
}
\end{xymatrix}
\end{displaymath}
\end{expl}

 The second part of this section is devoted to showing that finite intervals of $\MGp$ ``far enough" in $\C^+$ have surprisingly 
a very regular structure.

\subsection{Nice behaviour of finite intervals of $\MGp$} In this paragraph, we will consider only the description of $\MGp$ 
in which the set of vertices coincides with $\A^+$.

\begin{defin}Let $\lambda, \mu\in\C^+$. We say that 
\item[(i)]$\lambda$ is  \emph{strongly linked} to $\mu$ if $\lambda=\mu+x\al$, for some $x\in\RR$ and $\al\in\Delta_{+}$,
\item[(ii)]$\lambda$ is \emph{linked} to $\mu$ if $\lambda=w(\mu+n \al)$,  for some $n\in\mathbb{R}$, $\al\in\Delta_{+}$ and
 $w\in\W$.
\end{defin}
 Remark that the fundamental chamber $\C^+$ is a fundamental domain with respect to the left  action of the finite Weyl group 
(cf. \cite[\S 1.12]{Humph}), so the element in point \textit{(ii)} is unique.

\begin{prop}\label{prop_strlinked} There exists a $K>0$, depending only on the root system $\Delta$, such that if 
$\lambda\in\C^+$ and $d_{\lambda}$ is  the minimum of distances from $\lambda$  to the borders of $\C^+$, then all
  $\mu\in\C^+$ linked to $\lambda$ and such that $|\lambda-\mu|< K\cdot d_{\lambda}$ are strongly linked to $\lambda$.
\end{prop}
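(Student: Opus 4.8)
The plan is to reduce the statement to a finite, root-system-level combinatorial fact. Suppose $\mu \in \C^+$ is linked to $\lambda$, so by definition $\mu = w(\lambda + n\alpha)$ for some $w \in \W$, $\alpha \in \Delta_+$, $n \in \RR$ (uniqueness of $\mu$ in $\C^+$ is noted in the excerpt, but here $w, \alpha, n$ are the data we are handed). I want to show that if $|\lambda - \mu|$ is small compared to $d_\lambda$, then we may in fact take $w = \id$, i.e. $\mu - \lambda \in \RR\alpha'$ for some $\alpha' \in \Delta_+$. First I would observe that $\lambda + n\alpha = w^{-1}\mu$, so $\lambda$ and $w^{-1}\mu$ differ by a multiple of $\alpha$; applying $w$, the point $w\lambda$ and $\mu$ differ by a multiple of $w\alpha$. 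Hence $\mu - \lambda = (\mu - w\lambda) + (w\lambda - \lambda) = t\,(w\alpha) + (w\lambda - \lambda)$ for some $t \in \RR$. The strategy is to force $w\lambda - \lambda$ to vanish.

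The key estimate: I would bound $|w\lambda - \lambda|$ from below in terms of $d_\lambda$ whenever $w \neq \id$. The finite Weyl group $\W$ is a finite reflection group acting on $\h^*_\RR$; if $w \neq \id$ then $w$ moves $\lambda$ off the wall-complement in a controlled way. Concretely, write $w = s_{\beta_1}\cdots s_{\beta_r}$ reduced; if $w\lambda = \lambda$ then $\lambda$ lies on a reflection hyperplane $H_\beta$ for some $\beta$ in the inversion set of $w$ (standard for finite reflection groups: the stabilizer of a point is generated by the reflections through hyperplanes containing it), which is impossible for $\lambda \in \C^+$ strictly inside. More quantitatively, for $w \neq \id$ there is $\beta \in \Delta_+$ with $\langle \lambda, \check\beta\rangle > 0$ and $\langle w\lambda, \check\beta \rangle < 0$ (an inversion of $w$ applied to the dominant $\lambda$), whence $|w\lambda - \lambda| \geq c_0 \cdot \langle\lambda,\check\beta\rangle \geq c_0' \cdot d_\lambda$ for constants $c_0, c_0'$ depending only on $\Delta$ (comparing the distance $d_\lambda$ to the walls with the values $\langle\lambda,\check\beta\rangle$). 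Now $|\mu - \lambda| \geq |w\lambda - \lambda| - |t|\,|w\alpha|$, which is not immediately a contradiction because $t$ could be large; so instead I would argue projectively. Decompose $\mu - \lambda$ along $w\alpha$ and its orthogonal complement: $|w\lambda - \lambda|$ projected orthogonally to $w\alpha$ equals the orthogonal projection of $\mu - \lambda$, hence is $\leq |\mu - \lambda|$. So $\mathrm{dist}(w\lambda - \lambda,\ \RR\, w\alpha) \leq |\mu - \lambda| < K d_\lambda$. The remaining point is that $w\lambda - \lambda$, for $w\neq\id$, cannot lie within distance $\,c_0' d_\lambda / 2$, say, of the line $\RR\,w\alpha$: because the "direction" $w\lambda - \lambda$ for fixed $w$ is a fixed linear functional of $\lambda$, and one checks over the finitely many pairs $(w, \alpha)$ with $w \neq \id$ that the worst-case angle between $w\lambda - \lambda$ and $w\alpha$ is bounded away from $0$ uniformly (again because $\lambda \in \C^+$ and $d_\lambda$ scales the whole configuration). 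Choosing $K$ smaller than all these finitely many constants gives $w = \id$, hence $\mu - \lambda \in \RR\,w\alpha = \RR\alpha$ with $\alpha \in \Delta_+$, i.e. $\lambda$ is strongly linked to $\mu$.

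The main obstacle is the projective/angle part: one must rule out the degenerate possibility that $w\lambda - \lambda$ (with $w \neq \id$) happens to be almost parallel to $w\alpha$, since then cancellation could keep $|\mu-\lambda|$ small. I expect to handle this by homogeneity in $\lambda$ — the whole picture (the values $\langle\lambda,\check\beta\rangle$, the vector $w\lambda-\lambda$, the distances to walls, and the admissible lines $\RR\,w\alpha$) scales linearly when $\lambda$ is scaled, so after normalizing $d_\lambda = 1$ one is minimizing a continuous positive function over a compact set for each of finitely many $(w,\alpha)$, obtaining a uniform positive lower bound; then $K$ is the minimum of these finitely many bounds together with the length constants $c_0, c_0'$. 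Note $K$ depends only on $\Delta$, as claimed, since everything involved ($\W$, $\Delta_+$, the form, the walls of $\C^+$) depends only on $\Delta$.
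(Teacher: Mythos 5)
Your reduction (write $\mu=w\lambda+n\,w\alpha$, project onto the orthogonal complement of $w\alpha$, and try to bound $\mathrm{dist}(w\lambda-\lambda,\RR\, w\alpha)$ from below by a multiple of $d_\lambda$, uniformly over the finitely many pairs $(w,\alpha)$) is essentially the same reduction the paper makes: after applying the isometry $w^{-1}$, your quantity is exactly the distance from $w^{-1}(\lambda)$ to the line $r_{\lambda,\alpha}=\lambda+\RR\alpha$ studied there. But the step you dispose of with ``one checks'' is precisely the nontrivial content of the proof, and in the form you state it it is false: you exclude only $w=\id$, yet for $w=s_\alpha$ one has $w\lambda-\lambda=-\langle\lambda,\check{\alpha}\rangle\alpha$ and $w\alpha=-\alpha$, so $w\lambda-\lambda$ lies exactly on $\RR\, w\alpha$ and no uniform positive lower bound over ``all $w\neq\id$'' can exist. (This case is harmless for the conclusion, since then $\mu-\lambda\in\RR\alpha$ anyway, but it must be excluded and treated separately; the paper does this via the symmetry $D^{s_\alpha w}=D^{w}$, which allows one to assume $w^{-1}(\alpha)\in\Delta_+$, i.e.\ to work with $w\notin\{e,s_\alpha\}$.)

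For the remaining $w$ the uniform angle bound still requires an argument, and your compactness scheme does not provide one: the normalized set $\{\lambda\in\C^+\mid d_\lambda=1\}$ is an unbounded subset of the chamber, hence not compact, and if you normalize $|\lambda|=1$ instead you must control the ratio $\mathrm{dist}(w\lambda-\lambda,\RR\, w\alpha)/d_\lambda$ as $\lambda$ approaches the walls (where numerator and denominator can both tend to $0$) and rule out interior zeros of the numerator, i.e.\ regular dominant $\lambda$ with $w\lambda-\lambda\in\RR\, w\alpha$. The paper settles exactly this point by a compactness argument on the root side rather than on the chamber: writing $\lambda-w(\lambda)$ as a nonnegative combination of the inversion set $\Dw$ shows it lies in the closed cone $C^w$ generated by $\Dw$; since $w^{-1}(\alpha)\in\Delta_+$ one has $\alpha\notin\pm C^w$, and projective compactness of $C^w$ gives $\cos^2[\lambda-w(\lambda),\alpha]\le M^{\alpha,w}<1$ uniformly in $\lambda$. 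Combined with $|\lambda-w(\lambda)|\ge d_\lambda$ (the segment from $\lambda$ to $w(\lambda)$ crosses a wall --- this is your $c_0'$ estimate, with constant $1$), this yields the claim for any $K$ with $M\le 1-K^2$, where $M=\max M^{\alpha,w}$ over the finitely many pairs. So your plan needs two repairs: exclude $w\in\{e,s_\alpha\}$ (handling $s_\alpha$ directly), and replace the asserted uniform-angle step by an actual argument such as the inversion-cone argument above.
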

\proof For any  $\lambda\in \C^+$ and any finite positive root $\alpha\in\Delta_{+}$ we denote by $r_{\lambda,\al}$ the line 
$\{\lambda+\al x\,|\,x\in\RR\}\subseteq \h^{+}_{\RR}$. It is clear that the set of finite dominant weights strongly 
linked to $\lambda$ corresponds to $(\bigcup_{\al\in\Delta_{+}} r_{\lambda,\al})\bigcap \C^+$.
On the other hand, we may describe the set of $\mu\in\C^+$ linked to $\lambda$ as follows. 
Fix $\al \in \Delta_{+}$ and consider the line $r_{\lambda,\al}$. Each time that such a line hits a wall of $\C^+$, it reflects 
off the wall and goes on this way. Let us denote by $\widetilde{r}_{\lambda,\al}$ the piecewise linear path inside of $\C^+$ so obtained. 
Now $\bigcup_{\al\in \Delta_{+}} \widetilde{r}_{\lambda, \al}$ is the set of finite dominant weights linked to $\lambda$.

Thus  it is enough to show that there exists a  $K>0$ such that if $\mu\in\widetilde{r}_{\lambda,\al}$ and
 $|\lambda-\mu|< K\cdot d_{\lambda}$, then $\mu\in r_{\lambda,\al}$. Notice that the finite Weyl group
 acts on $\h^{*}_{\RR}$ as a group of orthogonal transformations, hence we may reduce to show that for
 all $w\in\W\setminus \{e,s_{\alpha}\}$, the distance of the weight $w(\lambda)$ from the line $r_{\lambda,\al}$
is not less than $K\cdot d_{\lambda}$. Moreover, one may think of this reduction as an ``unfolding"  back
  $\widetilde{r}_{\lambda,\al}$ to $ r_{\lambda,\al}$ and considering the conjugates of $\lambda$ instead of $\lambda$.

Since the distance of $w(\lambda)$ from the line  $r_{\lambda, \al}$ is the minimum of the distances of
 $w(\lambda)$ from $\lambda+x\al$ for $x\in\mathbb{R}$, we have to show that $|\lambda-x\al-w(\lambda)|^2\geq K^2 d_{\lambda}^2 $ for all $x\in \mathbb{R}$.  Computing the square norm, and denoting $\lambda^w:=\lambda-w(\lambda)$, we have:
\begin{equation*}
 |\al|^2x^2+2(\lambda^w,\al)x+|\lambda^w|^2-K^2d_{\lambda}^2\geq 0\,\,\,\,\,\,\,\forall x\in\mathbb{R}
\end{equation*}
Hence this is equivalent to showing that the discriminant $D^w=(\lambda^w,\al)^2-|\al|^2|\lambda^w|^2+|\al|^2K^2d_{\lambda}^2\leq 0$.

To start with, let us notice that $D^{s_{\alpha}w}=D^{w}$, since $\lambda^{s_{\alpha}w}=\lambda-w(\lambda)+\langle w(\lambda),\check{\al}\rangle \al=\lambda^w+\langle w(\lambda),\check{\al}\rangle \al$, hence:
\begin{equation*}
\begin{array}{ccl}
D^{s_{\alpha}w}&=&(\lambda^{s_{\alpha}w},\al)^2-|\al|^2|\lambda^{s_{\alpha}w}|^2+|\al|^2K^2d_{\lambda}^2=\\
&=&(\lambda^w+\langle w(\lambda),\check{\al}\rangle\al,\al)^2-|\al|^2(\lambda^w+\langle w(\lambda),\check{\al}\rangle \al,\lambda^w+\langle w(\lambda), \check{\al}\rangle\al)+| \al|^2K^2d_{\lambda}^2=\\
&=&(\lambda^w,\al)^2+2\langle w(\lambda),\check{\al}\rangle|\al|^2(\lambda^w,\al)+\langle w(\lambda),\check{\al}\rangle^2|\al|^4+\\
&-&|\al|^2|\lambda^w|^2-2|\al|^2(\langle w(\lambda), \check{\al}\rangle \al,\lambda^w)-\langle w(\lambda),\check{\al} \rangle^2|\al|^4+|\al|^2K^2d_{\lambda}^2=\\
&=&(\lambda^w, \al)^2-|\al|^2|\lambda^w|^2+|\al|^2K^2d_{\lambda}^2=D^{w}
\end{array}
\end{equation*}
Now if $w^{-1}(\al)$ is a  finite negative root, then clearly $(s_{\al}w)^{-1}(\al)\in\Delta_{+}$, hence, using the invariance
 property just proved, in what follows we may assume that $w\in \W\setminus\{e, s_{\alpha}\}$ is such that
 $w^{-1}(\al)\in\Delta_{+}$.

Denote now by $\Dw$ the set of positive roots sent to negative roots by $w^{-1}$, let $C^w$ be the (closed convex rational) 
cone $\langle \Dw \rangle_{\mathbb{R}^+}$ generated by the elements of $\Dw$ and notice that $\al$ is not in $\pm C^w$. Indeed,
 $\al$ is not in $C^w$ since all elements of this cone are sent to non-negative linear combination of finite negative roots by $w^{-1}$
 and, on the other hand, $\al$ is a finite positive root while all elements in $-C^w$ are non-negative linear combinations of finite negative 
roots.

Let $L^w$ be the set of weights $\lambda^w$, where  $\lambda$ runs in $\C^+$ and fix a reduced expression
 $s_{i_1}\ldots s_{i_r}$, with $s_{i_j}:=s_{\alpha_{i_j}}$, for $\alpha_j\in\stackrel{\cdot}\Pi$. Then we have $w(\lambda)=\lambda-(a_1\beta_{i_1}\ldots a_r \beta_{i_r})$, where $\beta_{j}=s_{i_1}\ldots s_{i_{j-1}}(\al_{i_j})$ for $j=1,\ldots , r$. Notice that $a_{i_j}\geq 0$ for all $j$ since $\lambda\in \C^+$ and, moreover, $\{\beta_{i_1},\ldots,\beta_{i_r}\}=\Dw$. This shows that $L^w\subseteq C^w$.

Let $\pi:\h^{*}_{\mathbb{R}}\setminus \{0\}\rightarrow \mathbb{P}(\h^{*}_{\mathbb{R}})$ be the quotient map to the projective space of $\h^{*}_{\mathbb{R}}$. Given two non-zero vectors $u,v\in\h^{*}_{\mathbb{R}}$ we denote by $[u,v]$ the angle between them; clearly this symbol depends only on  the lines generated by $u$ and $v$ up to sign to change and up to supplementary angles. In particular, the map $\mathbb{P}(\h^{*}_{\mathbb{R}})^2\rightarrow \mathbb{R}$ defined by $(\pi[u], \pi[v])\mapsto \cos^2[u,v]$ is well--defined.

Since $C^w$ is a closed convex rational cone we have that $\pi(C^w\setminus\{0\})$ is closed in $\mathbb{P}(\h^{*}_{\mathbb{R}})$. Hence  the  map $\pi(C^w\setminus\{0\})\rightarrow \mathbb{R}$ sending $\pi(\mu)\mapsto \cos^2[\mu, \al]$ achieves a maximal value $M^{\al,w}$ and this maximal value is less than 1 since $\pi(\al)\not\in\pi(C^w\setminus \{0\})$. In particular we have $\cos^2[\lambda^w,\al]\leq M^{\al,w}<1$ for all $\lambda\in\C^+\setminus\{0\}$ since $L^w\subseteq C^w$.

Finally, since there are only a finite number of pairs $(\al,w)$, we have $M:=\max M^{\al,w}<1$. Now notice that $w(\lambda)\not\in \C^+$, because $w\neq e$, so $|\lambda^w|\geq d_{\lambda}$, as the segment from $\lambda$ to $w(\lambda)$ must cross a wall of $\C^+$.

 We have to show $D^w\leq 0$.  Since 
\begin{equation*}(\lambda^w,\al)=|\lambda^w||\al| \cos[\lambda^w,\al],
\end{equation*}
 our inequality becomes $\cos^2[\lambda^w,\alpha]\leq 1-K^2d_{\lambda}^2/|\lambda^w|^2$. But we have $\cos^2[\lambda^w,\alpha]\leq M<1$ and $1-K^2d^2_{\lambda}/|\lambda^w|^2>1-K^2$. Hence it is enough to choose $K$ such that $M\leq 1-K^2$. This finishes the proof.

\endproof

Let $\rho$ be half the sum of the finite positive coroots, that is $\rho=\frac{1}{2}\sum_{\al\in\Delta_{+}} \check{\al}$. Moreover,  for any alcove $A\in\A$, let us denote by $c_A$ its centroid.

By using Proposition \ref{prop_strlinked}, together with the identification $\check{\al}=2\al/(\al,\al)$ for all $\al\in\Delta$,
 we get the following characterisation of finite intervals of $\MGp$ which are far enough from the walls of the dominant chamber.

\begin{lem}\label{PMGlimit}Let $A,B\in \A^+$, then there exists an integer $m_0=m_0(A,B)$ such that for any $\lambda\in X\cap  
m\rho+\C^+ $ , with $m\geq m_0$, for any pair $D,E\in[A+\lambda,B+\lambda]$ there is an edge $D-\!\!\!-\!\!\!-E$ in $\MGp$ if and only if 
\item[(i)] either $E=Ds_{\alpha}$ for some $\alpha\in\Dre_{+}$ 
\item[(ii)] or $E=D+a\al$ for some $a\in\mathbb{Z}\setminus \{0\}$ and $\al\in \Delta_{+}$.
\end{lem}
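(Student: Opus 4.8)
The plan is to analyze when two vertices $D,E$ of $\MGp$, identified with alcoves in $\C^+$, are joined by an edge, using the second description of $\MGp$ from \S\ref{ssec_2orders}: writing $D = xA^+$ and $E = yA^+$ with $x,y$ minimal representatives in $\WA/\W$, an edge $D-\!\!\!-\!\!\!-E$ exists precisely when there are $w\in\W$ and $\beta\in\Dre_+$ with $x = wy s_\beta$, i.e.\ when the corresponding coroot-lattice points $x(0), y(0)$ satisfy $x(0) = s_{\alpha+n\delta}(y(0)) = y(0) - n\check\alpha$ for some $\alpha+n\delta\in\Dre_+$ (possibly after a finite Weyl reflection). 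The two cases $(i)$ and $(ii)$ in the statement should correspond, respectively, to edges in which the reflecting affine root has nonzero $\delta$-component ($n\neq 0$), forcing $E = Ds_\beta$ for an affine reflection $s_\beta$, and edges coming from a purely finite reflection composed with a translation, which at the level of $\C^+$ looks like translation by a multiple of a finite root. The core point is that for $D,E$ far from the walls the ``folding'' of linked weights back into $\C^+$ never occurs, so that the set of $E$ linked to $D$ inside the relevant interval is exactly the set of $E$ \emph{strongly} linked to $D$.

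First I would reduce the combinatorial condition for an edge to the statement that $D(0)$ and $E(0)$ (the images of the centroids, or equivalently the coroot-lattice points attached to the minimal coset representatives) lie on a common line in the direction of some finite root $\alpha\in\Delta_+$, up to a finite Weyl group element: concretely, that there is $w\in\W$ with $w(E(0)) - D(0) \in \mathbb{R}\check\alpha$. This is just an unwinding of the edge condition $x = wys_\beta$ together with equation (\ref{eqn_aff_refl_h}) and the computation $T_{-n\check\alpha} = s_{\alpha+n\delta}s_\alpha$ already recorded in \S\ref{ssec_WA_alcoves}. Then I would pass to centroids $c_D, c_E$ of the alcoves: since $c_{A+\lambda} = c_A + \lambda$, translating $A,B$ by $\lambda\in X\cap(m\rho+\C^+)$ pushes $c_D, c_E$ uniformly far inside $\C^+$, with $d_{c_D} \geq c\cdot m$ for a constant $c>0$ depending only on $\Delta$ (because $\rho$ is in the interior of $\C^+$), while the diameter $|c_D - c_E|$ stays bounded by $|c_A - c_B| + O(1)$, a constant $R = R(A,B)$ independent of $m$.

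Next I would invoke Proposition \ref{prop_strlinked}: choose $m_0$ so large that $R < K\cdot d_{c_D}$ for all $m\geq m_0$, where $K$ is the constant of that proposition. Then any $c_E$ linked to $c_D$ within distance $R$ is in fact \emph{strongly} linked to $c_D$, i.e.\ $c_E = c_D + x\alpha$ for some $x\in\RR$, $\alpha\in\Delta_+$ — in other words the finite Weyl group element $w$ above can be taken trivial. Translating this back: an edge $D-\!\!\!-\!\!\!-E$ in the interval corresponds to $E(0) = D(0) + x\check\alpha$ for some $\alpha\in\Delta_+$ and $x\in\RR$. The reflecting affine root is then $\alpha + n\delta$ with $x\check\alpha = -n\check\alpha$, so $x = -n\in\ZZ$. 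Here I would split on $n$: the subtlety is to match $(i)$ and $(ii)$, and I expect this to be the main obstacle. When $E(0)\neq D(0)$ but the displacement is a nonzero integral multiple of $\check\alpha$, one must decide whether $y = s_{\alpha+n\delta}x$ (i.e.\ $E = Ds_{\alpha+n\delta}$, case $(i)$ with a genuine affine reflection) or $E = D + a\alpha$ with $a\in\ZZ\setminus\{0\}$ viewed as a translate of alcoves (case $(ii)$); concretely case $(i)$ covers the edges where the two alcoves share an $s$-wall, and case $(ii)$ the ``long-range'' translation edges. I would handle this by noting that right multiplication by a simple reflection $s\in\SRA$ (which defines $Ds$ via the shared $s$-wall, as in \S\ref{ssec_WA_alcoves}) conjugated by $\W$ produces exactly the reflections $s_{\alpha+n\delta}$, and that the two clauses together simply enumerate all possibilities for the pair $(D(0),E(0))$ lying on a common $\check\alpha$-line at integral separation. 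Finally I would note that conversely both $(i)$ and $(ii)$ do yield an edge (this direction does not need $m$ large, only the edge condition of $\MGp$), and that the Bruhat-comparability needed to orient the edge is automatic once $A+\lambda, B+\lambda$ are genuinely far enough in $\C^+$ by Proposition \ref{gen_bruhat}. Collecting the constraints on $m$ gives the required $m_0 = m_0(A,B)$.
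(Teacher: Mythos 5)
Your proposal takes essentially the same route as the paper's proof: translate the edge condition into linkage of the associated lattice points/centroids, observe that centroids satisfy $c_{F+n\rho}=c_F+n\rho$ (so mutual distances in the interval are fixed while the wall-distance $d_{c_{F+n\rho}}$ grows), apply Proposition \ref{prop_strlinked} with its constant $K$, and choose $m_0$ so that $K\cdot d_{c_D}$ exceeds the interval's diameter. The only organisational difference is that the paper invokes Proposition \ref{gen_bruhat} at the outset to identify $[A+m\rho,B+m\rho]$ with a fixed translated interval — which is precisely what underwrites your assertion that the diameter $|c_D-c_E|$ is bounded independently of $m$ — whereas you defer it to the end for orientation, so make that stabilisation step explicit.
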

\begin{proof} First of all, let us set $m_1=\max_{E,D\in[A,B]} n_0(E,D)$, where $n_0$ is the one in Proposition \ref{gen_bruhat}.
 It follows that for any $m>m_1$ there is an isomorphism of posets $[A+m\rho,B+m\rho]\cong [A+m_1\rho,B+m_1\rho]$.

By the previous proposition, we know that in $\MGp_{|[A+m\rho,B+m\rho]}$ edges adjacent to a vertex corresponding to a given alcove $D$ are of the desired type if for any $E\in [A+m\rho,B+m\rho]$  it holds
\begin{equation*}
|c_{D}-c_{E}|<K\cdot d_{c_{D}}
\end{equation*} 

Observe that for any $n\geq 0$  and for  any $F\in[A+m_1\rho,B+m_1\rho]$  we have  $c_{F+n\rho}=c_{F}+n\rho$ and hence,
for all  $G\in [A+m_1\rho,B+m_1\rho]$, $|c_{F+n\rho}-c_{G+n\rho}|=|c_F-c_G|$. Moreover, if $n>0$, $d_{c_{F+n\rho}}>d_{c_F}$ and therefore it makes sense to consider

\begin{equation*}n_F:=\min \left\{n\mid  K\cdot d_{c_{F+n\rho}}>|c_{F}-c_{G}| \text{ for all }G\in [A+m_1\rho,B+m_1\rho] \right\}
\end{equation*}
 Finally, we may set 
\begin{equation*}
m_0:=m_1+\max \left\{ n_F \mid  F\in[A+m_1\rho,B+m_1\rho]\right\}
\end{equation*}

\end{proof}

We say that the  edges of type (i), that is given by  reflections, are  \textit{stable}, while the ones of type (ii), that is given by translations, are \textit{non-stable}. We denote the corresponding sets $\E_S$, resp.  $\E_{NS}$. 

\begin{expl}Let $\gaff=\widehat{\mathfrak{sl}_3}$ and $A=A^+$, $B=s_0s_1s_2s_1A^+$. Then in the interval $[A,B]$ of $\MGp$ there are
 edges that are neither stable nor non-stable, as the one between $A=A^+$ and $C=s_0s_1A^+$. 
\begin{displaymath}
\begin{xy}0;/r.33pc/:
(-10,20\halfrootthree);(-15,30\halfrootthree)**\dir{.};
(-15,30\halfrootthree);(-5,30\halfrootthree)**\dir{.};
(10,20\halfrootthree);(15,30\halfrootthree)**\dir{.};
(15,30\halfrootthree);(5,30\halfrootthree)**\dir{.};
   (0,0);(-10,20\halfrootthree)**\dir{-};
   (0,0);(10,20\halfrootthree)**\dir{-};
    (-10,20\halfrootthree);(-5,30\halfrootthree)**\dir{-};
  (10,20\halfrootthree);(5,30\halfrootthree)**\dir{-};
   (-10,20\halfrootthree);(10,20\halfrootthree)**\dir{-};
   (-5,10\halfrootthree);(5,10\halfrootthree)**\dir{-};
(-5,30\halfrootthree);(5,30\halfrootthree)**\dir{-};
(-5,30\halfrootthree);(5,10\halfrootthree)**\dir{-};
(5,30\halfrootthree);(-5,10\halfrootthree)**\dir{-};
(0,6\halfrootthree)="A";(0,13\halfrootthree)="A1";
(-5,16\halfrootthree)="A2";(5,16\halfrootthree)="A3";
(-5,22\halfrootthree)="A5";(5,22\halfrootthree)="A4";
(0,26\halfrootthree)="B";
"A"-<0pc,.2pc>*{\text{\tiny{A}}};
"A2"+<-.1pc,.1pc>*{\text{\tiny{C}}};
"B"+<0pc,.25pc>*{\text{\tiny{B}}};
"A"+<0pc,.2pc>,{\ar"A1"+<0pc,-.1pc>};
"A"+<-.5pc,-.15pc>,{\ar@/^17pt/"A5"+<-.4pc,-.1pc>};
"A"+<.5pc,-.15pc>,{\ar@/_17pt/"A4"+<.4pc,-.1pc>};
"A"+<-.4pc,.0pc>,{\ar@/^7pt/"A2"+<-.1pc,-.2pc>};
"A"+<.4pc,.pc>,{\ar@/_7pt/"A3"+<.1pc,-.2pc>};
"A"+<.25pc,.1pc>,{\ar@/_11pt/"B"+<.1pc,-.08pc>};
"A1"+<-.2pc,.2pc>,{\ar"A2"+<.2pc,0pc>};
"A1"+<.2pc,.2pc>,{\ar"A3"+<-.2pc,0pc>};
"A1"+<0pc,.4pc>,{\ar"B"+<0pc,-.2pc>};
"A2"+<0pc,.4pc>,{\ar"A5"+<0pc,.pc>};
"A3"+<0pc,.4pc>,{\ar"A4"+<0pc,.pc>};
"A5"+<.2pc,.35pc>,{\ar"B"+<-.27pc,.1pc>};
"A4"+<-.2pc,.35pc>,{\ar"B"+<.27pc,.1pc>};
"A3"+<-.3pc,.4pc>,{\ar"A5"+<.3pc,.23pc>};
"A2"+<.3pc,.4pc>,{\ar"A4"+<-.3pc,.23pc>};
 \end{xy}
\end{displaymath}
It is enough to translate the interval of $\check{(\al+\beta)}=\al+\beta$ to get the structure described in Lemma \ref{PMGlimit}.
\begin{displaymath}
\begin{xy}0;/r.33pc/:
(-25,50\halfrootthree);(25,50\halfrootthree)**\dir{.};
 (-20,40\halfrootthree);(20,40\halfrootthree)**\dir{.};
 (-15,30\halfrootthree);(15,30\halfrootthree)**\dir{.};
(-10,20\halfrootthree);(10,20\halfrootthree)**\dir{.};
(-5,10\halfrootthree);(5,10\halfrootthree)**\dir{.};
 (0,0);(25,50\halfrootthree)**\dir{.};
  (0,0);(-25,50\halfrootthree)**\dir{.};
    (-5,10\halfrootthree);
(15,50\halfrootthree)**\dir{.};
  (5,10\halfrootthree);
(-15,50\halfrootthree)**\dir{.};
  (-10,20\halfrootthree);
(5,50\halfrootthree)**\dir{.};
  (10,20\halfrootthree);
(-5,50\halfrootthree)**\dir{.};
  (-15,30\halfrootthree);
(-5,50\halfrootthree)**\dir{.};
  (15,30\halfrootthree);
(5,50\halfrootthree)**\dir{.};
  (-20,40\halfrootthree);
(-15,50\halfrootthree)**\dir{.};
  (20,40\halfrootthree);
(15,50\halfrootthree)**\dir{.};
 (-5,50\halfrootthree);(5,50\halfrootthree)**\dir{-};
(-20,40\halfrootthree);(20,40\halfrootthree)**\dir{-};
(-15,30\halfrootthree);(15,30\halfrootthree)**\dir{-};
(-15,30\halfrootthree);(-5,50\halfrootthree)**\dir{-};
(15,30\halfrootthree);(5,50\halfrootthree)**\dir{-};
(-15,30\halfrootthree);(-20,40\halfrootthree)**\dir{-};
(15,30\halfrootthree);(20,40\halfrootthree)**\dir{-};
(0,20\halfrootthree);(-10,40\halfrootthree)**\dir{-};
(0,20\halfrootthree);(10,40\halfrootthree)**\dir{-};
(-5,30\halfrootthree);(5,50\halfrootthree)**\dir{-};
(5,30\halfrootthree);(-5,50\halfrootthree)**\dir{-};
(0,26\halfrootthree)="A";(0,33\halfrootthree)="A1";
(-5,36\halfrootthree)="A2";(5,36\halfrootthree)="A3";
(-5,42\halfrootthree)="A5";(5,42\halfrootthree)="A4";
(-10,33\halfrootthree)="A6";(10,33\halfrootthree)="A7";
(-15,36\halfrootthree)="A8";(15,36\halfrootthree)="A9";
(0,46\halfrootthree)="B";
"A"+<.0pc,-.2pc>*{\text{\tiny{A'}}};
"B"+<.02pc,.35pc>*{\text{\tiny{B'}}};
"A"+<0pc,.2pc>,{\ar"A1"+<0pc,-.1pc>};
"A"+<.15pc,.2pc>,{\ar@/_11pt/"B"+<.1pc,-.08pc>};
"A1"+<-.2pc,.2pc>,{\ar"A2"+<.2pc,0pc>};
"A6"+<-.2pc,.2pc>,{\ar"A8"+<.2pc,0pc>};
"A7"+<-.2pc,.2pc>,{\ar"A3"+<.2pc,0pc>};
"A1"+<.2pc,.2pc>,{\ar"A3"+<-.2pc,0pc>};
"A6"+<.2pc,.2pc>,{\ar"A2"+<-.2pc,0pc>};
"A7"+<.2pc,.2pc>,{\ar"A9"+<-.2pc,0pc>};
"A1"+<0pc,.4pc>,{\ar"B"+<0pc,-.2pc>};
"A2"+<-.1pc,.4pc>,{\ar"A5"+<-.1pc,.pc>};
"A3"+<.1pc,.4pc>,{\ar"A4"+<.1pc,.pc>};
"A5"+<.2pc,.35pc>,{\ar"B"+<-.27pc,.1pc>};
"A4"+<-.2pc,.35pc>,{\ar"B"+<.27pc,.1pc>};
"A3"+<-.3pc,.4pc>,{\ar"A5"+<.3pc,.23pc>};
"A9"+<-.3pc,.4pc>,{\ar"A4"+<.3pc,.23pc>};
"A"+<-.3pc,.22pc>,{\ar"A6"+<.45pc,-.15pc>};
"A2"+<.3pc,.4pc>,{\ar"A4"+<-.3pc,.23pc>};
"A"+<.3pc,.22pc>,{\ar"A7"+<-.5pc,-.05pc>};
"A8"+<.3pc,.4pc>,{\ar"A5"+<-.3pc,.23pc>};
"A"+<-.4pc,-.2pc>,{\ar@/^11pt/"A8"+<-.4pc,-.1pc>};
"A"+<.4pc,-.2pc>,{\ar@/_11pt/"A9"+<.4pc,-.1pc>};
"A8"+<-.15pc,.35pc>,{\ar@/^11pt/"B"+<-.4pc,.4pc>};
"A9"+<.15pc,.35pc>,{\ar@/_11pt/"B"+<.4pc,.4pc>};
"A6"+<.45pc,.1pc>,{\ar@/_11pt/"A4"+<-.18pc,-.18pc>};
"A7"+<-.45pc,.1pc>,{\ar@/^11pt/"A5"+<.18pc,-.18pc>};
\end{xy}
\end{displaymath}

\end{expl}

\begin{lem}\label{iso_Gp} For any pair $A,B\in\A^+$, $B\leq A$ and for any pair $\lambda_1=m_1\rho, \,\lambda_2=m_2\rho\in \check{X}\cap\C^+$ ($m_1,m_2\geq m_0(A,B)$)  then  $\MGp_{|_{[A+\lambda_1,B+\lambda_1]}}$ and $\MGp_{|_{A+\lambda_2,B+\lambda_2]}}$ are isomorphic as oriented graphs.
\end{lem}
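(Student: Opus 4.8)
The plan is to show that translation by a suitable multiple of $\rho$ realises the isomorphism. We may assume $m_2\geq m_1$ (the general case then follows by composing with an inverse isomorphism of the same type, or by passing through a common larger multiple), and set $\mu:=\lambda_2-\lambda_1=(m_2-m_1)\rho$, which lies in $\coX$ since $\lambda_1,\lambda_2$ do. First I would record the basic properties of $T_\mu$: since $\mu\in\coX$ we have $(\mu,\alpha)\in\ZZ$ for every $\alpha\in\Delta$, so $T_\mu$ sends the hyperplane $H_{\alpha,n}$ to $H_{\alpha,n-(\mu,\alpha)}$; hence $T_\mu$ permutes $\A$, and, conjugating $s_{\alpha+n\delta}$ to $s_{\alpha+(n-(\mu,\alpha))\delta}$, it normalises $\WA$ as a group of affine transformations of $\h^{*}_{\RR}$. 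The candidate isomorphism is then the restriction of $T_\mu$ to vertices (we may disregard the label data, as the statement concerns only oriented graphs), with inverse the restriction of $T_{-\mu}$.

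Next I would check that $T_\mu$ matches the two vertex sets and is a poset map. By Proposition \ref{gen_bruhat}, applied --- as in the proof of Lemma \ref{PMGlimit} --- to the finitely many pairs of alcoves involved, the Bruhat order coincides with the generic order $\preccurlyeq$ on $[A+\lambda_i,B+\lambda_i]$ for $i=1,2$. Since $\preccurlyeq$ is invariant under translation by finite coweights and $\mu\in\coX$, the map $T_\mu$ is an automorphism of $(\A,\preccurlyeq)$ carrying $A+\lambda_1$ to $A+\lambda_2$ and $B+\lambda_1$ to $B+\lambda_2$; therefore it takes the $\preccurlyeq$-interval between $A+\lambda_1$ and $B+\lambda_1$, that is, the vertex set of $\MGp_{|[A+\lambda_1,B+\lambda_1]}$, bijectively onto that of $\MGp_{|[A+\lambda_2,B+\lambda_2]}$, and restricts there to a poset isomorphism. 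Preservation of orientation is then automatic, since by Definition \ref{def_parBruhatMG} edges are oriented according to $\leq$, which equals $\preccurlyeq$ on these intervals.

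It remains to transport edges, and this is the step I expect to require the most care. Because $T_\mu$ is \emph{not} an element of $\WA$, it need not respect the defining relation $ywx^{-1}=s_\alpha$ of a general edge of $\MGp$, and indeed a general finite interval of $\MGp$ contains edges (see the $\widehat{\mathfrak{sl}_3}$ example preceding the statement) of neither stable nor non-stable type. The hypothesis $m_1,m_2\geq m_0(A,B)$ is what saves the day: by Lemma \ref{PMGlimit} the edges occurring in either interval are exactly the stable ones, $E=Ds_\alpha$ with $\alpha\in\Dre_+$, and the non-stable ones, $E=D+a\alpha$ with $a\in\ZZ\setminus\{0\}$ and $\alpha\in\Delta_+$. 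For non-stable edges $T_\mu(D+a\alpha)=T_\mu(D)+a\alpha$, as translations commute, so being non-stably joined is preserved (and, via $T_{-\mu}$, reflected back). For stable edges, the fact that $T_\mu$ normalises $\WA$ shows that $T_\mu$ carries the right $\WA$-action on $\A$ into itself, whence $\{Ds_\gamma:\gamma\in\Dre_+\}$ is mapped onto $\{(T_\mu D)s_\gamma:\gamma\in\Dre_+\}$; thus $E=Ds_\alpha$ with $\alpha\in\Dre_+$ if and only if $T_\mu E=(T_\mu D)s_{\alpha'}$ with $\alpha'\in\Dre_+$. Combining these two equivalences with the vertex identification of the previous paragraph and with Lemma \ref{PMGlimit} applied to \emph{both} intervals, one obtains that $D-\!\!\!-\!\!\!-E$ is an edge of $\MGp_{|[A+\lambda_1,B+\lambda_1]}$ if and only if $T_\mu D-\!\!\!-\!\!\!-T_\mu E$ is an edge of $\MGp_{|[A+\lambda_2,B+\lambda_2]}$, which, together with the orientation statement, completes the construction of the isomorphism of oriented graphs.
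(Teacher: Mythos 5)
Your proposal is correct and follows essentially the same route as the paper's own proof: translation by $\mu=\lambda_2-\lambda_1$, with the poset isomorphism coming from Proposition \ref{gen_bruhat} (Bruhat $=$ generic order far inside $\C^+$, and the generic order is translation-invariant), and edge preservation checked separately for the two edge types of Lemma \ref{PMGlimit} — reflections via conjugation by $T_\mu$ (normalisation of $\WA$), translations via commutativity. The only difference is that you make explicit the reliance on Lemma \ref{PMGlimit} and the WLOG $m_2\geq m_1$, which the paper leaves implicit.
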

\proof Set $\mu:=\lambda_2-\lambda_1$. The isomorphism we are looking for is given by $C\mapsto C+\mu$. Observe that, by Proposition 
\ref{gen_bruhat}, the Bruhat order coincides in the fundamental chamber with the generic one and  so it is invariant by finite coweight translation; then the map we have just defined is an isomorphism of posets. Moreover $C=xA^+$ is connected to $D=yA^{+}$ in $\MGp_{|_{A+\lambda_1,B+\lambda_1]}}$ if and only if $ C+\mu$ is connected to  $D+\mu$ in $\MGp_{|_{A+\lambda_2,B+\lambda_2]}}$, indeed:
\item[(i)] $D=Cs_{\alpha}$ for some $\alpha\Dre_{+}$ if and only if $yx^{-1}=s_{\alpha}$ if and only if $T_{\mu}yx^{-1}T_{-\mu}=T_{\mu}s_{\alpha}T_{-\mu}$, that is $T_{\mu}yx^{-1}T_{-\mu}\in \T$ and this is equivalent to $T_{\mu}yx^{-1}T_{-\mu}=s_{\beta}$ for some $\beta\in \Dre_{+}$, that is $D+\mu=(C+\mu)s_{\beta}$
\item[(ii)] $D=C+a\al$ if and only if $D+\mu=C+a\al+\mu=(C+\mu)+a\al$.
\endproof

\begin{rem} We want to stress the fact  that in Lemma \ref{iso_Gp} we are  \emph{not} proving the existence of an  isomorphism of 
moment graphs, but only between the underlying oriented graphs, that is  we are not considering labels. Our first hope was
 that we could find a collection of $\{f_{l,C}\}_{C\in[A+\lambda_1,B+\lambda_1]}$ satisfying condition \textit{(MORPh2a)} and
 \textit{(MORPH2b)}. In the next two paragraphs, we will see that it is not the case. In particular, it turns out that the labels 
 of stable edges are invariant by finite coroot translation (cf. Lemma \ref{lem_refl_label_root}), while the ones of non-stable edges are
 not (cf. Lemma \ref{lem_transl_label}).
\end{rem}

From now on we will denote by $w\in \WA$ the corresponding alcove $wA^+\in\A$, thanks to the identification (\ref{corr}) of the affine Weyl
 group with its set of alcoves. In particular, if $wA^+$ is contained in the fundamental chamber, we will write $w\in \A^+$.

\subsubsection{Stable edges} Let $|\SRA|=n$ and fix a numbering of the simple reflections. We define the permutation $\sigma_{A,\mu}\in S_{n}$, for $A\in\A$ and $\mu\in \coX$, in the following way: $\sigma_{A, \mu}(i)=j$ if the image under the translation by $\mu$ of the $s_i$--th wall of A is the $s_j$--th wall of $A+\mu$ (cf. \S\ref{ssec_WA_alcoves}). Let $\widetilde{\W}$ the extended affine Weyl group, that is $\widetilde{\W}=\WA\ltimes \Omega$, where $\Omega:=\coX/ \coQ$ (cf. \cite{Lu83}).

\begin{lem}\label{generic_labels}For any $\mu\in \coX$  the permutation defined above is independent on $A\in \A$, i.e. there exists $
\sigma_{\mu}\in S_n$ such that $\sigma_{A, \mu}=\sigma_{\mu}$ for any alcove $A$.
\end{lem}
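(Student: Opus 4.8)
The plan is to exploit the fact that any two alcoves $A,A'\in\A$ are related by a unique element of $\WA$, and that translation by $\mu\in\coX$ descends to an action of the \emph{extended} affine Weyl group $\widetilde{\W}$ via the normalizer of $\WA$ in the full affine group. First I would recall from \S\ref{ssec_WA_alcoves} that the $s_i$-walls of $A^+$ are permuted by the $\Omega$-action: the element $\omega\in\Omega\cong\coX/\coQ$ acts on $\A$ by an automorphism that preserves the fundamental alcove (suitably normalised) and induces a diagram automorphism $\dot\omega\in S_n$ of the affine Dynkin diagram, i.e.\ $\omega$ sends the $s_i$-wall of $A$ to the $s_{\dot\omega(i)}$-wall of $\omega(A)$, independently of $A$, because $\omega$ commutes with the left $\WA$-action up to relabelling.

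The key step is then to write $T_\mu$ in the semidirect-product decomposition. Since $\coX\supseteq\coQ$ and $\Omega=\coX/\coQ$, for $\mu\in\coX$ we have $T_\mu = v_\mu\,\omega_\mu$ with $\omega_\mu\in\Omega$ the class of $\mu$ and $v_\mu\in\WA$ (this is exactly the statement $\widetilde\W=\WA\ltimes\Omega$, cf.\ \cite{Lu83}). Now the $s_j$-wall of $A+\mu$ is obtained from $A$ by first applying $v_\mu\in\WA$ and then $\omega_\mu$: the left multiplication by $v_\mu$ sends the $s_i$-wall of $A$ to \emph{some} wall of $v_\mu A$ in the $\WA$-orbit of the $s_i$-wall of $A^+$ — and since every alcove has exactly one wall in each such orbit (cf.\ \S\ref{ssec_WA_alcoves}), the ``type'' $i$ is preserved by the $\WA$-action. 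Then $\omega_\mu$ applies the diagram automorphism $\dot\omega_\mu$ to the labels. Composing, $\sigma_{A,\mu}(i)=\dot\omega_\mu(i)$ for every $A$, so $\sigma_{A,\mu}$ is independent of $A$; we set $\sigma_\mu:=\dot\omega_\mu$.

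Concretely I would organize the proof as: (1) recall that elements of $\WA$ act on $\A$ preserving the wall-type (orbit label) of each wall, so that $\sigma_{vA,\,0\text{ as }WA\text{-move}}$ is trivial for $v\in\WA$; (2) recall that $\Omega$ acts on $\A$ by type-permuting automorphisms and that this permutation is a fixed diagram automorphism $\dot\omega$, independent of the alcove; (3) decompose $T_\mu=v_\mu\omega_\mu$ and conclude $\sigma_{A,\mu}=\dot\omega_\mu$ for all $A$. The main obstacle is step (2) — making precise and correctly proving that the $\Omega$-action on walls is by a single alcove-independent permutation. This requires the standard fact that $\Omega$ is realized as the stabilizer of the fundamental alcove inside $\widetilde\W$ and acts on $A^+$ by permuting its walls according to a Dynkin diagram automorphism; once one knows this on $A^+$, transporting along the (type-preserving) $\WA$-action gives it on every alcove. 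Everything else is bookkeeping with the decomposition $\widetilde\W=\WA\ltimes\Omega$ and the simply transitive action of $\WA$ on $\A$.
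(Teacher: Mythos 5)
Your proposal is correct and follows essentially the same route as the paper: both factor the translation $T_\mu$ through the extended group $\widetilde{\W}=\WA\ltimes\Omega$, observe that the $\WA$-part does not affect the wall numbering, and attribute the resulting permutation to the $\Omega$-component alone, so that $\sigma_{A,\mu}$ is the fixed diagram automorphism attached to the class of $\mu$ in $\coX/\coQ$. The only (harmless) difference is in the treatment of the $\WA$-part: the paper checks invariance of the numbering explicitly for coroot translations, written as products of two reflections in parallel hyperplanes, whereas you note that every element of $\WA$ preserves wall types because the types are defined via $\WA$-orbits of walls — a slightly more general formulation of the same point.
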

\proof We know that $T_{k\check{\al}}=s_{-\al+k\delta}s_{\al}$ for $k\in \ZZ_{\geq 0}$ and $\al\in \Delta$. Since we are
 reflecting twice in the same direction (orthogonal to $\al$), the walls of $A+k\check{\al}$ have the same numbering as the ones of $A$.

Thus for any $\mu\in \coX$ there exists an element $\omega\in \Omega$ and roots $\al_1, \ldots \al_r\in \Delta$ such that 
$T_{\mu}=\omega s_{\alpha_1,k_1}s_{\alpha_1}\ldots s_{\alpha_r,k_r}s_{\alpha_r} $ and the numbering of the walls of $A+\mu$ only depends 
on $\omega$.
\endproof

Let us denote (by abuse of notation) also by $\sigma_{\mu}:\widehat{\coQ}\rightarrow\widehat{ \coQ}$ the automorphism of 
the affine  coroot lattice induced by  the map $\al_i\mapsto \al_{\sigma_{\mu}(i)}$ for $\al_i$ corresponding to the simple
 reflection $s_i\in\SRA$. Let us observe that $\sigma_{\mu}$ preserves the positive cone $\langle \widehat{\Pi} \rangle_{\RR_{\geq 0}}$ by 
definition, and the following result is straightforward.

\begin{cor}\label{lem_refl_label_root} Let $x\in \W$, $t\in \T$, $\mu\in\coX$ be such that $x,xt,T_{\mu}x,T_{\mu}xt\in \A^+$. Then, $$l(T_{\mu}x-\!\!\!-\!\!\!-T_{\mu}xt)=\sigma_{\mu}(l(x-\!\!\!-\!\!\!-xt)).$$
\end{cor}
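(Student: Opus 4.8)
The plan is to read off the corollary as a direct consequence of the label formula in Definition~\ref{def_parBruhatMG}(iii) together with the combinatorial fact about wall-numberings encoded in Lemma~\ref{generic_labels}. Recall that an edge $x-\!\!\!-\!\!\!-xt$ of $\MGp$ with $t=s_\beta\in\T$ and $\beta\in\Dre_+$ carries the label $\check\beta$ (up to the minimal-representative bookkeeping built into the definition of $\MGp$); the content of Lemma~\ref{PMGlimit} is that, far enough in $\C^+$, this $t$ may be taken to be the reflection $s$ in the wall shared by the two alcoves, so the label is determined by \emph{which} wall of $x$ (equivalently, which simple reflection in the $\WA$-orbit sense) is crossed. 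So first I would make precise the statement that for a stable edge $x-\!\!\!-\!\!\!-xs$ the label is $\sigma(l(\cdot))$-compatible with the labelling of walls: if the shared wall is the $s_i$-wall of the alcove, then the label of the edge is (a $\WA$-conjugate of) $\check\al_i$.

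Next I would bring in the translation. The map $C\mapsto C+\mu=T_\mu C$ sends the edge $x-\!\!\!-\!\!\!-xt$ to $T_\mu x-\!\!\!-\!\!\!-T_\mu x t' $ where $t'=T_\mu t T_{-\mu}=T_\mu s_\beta T_{-\mu}=s_{\gamma}$ is again a reflection, as already observed in the proof of Lemma~\ref{iso_Gp}(i); concretely $\gamma=T_\mu(\beta)$ in the appropriate sense, and the new label is $\check\gamma$. The key point is then: if the edge $x-\!\!\!-\!\!\!-xt$ crosses the $s_i$-wall of the relevant alcove, then by the \emph{definition} of $\sigma_{A,\mu}$ the translated edge $T_\mu x-\!\!\!-\!\!\!-T_\mu xt'$ crosses the $s_{\sigma_{A,\mu}(i)}$-wall of $T_\mu x$, and by Lemma~\ref{generic_labels} this index is $\sigma_\mu(i)$, independent of the alcove. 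Feeding this back through the description of the label as $\check\al_i$ versus $\check\al_{\sigma_\mu(i)}$, and recalling that $\sigma_\mu:\widehat{\coQ}\to\widehat{\coQ}$ was defined precisely as the linear extension of $\al_i\mapsto\al_{\sigma_\mu(i)}$, gives exactly $l(T_\mu x-\!\!\!-\!\!\!-T_\mu xt)=\sigma_\mu(l(x-\!\!\!-\!\!\!-xt))$.

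The main obstacle, and the step I would spend the most care on, is the translation from ``wall of an alcove'' language to ``label of an edge in $\MGp$'' language: the vertices of $\MGp$ are minimal-length coset representatives in $\WA^{\SR}$, not alcoves as such, and an edge is only well-defined when the two endpoints are comparable in the Bruhat order, so I need to know that the stable edge produced by crossing a wall is genuinely the edge recorded in Definition~\ref{def_parBruhatMG}. This is where the ``far enough in $\C^+$'' hypothesis (the constants $m_0(A,B)$, Proposition~\ref{gen_bruhat}, and Proposition~\ref{prop_strlinked}) does the work: it guarantees that wall-crossings land inside $[A+\lambda,B+\lambda]$ and that the associated reflection is the one featuring in the label formula. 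I would therefore structure the proof as: (1) identify the label of a stable edge with $\check\al_i$ where $s_i$ names the crossed wall; (2) apply $T_\mu$, noting (as in Lemma~\ref{iso_Gp}) that a stable edge maps to a stable edge and a reflection conjugates to a reflection; (3) invoke Lemma~\ref{generic_labels} to say the wall-index changes by the alcove-independent $\sigma_\mu$; (4) unwind via the definition of the induced automorphism $\sigma_\mu$ of $\widehat{\coQ}$ to conclude. Each of (1)--(4) is short once (1) is set up correctly, so the corollary really is ``straightforward'' modulo that identification.
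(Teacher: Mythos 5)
Your proposal has a genuine gap: the mechanism you use to transport the label is not the right one. First, translation of alcoves is \emph{left} multiplication, so the edge $x-\!\!\!-\!\!\!-xt$ is sent to $T_\mu x-\!\!\!-\!\!\!-(T_\mu x)\,t$ with the \emph{same} reflection $t$ acting on the right; your identity $T_\mu(xt)=(T_\mu x)\,T_\mu tT_{-\mu}$ is false (it would force $xt=tx$). Moreover, even if one does conjugate by $T_\mu$, Proposition \ref{conj_reflection_prop} (with $v=e$) gives $T_\mu s_{\al+n\delta}T_{-\mu}=s_{\al+(n-(\al,\mu))\delta}$: the finite part of the root is unchanged and only the $\delta$-coefficient shifts, so your rule ``new label $=\check{\gamma}$ with $\gamma=T_\mu(\beta)$'' would predict exactly the non-stable behaviour of Lemma \ref{lem_transl_label} (a shift by a multiple of $c$), not the permutation $\sigma_\mu$. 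Second, the wall-crossing reduction in your step (1) does not apply as stated: for a stable edge $x-\!\!\!-\!\!\!-xs_\beta$ with $\beta\in\Dre_+$ not simple, the alcoves $xA^+$ and $xs_\beta A^+$ do not share a wall (they are mirror images in the hyperplane fixed by $xs_\beta x^{-1}$, which in general is a wall of neither), so Lemma \ref{PMGlimit} does not let you take $t$ to be ``the reflection in the shared wall'', and $\sigma_{A,\mu}$ cannot be applied edge by edge. Your argument is essentially correct only for $t\in\SRA$, and the passage from simple reflections to general $t\in\T$ is precisely what is missing. (The ``far enough'' constants are not the issue here: for any $x,xt\in\A^+$ the edge of $\MGp$ exists, realized with $w=e$.)

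The reason the paper can declare the corollary straightforward after Lemma \ref{generic_labels} is that $\sigma_\mu$ enters through the length-zero part of $T_\mu$ in the extended affine Weyl group. Write $T_\mu=\omega u$ with $u\in\WA$ and $\omega\in\Omega$ (the subgroup stabilizing $A^+$, cf.\ the proof of Lemma \ref{generic_labels}). The translated alcove $T_\mu xA^+$ equals $zA^+$ for the genuine element $z=\omega(ux)\omega^{-1}\in\WA$, and likewise $T_\mu xtA^+=z\,(\omega t\omega^{-1})A^+$; so the translated edge, read through the identification of alcoves with elements of $\WA$ (equivalently with minimal coset representatives, where the label of Definition \ref{def_parBruhatMG} lives), is realized by the reflection $\omega t\omega^{-1}$. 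If $t=s_\beta$ then $\omega t\omega^{-1}=s_{\sigma_\mu(\beta)}$, because $\omega$ permutes the walls of $A^+$, i.e.\ the affine simple roots, exactly by $\sigma_\mu$ (the $\coQ$-part $u$ of $T_\mu$ has no effect on the numbering, as in the proof of Lemma \ref{generic_labels}); since $\sigma_\mu$ preserves the positive cone, $\sigma_\mu(\beta)\in\Dre_+$ and the label is $\sigma_\mu(\check{\beta})=\sigma_\mu(l(x-\!\!\!-\!\!\!-xt))$. In particular for $\mu\in\coQ$ one has $\omega=e$ and the label of a stable edge is unchanged. Your wall-numbering intuition is the right heuristic, but the proof needs this $\Omega$-conjugation (or an equivalent reduction of general $t\in\T$ to the simple case), which your write-up does not supply.
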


\subsubsection{Non-stable edges} Now we describe how labels of non-stable edges change. In order to do so, we need the following result 

\begin{prop}[\cite{Humph}, Proposition 4.1]\label{conj_reflection_prop} Let $z=T_{z(0)}v$, where $z(0)\!\in\! \coQ$ and $v\!\in\!\W$. Then, for any $\al+n\delta\in \Dre_{+}$ , 
\begin{equation}\label{conj_reflection_eqn}z s_{\al+n\delta}z^{-1}=s_{ v(\al)+ r \delta}\,\,\,\,\,\text{ with }r=n-(v(\al),z(0)).
\end{equation}
\end{prop}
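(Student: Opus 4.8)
The plan is to carry out the conjugation by tracking reflecting hyperplanes rather than by computing directly on all of $\h^{*}_{\RR}$. Since $\T$ is stable under conjugation by $\WA$, the element $z s_{\al+n\delta}z^{-1}$ is again a reflection $s_{\beta}$ for some $\beta\in\Dre_{+}$; and because $s_{\al+n\delta}$ fixes the hyperplane $H_{\al,n}$ pointwise, its conjugate fixes $z(H_{\al,n})$ pointwise, whence (the fixed-point set of a reflection being a hyperplane, and $z(H_{\al,n})$ being one already) the fixed-point set of $z s_{\al+n\delta}z^{-1}$ is exactly $z(H_{\al,n})$. Recalling that $s_{\gamma+m\delta}$ is the unique reflection of $\WA$ with reflecting hyperplane $H_{\gamma,m}$, it now suffices to identify $z(H_{\al,n})$ with some $H_{\gamma,m}$.

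First I would use that $z=T_{z(0)}v$ acts on $\h^{*}_{\RR}$ by $\lambda\mapsto v(\lambda)+z(0)$ and that $v\in\W$ preserves the bilinear form $(\cdot,\cdot)$. For $\mu=z(\lambda)$ this gives $(\mu,v(\al))=(v(\lambda),v(\al))+(z(0),v(\al))=(\lambda,\al)+(v(\al),z(0))$, so the equation $(\lambda,\al)=-n$ defining $H_{\al,n}$ becomes $(\mu,v(\al))=-\bigl(n-(v(\al),z(0))\bigr)$, i.e.\ $z(H_{\al,n})=H_{v(\al),\,r}$ with $r=n-(v(\al),z(0))$, which is \eqref{conj_reflection_eqn}. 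To see the right-hand side is meaningful one notes $v(\al)+r\delta\in\Dre$: indeed $v(\al)\in\Delta$ since $v\in\W$, and $r\in\ZZ$ since $n\in\ZZ$ and $(v(\al),z(0))$ is an integer, $z(0)$ being an integral combination of coroots and $(\gamma,\check{\beta})\in\ZZ$ for roots $\gamma,\beta$. Replacing $v(\al)+r\delta$ by its negative when necessary to land in $\Dre_{+}$ does not change the reflection.

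The one genuinely delicate point is conceptual, not computational: one must keep separate the linear part $s_{v(\al)}$ of the conjugated reflection and the affine real root $v(\al)+r\delta$ that actually indexes it, the shift $r$ being precisely what records the translational contribution of $z(0)$. The hyperplane computation above makes this bookkeeping automatic. A more pedestrian alternative is to expand $z s_{\al+n\delta}z^{-1}(\lambda)$ directly, using $z^{-1}=T_{-v^{-1}(z(0))}v^{-1}$ and formula \eqref{eqn_aff_refl_h}, and match it against $s_{v(\al)+r\delta}(\lambda)$ computed likewise; this yields the same identity with more algebra, so I would prefer the hyperplane route.
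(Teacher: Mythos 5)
Your argument is correct. The paper itself offers no proof of this proposition (it is quoted from Humphreys), so there is nothing to diverge from; your hyperplane-tracking argument is essentially the standard one, and the points that actually need checking are all in place: the computation $(z(\lambda),v(\al))=(\lambda,\al)+(v(\al),z(0))$ uses only $\W$-invariance of the form and matches the paper's sign convention $H_{\al,n}=\{\lambda\mid(\lambda,\al)=-n\}$, giving $z(H_{\al,n})=H_{v(\al),r}$ with $r=n-(v(\al),z(0))$; the integrality of $r$ follows as you say from $z(0)\in\coQ$; and since $\Delta$ is reduced, each hyperplane $H_{\gamma,m}$ is the fixed set of exactly one reflection of $\WA$ (the action on $\h^{*}_{\RR}$ being faithful and by affine isometries), so the conjugate, which lies in $\T$ and fixes $z(H_{\al,n})$ pointwise, must be $s_{v(\al)+r\delta}$, with the sign ambiguity $\pm(v(\al)+r\delta)$ harmless as you note.
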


In the proof of  Lemma \ref{lem_transl_label} we will also need  the following equality
\begin{equation}\label{eqn_aff_lables} \check{(\al+n\delta)}=\check{\al}+\frac{2n}{(\al,\al)}\, c.
\end{equation} 

\begin{lem}\label{lem_transl_label} Let $x\in\A^+$ and  $x=T_{x(0)}w$, with $x(0)\in\coQ$ and $w\in \W$.
\begin{itemize}
\item[(i)]If $\alpha\in \Delta$, $n\in\ZZ_{> 0}$ and $T_{n\check{\al}}x\in\A^+ $, then \begin{equation}l(x-\!\!\!-\!\!\!-T_{n\check{\al}}x)=\pm\left ( w^{-1}(\check{\al})+ \frac{2}{(\al,\al)}\big( (\al, x(0))+n\big)\,c\right)\in\Dre_{+}.
\end{equation}
 \item[(ii)] Let $y=T_{a\check{\al}}x$, for some $a\in\ZZ$ and $\al\in\Delta_{+}$. Let moreover $\mu\in\coX$, $\omega\in\Omega$ and $\gamma\in\coQ$ be such that $T_{\mu}=\omega T_{\gamma}$. Then, if $y,T_{\mu}x,T_{\mu}y\in\A^+$, \begin{equation}l(T_{\mu}x-\!\!\!-\!\!\!-T_{\mu}y)=\sigma_{\mu}(l(x-\!\!\!-\!\!\!-y))+ \langle \gamma, \check{\al}\rangle\sigma_{\mu}(c)                                     \end{equation}
\end{itemize}
\end{lem}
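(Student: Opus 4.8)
The plan is to prove both parts by direct computation using the conjugation formula in Proposition~\ref{conj_reflection_prop} (Humphreys 4.1) together with the identity (\ref{eqn_aff_lables}) relating the coroot of $\al+n\delta$ to $\check{\al}$ and a multiple of $c$.

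\textbf{Part (i).} First I would identify the reflection underlying the non-stable edge $x-\!\!\!-\!\!\!-T_{n\check{\al}}x$. Since $T_{n\check{\al}}=T_{-(-n)\check{\al}}=s_{\al-n\delta}s_{\al}$ (from \S\ref{ssec_WA_alcoves}, rewriting $T_{-m\check{\al}}=s_{\al+m\delta}s_{\al}$ with $m=-n$), and noting more usefully that $T_{n\check{\al}}x\cdot x^{-1}=T_{n\check{\al}}$ is a \emph{translation}, not a reflection, I instead need the reflection $t\in\T$ with $T_{n\check{\al}}x = t x$ in the sense relevant to edges of $\MGp$ — i.e.\ following \S\ref{ssec_stableMG}, $D-\!\!\!-\!\!\!-E$ with $E=D+a\al$ corresponds to $E^{-1}=s_{\beta}D^{-1}w^{-1}$ for some $\beta\in\Dre_+$, $w\in\W$. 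Concretely, $x(0)\mapsto x(0)-n\check{\al}$ is realized by the affine reflection $s_{\al+k\delta}$ with $k$ chosen so that $s_{\al+k\delta}(x(0))=x(0)-n\check{\al}$; by (\ref{eqn_aff_refl_h}) this means $\langle x(0),\check{\al}\rangle + \tfrac{2k}{(\al,\al)} = n$, i.e.\ $\tfrac{2k}{(\al,\al)} = n - (\al,x(0))$ after using the identification $\check{\al}=2\al/(\al,\al)$ so that $(\al,x(0))=\langle x(0),\check{\al}\rangle$. Then the edge label is $\check{(\al+k\delta)} = \check{\al} + \tfrac{2k}{(\al,\al)}c$ by (\ref{eqn_aff_lables}), but this is the label when the source is the \emph{coweight-lattice} vertex; to pass to the $\A^+$-description I conjugate by $x=T_{x(0)}w$, which by Proposition~\ref{conj_reflection_prop} sends $s_{v(\al)+r\delta}$-type reflections appropriately. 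Carrying out the bookkeeping: the reflection in the $\A^+$-picture is $w^{-1}s_{\al+k\delta}w$-conjugated-by-translation, whose root is $w^{-1}(\al)+r'\delta$ for a suitable $r'$, and applying (\ref{eqn_aff_lables}) once more gives $\pm\bigl(w^{-1}(\check{\al}) + \tfrac{2}{(\al,\al)}((\al,x(0))+n)c\bigr)$. The $\pm$ and the membership in $\Dre_+$ are then forced by the requirement that $x,T_{n\check{\al}}x\in\A^+$ and $x<T_{n\check{\al}}x$ or vice versa, so one chooses the positive-root representative.

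\textbf{Part (ii).} Here I would use Part (i) twice — once for the edge $x-\!\!\!-\!\!\!-y$ and once for $T_{\mu}x-\!\!\!-\!\!\!-T_{\mu}y$ — and subtract. Write $x=T_{x(0)}w$, so by (i), $l(x-\!\!\!-\!\!\!-y) = \pm\bigl(w^{-1}(\check{\al}) + \tfrac{2}{(\al,\al)}((\al,x(0))+a)c\bigr)$ (taking $a$ in place of $n$, allowing the sign to absorb $a<0$). For $T_{\mu}x = \omega T_{\gamma}T_{x(0)}w = \omega T_{\gamma+x(0)}w$, the finite part is still $\omega$-twisted $w$: precisely, conjugation/multiplication by $\omega\in\Omega$ permutes the simple reflections by $\sigma_{\mu}$ (this is exactly how $\sigma_{\mu}$ was defined via the $\Omega$-component in Lemma~\ref{generic_labels}), so the $\W$-part of $T_{\mu}x$ contributes $\sigma_{\mu}\circ(w^{-1}(\cdot))$ and the translation part is $\gamma+x(0)$. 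Hence by (i) applied to $T_{\mu}x$, with translation part $\gamma+x(0)$ and the same $\al$ (since $\sigma_\mu$ permutes roots and $T_\mu y = T_{a\sigma_\mu(\check\al)'}T_\mu x$ — more precisely the translation $a\check{\al}$ becomes $a\,\sigma_{\mu}(\check{\al})$, which is again $a$ times a coroot, and the relevant finite root becomes $\sigma_{\mu}(\al)$): the label is $\sigma_{\mu}\bigl(\pm w^{-1}(\check{\al})\bigr) + \tfrac{2}{(\sigma_\mu\al,\sigma_\mu\al)}\bigl((\sigma_\mu\al,\gamma+x(0))+a\bigr)c$. Since $\sigma_{\mu}$ is induced by a diagram automorphism it preserves the pairing and maps $c$ to $\sigma_\mu(c)$, so $(\sigma_\mu\al,\sigma_\mu\al)=(\al,\al)$ and $(\sigma_\mu\al,\gamma+x(0))$ relates to $(\al,\cdot)$. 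Subtracting the expression for $\sigma_{\mu}(l(x-\!\!\!-\!\!\!-y))$ — which is $\sigma_{\mu}$ applied termwise, giving $\sigma_{\mu}(\pm w^{-1}\check{\al}) + \tfrac{2}{(\al,\al)}((\al,x(0))+a)\sigma_\mu(c)$ — the $w^{-1}(\check{\al})$-terms cancel, the $a$-terms cancel, and what remains is $\tfrac{2}{(\al,\al)}(\al,\gamma)\,\sigma_\mu(c) = \langle\gamma,\check{\al}\rangle\,\sigma_{\mu}(c)$, using $\langle\gamma,\check{\al}\rangle = \tfrac{2(\al,\gamma)}{(\al,\al)}$.

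\textbf{Main obstacle.} The delicate point is the careful tracking of the finite-Weyl-group part and the $\Omega$-part when passing from $x$ to $T_{\mu}x$ in the $\A^+$-description, together with keeping the signs consistent: the labels of non-stable edges are honest roots (elements of $\Dre_+$), so one must verify that after translation the sign choice is still the one producing a \emph{positive} root, and that the permutation $\sigma_\mu$ really does intertwine everything as claimed (this uses that $\sigma_\mu$ comes from the $\Omega$-action, hence from a diagram automorphism, so it is an isometry of the coroot lattice fixing the shape of the affine Dynkin diagram). Once the identity in Part~(i) is established cleanly as an identity in $\widehat{\coQ}$ with the correct sign convention, Part~(ii) is a formal subtraction, so essentially all the work is in (i) and in the compatibility of $\sigma_\mu$ with the pairing and with the decomposition $T_\mu = \omega T_\gamma$.
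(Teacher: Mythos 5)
The core of part (i) is never actually established, and the intermediate reflection you compute is not the right one. In the alcove description the edge $x-\!\!\!-\!\!\!-T_{n\check{\al}}x$ exists because there are $u\in\W$ and $\gamma\in\Dre_{+}$ with $x^{-1}uT_{n\check{\al}}x=s_{\gamma}$, and the label is $\check{\gamma}$; the paper takes $u=s_{\al}$, so that $uT_{n\check{\al}}=s_{\al}s_{-\al+n\delta}s_{\al}=s_{\al+n\delta}$, and then conjugates by $x^{-1}$ via Proposition \ref{conj_reflection_prop}, getting $s_{w^{-1}(\al)+k\delta}$ with $k=n+(\al,x(0))$. You instead start from the reflection $s_{\al+k\delta}$ determined by $s_{\al+k\delta}(x(0))=x(0)-n\check{\al}$. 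This is not the relevant datum: under the passage between the two descriptions of $\MGp$ the alcove $xA^+$ corresponds to the coset $x^{-1}\W$, i.e.\ to the lattice point $x^{-1}(0)=-w^{-1}(x(0))$, not to $x(0)$, and $T_{n\check{\al}}x$ has translation part $x(0)+n\check{\al}$, not $x(0)-n\check{\al}$; moreover your simplifications $\langle x(0),\check{\al}\rangle+\tfrac{2k}{(\al,\al)}=n$ and $(\al,x(0))=\langle x(0),\check{\al}\rangle$ are valid only when $(\al,\al)=2$. The ``bookkeeping'' that is supposed to transport your reflection into the $\A^+$-picture is exactly where the formula is decided, and you only assert it (``for a suitable $r'$''). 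In fact it fails: even with the corrected value $k=n-(\al,x(0))$, Proposition \ref{conj_reflection_prop} gives $x^{-1}s_{\al+k\delta}x=s_{w^{-1}(\al)+n\delta}$, whose coroot is $w^{-1}(\check{\al})+\tfrac{2n}{(\al,\al)}c$ and carries no $(\al,x(0))$-term, so your intermediate step does not lead to the stated label. The fix is either to solve $x^{-1}uT_{n\check{\al}}x=s_{\gamma}$ directly as above, or to work in the coroot-lattice description with the correct base point $x^{-1}(0)$ and direction $w^{-1}(\al)$, which yields $\tfrac{2m}{(\al,\al)}=\tfrac{2}{(\al,\al)}\big(n+(\al,x(0))\big)$ without any further conjugation.

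Part (ii) follows the paper's route (write $T_{\mu}x=\omega T_{\gamma+x(0)}w$, apply (i) with $T_{\gamma}x$ in place of $x$, and twist by $\sigma_{\mu}$ via Lemma \ref{generic_labels}), and the final cancellation producing $\langle\gamma,\check{\al}\rangle\sigma_{\mu}(c)$ is the right one; but as written you apply $\sigma_{\mu}$ to only one argument of the pairing, in $(\sigma_{\mu}\al,\gamma+x(0))$, which is unjustified since $\sigma_{\mu}$ is an isometry only when applied to both entries. The clean order of operations, as in the paper, is to apply (i) to the honest element $T_{\gamma}x\in\WA$ first, obtaining $\pm\big(w^{-1}(\check{\al})+\tfrac{2}{(\al,\al)}((\al,\gamma+x(0))+a)c\big)$, and only then apply $\sigma_{\mu}$ to the whole label. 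Since (ii) is a formal consequence of (i), the genuine gap is the one described above.
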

\proof
\item[(i)] We have to determine $u\in\W$ and $\gamma\in\Dre_{+}$ such that $x^{-1}u T_{n\check{\al}}x=s_{\gamma}$. Because $T_{n\check{\al}}=s_{-\alpha+n\delta}s_{\alpha}$, we have
\begin{equation*}
x^{-1}uT_{n\check{\al}}x=x^{-1}us_{-\alpha+n\delta}s_{\alpha}x.
\end{equation*}
Clearly, to get a reflection, we have to choose $u=s_{\alpha}$. 

We may now apply twice the proposition above. First consider with $z=s_{\al}$ and $n=0$ and we obtain $s_{\al}s_{-\al+a\delta}s_{\al}=s_{\al+a\delta}$. Next, by applying Proposition \ref{conj_reflection_prop} with $z=x^{-1}$, i.e. $z(0)=-w^{-1}x(0)$ and $v=w^{-1}$, we obtain the following.
\begin{equation*}
x^{-1}uT_{n\check{\al}}x=s_{w^{-1}(\al)+k\delta},
\end{equation*}
with $k=n-(w^{-1}(\alpha),-w^{-1}x(0))=n+(\al,x(0))$, since the bilinear form is $\W$-invariant. The result then follows from (\ref{eqn_aff_lables}) and the fact that $\check{w(\al)}=w(\check{\al})$ for all $\al\in\Delta$ and $w\in\W$.

\item[(ii)]  Let us observe that $T_{\mu}x-\!\!\!-\!\!\!-T_{\mu}y=T_{a\check{\alpha}}(T_{\mu}x)$.
If $x=T_{x(0)}w$, then $T_{\mu}x=T_{\mu+x(0)}w=\omega T_{\gamma+x(0)}w$ (this expression in unique) and we may apply Lemma
 \ref{generic_labels} and point (i) of this lemma with $T_{\gamma}x$ instead of $x$ to get
\begin{align*}
l(T_{\mu}x-\!\!\!-\!\!\!-T_{\mu}y)&=\sigma_{\mu}(l(T_{\gamma}
x-\!\!\!-\!\!\!-T_{\gamma}y)\\
&=\pm \sigma_{\mu}\left(w^{-1}(\check{\al})+\frac{2}{(\al,\al)}\big((\al,\gamma+x(0))+n\big)c\right)\in\Dre_{+}\\
&=\sigma_{\mu}(l(x-\!\!\!-\!\!\!-y))+ \langle \gamma, \check{\al}\rangle\sigma_{\mu}(c).
\end{align*}
\endproof

\subsection{The stable moment graph}\label{ssec_stableMG}
We are now ready to define  the main character of this paper: the \emph{stable moment graph} $\MGst$. This is the moment graph having
 as set of vertices the alcoves in the fundamental chamber (that we identify with the corresponding elements of the Weyl group), equipped
 with the Bruhat order (that here coincides with the generic one);  we connect two vertices if and only if there exists a real
 positive root $\alpha\in \Dre_{+}$ such that $y=xs_{\alpha}$, and in this case we set $l(x-\!\!\!-\!\!\!-xs_{\al}):=\check{\al}$.

Then we have:

\begin{lem}\label{lem_stableMG} For any interval $[y,w]$ and for any $\mu\in \coX$ there  exists an isomorphism of $k$-moment graphs $\MGst_{|_{[y,w]}}\longrightarrow \MGst_{|_{[y+\mu,w+\mu]}}$ for all $k$.
\end{lem}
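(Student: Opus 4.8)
The plan is to write the isomorphism down explicitly and verify it against the criterion of Lemma~\ref{Lem_IsoMG}. On vertices take $f_{\V}\colon C\mapsto C+\mu$; on labels take $f_{l,C}:=\sigma_{\mu}$ for \emph{every} vertex $C$, where $\sigma_{\mu}\in\Aut(\widehat{\coQ})$ is the automorphism of the affine coroot lattice induced by the permutation of $\widehat{\Pi}$ of Lemma~\ref{generic_labels}. Being a permutation of a $\ZZ$-basis, $\sigma_{\mu}$ induces an element of $\Aut_{k}(\widehat{\coQ}_{k})$ for every local ring $k$ with $2\in k^{\times}$, which is what yields the clause ``for all $k$''. (Here $y+\mu,w+\mu$ are understood to lie in $\A^{+}$, so that $\MGst_{|_{[y+\mu,w+\mu]}}$ is defined; this is automatic whenever $\mu$ is dominant, since $\C^{+}$ is closed under adding dominant coweights.)

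The first step is to check that $f_{\V}$ is an isomorphism of oriented graphs $\MGst_{|_{[y,w]}}\to\MGst_{|_{[y+\mu,w+\mu]}}$. Since on $\A^{+}$ the Bruhat order agrees with the generic order (Proposition~\ref{gen_bruhat}) and the latter is invariant under finite coweight translation, $C\mapsto C+\mu$ restricts to an order isomorphism of the two intervals, hence a bijection preserving edge orientations. It also carries edges to edges: if $D=Cs_{\al}$ with $\al\in\Dre_{+}$, write $T_{\mu}=\omega T_{\gamma}$ with $\omega\in\Omega$, $\gamma\in\coQ$; using that an affine reflection moves $0$ by an element of $\coQ$ (see (\ref{eqn_aff_refl_h})), so that the two endpoints of an edge lie in the same coset of $\coX/\coQ$, one gets that the elements of $\WA$ attached to $C+\mu$ and $D+\mu$ differ by the reflection $\omega s_{\al}\omega^{-1}=s_{\omega(\al)}\in\T$, with $\omega(\al)\in\Dre_{+}$. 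This is precisely the computation in the proof of Lemma~\ref{iso_Gp}(i), which one may simply cite.

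It remains to verify the moment-graph axioms and appeal to Lemma~\ref{Lem_IsoMG}. Condition \textit{(MORPH1)} is the content of the previous step. For \textit{(MORPH2a)}: every edge $E$ of $\MGst_{|_{[y,w]}}$ is of the reflection (``stable'') type, so Corollary~\ref{lem_refl_label_root} gives $f_{l,C}(l(E))=\sigma_{\mu}(l(E))=l(f_{\E}(E))$, that is, the axiom holds with unit $h=1$. Condition \textit{(MORPH2b)} is immediate, since $f_{l,C}=f_{l,D}=\sigma_{\mu}$ at the two endpoints of $E$. Hence $f=(f_{\V},\{f_{l,C}\})$ is a morphism of $k$-moment graphs; it satisfies \textit{(ISO1)} (bijectivity of $f_{\V}$) and \textit{(ISO2)} (translation by $-\mu$ is injective, and by the first step $u-\mu\to v-\mu$ is the unique edge lying over a given edge $u\to v$), so Lemma~\ref{Lem_IsoMG} says it is an isomorphism. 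The step most in need of care, and the only point where the lemma carries real content, is the requirement that a \emph{single} label automorphism $\sigma_{\mu}$ does the job at all vertices at once — which is exactly what makes \textit{(MORPH2b)} trivial; but this uniformity is already supplied by Lemma~\ref{generic_labels} and Corollary~\ref{lem_refl_label_root}, so I do not expect a serious obstacle in the assembly itself. It is precisely this uniformity that fails on $\MGp$: there a non-stable edge $x\to T_{a\check{\al}}x$ acquires the extra summand $\langle\gamma,\check{\al}\rangle\sigma_{\mu}(c)$ of Lemma~\ref{lem_transl_label}(ii), which is not a unit multiple of $\sigma_{\mu}(l(x-\!\!\!-\!\!\!-y))$, so no choice of $\{f_{l,C}\}$ can repair it — which is why the stable moment graph, with no such edges, is the right object.
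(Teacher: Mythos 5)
Your proof is correct and follows essentially the same route as the paper: translation $C\mapsto C+\mu$ on vertices (using invariance of the order and the edge bijection already established for Lemma \ref{iso_Gp}) together with the single label automorphism $\sigma_{\mu}$ of Lemma \ref{generic_labels} at every vertex, with Corollary \ref{lem_refl_label_root} guaranteeing \textit{(MORPH2a)}. You merely make explicit the verification of the conditions of Lemma \ref{Lem_IsoMG}, which the paper leaves implicit.
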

\proof 
Since the order on the set of vertices of $\MGst$ is invariant by finite coweight translation, we have an isomorphism of posets given 
by the map $f_{\V}:x\mapsto x+\mu$. This  map induces also a bijection between set of edges, as we have already seen in the proof of Lemma \ref{iso_Gp}.

For any $x\in[y,w]$ we set $f_{l,x}=\sigma_{\mu}$ (see Lemma \ref{generic_labels}) and the data $(f_{\V}, \{f_{l,x}\})$ gives us an isomorphism of $k$-moment graphs for any $k$.
\endproof

\section{Sheaves on moment graphs}

The notion of sheaf on a moment graph is due to Braden and MacPherson (cf. \cite{BM01}) and it has been used  by Fiebig in
 several papers (cf.  \cite{Fie08b}, \cite{Fie08a}, \cite{Fie07a}, \cite{Fie07b}, \cite{Fie06}). In the first part of this 
section, we recall the definition of category of sheaves on a $k$-moment graph and we present two important examples, namely, 
the structure sheaf and the canonical --or BMP-- sheaf. In the second part, for any homomorphism of $k$-moment graphs $f$, we 
define the \emph{pullback} functor  $f^*$ and the \emph{push-forward}  functor $f_*$. These two functors turn out to have the
 same adjointness property as in classical sheaf theory (see Proposition \ref{Prop_Adj}).

\subsection{The category of sheaves on a $k$-moment graph}

As in the previous sections, let $Y$ be a lattice of  finite rank and  $k$ a local ring (with $2\in k^*$). 
Let us denote the symmetric algebra of $Y$ by $S=\text{Sym}(Y)$ and  set $S_k:=S\otimes_{\ZZ}k $ its extension. 
As a polynomial ring, $S_k$ has a natural $\ZZ$-grading, but we keep the convention (coming from geometry) of doubling it, 
that is we set  $(S_k)_{\{2\}}=Y_k$. From now on, all the $S_k$-modules will be finitely generated and equipped with 
this $\ZZ$-grading. Moreover, we will consider only degree zero morphisms between them. Finally, for $j\in \mathbb{Z}$ and $M$ a
 graded $S_k$-module we denote by $M\{j\}$ the graded $S_k$-module obtained from $M$ by shifting the grading by $j$, that is $M\{j\}_{\{i\}}=M_{\{j+i\}}$.

\begin{defin}[cf. \cite{BM01}]\label{Def_ShMG} Let $\MG=(\V, \E, \tle, l)\in\MGYk$, then a \emph{sheaf} $\F$ \emph{on }$\MG$ is given by the following data
$(\{\F^x\}, \{\F^E\}, \{\rho_{x,E}\})$ 
\item[(SH1)] for all $x\in \V$, $\F^x$  is an $S_k$-module;
\item[(SH2)] for all $E\in\E$, $\mathcal{F}^{E}$ is an $S_k$-module such that $l(E)\cdot \mathcal{F}^E=\{0\}$;
\item[(SH3)] for $x\in \V$, $E\in \E$, $\rho_{x,E}:\mathcal{F}^x\rightarrow \mathcal{F}^E$ is  a homomorphism of $S_k$-modules defined if $x$ lies on the border of the edge $E$.
\end{defin}

\begin{defin}{\cite{FieNotes}}\label{Def_morphShMG} Let $\MG=(\V, \E,\tle, l)\in \MGYk$ and let
 $\F=(\{\F^x\}, \{\F^E\}, \{\rho_{x,E}\})$, $\F'=(\{\F'^x\}, \{\F'^E\}, \{\rho'_{x,E}\})$ be two sheaves on it.  
A \emph{morphism} $\varphi:\F\longrightarrow \F'$  is given by the following data
\begin{itemize}
\item[(i)] for all $x\in\V$, $\varphi^x:\F^{x}\rightarrow {\F'}^x$ is a homomorphism of $S_k$-modules
\item[(ii)] for all $E\in\E$,  $\varphi^E:\F^{E}\rightarrow {\F'}^E$ is a homomorphism of $S_k$-modules
\end{itemize}
such that,  for any $x\in\V$ on the border of $E\in\E$, the following diagram commutes
\begin{equation*}
\begin{xymatrix}{
\F^x \ar[d]^{\varphi^x}\ar[r]^{\rho_{x,E}}&\F^E \ar[d]^{\varphi^E}\\
{\F'}^x \ar[r]^{\rho'_{x,E}}&{\F'}^E\\
}
\end{xymatrix}
\end{equation*}
\end{defin}

\begin{defin}\label{Def_catShMG} Let $\MG\in\MGYk$. We denote by $\ShMGk$ the category, whose objects are the sheaves on $\MG$ 
and whose morphisms are as in Definition \ref{Def_morphShMG}. 
\end{defin}

\subsection{Combinatorial sheaf theory}
In this paragraph we recall or introduce notions as space of sections, flabbiness, pullback and pushfoward functors,
 which mimic the corresponding ones in classical sheaf theory.

\subsubsection{Sections of sheaves} Let us consider a sheaf $\F\in \ShMGk$  and let $\I$ be a subset of the set of vertices $\V$ of
 $\MG$. The the set of \emph{local sections} of $\F$ over $\I$ is defined as follows.
\begin{equation*}
 \Gamma(\I, \F)=\left\{(m_x)\in \prod_{x\in \mathcal{I}} \mathcal{F}^{x} \mid \begin{array}{c} 
\rho_{x,E}(m_x)=\rho_{y,E}(m_y)\\ \forall E:x-\!\!\!-\!\!\!- y\in\E ,\, x,y\in\mathcal{I}                                                                                                                  
                                                                                                                 \end{array}\right\}.
\end{equation*}

We write $\Gamma(\F)$ for $\Gamma(\V, \F)$ and we call it the set of \emph{global sections} of $\F$.

\subsubsection{The structure sheaf and the structure algebra}\label{sssec_ShZ_Z}. With any $k$-moment graph, it is possible to associate its structure sheaf $\ShZ$, that
 is the sheaf on $\MG$ given by $\ShZ^x=S_k$ for all $x\in \V$, $\ShZ^E=S_k/l(E)\cdot S_k$ for all $E\in\E$ and 
$\rho_{x,E}:\ShZ^x=S_k\rightarrow \ShZ^E=S_k/l(E)\cdot S_k$ the canonical quotient map, for any vertex $x\in \V$ on 
the edge $E\in \E$. Then the \emph{structure algebra} $\Z$ of $\MG$ is the set of global sections of $\ShZ$, namely
\begin{equation*}\Z:=\Gamma(\ShZ)=\left\{(z_x)_{x\in\V}\in\bigoplus_{x\in\V}S_k \,\,\Big\vert\,\,  \forall \, E:x-\!\!\!-\!\!\!-y\in\E  \,\,\, z_x-z_y\in l(E)\cdot S \right\}	
\end{equation*}
The symmetric algebra $S_k$ acts on the structure algebra via diagonal action and it is easy to check that $\Z$ is
 actually an algebra under componentwise addiction and multiplication.

\subsubsection{Flabby sheaves} Once obtained the analogue of the spaces of sections, we would like to define the concept of flabby sheaves. Clearly, in order
 to do so, the notion of open set is needed.   We declare $\I\subseteq\V$ to be \emph{open} 
if  it is upwardly closed,  that is if and only if whenever $x\in \I$ and  $y\geq x$, then also $y\in \I$.
\begin{defin}[cf. \cite{Fie08a}]
A sheaf $\F$ on $\MG$ is \emph{flabby} if the map $\Gamma(\F)\rightarrow \Gamma(\I,\F)$, given by the projection on the 
$\I$-components, is surjective for any $\I\subseteq\V$ open.		
\end{defin}

\subsection{Pullback and pushforward of sheaves}

Let $f=(f_{\V}, \{f_{l,x}\}):\MG=(\V, \E, \tle, l)\rightarrow \MG'=(\V, \E, \tle, l)$  be a morphism of $k$-moment graphs.
 We want to define two functors
\begin{displaymath}\xymatrix{  \ShMGk \ar@/_1.4pc/[rr]_{f_*} && \textbf{Sh}_k(\MG')\ar@/_1.4pc/[ll]_{f^*}
}
 \end{displaymath}

From now on, for any $\varphi\in\text{Aut}_k(Y_k)$, we will denote by $\varphi$ also the automorphism of $S_k$ that it induces. 

We need a lemma, in order to make consistent the definitions we are going to give.

\begin{lem} Let $s\in S_k$,  $f\in\Hom_{\MGYk}(\MG, \MG')$, $\F\in\ShMGk$ and $\H\in\textbf{Sh}_k(\MG')$. Let
 $E:x-\!\!\!-\!\!\!-y\in\E$ and $F:f_{\V}(x)-\!\!\!-\!\!\!-f_{\V}(y)\in\E'$, then
 \item[(i)] the twisted actions of $S_k$ on $\F^E$ defined via $s\centerdot m_E:=f_{l,x}^{-1}(s)\cdot m_E$ and 
$s\centerdot m_E:=f_{l,y}^{-1}(s)\cdot m_E$ coincide on $\F^E/l'(F)\centerdot \F^E$ ($\cdot$ denotes the action of
 $S_k$ on $\F^E$ before the twist). Moreover, $l'(F)\centerdot \F^E=\{0\}$ in both cases.
\item[(ii)] the twisted actions of $S_k$ on $\H^F$ defined via $s\centerdot n_F:=f_{l,x}(s)\cdot n_F$ and
 $s\centerdot n_F:=f_{l,y}(s)\cdot n_F$ coincide on $\H^F/l(E)\centerdot\H^F$ ($\cdot$ denotes the action of $S_k$ on $\F^E$ before 
the twist). Moreover, $l(E)\centerdot \H^F=\{0\}$ in both cases.
\end{lem}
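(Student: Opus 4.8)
\emph{Proof plan.} This lemma is of a purely formal nature: it should follow directly from the defining conditions \textit{(MORPH2a)} and \textit{(MORPH2b)} of a morphism of $k$-moment graphs, from condition \textit{(SH2)}, and from the fact that $S_k=\mathrm{Sym}(Y_k)$ is generated as a $k$-algebra by its degree-two part $Y_k$, so that two $k$-algebra endomorphisms of $S_k$ coincide as soon as they coincide on $Y_k$. Items (i) and (ii) are mirror images of one another under the exchange $\F^E\leftrightarrow\H^F$, $l(E)\leftrightarrow l'(F)$ and $f_{l,\bullet}\leftrightarrow f_{l,\bullet}^{-1}$, so the plan is to prove (i) carefully and then transcribe the argument for (ii).

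For (i), set $F=f_{\E}(E)$. Since $F$ is an edge of $\MG'$ between $f_\V(x)$ and $f_\V(y)$, these two vertices are distinct, so condition \textit{(MORPH2a)} applies at \emph{both} endpoints $x$ and $y$ of $E$ (in Definition \ref{Def_morphMG} it is imposed for every vertex, not just for a distinguished one). This yields units $h,h'\in k^{\times}$ with $f_{l,x}(l(E))=h\,l'(F)$ and $f_{l,y}(l(E))=h'\,l'(F)$; inverting and using $k$-linearity, $f_{l,x}^{-1}(l'(F))=h^{-1}l(E)$ and $f_{l,y}^{-1}(l'(F))=h'^{-1}l(E)$. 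Combined with $l(E)\cdot\F^E=\{0\}$ from \textit{(SH2)}, we get $l'(F)\centerdot\F^E=f_{l,x}^{-1}(l'(F))\cdot\F^E=h^{-1}\,l(E)\cdot\F^E=\{0\}$ for the first twisted action, and likewise $\{0\}$ for the second; this is the last assertion of (i), and it shows in particular that the quotient $\F^E/l'(F)\centerdot\F^E$ is just $\F^E$.

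To see that the two twisted actions agree, I would first check it on the generators $Y_k$: by \textit{(MORPH2b)}, for $v\in Y_k$ the difference $f_{l,x}(v)-f_{l,y}(v)$ is a $k$-multiple of $l'(F)$, and feeding an arbitrary $w=f_{l,x}(v)\in Y_k$ into $f_{l,y}^{-1}$ (using the identity just obtained for $f_{l,y}^{-1}(l'(F))$) shows that $f_{l,x}^{-1}(w)-f_{l,y}^{-1}(w)\in l(E)S_k$. Since both $f_{l,x}^{-1}$ and $f_{l,y}^{-1}$ are $k$-algebra homomorphisms and $S_k\twoheadrightarrow S_k/l(E)S_k$ is a ring map, they then agree modulo $l(E)S_k$ on all of $S_k$, and as $l(E)$ annihilates $\F^E$ this gives $f_{l,x}^{-1}(s)\cdot m=f_{l,y}^{-1}(s)\cdot m$ for every $s\in S_k$ and $m\in\F^E$, i.e.\ the two twisted actions coincide on $\F^E$ (a fortiori on the displayed quotient). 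Part (ii) is then obtained verbatim with the roles of $l(E)$ and $l'(F)$, and of $f_{l,\bullet}$ and $f_{l,\bullet}^{-1}$, interchanged, now using $l'(F)\cdot\H^F=\{0\}$ from \textit{(SH2)}. I do not expect a genuine obstacle here; the only points needing care are that \textit{(MORPH2a)} must be invoked at \emph{both} border vertices of $E$, and that the passage from $Y_k$ to $S_k$ relies on multiplicativity of reduction modulo $l(E)S_k$ (respectively $l'(F)S_k$).
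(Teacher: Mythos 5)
Your proposal is correct and follows essentially the same route as the paper: reduce to the generators $Y_k$ of $S_k$, use \textit{(MORPH2a)} at both endpoints to see that $l'(F)$ acts through a unit multiple of $l(E)$ (hence annihilates $\F^E$, and dually $l(E)$ annihilates $\H^F$), and use \textit{(MORPH2b)} together with the computation already carried out in the proof of Lemma \ref{Lem_IsoMG} to see that the two twists agree modulo the relevant label. The only difference is that you spell out in full what the paper delegates to that earlier computation, so there is nothing to correct.
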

\proof  It is enough to prove the claim for $s\in (S_k)_{\{2\}}=Y_k$, since $S_k$ is a $k$-algebra generated by $Y_k$. 
 \item[(i)]  The statement follows from \textit{(MORPH2a)}, \textit{(MORPH2b)} and the computations we made in the proof
 of Lemma
 \ref{Lem_IsoMG}.
 \item[(ii)] It is an immediate consequence of conditions \textit{(MORPH2a)}, \textit{(MORPH2b)}.
\endproof

If $\varphi$ is an automorphism of $S_k$, for any $S_k$-module $M$, we will denote $\text{Tw}_{\varphi}:M\rightarrow M$ the map
 sending $M$ to $M$ and twisting the action of $S_{k}$ on $M$ by $\varphi$.

\subsubsection{Definitions}

\begin{defin}\label{Def_push} Let $\F\in\ShMGk$, then $f_*\F\in\textbf{Sh}_k(\MG')$ is defined as follows
\item[(PUSH1)] for any $u \in \V'$, $$(f_*\F)^u:=\Gamma(f_{\V}^{-1}(u), \F)$$ and the structure of $S_k$-module is
 given by $s\centerdot (m_x)_{x\in f_{\V}^{-1}(u)}:=(s\cdot m_x)_{x\in f_{\V}^{-1}(u)}$
\item[(PUSH2)] for any $u \in \V'$, $$(f_*\F)^F:=\bigoplus_{E:f_{\E}(E)=F}\F^E$$ and the action of $S_k$ is twisted 
in the following way: $s\centerdot (m_E)_{E: f_{\E}(E)=F}:=(f_{l,x}^{-1}(s)\cdot m_E)_{E: f_{\E}(E)=F}$, where $x$ is 
on the border of $E$
\item[(PUSH3)]  for all $u\in \V'$ and $F\in \E'$, such that $u$ is in the border of the edge $F$,$(f_*\rho)_{u,F}$ 
is defined as the composition of the following maps:
\begin{displaymath}\begin{xymatrix}{
\Gamma(f_{\V}^{-1}(u), \F)  \ar@{^{(}->}[r]& \bigoplus_{x :  f_{\V}(x)=u} \F^x \ar[r]^{\oplus \rho_{x,E}}&\bigoplus_{E :  f_{\V}(E)=F} \F^E\ar[r]^{\text{Tw}}&\bigoplus_{E :  f_{\V}(E)=F} \F^E,
}
\end{xymatrix}
\end{displaymath}
where $\text{Tw}=\oplus \text{Tw}_{f_{l,x}^{-1}}$.
We call $f_*$ \emph{direct image} or \emph{push-forward} functor.
\end{defin}

\begin{defin}\label{Def_pull} Let $\H\in\textbf{Sh}_k(\MG')$, then $f^*\H\in \ShMGk$ is defined as follows
\item[(PULL1)] for all $x\in\V$, $(f^*\H)^x:=\H^{f_{\V}(x)}$ an the action of $S_k$ is twisted by $f_{l,x}$
\item[(PULL2)] for all $E:x-\!\!\!-\!\!\!-y\in\E$\begin{equation*}(f^*\H)^{E}=\left\{ \begin{array}{ll}
\H^{f_{\V}(x)}/l(E) \H^{f_{\V}(x)} & \text{if } f_{\V}(x)=f_{\V}(y)\\
\H^{f_{\E}(E)} &\text{otherwise }
\end{array}
\right.
\end{equation*}
and of $s\in S_k$ acts on $(f^*\H)^{E}$ via $f_{l,x}(s)$.
\item[(PULL3)] for all $x\in \V$ and $E\in \E$, such that $x$ is in the border of the edge $E$,\begin{equation*}(f^*\rho)_{x,E}=\left\{ \begin{array}{ll}
\text{canonical quotient map} & \text{if } f_{\V}(x)=f_{\V}(y)\\
\text{Tw}_{f_{l,x}}\circ \rho_{f_{\V}(x),f_{\E}(E)}\circ \text{Tw}_{f_{l,x}^{-1}} &\text{otherwise }
\end{array}
\right.
\end{equation*}
We call $f^*$ \emph{inverse image} or \emph{pullback} functor.
\end{defin}

\begin{expl}\label{Ex_pullpush_pt} Let $\MG\in\MGYk$ and let $p:\MG\rightarrow \{\text{pt}\}$ be  the homomorphism of $k$-moment
 graphs having $p_{l,x}=\id_{Y_k}$ for all $x$, vertex of $\MG$. Then, for any $\F\in\ShMGk$ $p_*(\F)=\Gamma(\F)$. 
Moreover $p^{*}(S_k)=\mathscr{Z}$, the structure sheaf of $\MG$.
\end{expl}

\subsubsection{Adjuction formula}\label{ssec_adjformula} Although the following result will not be used in the rest of the paper, we include it
for completeness. 

\begin{prop}\label{Prop_Adj}%(\textbf{Adjunction Formula}) 
Let $f\in\Hom_{\MGYk}(\MG, \MG')$, then $f^*$ is left adjoint to $f_*$, that is for all pair of sheaves $\F\in\ShMGk$ and $\mathcal{H}\in\textbf{Sh}_k(\MG')$ the following equality holds 
\begin{equation}\label{Eqn_Adjunction} \Hom_{\ShMGk}(f^*\mathcal{H}, \F)=\Hom_{\textbf{Sh}_k(\MG')}(\mathcal{H}, f_*\F)
\end{equation}
\end{prop}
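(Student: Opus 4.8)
The plan is to establish the adjunction by constructing the unit and counit natural transformations, or equivalently by exhibiting an explicit bijection between the two Hom-sets that is natural in both arguments. Since sheaves on a moment graph are combinatorial gadgets (a finite collection of $S_k$-modules and maps), I expect the whole proof to reduce to checking that the classical sheaf-theoretic adjunction argument goes through once one keeps careful track of the label twists $f_{l,x}$.

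First I would fix $f\in\Hom_{\MGYk}(\MG,\MG')$ together with $\F\in\ShMGk$ and $\H\in\textbf{Sh}_k(\MG')$, and describe the map from left to right in \eqref{Eqn_Adjunction}. Given $\psi\colon f^*\H\to\F$, I want to produce $\widehat{\psi}\colon\H\to f_*\F$. On a vertex $u\in\V'$, an element of $(f_*\F)^u=\Gamma(f_\V^{-1}(u),\F)$ is a compatible family $(m_x)_{x\in f_\V^{-1}(u)}$; so I set $\widehat{\psi}^u(n):=(\psi^x(n))_{x\in f_\V^{-1}(u)}$, where on each factor $\H^u=(f^*\H)^x$ and $\psi^x\colon(f^*\H)^x\to\F^x$ makes sense. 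The first thing to verify is that this family actually lies in $\Gamma(f_\V^{-1}(u),\F)$: for an edge $E\colon x-\!\!\!-\!\!\!-y$ with $f_\V(x)=f_\V(y)=u$ one uses that $\psi$ commutes with the restriction maps $(f^*\rho)_{x,E}$, which by \textit{(PULL3)} are the canonical quotient maps out of $\H^u$; hence $\rho_{x,E}(\psi^x(n))$ and $\rho_{y,E}(\psi^y(n))$ are both images of the same $n$, and $\F^E$ being killed by $l(E)$ they agree. On the edges $F\in\E'$ one defines $\widehat{\psi}^F$ using the $\psi^E$ for $E$ with $f_\E(E)=F$, reading off \textit{(PUSH2)} and \textit{(PUSH3)}; here the twists $\text{Tw}_{f_{l,x}^{-1}}$ appearing in the pushforward are exactly what is needed for $S_k$-linearity, and the Lemma preceding Definition \ref{Def_push} guarantees this is well defined independently of the choice of endpoint $x$ of $E$.

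Conversely, from $\varphi\colon\H\to f_*\F$ I would build $\check{\varphi}\colon f^*\H\to\F$ by composing $\varphi^{f_\V(x)}\colon\H^{f_\V(x)}\to\Gamma(f_\V^{-1}(f_\V(x)),\F)$ with the projection onto the $x$-component, and similarly on edges; the twist in \textit{(PULL1)} matches the twist built into $(f_*\F)^u$, so this is $S_k$-linear, and commutativity with restriction maps is checked edge by edge, again separating the two cases $f_\V(x)=f_\V(y)$ and $f_\V(x)\neq f_\V(y)$ as in \textit{(PULL3)}. Then one checks $\check{(\,\widehat{\psi}\,)}=\psi$ and $\widehat{(\,\check{\varphi}\,)}=\varphi$ — both are immediate from the fact that a compatible family is determined by its components — and finally naturality in $\F$ and $\H$, which is a routine diagram chase.

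The step I expect to be the main obstacle is the bookkeeping of the twisted $S_k$-actions: one must check that $\widehat{\psi}^u$ and $\widehat{\psi}^F$ are genuinely homomorphisms of $S_k$-modules for the twisted structures prescribed in Definitions \ref{Def_push} and \ref{Def_pull}, and that the square in Definition \ref{Def_morphShMG} commutes after all the $\text{Tw}_{f_{l,x}^{\pm1}}$ have been inserted. Here the invertibility of each $f_{l,x}$ (it lies in $\Aut_k(Y_k)$) and the compatibility condition \textit{(MORPH2b)} modulo $l'(f_\E(E))$ do all the work, exactly as in the proof of Lemma \ref{Lem_IsoMG}; once that is absorbed, the rest is the formal content of the classical adjunction $f^*\dashv f_*$ for sheaves on a space, transported to this finite combinatorial setting.
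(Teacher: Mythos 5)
Your proposal is correct and follows essentially the same route as the paper: both establish the adjunction by the explicit bijection sending $\psi\colon f^*\H\to\F$ to the map whose $u$-component is $(\psi^x)_{x\in f_{\V}^{-1}(u)}$, verifying that the resulting family lands in $\Gamma(f_{\V}^{-1}(u),\F)$ via the fact that $(f^*\rho)_{x,E}$ and $(f^*\rho)_{y,E}$ are the same canonical quotient when $f_{\V}(x)=f_{\V}(y)$, and constructing the inverse by composing with the projections onto components exactly as in the paper's surjectivity argument.
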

\proof
Let us take $\varphi\in\Hom_{\ShMGk}(f^*\mathcal{H}, \F)$, that is $\varphi=(\{\varphi^x\}_{x\in\V}, \{\varphi^E\}_{E\in\E})$ 
such that for all $x\in\V$ and $E\in\E$, with $x$ is on the border of $E$, the following diagram commutes

\begin{equation}\label{Diagr_Adj1}\begin{xymatrix}{
(f^*\H)^x\ar[d]|{\,\,\,\,\,(\!f^*\rho'\!)_{x,E}}\ar[r]^{\,\,\,\,\varphi^x}&\F^x\ar[d]|{\rho_{x,E}}\\
(f^*\H)^E\ar[r]_{\,\,\,\,\varphi^E}&\F^E
}
\end{xymatrix}
\end{equation}

We want to show that there is a bijective map 
$ \gamma:\Hom_{\ShMGk}(f^*\mathcal{H}, \F)\rightarrow\Hom_{\textbf{Sh}_k(\MG')}(\mathcal{H}, f_*\F)$ and it is  given by 
$\varphi=(\{\varphi^x\}_{x\in\V}, \{\varphi^E\}_{E\in\E})\mapsto \psi=(\{\psi^u\}_{u\in\V'}, \{\psi^F\}_{F\in\E'})$, where
\begin{equation*}\psi^u:=\oplus_{x\in f_{\V}^{-1}(u)}\, \varphi^x\, ,\hspace{10mm}\psi^F:=\oplus_{E\in f_{\E}^{-1}(F)}\, \varphi^E
\end{equation*}

We start by verifying that this map is well-defined. We have to show that for any 
$h\in \H^u$, $\psi^u(h)\in(f_*\F)^u=\Gamma(f_{\V}^{-1}(u),\F)$, that is, for any $x, y\in f_{\V}^{-1}(u)$ such 
that $E:x-\!\!\!-\!\!\!-y\in\E$, $\rho_{x,E}(\varphi^{x}(h))=\rho_{y,E}(\varphi^{y}(h))$. 

From Diagram (\ref{Diagr_Adj1}), we get the following commutative diagram
\begin{equation}\begin{xymatrix}{
(f^*\H)^x=\H^{f_{\V}(x)}=\H^u\ar[d]|{\,\,\,\,\,(\!f^*\rho'\!)_{x,E}}\ar[rr]^{\hspace{19mm}\varphi^x}&&\F^x\ar[d]|{\rho_{x,E}}\\
(f^*\H)^E=\H^{u}/l(E) \H^{u}\ar[rr]^{\hspace{19mm}\varphi^E}&&\F^E\\
(f^*\H)^y=\H^{f_{\V}(y)}=\H^u\ar[u]|{\,\,\,\,\,(\!f^*\rho'\!)_{y,E}}\ar[rr]^{\hspace{19mm}\varphi^y}&&\F^y\ar[u]|{\rho_{x,E}}\\
}
\end{xymatrix}
\end{equation}

But $(\!f^*\rho'\!)_{y,E}=(\!f^*\rho'\!)_{x,E}$ by definition (they are both the canonical projection) and we obtain 
\begin{equation*}\rho_{x,E}\circ \varphi^x=\varphi^E\circ (\!f^*\rho'\!)_{x,E}=\varphi^E\circ (\!f^*\rho'\!)_{y,E}=\rho_{y,E}\circ \varphi^y
\end{equation*}

It is clear that the map $\gamma: \Hom_{\ShMGk}(f^*\mathcal{H}, \F)\rightarrow\Hom_{\textbf{Sh}_k(\MG')}(\mathcal{H}, f_*\F)$ we defined is injective. To conclude our proof, we have to show the  surjectivity of $\gamma$.

Suppose $\psi=(\{\psi^u\}_{u\in\V'}, \{\psi^F\}_{F\in\E'})\in \Hom_{\textbf{Sh}_k(\MG')}(\mathcal{H}, f_*\F)$, where, for all $u\in\V'$ and $F\in\E'$ such that $u$ is on the border of $F$, the following diagram commutes

\begin{equation}\label{Diagr_Adj2}\begin{xymatrix}{
\H^u\ar[d]|{\,\,\,\,\,\rho'_{u,F}}\ar[r]^{\psi^x}&\Gamma(f_{\V}^{-1}(u),\F)\ar[d]|{\oplus (\text{Tw}_{f_{l,x}}\circ\rho_{x,E})}\\
(f^*\H)^F\ar[r]_{\psi^E}&\bigoplus_{E\in f_{\E}^{-1}(F)}\F^E
}
\end{xymatrix}
\end{equation}

We claim  that there exist $\varphi=(\{\varphi^{x}\})_{}\in \Hom_{\ShMGk}(f^*\mathcal{H}, \F)$ such that $\gamma(\varphi)=\psi$.

For any $x\in\V$, let us consider $u:=f_{\V}(x)$ and define $\varphi^x$ as the composition of the following maps

\begin{displaymath}\begin{xymatrix}{
\H^u\ar[r]^(0.33){\psi^y}\ar@/_1.6pc/[rrr]_{{\varphi^x}}&\Gamma(f_{\V}^{-1}(u), \F)\ar@{^{(}->}[r]&\bigoplus_{y\in f_{\V}^{-1}(u)}\F^{y}\ar@{->>}[r]&\F^x
}
\end{xymatrix}
\end{displaymath}

For any $E:x-\!\!\!-\!\!\!-y\in\E$ such that $f_{\V}(x)\neq f_{\V}(y)$, that is there exists an edge $F\in\E'$ such that $f_{\E}(E)=F$, we define $\varphi^E$ as the composition of the following  maps

\begin{displaymath}\begin{xymatrix}{
\H^F\ar[r]^(0.33){\psi^F}\ar@/_1.6pc/[rrr]_{{\varphi^E}}&\bigoplus_{L\in f_{\E}^{-1}(F)} \F^L\ar[r]^{\text{Tw}_{f_{l,y}}}&\bigoplus_{L\in f_{\E}^{-1}(F)} \F^L\ar@{->>}[r]&\F^E
}
\end{xymatrix}
\end{displaymath}

Now, it is clear that $\gamma(\varphi)=\psi$. Indeed, if $u\not\in f_{\V}(\V)$, then $\psi^u=0$ and the claim is trivial. Otherwise, $u\in f_{\V}(\V)$ and we get the following diagram, with Cartesian squares

\begin{displaymath}\begin{xymatrix}{
\H^u\ar[d]|{\rho'_{u,F}}\ar[r]^(0.33){\psi^y}\ar@/^1.6pc/[rrr]<1ex>^{{\varphi^x}}&\Gamma(f_{\V}^{-1}(u), \F)\ar[d]|{\,\,\,\,\,\,\,(f_*\rho)_{y,F}}\ar@{^{(}->}[r]&\bigoplus_{y\in f_{\V}^{-1}(u)}\F^{y}\ar[d]|{\oplus \rho_{z,L}}\ar@{->>}[r]&\F^x\ar[d]|{\rho_{x,E}}\\
\H^F\ar[r]^(0.33){\psi^F}\ar@/_1.6pc/[rrr]<-1ex>_{{\varphi^E}}&\bigoplus_{L\in f_{\E}^{-1}(F)} \F^L\ar[r]^{\text{Tw}_{f_{l,y}}}&\bigoplus_{L\in f_{\E}^{-1}(F)} \F^L\ar@{->>}[r]&\F^E
}
\end{xymatrix}
\end{displaymath}

\endproof

As application of the previous proposition, we get the following corollary.

\begin{cor} Let $\MG\in\MGYk$ and let $\ShZ$, resp. $\Z$, be its structure sheaf, resp. its structure algebra. Then the functors $\Gamma(-),\Hom_{\ShMGk}( \ShZ, -): \ShMGk\rightarrow \Z-\text{modules}$ are naturally equivalent.  In particular, we get the following isomorphism of $S_k$-modules
\begin{equation*}
\Z\cong \text{End}_{\ShMGk}(\ShZ).
\end{equation*}
\end{cor}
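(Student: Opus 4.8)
The plan is to obtain everything as a formal consequence of the adjunction formula, Proposition~\ref{Prop_Adj}, applied to the structure morphism $p\colon\MG\rightarrow\{\text{pt}\}$ of Example~\ref{Ex_pullpush_pt}. First one observes that $\textbf{Sh}_k(\{\text{pt}\})$ is just the category of (finitely generated, graded) $S_k$-modules: the one-vertex graph carries no edges, so a sheaf on it is nothing but the datum (SH1), an $S_k$-module, and a morphism is an $S_k$-linear map, with no compatibility condition to impose. Under this identification the free module $S_k$ is an object of $\textbf{Sh}_k(\{\text{pt}\})$, and Example~\ref{Ex_pullpush_pt} gives $p^{*}(S_k)=\ShZ$ and $p_{*}\F=\Gamma(\F)$ for every $\F\in\ShMGk$. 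Taking $\mathcal{H}=S_k$ in \eqref{Eqn_Adjunction} therefore yields, for each $\F$, a bijection
\[
\Hom_{\ShMGk}(\ShZ,\F)=\Hom_{S_k}(S_k,\Gamma(\F))=\Gamma(\F),
\]
the last step being the canonical ``evaluation at $1$'' isomorphism; chasing the explicit form of $\gamma$ in the proof of Proposition~\ref{Prop_Adj} shows that it carries $\varphi=(\{\varphi^x\},\{\varphi^E\})$ to the tuple $(\varphi^x(1))_{x\in\V}$. (As usual the $\Hom$-spaces are understood graded, or one argues degree by degree; this plays no role in what follows.)

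It remains to equip both sides with a $\Z$-module structure and to check that the bijection is $\Z$-linear and natural in $\F$. The structure algebra embeds into $\text{End}_{\ShMGk}(\ShZ)$: a global section $z=(z_x)_{x\in\V}\in\Z$ acts on $\ShZ$ by multiplication by $z_x$ on the stalk $\ShZ^x=S_k$ and by multiplication by the common residue class $\overline{z_x}=\overline{z_y}\in S_k/l(E)S_k$ on $\ShZ^E$ for $E\colon x-\!\!\!-\!\!\!-y$, and these maps commute with the quotient maps $\rho_{x,E}$; composing with this embedding makes $\Hom_{\ShMGk}(\ShZ,\F)$ a $\Z$-module via precomposition. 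On the other side $\Gamma(\F)\subseteq\prod_{x}\F^x$ is a $\Z$-module under componentwise multiplication: if $(m_x)\in\Gamma(\F)$ and $z\in\Z$ then, for $E\colon x-\!\!\!-\!\!\!-y$,
\[
\rho_{x,E}(z_xm_x)=z_x\,\rho_{x,E}(m_x)=z_y\,\rho_{x,E}(m_x)=z_y\,\rho_{y,E}(m_y)=\rho_{y,E}(z_ym_y),
\]
using $S_k$-linearity of the $\rho$'s, the fact that $z_x-z_y\in l(E)S_k$ annihilates $\F^E$ by (SH2), and $(m_x)\in\Gamma(\F)$. Now for $z\in\Z$ one has $(\varphi\circ z)^x(1)=\varphi^x(z_x\cdot 1)=z_x\,\varphi^x(1)$ by $S_k$-linearity of $\varphi^x$, so the bijection above sends $\varphi\circ z$ to $z\cdot(\varphi^x(1))_x$; hence it is $\Z$-linear. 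Naturality in $\F$ is inherited from the naturality of the adjunction isomorphism of Proposition~\ref{Prop_Adj} together with that of $\Hom_{S_k}(S_k,-)\cong\id$, so $\Gamma(-)$ and $\Hom_{\ShMGk}(\ShZ,-)$ are naturally equivalent as functors $\ShMGk\rightarrow\Z$-modules.

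Finally, the displayed isomorphism is the special case $\F=\ShZ$, which reads $\text{End}_{\ShMGk}(\ShZ)\cong\Gamma(\ShZ)=\Z$, an isomorphism of $S_k$-modules (and in fact of $k$-algebras, composition on the left corresponding to componentwise multiplication on $\Z$). I do not expect a genuine obstacle here: the two points deserving real attention, rather than purely formal checking, are that the one-point moment graph genuinely reproduces $S_k\text{-mod}$ with $S_k$ playing the role of $p^{*}(S_k)=\ShZ$, and that the two $\Z$-actions match up through the adjunction so that one gets $\Z$-linearity and not merely $S_k$-linearity; both are bookkeeping once the explicit description of $\gamma$ from Proposition~\ref{Prop_Adj} is in hand.
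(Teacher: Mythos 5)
Your argument is exactly the paper's: apply the adjunction of Proposition \ref{Prop_Adj} to the morphism $p:\MG\rightarrow\{\text{pt}\}$ of Example \ref{Ex_pullpush_pt}, identify $p^{*}S_k\cong\ShZ$ and $p_{*}\F=\Gamma(\F)$, and conclude via $\Hom_{S_k}(S_k,-)\cong\id$. The only difference is that you spell out the $\Z$-module structures, the $\Z$-linearity of the bijection and its naturality, which the paper leaves implicit; this is correct and just fills in bookkeeping the paper omits.
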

\proof Consider the homomorphism $p:\MG\rightarrow \{\text{pt}\}$, where we set $p_{l,x}=\id_{Y_k}$ for all $x$, vertex of $\MG$. The structure sheaf of $\{\text{pt}\}$ is just a copy of $S_k$ and, for all $\F\in\ShMGk$, by Prop. \ref{Prop_Adj}, we get
\begin{equation*}\Hom_{\ShMGk}(p^*S_k, \F)=\Hom_{\textbf{Sh}_k(\{\text{pt}\})}(S_k, p_*\F)
\end{equation*}

Bu we have already noticed in  Example  \ref{Ex_pullpush_pt} that  $p^*S_k\cong \ShZ$ and $p_*\F=\Gamma(\F)$. Moreover,  that $ \Hom_{S_k}(S_k, \Z)\cong \Z$ and we get the claim.

\endproof

\subsection{BMP-sheaves}
The following definition, due to Fiebig and Williamson, generalises the one of canonical sheaves given by Braden and MacPherson 
in \cite{BM01}. These sheaves will play a fundamental role in the rest of this paper.

\begin{defin}[cf. \cite{FW}]\label{defin_BMP} Let $\MG\in\MGYk$ and let $\BMP$ be a sheaf on it.  We say that $\BMP$ is a
\emph{Braden-MacPherson sheaf} if it satisfies the following
properties:
\begin{itemize}
 \item[(BMP1)]  for any $x\in\V$, $\BMP^x\in S\text{-mod}$ is free
\item[(BMP2)] for any $E:x\rightarrow y\in\E$, $\rho_{y,E}:\BMP^y\rightarrow \BMP^E$ is surjective with kernel
$l(E)\cdot\BMP^y$  
\item[(BMP3)] $\BMP$ is flabby 
\item[(BMP4)]  for any $x\in\V$, the map
$\Gamma(\BMP)\rightarrow \BMP^x$ is surjective.
\end{itemize}

\end{defin}

 Hereafter, Braden-MacPherson sheaves will be referred to also as  \emph{BMP}-sheaves
 or \emph{canonical} sheaves. 

An important theorem, characterizing Braden-MacPherson sheaves, is the following one.
\begin{theor}[cf. \cite{FW}, Theorem 6.3] \label{theor_FW_BMPindc}Let $\MG\in\MGYk$
\item[(i)] For any $w\in\V$, there is up to isomorphism unique Braden-MacPherson sheaf $\BMP(w)\in\ShMGk$ with the following properties:

\vspace{2.3mm}\hspace{3mm}\textit{(BMP0)} $\BMP(w)$ is indecomposable in $\ShMGk$

\hspace{3mm}\textit{(BMP1a)} $\BMP(w)^w\cong S_k$ and $\BMP(w)^x=0$, unless $x\leq w$
\vspace{2.3mm}

\item[(ii)]Let $\BMP$ be a Braden-MacPherson sheaf. Then, there are $w_1, \ldots, w_r\in\V$ and $l_1\ldots l_r\in\mathbb{Z}$ such that 
\begin{equation*} \BMP\cong \BMP(w_1)[l_r]\oplus \ldots \oplus \BMP(w_r)[l_r]
\end{equation*}h
\end{theor}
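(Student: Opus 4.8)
I would prove this the way one always proves existence/uniqueness of canonical sheaves: by reproducing the Braden--MacPherson algorithm and then invoking a Krull--Schmidt argument. Fix $w\in\V$ and work on the interval $\{x\in\V\mid x\leq w\}$, which we may assume finite (in general one exhausts $\V$ by finite open subsets). The plan is to build a candidate $\BMP(w)$ by a downward induction on this interval, check that it satisfies \textit{(BMP1)}--\textit{(BMP4)} of Definition \ref{defin_BMP} together with \textit{(BMP1a)}, deduce indecomposability \textit{(BMP0)} from the fact that $\operatorname{End}(\BMP(w))$ is local, prove uniqueness by showing the axioms force the construction stalk by stalk, and finally obtain the decomposition (ii) by splitting off the summands attached to a maximal vertex of the support and inducting on the size of the support.

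For the construction (due to Braden and MacPherson, cf. \cite{BM01}), set $\BMP(w)^w:=S_k$ and $\BMP(w)^x:=0$ for $x\not\leq w$. Processing the vertices $x<w$ from larger to smaller: once $\BMP(w)^y$ and the maps $\rho_{y,E}$ are defined for all $y>x$ with $y\leq w$, put $\BMP(w)^E:=\BMP(w)^y/l(E)\BMP(w)^y$ for every edge $E:x\to y$ (forced by \textit{(BMP2)}), form the partial section module $\Gamma_{>x}:=\Gamma(\{y\leq w\mid y>x\},\BMP(w))$ together with the boundary map $d_x\colon\Gamma_{>x}\to\bigoplus_{E:x\to y}\BMP(w)^E$, and let $\BMP(w)^x$ be a projective cover in graded $S_k$-modules of $\operatorname{im}d_x$ (equivalently a graded free cover, since $S_k$ is graded local), with $\rho_{x,E}$ the composite of the cover map with the projection onto the $E$-component. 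Then \textit{(BMP1)} holds since every stalk is free; \textit{(BMP2)} holds since $\rho_{x,E}$ is onto $\BMP(w)^E$ with kernel $l(E)\BMP(w)^x$; flabbiness \textit{(BMP3)} follows inductively because a compatible tuple of values over an open set, read off at the vertices lying directly above $x$, lands in $\operatorname{im}d_x$ and hence lifts to $\BMP(w)^x$; and \textit{(BMP4)} is the instance of \textit{(BMP3)} for the open set $\{y\mid y\geq x\}$. Properties \textit{(BMP1a)} are immediate. For \textit{(BMP0)}, an endomorphism $\varphi$ acts on $\BMP(w)^w=S_k$ as multiplication by some $c\in k$ of degree $0$, which gives a ring homomorphism $\operatorname{End}(\BMP(w))\to k$; if $c\in k^\times$, downward induction and Nakayama's lemma show every $\varphi^x$ is invertible, so $\varphi$ is an automorphism. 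Hence the non-units of $\operatorname{End}(\BMP(w))$ are exactly the preimage of $\mathfrak{m}_k$, which is an ideal, so $\operatorname{End}(\BMP(w))$ is local and $\BMP(w)$ is indecomposable.

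Uniqueness of $\BMP(w)$ comes from the same downward induction: for any Braden--MacPherson sheaf $\BMP$ satisfying \textit{(BMP1a)} for $w$, \textit{(BMP2)} forces the edge modules, \textit{(BMP3)}--\textit{(BMP4)} force $\BMP^x$ to be generated by global sections and to surject onto $\operatorname{im}d_x$, and a graded-rank comparison via Nakayama identifies $\BMP^x$ with a free cover of $\operatorname{im}d_x$; hence $\BMP\cong\BMP(w)$. For (ii), given an arbitrary Braden--MacPherson sheaf $\BMP$, pick $w$ maximal in the support $\{x\mid\BMP^x\neq 0\}$ and write $\BMP^w\cong\bigoplus_{i=1}^m S_k[l_i]$. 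Each basis vector is, by \textit{(BMP4)}, the value at $w$ of a global section, which by flabbiness propagates to a morphism $\BMP(w)[l_i]\to\BMP$; dually one constructs $\BMP\to\BMP(w)[l_i]$, and the resulting composite $\bigoplus_i\BMP(w)[l_i]\to\BMP\to\bigoplus_i\BMP(w)[l_i]$ is the identity on top stalks, hence an automorphism (its reduction to a matrix over $k$ is the identity). Thus $\bigoplus_i\BMP(w)[l_i]$ is a direct summand of $\BMP$; its complement is again a sheaf satisfying \textit{(BMP1)}--\textit{(BMP4)} with strictly smaller support, and one concludes by induction.

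The main obstacle is the splitting step in (ii): verifying that the composite endomorphism of $\bigoplus_i\BMP(w)[l_i]$ is genuinely invertible, and that passing to a direct summand preserves \textit{(BMP1)}--\textit{(BMP4)}. This requires the graded local structure of $S_k$ (a graded Fitting/Krull--Schmidt argument), careful bookkeeping of the shifts $l_i$, and the fact that a single global section lifts to a morphism of sheaves --- which uses flabbiness \textit{(BMP3)} together with the free-cover property \textit{(BMP1)}--\textit{(BMP2)} in an essential way. A subsidiary point, used throughout, is that all the $S_k$-modules that occur admit graded free covers and obey Nakayama, which is where the standing technical hypotheses on the pair $(\MG,k)$ --- such as the GKM condition of Definition \ref{Def_GKM} --- come in.
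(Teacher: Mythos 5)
The paper does not actually prove this statement: it is quoted from Fiebig--Williamson (\cite{FW}, Theorem 6.3), whose argument in turn goes back to Braden--MacPherson \cite{BM01}. Your proposal reconstructs essentially that standard argument --- the downward-inductive construction of $\BMP(w)$ via graded free covers of the image of $d_x$, indecomposability via locality of the endomorphism ring, and the decomposition in (ii) by splitting off shifted copies of $\BMP(w)$ at a maximal vertex of the support, using that a global section lifts to a morphism of sheaves in both directions thanks to flabbiness, \textit{(BMP2)} and freeness of stalks. Two minor remarks: the GKM condition plays no role in this theorem (graded Nakayama and the existence of graded free covers only need $k$ local), and in the infinite case one should either assume finite support or quote the finiteness conventions of \cite{FW}.

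There is, however, one step that fails as written: your uniqueness paragraph. You claim that for \emph{any} Braden--MacPherson sheaf $\BMP$ satisfying \textit{(BMP1a)}, the axioms \textit{(BMP2)}--\textit{(BMP4)} plus a graded-rank comparison force each stalk $\BMP^x$ to be a \emph{minimal} free cover of $\im\, d_x$, hence $\BMP\cong\BMP(w)$. This cannot be right, because it never uses \textit{(BMP0)}: for any $x<w$ the direct sum $\BMP(w)\oplus\BMP(x)$ is again a Braden--MacPherson sheaf satisfying \textit{(BMP1a)} (its stalk at $w$ is still $S_k$ and it vanishes off $\{\leq w\}$), yet its stalk at $x$ is strictly larger than the minimal cover --- the summand $\BMP(x)$ contributes nothing to the sections above $x$, so $\im\, d_x$ is unchanged. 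The axioms only give a surjection $\BMP^x\twoheadrightarrow \im\, d_x$; minimality is not forced. The repair is one you essentially already have: run the splitting argument of (ii) first. For an indecomposable $\BMP$ with \textit{(BMP1a)}, the two morphisms $\BMP(w)\to\BMP$ and $\BMP\to\BMP(w)$ built by downward induction (exactly as in your part (ii)) compose to an endomorphism of $\BMP(w)$ that is the identity on the $w$-stalk, hence an automorphism by locality of $\operatorname{End}(\BMP(w))$; thus $\BMP(w)$ is a direct summand of $\BMP$, and \textit{(BMP0)} forces $\BMP\cong\BMP(w)$.
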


We want to quote also a result by Fiebig, that will be used later, which tells us that the structure sheaf is not in general 
flabby, but that if it is the case, then it is isomorphic to an indecomposable BMP-sheaf.

\begin{prop}[\cite{Fie06}, Proposition 4.2]\label{prop_StrucShflabby} Let $\MG\in\MGYk$ be such that it has a highest  vertex $w$.
 Then $\BMP(w)\cong \ShZ$ if and only if $\ShZ$ is flabby.
\end{prop}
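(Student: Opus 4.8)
The plan is to treat the two implications separately; one is immediate and all the work is in the other. If $\BMP(w)\cong\ShZ$, then $\ShZ$ is flabby simply because every Braden--MacPherson sheaf is flabby, by (BMP3). For the converse I would show that, when $\ShZ$ is flabby, it is an \emph{indecomposable} Braden--MacPherson sheaf with $\ShZ^w\cong S_k$ and $\ShZ^x=0$ for $x\not\le w$; then the uniqueness part of Theorem~\ref{theor_FW_BMPindc}(i) forces $\ShZ\cong\BMP(w)$.

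First I would run the cheap checks. By construction of the structure sheaf, $\ShZ^x=S_k$ is free, so (BMP1) holds; for a directed edge $E\colon x\to y$ the map $\rho_{y,E}\colon\ShZ^y=S_k\to\ShZ^E=S_k/l(E)S_k$ is the canonical projection, hence surjective with kernel $l(E)S_k=l(E)\cdot\ShZ^y$, so (BMP2) holds; (BMP3) is the hypothesis; and (BMP4) holds because for every vertex $x$ and every $s\in S_k$ the constant tuple $(s)_{v\in\V}$ lies in $\Gamma(\ShZ)$ (the congruences $s-s\in l(E)S_k$ being trivial) and projects onto $s\in\ShZ^x$. Moreover $\ShZ^w=S_k$ by definition, and the condition ``$\ShZ^x=0$ for $x\not\le w$'' is vacuous since $w$ is the highest vertex of $\MG$.

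The substantive point is that $\ShZ$ is indecomposable, and here the rigid shape of the structure sheaf is exactly what is needed. Suppose $\ShZ=\F_1\oplus\F_2$. For each vertex $x$ this gives a decomposition $S_k=\ShZ^x=\F_1^x\oplus\F_2^x$ of $S_k$-modules; since $k$ is local, $S_k$ has no idempotents besides $0$ and $1$ (idempotents of a polynomial ring over $k$ are those of $k$), hence $S_k$ is indecomposable over itself, so for each $x$ exactly one of $\F_1^x,\F_2^x$ equals $S_k$ and the other is $0$. Writing $\V_i=\{x\mid\F_i^x=S_k\}$ we get a partition $\V=\V_1\sqcup\V_2$. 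If some edge $E$ had one endpoint $x\in\V_1$ and the other $y\in\V_2$, then, the restriction maps of $\ShZ$ being block-diagonal with respect to the decomposition, $\im(\rho_{x,E})\subseteq\F_1^E$; but $\rho_{x,E}\colon S_k\to S_k/l(E)S_k$ is surjective, so $\ShZ^E=\F_1^E$ and $\F_2^E=0$, whence $\rho_{y,E}$ (whose image lies in $\F_2^E$) would be zero --- contradicting that $\rho_{y,E}$ is the canonical surjection $S_k\twoheadrightarrow S_k/l(E)S_k$, which is nonzero because $l(E)$ is a non-unit. Hence no edge joins $\V_1$ to $\V_2$, and since $\MG$ is connected one of $\V_1,\V_2$ is empty, i.e. $\F_1=0$ or $\F_2=0$. (Equivalently, since $\text{End}_{\ShMGk}(\ShZ)\cong\Z$ as rings, this is the statement that the structure algebra $\Z$ has no nontrivial idempotents, which one checks by the same computation.) Thus $\ShZ$ is an indecomposable Braden--MacPherson sheaf with the required stalk data, so $\ShZ\cong\BMP(w)$ by Theorem~\ref{theor_FW_BMPindc}(i).

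I expect this indecomposability step to be the only real obstacle. It is where connectedness of the moment graph is used (this holds for the finite intervals of $\MGst$ to which the proposition is applied), and where one must exploit not merely that the stalks of $\ShZ$ are free of rank one but the sharper fact that all of its restriction maps are the canonical quotient maps $S_k\to S_k/l(E)S_k$ --- it is this that makes any nontrivial direct-sum decomposition incompatible with a connected graph.
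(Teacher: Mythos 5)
Your proof is correct, and there is in fact no in-paper argument to compare it with: the paper states this proposition as a quotation of \cite{Fie06}, Proposition 4.2, without proof. Your route --- checking (BMP1), (BMP2), (BMP4) directly for $\ShZ$, taking flabbiness as (BMP3), proving indecomposability, and then invoking the uniqueness part of Theorem \ref{theor_FW_BMPindc}(i) --- is a legitimate self-contained substitute, and the verification of the axioms and the idempotent/partition argument are sound (the fact that $S_k$ has only trivial idempotents for $k$ local, the block-diagonality of the restriction maps in a direct sum, and the nonvanishing of $S_k/l(E)S_k$ are all used correctly).

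Two small points. First, your indecomposability step uses connectedness of the underlying graph, which is not among the stated hypotheses; it is genuinely needed (for a graph with two comparable vertices and no edge, $\ShZ$ is flabby yet splits into two skyscrapers, so it is not $\BMP(w)$), so connectedness must be read as an implicit standing assumption of the quoted result --- and, as you note, it holds for all the intervals of $\MGst$ and $\MGp$ to which the proposition is applied in this paper. Second, after concluding that no edge joins $\V_1$ to $\V_2$ and hence, by connectedness, that say $\V_2=\emptyset$, you should also record why the \emph{edge} components of $\F_2$ vanish, since vanishing of the vertex stalks alone does not force $\F_2=0$ as a sheaf; this is exactly the computation you already made: for any edge $E$ with endpoint $x$, $\F_2^x=0$ gives $\im(\rho_{x,E})\subseteq\F_1^E$, and surjectivity of the canonical quotient $\rho_{x,E}$ then yields $\F_1^E=\ShZ^E$ and $\F_2^E=0$. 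With these two remarks spelled out, the argument is complete.
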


\subsubsection{Pullback of BMP-sheaves} We conclude this section by recalling a result from our paper \cite{L11}.

 \begin{lem}[cf. \cite{L11}] \label{pullbackBMP}Let $\MG$ and $\MG'$ be two $k$-moment graphs on $Y$, both with a unique
 maximal vertex, w resp. w', and let  $f:\MG\longrightarrow \MG'$ be an isomorphism. If  $\BMP_w$ and $\BMP'_{w'}$ are the
 corresponding canonical sheaves, then $\BMP_w\cong f^*\BMP'_{w'}$ as $k$-sheaves on $\MG$.
\end{lem}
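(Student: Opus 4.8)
The plan is to verify that $f^{*}\BMP'_{w'}$ satisfies the list of properties which, by Theorem~\ref{theor_FW_BMPindc}(i), characterise the indecomposable Braden--MacPherson sheaf $\BMP_w$ on $\MG$ uniquely up to isomorphism: namely, that it is an indecomposable Braden--MacPherson sheaf with $(f^{*}\BMP'_{w'})^{w}\cong S_k$ and $(f^{*}\BMP'_{w'})^{x}=0$ for $x\not\leq w$. The first point to record is that, by Lemma~\ref{Lem_IsoMG}, $f$ being an isomorphism means $f_{\V}$ is bijective and \textit{(ISO2)} holds, so $f$ induces a bijection $f_{\E}$ on edges as well; moreover the inverse morphism $f^{-1}$ has vertex component $f_{\V}^{-1}$ and local parts $f_{l,f_{\V}^{-1}(u)}^{-1}$. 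In particular $f_{\V}$ is an \emph{isomorphism of posets}, hence $f_{\V}(w)=w'$ because $w$ is the unique maximal vertex of $\MG$.

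Next I would check that $f^{*}$ transports \textit{(BMP1)}--\textit{(BMP4)} from $\BMP'_{w'}$ to $f^{*}\BMP'_{w'}$. Property \textit{(BMP1)} is immediate, since $(f^{*}\BMP'_{w'})^{x}=(\BMP'_{w'})^{f_{\V}(x)}$ with the $S_k$-action merely twisted by the degree-zero automorphism $f_{l,x}$, and such a twist preserves being graded free of finite rank. For \textit{(BMP2)}: every edge $E\colon x\to y$ of $\MG$ has $f_{\V}(x)\neq f_{\V}(y)$, so $(f^{*}\BMP'_{w'})^{E}=(\BMP'_{w'})^{f_{\E}(E)}$ and, by \textit{(PULL3)}, $(f^{*}\rho)_{y,E}$ is $\rho_{f_{\V}(y),f_{\E}(E)}$ conjugated by the bijective maps $\text{Tw}_{f_{l,y}^{\pm 1}}$; it is therefore surjective, and its kernel, as a subset, equals $\ker\rho_{f_{\V}(y),f_{\E}(E)}=l'(f_{\E}(E))\cdot(\BMP'_{w'})^{f_{\V}(y)}$. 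To identify this with $l(E)\cdot(f^{*}\BMP'_{w'})^{y}$ one applies \textit{(MORPH2a)} to the morphism $f^{-1}$ and uses $(f^{-1})_{l',f_{\V}(y)}=f_{l,y}^{-1}$ (from the proof of Lemma~\ref{Lem_IsoMG}) to get $f_{l,y}^{-1}(l'(f_{\E}(E)))=h\cdot l(E)$ with $h\in k^{\times}$; hence $f_{l,y}(l(E))$ and $l'(f_{\E}(E))$ differ by a unit, so they cut out the same submodule of $(\BMP'_{w'})^{f_{\V}(y)}$, which is exactly \textit{(BMP2)} for $f^{*}\BMP'_{w'}$. For \textit{(BMP3)} and \textit{(BMP4)}, note that $f_{\V}$ identifies upwardly closed (i.e.\ open) subsets $\I\subseteq\V$ with open subsets $f_{\V}(\I)\subseteq\V'$, and that the bijections $f_{\V}$, $f_{\E}$ together with the twisting isomorphisms give isomorphisms $\Gamma(\I,f^{*}\BMP'_{w'})\cong\Gamma(f_{\V}(\I),\BMP'_{w'})$ compatible with restriction to smaller subsets; flabbiness of $\BMP'_{w'}$ and surjectivity of $\Gamma(\BMP'_{w'})\to(\BMP'_{w'})^{u}$ for every $u$ then pass verbatim to $f^{*}\BMP'_{w'}$.

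It remains to transfer the normalisation and indecomposability. From $f_{\V}(w)=w'$ we get $(f^{*}\BMP'_{w'})^{w}=(\BMP'_{w'})^{w'}\cong S_k$, and for $x\not\leq w$ the poset isomorphism gives $f_{\V}(x)\not\leq w'$, so $(f^{*}\BMP'_{w'})^{x}=(\BMP'_{w'})^{f_{\V}(x)}=0$; this is \textit{(BMP1a)}. Finally, since $f$ is an isomorphism of $k$-moment graphs, $f^{*}$ is an equivalence of categories, a quasi-inverse being $(f^{-1})^{*}$ (by the functoriality $(g\circ f)^{*}\cong f^{*}\circ g^{*}$ of the pullback together with $f^{-1}\circ f=\id$ and $f\circ f^{-1}=\id$); an equivalence sends indecomposable objects to indecomposable objects, so $f^{*}\BMP'_{w'}$ is indecomposable in $\ShMGk$. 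Putting everything together, $f^{*}\BMP'_{w'}$ is an indecomposable Braden--MacPherson sheaf with $(f^{*}\BMP'_{w'})^{w}\cong S_k$ supported on $\{x\leq w\}$, so the uniqueness in Theorem~\ref{theor_FW_BMPindc}(i) forces $f^{*}\BMP'_{w'}\cong\BMP_w$.

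I expect the one genuinely delicate step to be the kernel computation inside \textit{(BMP2)}: this is the place where the isomorphism must be used in both directions — through $f$ and through $f^{-1}$ — and where one must pay attention to which of the several twisted $S_k$-module structures is in force. The rest is bookkeeping: transporting the sheaf data $(\{\F^{x}\},\{\F^{E}\},\{\rho_{x,E}\})$ along the bijections $f_{\V}$, $f_{\E}$ and the automorphisms $f_{l,x}$, and then invoking the characterisation of indecomposable Braden--MacPherson sheaves.
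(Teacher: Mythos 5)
Your proposal is correct and follows essentially the same route as the proof in \cite{L11}, which is what the paper cites for this lemma (no proof is reproduced here): transport the axioms \textit{(BMP1)}--\textit{(BMP4)} and the normalisation \textit{(BMP1a)} along the isomorphism, observe that $f^{*}$ is an equivalence (quasi-inverse $(f^{-1})^{*}$) so indecomposability is preserved, and conclude by the uniqueness statement of Theorem \ref{theor_FW_BMPindc}. The delicate points are exactly the ones you flag and handle: that $f_{\V}$ is a poset isomorphism (hence $f_{\V}(w)=w'$ and edge orientations are preserved), and that the unit ambiguity in \textit{(MORPH2a)} is harmless in the kernel computation for \textit{(BMP2)}.
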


\section{Statement of the main result and proof of the subgeneric case}

 In order to provide a new approach to Kazhdan-Lusztig amd Lusztig conjectures, Fiebig applied the theory of sheaves on moment 
graphs to the representation theory of complex symmetrisable Kac-Moody algebras and of semisimple reductive algebraic groups over
 a field of positive characteristic and formulated a conjectural formula relating Kazhdan-Lusztig polynomials to the stalks of indecomposable 
Braden-MacPherson sheaves (cf. \cite{Fie08a}, \cite{Fie07b}, \cite{Fie07a}). More precisely, if $\MGJ$ is the Bruhat graph we
 defined  in \S\ref{bruhatMG}, thus for any $w\in \W^J$ we can consider the subgraph $\MG^J_{w}:=\MGJ_{|\{\leq w\}}$. It is a 
finite $k$-moment graph (for any $k$) with highest vertex $w$, hence we may build the corresponding indecomposable
 Braden-MacPherson sheaf  $\BMP^J(w)\in \ShMGJk$. For any free and finitely generated $S_k$-module $M=\bigoplus_{i=1}^n S\{l_i\}$, its graded 
rank is
\begin{equation*}
 \rk M=\sum_{i=1}^n  q^{-l_i}\in \ZZ[q^{\pm 1}]
\end{equation*}

Finally, let us denote by  $P^{J,-1}_{y,w}$  Deodhar's analogue of Kazhdan-Lusztig polynomials at the parameter $u=-1$
 (cf. \cite{Deo87}).

\begin{quest}[Cf. \cite{Fie07b}, Conjecture 4.4.]\label{mult_conj} Under which assumptions on the characteristic of the base field, if $y\leq w$ 
then $\rk \,(\BMP^J(w))^y= P^{J,-1}_{y,w}(q)$?
\end{quest}

This question motivated our paper \cite{L11}, where we were able to give the moment graph analogue of several properties of
 Kazhdan-Lusztig polynomials. The interpretation of certain equalities of these polynomials at a categorical level has the 
advantage of furnishing a deeper understanding of several phenomena, since they are lifted to a setting where there is extra 
structure.
The problem we address in this paper concerns a stabilisation property of the affine Kazhdan-Lusztig polynomials, proved by Lusztig in \cite{Lu80},
 while he was trying to find a support for his conjecture on modular representations.

Let $\W$ denote a finite Weyl group, $\WA$ its affinisation and $\SR$ the set of \emph{finite} simple reflections.
 Then, as we  have already noticed, the set of minimal representatives of $\W\setminus \WA$ is in bijection with the set of 
alcoves in the fundamental chamber and the corresponding  parabolic Kazhdan-Lusztig polynomials may be indexed by pairs of
 alcoves $A, B\in \A^+$. The result that Lusztig proved can be reformulated in terms of these polynomials. In particular,  
 quoting Soergel's reformulation (cf. \cite[Theorem 6.1]{Soe97}), the parabolic Kazhdan-Lusztig polynomials 
$P_{A,B}^{J, -1}$ indexed by pairs of alcoves far enough in the fundamental chamber stabilize, in the sense that, for 
any pair of alcoves $A,B$, there exists a polynomial $Q_{A,B}$ with integer coefficients such that
 \begin{equation*}\lim_{\overrightarrow{\mu\in \C^+}}P_{A+\mu,B+\mu}^{J, -1}=Q_{A,B}
\end{equation*}
The $Q_{A,B}$'s are called \emph{generic polynomials} and turn out to have a realization very similar to the one of the regular 
Kazhdan-Lusztig polynomials. Indeed, Lusztig in \cite{Lu80} associated to every affine Weyl group $\WA$ its periodic module 
\textbf{M}, that is the free $\mathbb{Z}[q^{\pm\frac{1}{2}}]$-module with set of generators -or standard basis- indexed by the set
 of all alcoves $\A$. It is possible to define an involution and to prove that there exists  a self-dual basis 
of $\textbf{M}$: the \emph{canonical basis}. In this setting, the generic polynomials are the coefficients of the change basis
 matrix. Our interest in the periodic module is motivated by the fact  that $\textbf{M}$ governs the representation theory of 
the affine Kac-Moody algebra, whose Weyl group is $\WA$, at the critical level.

 The aim of this section is to study the behaviour of indecomposable Braden-MacPherson sheaves on finite intervals of the
 parabolic Bruhat graph $\MGp$ far enough in $\C^+$ (see \S\ref{ssec_stabMG}). More precisely, let $\I=[A,B]$ be an interval 
far enough in the fundamental chamber. Inspired by \cite[ Proposition 11.15]{Lu80}, we claim that  for any $C\in[A,B]$
 and for all $\mu\in\coX\cap \C^+$,
\begin{equation}\label{eqn_stab}\BMP(B)^{C}\cong \BMP(B+\mu)^{C+\mu}.
\end{equation}

We showed in \S\ref{ssec_stabMG} that $\MGp_{|_{[A,B]}}$ is in general not isomorphic 
to $\MGp_{|_{[A+\mu,B+\mu]}}$ as a $k$-moment graph, so unluckily  we cannot use Lemma \ref{pullbackBMP} to get
 the isomorphism of $S_k$-modules above.  On the other hand, we proved in Lemma \ref{lem_stableMG}  that, 
for all $\mu\in\check{X}$,  there is an isomorphism of $k$-moment graphs
\begin{equation*} \tau_{\mu}:\MGst_{|_{[A,B]}}\rightarrow \MGst_{|_{[A+\mu,B+\mu]}}
\end{equation*}
Thus, by Lemma \ref{pullbackBMP}, we get an isomorphism between the indecomposable canonical sheaf $\BMP(B)$ on 
$\MGst_{|_{[A,B]}}$ and $ \tau_{\mu}^*\BMP(B+\mu)$, the pullback of the indecomposable Braden-MacPherson sheaf
 $\BMP(B+\mu)$ on  $\MGst_{|_{[A+\mu,B+\mu]}}$.

For any finite interval $\I$ far enough in the fundamental chamber, consider the monomorphism
 $g_{\I}:\MGst_{|_{\I}}\hookrightarrow\MGp_{|_{\I}}$, given by $g_{\I,\V}=\id_{\V}$ and $g_{\I,l,x}=\id$ for all $x\in\I$. We 
obtain the functor 
\begin{equation*}{\cdot}^{\text{stab}}:\textbf{Sh}_{\MGp_{|_{\I}}}\rightarrow \textbf{Sh}_{\MGst_{|_{\I}}},
 \end{equation*}
 defined by the setting $\F\mapsto {\F}^{\text{stab}}:=g_{\I}^*(\F)$. The goal of the rest of this paper is to prove the following result.

\begin{theor}\label{theor_stabfunctor} Let $\I$ be a finite interval far enough in the fundamental chamber and let $k$ be such that
$(\MGp_{|\I}, k)$ is a GKM-pair. Then the functor ${\cdot}^{\text{stab}}:\ShMGI{par}\rightarrow \ShMGI{stab}$ preserves indecomposable Braden-MacPherson sheaves.
\end{theor}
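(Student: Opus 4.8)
The plan is to follow the local--global strategy that Fiebig developed in \cite{Fie08a}. The first step, which I expect to be essentially formal (Lemma \ref{lem_reduction}), is the reduction to the following assertion: if $\BMP$ is an indecomposable Braden--MacPherson sheaf on $\MGp_{|_{\I}}$, then $\BMP^{\text{stab}}=g_{\I}^*\BMP$ is again \emph{flabby} and \emph{indecomposable}. Indeed $g_{\I,\V}=\id$, so $\BMP^{\text{stab}}$ has free stalks automatically; and since $\MGst_{|_{\I}}$ is the subgraph of $\MGp_{|_{\I}}$ on the same vertex set carrying only the stable (reflection) edges, the surjectivity conditions (BMP2) and (BMP4) are inherited, and $g_{\I,l,x}=\id$ makes (BMP2) compatible with the labels (this is where the GKM hypothesis on $(\MGp_{|_{\I}},k)$ is used). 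By the characterisation in Theorem \ref{theor_FW_BMPindc}, flabbiness together with indecomposability then forces $\BMP^{\text{stab}}$ to be an indecomposable BMP sheaf.

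For flabbiness I would use the factorisation of ${\cdot}^{\text{stab}}$ into five functors announced in the introduction,
\[
\textbf{Sh}_k(\MGp_{|_{\I}})\xrightarrow{\,i_*\,}\textbf{Sh}_k(\MGp)\xrightarrow{\,p_{\text{par}}^*\,}\textbf{Sh}_k(\MGa)\xrightarrow{\,{\cdot}^{\text{per}}\,}\textbf{Sh}_k(\MGper)\xrightarrow{\,j^*\,}\textbf{Sh}_k(\MGper_{|_{\I}})\xrightarrow{\,{\cdot}^{\text{opp}}\,}\textbf{Sh}_k(\MGst_{|_{\I}}).
\]
The outer four functors --- $i_*$ (pushforward along the inclusion of the interval), $p_{\text{par}}^*$ (pullback along the parabolic projection $\MGa\to\MGp$), $j^*$ (restriction to $\I$), and ${\cdot}^{\text{opp}}$ (which, since $\MGa$ and $\MGper$, hence also $\MGper_{|_{\I}}$ and $\MGst_{|_{\I}}$ far inside $\C^+$, coincide as unoriented labelled graphs, is a pullback along an isomorphism of $k$-moment graphs up to reorientation) --- all preserve indecomposable BMP sheaves, and therefore flabbiness, by the results of \cite{L11} and \cite{L12}, in particular by the property of the pullback, Lemma \ref{pullbackBMP}. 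Hence the whole content of the flabbiness statement is concentrated in the middle functor ${\cdot}^{\text{per}}$, which re-orients the edges of $\MGa$ according to the generic/periodic order, and it remains to prove that ${\cdot}^{\text{per}}$ sends a BMP sheaf to a flabby sheaf.

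This I expect to be the main obstacle, since $\MGper$ is not the Bruhat graph of any Kac--Moody algebra and Fiebig's flabbiness machinery does not apply off the shelf. The plan is to bring in translation functor techniques: choose a partial order $\tle$ on $\WA$ interpolating between the Bruhat and the periodic orders (satisfying the axioms of \S\ref{ssec_propertiesBruhat}), introduce a relative notion of $\tle$-flabbiness (Definition \ref{def_tle_flabby}), and prove, generalising \cite{Fie08a}, that translation functors preserve $\tle$-flabby objects (Theorem \ref{theor_sthper_flabby}). One then passes from the Bruhat orientation to the periodic one one reflecting hyperplane at a time, applying the translation functors wall by wall, and an induction on the number of walls crossed yields the flabbiness of ${\cdot}^{\text{per}}\BMP$ (Proposition \ref{prop_flabbyStab}). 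The delicate points will be the correct axiomatisation of the intermediate orders and checking that the localisation arguments underlying the translation functors survive in the periodic picture.

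It remains to show that $\BMP^{\text{stab}}$ is indecomposable. By the local--global correspondence this amounts to showing that $\text{End}(\Gamma(\BMP^{\text{stab}}))$, a module over the structure algebra $\Z$ of $\MGst_{|_{\I}}$, is local. The plan is to localise $\Z$ at the primes generated by the edge labels $\check{\alpha}$, i.e. to pass to the special $\Z$-modules of \S\ref{sssec_loc_SM}, which cuts the computation down to rank-one and rank-two subquotients and thereby to the subgeneric situation $\gaff=\widehat{\mathfrak{sl}_2}$. That case is treated independently (Section 6, with an alternative argument in Appendix A): one checks directly that on every finite interval of $\MGst$ the structure sheaf $\ShZ$ is flabby, so by Proposition \ref{prop_StrucShflabby} it coincides with the indecomposable BMP sheaf, using the explicit description of intervals of $\MGp$ from Section 4. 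Assembling the flabbiness and indecomposability conclusions finishes the proof (Proposition \ref{prop_indp_BMPst}).
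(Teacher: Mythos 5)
Your skeleton coincides with the paper's: the reduction of Lemma \ref{lem_reduction}, the five-functor factorisation isolating ${\cdot}^{\text{per}}$, translation functors together with $\tle$-flabbiness, the subgeneric case via flabbiness of the structure sheaf, and a localisation argument for indecomposability. However, your account of the central step --- flabbiness of $\F^{\text{per}}$ --- contains a genuine gap. There are no ``intermediate orders interpolating between the Bruhat and the periodic order'' and no induction ``on the number of walls crossed'': one cannot deform the orientation one reflecting hyperplane at a time, and at the level of sheaves there is nothing to induct on, since $\F^{\text{per}}$ carries exactly the same data as $\F$ and only the notion of open set changes. The mechanism that actually works is different: (i) Lusztig proved that the generic order $\preccurlyeq$ itself satisfies (PO1)--(PO3) of \S\ref{ssec_propertiesBruhat}, so Theorem \ref{theor_sthper_flabby} applies verbatim to $\preccurlyeq$ and shows that the translation functor $\theta^s$ preserves $\preccurlyeq$-flabby objects of $\Zmod$; (ii) by Theorem \ref{theor_Fie08bSM_BMP}, the global sections of a Braden--MacPherson sheaf on $\MGa$ form a \emph{special} module, i.e.\ a summand of $\theta^{s_{i_1}}\circ\cdots\circ\theta^{s_{i_r}}(B_e)$, and the induction in Proposition \ref{prop_flabbyStab} runs over this expression, the base case $B_e$ being flabby for any order. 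This special-module input, which your proposal never mentions, is precisely what makes the induction possible; the point you single out as delicate (axiomatising intermediate orders) plays no role in the proof.

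On indecomposability your proposal is also only a gesture, and in a direction the paper does not take. Reformulating indecomposability as locality of $\text{End}(\Gamma(\BMP^{\text{stab}}))$ is legitimate, but you do not say how the localisation would establish it. The argument of Proposition \ref{prop_indp_BMPst} is concrete: assume $\BMP^{\text{stab}}=\mathcal{C}\oplus\mathcal{D}$ with $\mathcal{C}^w\cong S_k$ and $\mathcal{D}^w=0$, take $y$ maximal with $\mathcal{D}^y\neq 0$, and note that (BMP2), which $\BMP^{\text{stab}}$ inherits from $\BMP$, kills $\mathcal{D}$ on every \emph{stable} edge out of $y$; then localise at $S_k^{\beta}$, which inverts all real roots whose finite part is not $\pm\beta$ (not ``the primes generated by the edge labels''), so that by Lemma \ref{lem_locPSM} the localised module splits into subgeneric special modules, and the $\widehat{\mathfrak{sl}_2}$ case forces $\mathcal{D}$ to vanish on the non-stable edges at $y$ as well; finally the projective-cover property (BMP3') of $u_y:\BMP^y\rightarrow\BMP^{\delta y}$ yields $\mathcal{D}^y=0$, a contradiction. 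So the reduction to the subgeneric case is the right idea, but the step that actually closes the argument is this projective-cover computation on a putative splitting, which your plan leaves unsupplied. (A smaller inaccuracy: ${\cdot}^{\text{opp}}$ is the pullback along an isomorphism $\MGst_{|\I}\rightarrow\MGper_{|\I}$ whose label maps are the nontrivial twists $f_{l,x}=x^{-1}$, not merely a reorientation; this is needed because $\MGst$ labels edges through right multiplication and $\MGper$ through left multiplication.)
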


We will prove this theorem via explicit calculations in the $\widehat{\frak{sl}_2}$-case, while for the general case we will
 need  results and methods developed by Fiebig in \cite{Fie07a}.

Once Theorem \ref{theor_stabfunctor} is proven, we will obtain Equality (\ref{eqn_stab}) by applying Lemma \ref{lem_stableMG}.

\subsection{The subgeneric case} Using the same terminology as Fiebig in \cite{Fie12}, we denominate  \emph{subgeneric} the case in which  $\gaff=\mathfrak{sl}_2$.

In the subgeneric case, for any finite interval $\I$ and any $k$ such that $(\MGp_{|\I}, k)$ is a GKM-pair, the corresponding indecomposable BMP-sheaf is isomorphic to the structure
 sheaf $\A$ of $\MGp_{|\I}$. There are several ways to prove this fact. 

The first one comes from the topology of the underlying variety. We try to give a roughly idea of this argument. To start with, let us suppose $k=\mathbb{Q}$. In the case of  a finite
 interval of a Bruhat moment graph, its structure algebra and the space of global 
sections of a BMP-sheaf describe the $T$-equivariant cohomology and intersection cohomology, respectively, of the corresponding
 Richardson variety. Since all the Richardson varieties are in the subgeneric case rationally smooth, the intersection cohomology
  coincides with the usual cohomology and then the structure sheaf with the canonical sheaf. It 
is possible to perform the same argument, using parity sheaves and results from \cite{FW} in the positive characteristic setting.

An alternative proof comes from Fiebig's multiplicity one result. Indeed, using the inductive formula (2.2.c) of \cite{KL}, it is
 easy to show that, $P_{A,B}^{J-,1}=1$ for all $A,B\in \A^+$ with $A\leq B$. We then have to apply the following theorem
\begin{theor}[cf. \cite{Fie06}, Theorem 6.4.] Suppose that $k$ and $w$ are such that $(\MG_w,k)$ is a GKM-pair. For
all $x\leq w$ we have
\begin{equation*}
\BMP(w)^x \cong S_k \qquad \text{if and only if}\qquad P_{x,w} = 1
\end{equation*}
\end{theor}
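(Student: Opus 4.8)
The plan is to construct $\BMP(w)$ explicitly by the Braden--MacPherson algorithm --- descending induction along the Bruhat order on $\{x\le w\}$ --- and to confront its output, vertex by vertex, with the Kazhdan--Lusztig recursion for $P_{x,w}$. Recall the inductive step: once $\BMP(w)^y$ and the maps $\rho_{y,E}$ are known for all $y>x$, one forms the boundary section module $\Gamma\big(\{y\mid x<y\le w\},\BMP(w)\big)$, the edge modules $\BMP(w)^E:=\BMP(w)^y/l(E)\BMP(w)^y$ for the edges $E\colon x\to y$ with $y>x$, and the boundary map $d_x$ from the former into $\bigoplus_{E\colon x\to y>x}\BMP(w)^E$; then $\BMP(w)^x$ is a graded projective cover of $\im d_x$, and $\rho_{x,E}$ is induced by $\oplus\rho_{y,E}$. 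Consequently $\BMP(w)^x\cong S_k$ holds exactly when $\im d_x$ is cyclic and generated in degree $0$, and in general $\rk(\BMP(w)^x)$ is the graded number of generators of $\im d_x$; the whole proof is an analysis of this one module.

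For the implication ``$\BMP(w)^x\cong S_k\Rightarrow P_{x,w}=1$'' I would compare with characteristic zero. Running the same algorithm over $\ZZ_{(p)}$ (with $p=\operatorname{char}k$, or over $\ZZ$ if $p=0$) yields free stalks, and --- this is the one place the GKM hypothesis is used in this direction --- for a GKM-pair the edge modules and the images occurring in the recursion are compatible enough with extension of scalars that $\rk(\BMP(w)^x)$ over $k$ dominates, coefficient-wise, its value over $\mathbb{Q}$ (passing to $\mathbb{Q}$ is flat, so the section modules behave well, and a non‑minimal cover only needs more generators). Over $\mathbb{Q}$ one has $\rk_{\mathbb{Q}}(\BMP(w)^x)=P_{x,w}(q)$: this is Fiebig's verification of the multiplicity formula (Question~\ref{mult_conj}) in characteristic zero, equivalently the Braden--MacPherson identification of $\BMP(w)^x$ with the stalk at the fixed point $x$ of the intersection cohomology complex of the (parabolic) Schubert variety $X_w$. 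Hence $P_{x,w}(q)\le 1$ coefficient-wise, and as $P_{x,w}$ has non‑negative integer coefficients and constant term $1$, it equals $1$.

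For the converse, suppose $P_{x,w}=1$. A standard property of (parabolic) Kazhdan--Lusztig polynomials --- propagation of the value $1$ upward in a Bruhat interval, i.e. rational smoothness ``towards $w$'' (cf.\ \cite{Deo87}) --- gives $P_{y,w}=1$ for every $y\in[x,w]$. I would then prove by descending induction on $y\in[x,w]$ that $\BMP(w)^y\cong S_k$ and that $\BMP(w)$ restricted to $\{z\mid y\le z\le w\}$ agrees with the structure sheaf $\ShZ$. At the inductive step the hypothesis already identifies the edge modules above $y$ with $S_k/l(E)S_k$ and the boundary section module with $\Gamma(\{z\mid y<z\le w\},\ShZ)$; what remains is that $\im d_y$ is free of rank one, generated in degree $0$ by the image of the constant section $(1,\dots,1)$. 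This is exactly where the Braden--MacPherson recursion is matched with the Kazhdan--Lusztig recursion (formula (2.2.c) of \cite{KL}, resp.\ its Deodhar analogue): for a GKM-pair the correction terms of the latter are precisely the obstructions to cyclicity of $\im d_y$, and the vanishing of every non‑constant coefficient of $P_{y,w}$ forces those obstructions to vanish. Hence $\BMP(w)^y\cong S_k$, which completes the induction and, for $y=x$, the theorem.

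The crux, in both directions, is the fine control of $\im d_x$: the dictionary between the Braden--MacPherson recursion for $\rk(\BMP(w)^x)$ and the Kazhdan--Lusztig recursion for $P_{x,w}$, and the compatibility of the former with reduction modulo $p$. Both inputs are available precisely because $(\MG_w,k)$ is a GKM-pair --- without that hypothesis the linear systems defining the local sections need not be solvable and the base-change estimate collapses --- so the GKM assumption is genuinely used rather than a convenience.
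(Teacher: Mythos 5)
This theorem is not proved in the paper at all: it is imported verbatim from \cite[Theorem 6.4]{Fie06}, so the only meaningful comparison is with Fiebig's argument. Your forward direction ($\BMP(w)^x\cong S_k\Rightarrow P_{x,w}=1$) takes a different route from his: base change from $\mathbb{Z}_{(p)}$ together with the characteristic-zero identification $\rk_{\mathbb{Q}}\BMP(w)^x=P_{x,w}$. That route can be made to work, but be aware that each ingredient is itself a nontrivial imported theorem rather than a flatness remark: one must construct the canonical sheaf over $\mathbb{Z}_{(p)}$, know that its reduction is $\BMP_k(w)$ (this is exactly where GKM enters, cf.\ \cite{FW}, \cite{Fie07a}), and know that its rationalisation decomposes into $\mathbb{Q}$-canonical sheaves with $\BMP_{\mathbb{Q}}(w)$ as a summand; also the monotonicity you invoke later ($x\le y\le w\Rightarrow P_{y,w}\le P_{x,w}$) is a theorem of Irving/Braden--MacPherson \cite{BM01}, not of \cite{Deo87}.

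The genuine gap is in the converse, which is the hard direction and the actual content of \cite{Fie06}. Your inductive step rests entirely on the assertion that, over an arbitrary GKM ring $k$, ``the correction terms of the Kazhdan--Lusztig recursion (2.2.c) are precisely the obstructions to cyclicity of $\im d_y$''. No such dictionary is established, and in the generality in which you use it it cannot exist: it would give $\rk\,\BMP(w)^x=P_{x,w}$ for every GKM pair, i.e.\ an unconditional positive answer to Question~\ref{mult_conj}, whereas the paper itself emphasises that this formula is known to fail in some positive-characteristic GKM situations. The multiplicity-one statement survives only because the hypothesis $P_{y,w}=1$ on the whole interval is rephrased combinatorially: by the Carrell--Peterson/Deodhar criterion it is equivalent to every $y\in[x,w]$ lying on exactly $\ell(w)-\ell(y)$ edges with second endpoint in $[y,w]$, and Fiebig then proves \emph{directly}, using this edge count and the pairwise non-proportionality of labels guaranteed by GKM, that the structure sheaf on $[x,w]$ is flabby, hence that $\BMP(w)$ restricted to $[x,w]$ is $\ShZ$ (cf.\ Proposition~\ref{prop_StrucShflabby} and \cite[Theorem 5.1]{Fie06}); this flabbiness argument is precisely the step your sketch does not supply. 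The Appendix of the present paper, which verifies flabbiness of $\ShZ$ by explicit sections in the subgeneric case, gives a fair picture of the kind of work that replaces your ``matching of recursions''. To repair the proposal you would need either to carry out such a flabbiness argument under the edge-counting criterion, or to give an actual proof of the claimed recursion matching in the multiplicity-one situation; as written, the converse direction is asserted rather than proved.
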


But $\BMP(w)^x\cong S_k$ if and only if $\BMP(w)^y\cong S_k$ for all $y\in[x,w]$ (see, e.g. \cite[Theorem 5.1]{Fie06}), which, together with (BMP2), 
 implies that
 the indecomposable BMP-sheaf $\BMP(w)$  on $\MG_{[x,w]}$ is isomorphic to the structure sheaf of such a graph.

A third proof is presented in the appendix of this paper and it makes use only of some combinatorial arguments together with 
%a well-known algorithm
intrinsic properties of indecomposable BMP-sheaves.

In this paragraph, $\MGst$, resp. $\MGp$,  denotes the parabolic moment graph, resp. the stable moment graph, for the $\widetilde{A_1}$
 root system. 

We have already seen that for any two vertices $x,w$ with $x\leq w$ the stalk of the Braden-MacPherson sheaf on 
 $\MGp_{\leq w}$ is $\BMP(w)^x\cong S_k$. This fact, by  Proposition \ref{prop_StrucShflabby},  is equivalent to the flabbiness of the
 structure sheaf on $\MGp_{\leq w}$. Therefore in order to show that the functor ${}^{\text{stab}}$ preserves indecomposable canonical 
sheaves, again by Proposition \ref{prop_StrucShflabby}, it is enough to verify that, for any vertex $w$, the structure sheaf 
 $\A$ on $\MGst_{\leq w}$ is still flabby.

Recall that the set of vertices of  $\MGp$ (and so of  $\MGst$) can be identified with the finite (co)root lattice, that is $\mathbb{Z}\al$, where $\al=\check{\al}$ is the positive (co)root  of $A_1$. Moreover,  $\MGp$ is a complete graph and the label function is given, up to a sign, by $l(h\al-\!\!\!-\!\!\!-k\al)=-\al+(h+k)c$. By definition, we get $\MGst$ from $\MGp$ by deleting the non-stable edges, then $h\al-\!\!\!-\!\!\!-k\al\in \E^{\text{stab}}$ if and only if $\text{sgn}(h)=-\text{sgn}(k)$ (where, by convention, we set $\text{sgn}(0)=-$).

\begin{lem}\label{even_section} Let $r\in \mathbb{Z}_{> 0}$. If $n\in\mathbb{Z}$,  set, for any $h\in \mathbb{Z}$, with $h\al\leq n\al$, 
\begin{equation*} ^{\text{e}}z^r_{n\al,h\al}:=\left \{ \begin{array}{lcr}
0&\text{if}& |h|\in [|n|-r+1, |n|]\\
\prod_{i=0}^{r-1}\big[\big(\al+(|n|-h-i)c\big)(|n|-h-i)\big] &\text{if}& h\in(0,|n|-r]\\
\prod_{i=0}^{r-1}\big[\big(-\al+(|n|+h-i)c\big)(|n|+h-i)\big]  &\text{if}& h\in[r-|n| ,0]\\
\end{array}\right.
\end{equation*}
If $(\MGst_{\leq n\al},k)$ is a GKM-pair, then $^{\text{e}}z^r_{n\al}=(^{\text{e}}z^r_{n\al,h\al})\in\Gamma(\{\leq n\al\}, \A)_{\{r\}}$.
\end{lem}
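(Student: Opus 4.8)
The plan is to check directly that the tuple ${}^{\text{e}}z^r_{n\al}=({}^{\text{e}}z^r_{n\al,h\al})_{h\al\le n\al}$ satisfies the two requirements for lying in $\Gamma(\{\le n\al\},\A)_{\{r\}}$: that all its components are homogeneous of the recorded degree, and that the components on the two ends of each edge of $\MGst_{\le n\al}$ agree modulo the label of that edge. First I would lay out the combinatorics of $\MGst_{\le n\al}$. Writing $N:=|n|$, the description of the Bruhat order on $\ZZ\al$ in \S\ref{sssec_sl2PMG} shows that every vertex $h\al\le n\al$ has $|h|\le N$; and since $\MGp$ is a complete graph while the edge $h\al-\!\!\!-\!\!\!-k\al$ is stable precisely when exactly one of $h,k$ is positive (with $\text{sgn}(0)=-$), the graph $\MGst_{\le n\al}$ is bipartite, every edge joining a vertex $h\al$ with $h>0$ to a vertex $k\al$ with $k\le 0$; by (\ref{eqn_A1_label}) such an edge $E$ has label ideal $l(E)S_k=(-\al+(h+k)c)S_k$, independently of the sign convention. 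Homogeneity is then immediate: in each of the three clauses of the definition the component ${}^{\text{e}}z^r_{n\al,h\al}$ is either $0$ or an integer multiple of a product of $r$ linear forms in $\al$ and $c$, so all components share a common degree.

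The real point is the compatibility along edges. I would fix a stable edge $E$ with endpoints $h\al$ ($h>0$) and $k\al$ ($k\le 0$), both $\le n\al$, so $|h|,|k|\le N$; reducing modulo $l(E)$ amounts to substituting $\al\equiv(h+k)c$ in $S_k/l(E)S_k$. Since $h\in\{1,\dots,N\}$ and $-k\in\{0,\dots,N\}$, each of $h$ and $-k$ is either $\le N-r$ or in $[N-r+1,N]$, giving three cases. If $h\le N-r$ and $-k\le N-r$, then ${}^{\text{e}}z^r_{n\al,h\al}=\big(\prod_{i=0}^{r-1}(N-h-i)\big)\prod_{i=0}^{r-1}(\al+(N-h-i)c)$ and ${}^{\text{e}}z^r_{n\al,k\al}=\big(\prod_{i=0}^{r-1}(N+k-i)\big)\prod_{i=0}^{r-1}(-\al+(N+k-i)c)$, and the substitution sends both to $c^{r}\prod_{i=0}^{r-1}(N-h-i)(N+k-i)$, so the reductions coincide. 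If exactly one of $h$ and $-k$ — say $-k$ — lies in $[N-r+1,N]$, then ${}^{\text{e}}z^r_{n\al,k\al}=0$ by the first clause, while the reduction of ${}^{\text{e}}z^r_{n\al,h\al}$ is $c^{r}\prod_{i=0}^{r-1}(N+k-i)$ times an integer and vanishes because $N+k\in\{0,\dots,r-1\}$ kills one factor; the case $h\in[N-r+1,N]$ is symmetric, the vanishing factor being $N-h-i$ at $i=N-h$. If both $h$ and $-k$ lie in $[N-r+1,N]$, both components are $0$. In every case ${}^{\text{e}}z^r_{n\al,h\al}-{}^{\text{e}}z^r_{n\al,k\al}\in(-\al+(h+k)c)S_k=l(E)S_k$, which is exactly the relation defining a section along $E$; as these are all the edges of $\MGst_{\le n\al}$, the verification is complete.

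The GKM hypothesis is used only to guarantee that $(\MGst_{\le n\al},k)$ is a genuine $k$-moment graph, so that $\A$, its spaces of sections, and the quotients $S_k/l(E)S_k$ are defined; the computation itself is formal over any admissible $k$. I expect the only delicate point to be the case bookkeeping at the boundary indices $|h|\in[N-r+1,N]$, where the component is deliberately set to $0$ so that, across each stable edge, a single linear factor in the reduced product degenerates and absorbs the mismatch; keeping track of which of $N-h-i$ and $N+k-i$ vanishes on which side of a given edge is the one place where a sign or off-by-one slip could occur.
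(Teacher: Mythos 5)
Your verification is correct and follows essentially the same route as the paper: reduce to the stable edges joining $h\al$ ($h>0$) to $k\al$ ($k\le 0$), substitute $\al\equiv(h+k)c$ modulo the label $-\al+(h+k)c$, and check the three cases according to whether $h$ and $-k$ fall in the boundary range $[|n|-r+1,|n|]$ or not, with both products reducing to $c^r\prod_{i=0}^{r-1}(|n|-h-i)(|n|+k-i)$ in the generic case and a degenerating factor producing $0$ in the mixed cases. Nothing further is needed.
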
 
\proof  We verify that, for any $h,k\in \mathbb{Z}$ such that $h\al, k\al \leq n\al$, if $h\al-\!\!\!-\!\!\!-k\al$ is an edge, %i.e. if $\text{sgn}(h)=-\text{sgn}(k)$,
 then 
\begin{equation}  ^{\text{e}}z^r_{n\al,h\al}- ^{\text{e}}z^r_{n\al,k\al}\equiv 0   \qquad (\text{mod } -\al+(h+k)c)
\end{equation}

We may clearly suppose $h>0$ and $k\leq 0$.

Let at first consider $h\in [|n|-r+1, n]$. If  $-k\in [|n|-r+1, n]$, then $^{\text{e}}z^r_{n\al,h\al}= {}^{\text{e}}z^r_{n\al,k\al}=0$ and there is nothing to prove. Otherwise, $k\in [r-|n|,0]$ and
\begin{equation}^{\text{e}}z^r_{n\al,h\al}- ^{\text{e}}z^r_{n\al,k\al}=0-\prod_{i=0}^{r-1}\big[\big(-\al+(|n|+k-i)c\big)(|n|+k-i)\big].
\end{equation}
Now, $-\al+(h+k)c$ divides $\prod_{i=0}^{r-1}\big[\big(-\al+(|n|+k-i)c\big)(|n|+k-i)\big]$ if and only if there exists an $i\in [0,r-1]$ such that $|n|-i=h$, i.e. $h-|n|=-i$. But we supposed $h\in[|n|-r+1, n]$ that is, precisely, $h-|n|\in[-r+1, 0]$.

Let consider the case $h\in (0, |n|-r]$. If $-k\in [|n|-r+1, n]$, then 
\begin{equation}^{\text{e}}z^r_{n\al,h\al}- ^{\text{e}}z^r_{n\al,k\al}=\prod_{i=0}^{r-1}\big[\big(\al+(|n|-h-i)c\big)(|n|-h-i)\big]-0.
\end{equation}
Now, $-\al+(h+k)c$ divides $\prod_{i=0}^{r-1}\big[\big(\al+(|n|-h-i)c\big)(|n|-h-i)\big]$ if and only if there exists an $i\in [0,r-1]$ such that $|n|-i=-k$, i.e. $-k-|n|=-i$. But we supposed $-k\in[|n|-r+1, |n|]$ that is, precisely, $-k-|n|\in[-r+1, 0]$.

Otherwise,   $k\in [r-|n|,0]$ and
\begin{align*} {}^{\text{e}}z^r_{n\al,h\al}- ^{\text{e}}z^r_{n\al,k\al}&=\prod_{i=0}^{r-1}\!\!\big[\!\big(\al\!+\!(|n|-h-i)c\big)(|n|-h-i)\!\big]\!-\!\prod_{i=0}^{r-1}\!\!\big[\!\big(\!-\!\al\!+\!(k+|n|-i)c\big)(k+|n|-i)\!\big]\\
&\equiv \prod_{i=0}^{r-1}[(k+h+|n|-h-i)(|n|-h-i)c]\\
&-\prod_{i=0}^{r-1}[(-k-h+k+|n|-i)(|n|+k-i)c] \qquad (\text{mod }-\al+(h+k)c)\\
&=c^r\prod_{i=0}^{r-1}[(k+|n|-i)(|n|-h-i)-(-h+|n|-i)(|n|+k-i)]\\
&=0
\end{align*}
\endproof

\begin{lem}\label{odd_section} Let $r\in \mathbb{Z}_{> 0}$. If $n\in\mathbb{Z}$, for any $h\in \mathbb{Z}$, such that $h\al\leq  n\al$, we set
\begin{equation*} ^{\text{o}}z^r_{n\al,h\al}:=\left \{ \begin{array}{lcr}
0&\text{if}& |h|\in [|n|-r+2, |n|]\\
\prod_{i=0}^{r-1}\big[\big(\al+(|n|-h-i)c\big)(|n|-h-i+1)\big] &\text{if}& h\in(0, |n|-r+1]\\
\prod_{i=0}^{r-1}\big[\big(-\al+(|n|+h-i+1)c\big)(|n|+h-i)\big]  &\text{if}&  h\in [r+n-1, 0]\\
\end{array}\right.
\end{equation*}
If $(\MGst_{\leq n\al},k)$ is a GKM-pair, then  $^{\text{o}}z^r_{n\al}=(^{\text{o}}z^r_{n\al,h\al})\in\Gamma(\{\leq n\al\}, \A)_{\{r\}}$.
\end{lem}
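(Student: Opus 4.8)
The plan is to argue exactly as in the proof of Lemma~\ref{even_section}. By the description of the structure sheaf $\A$ and of the space of local sections, it suffices to show that for every stable edge $h\al-\!\!\!-\!\!\!-k\al$ of $\MGst_{\leq n\al}$ one has
\begin{equation*}
{}^{\text{o}}z^r_{n\al,h\al}-{}^{\text{o}}z^r_{n\al,k\al}\equiv 0\qquad(\text{mod }-\al+(h+k)c).
\end{equation*}
Since $h\al-\!\!\!-\!\!\!-k\al\in\E^{\text{stab}}$ forces $\text{sgn}(h)=-\text{sgn}(k)$, we may assume $h>0$ and $k\leq 0$; moreover $h\al,k\al\leq n\al$ means $h,|k|\leq|n|$. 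As in Lemma~\ref{even_section} I would then distinguish three configurations, according to whether $h$ and $|k|$ lie in the vanishing range $[|n|-r+2,|n|]$ or in the product range of the definition.

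If both $h$ and $|k|$ lie in the vanishing range, both components are $0$ and there is nothing to prove. In the mixed case, say $h\in[|n|-r+2,|n|]$ while $|k|$ lies in the product range, only one product survives; reducing it modulo $-\al+(h+k)c$, so that $\al\equiv(h+k)c$, turns the factor $-\al+(|n|+k-i+1)c$ into the integer multiple $(|n|-h-i+1)c$ of $c$, and the hypothesis $h\in[|n|-r+2,|n|]$ guarantees that the index $i=|n|-h+1$ lies in $\{0,\dots,r-1\}$, so that this factor is literally $0$ and the label divides the product. The symmetric sub-case (with $|k|$ in the vanishing range and $h$ in the product range) is handled the same way, now using $\al+(|n|-h-i)c\equiv(k+|n|-i)c$ and the vanishing index $i=|n|-|k|$. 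Finally, when both $h$ and $|k|$ lie in the product range, I would expand both products, reduce modulo $-\al+(h+k)c$ via $\al+(|n|-h-i)c\equiv(k+|n|-i)c$ and $-\al+(|n|+k-i+1)c\equiv(|n|-h-i+1)c$, pull out $c^r$ from each, and observe that both products then equal $c^r\prod_{i=0}^{r-1}(k+|n|-i)(|n|-h-i+1)$, whence the difference is $0$.

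The GKM-pair hypothesis on $(\MGst_{\leq n\al},k)$ enters precisely where it did in Lemma~\ref{even_section}, and the homogeneity assertion ${}^{\text{o}}z^r_{n\al}\in\Gamma(\{\leq n\al\},\A)_{\{r\}}$ is read off directly from the product formula defining the components. The only step that demands real attention --- and the one I expect to be the main obstacle --- is the bookkeeping of index ranges in the mixed case, i.e. checking that the index $i$ annihilating the surviving product really falls in $\{0,\dots,r-1\}$ under the precise interval hypotheses of the definition (and, relatedly, that the vanishing and product ranges genuinely exhaust $(0,|n|]$ so that the three cases above are exhaustive); everything else reduces to the same routine congruence manipulations already carried out for ${}^{\text{e}}z^r$.
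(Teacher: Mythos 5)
Your proposal is correct and matches the paper exactly: the paper omits this proof, stating only that it is ``very similar to the one of the previous lemma,'' which is precisely the argument you carry out. Your congruence computations modulo $-\al+(h+k)c$ and the index checks ($i=|n|-h+1$ and $i=|n|-|k|$ landing in $\{0,\dots,r-1\}$ under the stated interval hypotheses) are the right verification of that analogy.
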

\proof The proof is very similar to the one of the previous lemma and therefore we omit it.
\endproof

Define $^{\text{e}}z^0_{n\al}:=(1)_{h\al\leq n\al}$.

\begin{lem}\label{lemma_sections1} Let $r\in\mathbb{Z}_{\geq 0}$, $n\in\mathbb{Z}$ and 
$m\in \mathbb{Z}$ be such that $m\al\leq n\al$. If $(\MGst_{\leq n\al},k)$ is a GKM-pair, for all $z\in \Gamma([ m\al, n\al], \A)_{\{r\}}$,
 there exist ${}^{o}s^i_{k}, {}^{e}s^j_{k}\in  (S_k)_{\{i\}}$, with $i\in[0, r]$, $j\in(0,r]$ and $p$ such that
 $p\al\in [ m\al, n\al]$,  such that 
\begin{equation}
z=\sum_{j=1}^{r}{}^{\text{e}}s^j_{p} (^{\text{e}}z^{r-j}_{p\al})_{p\al\in [ m\al, n\al]} +\sum_{i=0}^{r}{}^{\text{o}}s^i_{p} 
(^{\text{o}}z^r_{p\al})_{p\al\in [ m\al, n\al]}.
\end{equation}
\end{lem}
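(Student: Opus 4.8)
The plan is to induct on the number $N$ of vertices of the interval $[m\al,n\al]$. The key structural input, from \S\ref{sssec_sl2PMG}, is that in the $\widehat{\mathfrak{sl}_2}$ case the order on the vertex set $\{p\al\}$ of $\MGst$ is \emph{total}, namely $0\al<\al<-\al<2\al<-2\al<\cdots$; hence $[m\al,n\al]$ is a chain whose unique maximal vertex is $n\al$, and removing $n\al$ leaves again an interval $[m\al,w']$ on $N-1$ vertices ($w'$ the immediate predecessor of $n\al$). For $N=1$ the interval is $\{n\al\}$: set ${}^{\text{e}}s^{r}_{n}:=z_{n\al}$, so that $z=z_{n\al}\cdot{}^{\text{e}}z^{0}_{n\al}$ has the required shape, since ${}^{\text{e}}z^{0}_{n\al}$ restricted to $[m\al,n\al]$ is the all-ones section.

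For the inductive step, let $w=n\al$ be the top vertex and put $z^{(1)}:=z-z_{w}\cdot{}^{\text{e}}z^{0}_{w}$. As ${}^{\text{e}}z^{0}_{w}$ is the all-ones section, the subtracted tuple is the constant section with value $z_{w}$, so $z^{(1)}$ is again a section of $\A$ over $[m\al,n\al]$, now with $z^{(1)}_{w}=0$. Therefore $z^{(1)}$ is the extension by zero of its restriction to $[m\al,w']$, and that restriction lies in $\Gamma([m\al,w'],\A)_{\{r\}}$. Applying the inductive hypothesis to it and then re-reading each generator as a tuple over $[m\al,n\al]$ (which vanishes at $w$) gives a decomposition of $z^{(1)}$ of the prescribed form supported on $[m\al,w']$; adding back $z_{w}\cdot{}^{\text{e}}z^{0}_{w}$, which is the $p=n$, $j=r$ term ${}^{\text{e}}s^{r}_{n}\cdot{}^{\text{e}}z^{0}_{n\al}$, yields the desired expansion of $z$ over $[m\al,n\al]$. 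This already produces the decomposition using only the terms with $j=r$; to obtain it in the finer graded form in which the generators ${}^{\text{e}}z^{k}_{p\al}$, ${}^{\text{o}}z^{k}_{p\al}$ with $k\ge1$ occur with coefficients of degree $r-k$, one performs exactly the same peeling \emph{degree by degree} instead of all at once: having matched $z$ at $w$, one matches it at $w'$ modulo the congruences $z_{w'}\equiv z_{q\al}\pmod{l(E)}$ imposed by the edges $E$ at $w'$, then at the next vertex, and so on, using at the $p\al$-stage the explicit sections of Lemmas \ref{even_section} and \ref{odd_section}, whose values at the top of $\{\le p\al\}$ and at the vertices just below it are read off from the product formulas defining them.

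The real content therefore lies in Lemmas \ref{even_section} and \ref{odd_section}: that the prescribed products genuinely define sections of $\A$ over $\{\le p\al\}$ vanishing on the top $r$, resp.\ $r-1$, vertices, together with the fact that these sections carry enough independent leading behaviour near the top to realise, degree by degree, an arbitrary section over a subinterval. Granting that, Lemma \ref{lemma_sections1} is the resulting combinatorial bookkeeping. The GKM hypothesis on $(\MGst_{\le n\al},k)$ is essential here: it forces the integers occurring as factors in the product formulas to be units in $k$, which is precisely what allows the congruences above to be solved and the ${}^{\text{e}}z^{k}$, ${}^{\text{o}}z^{k}$ to span the relevant graded pieces. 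I expect the only genuine obstacle to be organising the induction so that the output matches the stated indexing: one must keep track of the alternation, along the chain, between the two types of vertices and of the consequent one-degree shift between the even and odd families, which is why both families appear.
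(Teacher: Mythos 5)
Your argument does not actually prove the statement; the step that carries all the content is deferred. First, the inductive scheme itself is off: applying the inductive hypothesis to the restriction of $z^{(1)}$ to $[m\al,w']$ produces a decomposition in terms of the sections ${}^{\text{e}}z^{k}_{w'},{}^{\text{o}}z^{k}_{w'}$ attached to the \emph{smaller} top vertex $w'$, not the generators attached to $n\al$ that the lemma asserts (and that the flabbiness corollary needs, since only the sections of Lemmas \ref{even_section} and \ref{odd_section} for the full graph $\MGst_{\leq n\al}$ are known to be global, hence extensible). ``Re-reading each generator as a tuple over $[m\al,n\al]$ which vanishes at $w$'' is not legitimate: the zero-extension of a section built for $\MGst_{\leq w'}$ need not satisfy the congruences along the stable edges joining the lower vertices to $w=n\al$, so these extended tuples are in general neither sections nor the prescribed generators; the ``decomposition using only the terms with $j=r$'' is therefore not of the required form. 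Second, the repair you sketch --- ``perform the same peeling degree by degree \ldots using the explicit sections of Lemmas \ref{even_section} and \ref{odd_section}'' --- is precisely where the lemma lives, and you grant it rather than prove it. The paper's proof inducts on the position of the highest vertex $h\al$ at which $z$ is nonvanishing: since $z$ vanishes at every vertex above $h\al$, the edge relations force $z_{h\al}$ to be congruent to $0$ modulo the label of each stable edge from $h\al$ upwards; the GKM hypothesis makes these labels pairwise non-associate, and since $S_k$ is a UFD, $z_{h\al}$ is divisible by their product, which (together with invertible integer factors) is exactly the leading component of the appropriate ${}^{\text{e}}z^{k}_{n\al}$ or ${}^{\text{o}}z^{k}_{n\al}$; subtracting that single $S_k$-multiple strictly lowers the highest nonvanishing component and closes the induction. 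Nothing in your proposal establishes this divisibility-and-matching step, and your remark that GKM ``makes the integers units'' captures only the lesser half of its role --- its main use here is the coprimality of the labels at a common vertex, which is what upgrades divisibility by each label to divisibility by the product. As written, your proof reduces the lemma to an unproved spanning assertion about the leading behaviour of the ${}^{\text{e}}z$, ${}^{\text{o}}z$ sections, i.e.\ to the lemma itself.
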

\proof Let $h\al$ be the maximal vertex in $[m\al,n\al]$ such that $z_{h\al}\neq 0$. We prove the statement by induction on
 $l= \sharp [m\al ,h\al]$.

 If such a vertex does not exists, that is $l=0$, then $z=(0)$ and there is nothing to prove.

We should consider four cases: $n>0$ and $l>0$; $n>0$ and $l\leq 0$; $n\leq 0$ and $l>0$; $n\leq 0$ and  $l\leq 0$. 
Actually, we will verify only the first case, since the others can be proven in a very  similar way.

Let $n>0$ and $h>0$. If $h=n$, then we set $z'=z-z_{n\alpha}^{\text{e}}z^0_{n\al}$ and the result follows from the inductive hypothesis.
 Otherwise, $h<n$ and then  $\prod_{i=0}^{n-h-1}(\al+(n-h-i)c)$ divides $z_{h\al}$ in $S_k$ and we may set 
 \begin{equation}{}^{\text{e}}s^{r-n+h}_h:= \prod_{i=0}^{n-h-1}[(\al+(n-h+i)c)(n-h-i)]^{-1}\cdot z_{h\al}\in S_{\{r-n+h\}}.
\end{equation}
Now $z':=z-{}^{\text{e}}s^{r-n+h+1}_h \cdot (^{\text{e}}z^{n-h}_{p\al})_{p\al\in [ m\al, n\al]}\in  \Gamma([ m\al, n\al], \A)_{\{r\}} $ 
has the property that $z'_{p\al}=0$ for all $p\in[h\al,n\al]$ and we obtain the  statement from the inductive hypothesis.

\endproof

\begin{cor} For any $n\in \mathbb{Z}$ and any $k$ such that $(\MGst_{\leq n\al},k)$ is a GKM-pair, the structure sheaf $\A$ on $\MGst_{\leq n\alpha}$ is flabby. 
\end{cor}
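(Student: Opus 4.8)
\proof
The plan is to deduce the corollary formally from Lemmas~\ref{even_section}, \ref{odd_section} and \ref{lemma_sections1}. First recall that, by the last sentence of \S\ref{sssec_sl2PMG}, the Bruhat order restricted to the vertex set $\{h\al\mid h\al\leq n\al\}$ of $\MGst_{\leq n\al}$ is \emph{total}, with maximum $n\al$. Hence its upwardly closed (i.e.\ open) subsets are exactly $\emptyset$ and the intervals $[m\al,n\al]$ with $m\al\leq n\al$. Since flabbiness of $\A$ means that the projection $\Gamma(\{\leq n\al\},\A)\to\Gamma(\I,\A)$ is surjective for every open $\I$, and the cases $\I=\emptyset$ and $\I=\{\leq n\al\}$ are trivial, it remains to prove surjectivity of
\begin{equation*}\Gamma(\{\leq n\al\},\A)\longrightarrow\Gamma([m\al,n\al],\A)\end{equation*}
for every $m\al\leq n\al$.

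This is a degree-preserving morphism of graded $S_k$-modules, so it is enough to lift a homogeneous section $z\in\Gamma([m\al,n\al],\A)_{\{r\}}$. Here I would apply Lemma~\ref{lemma_sections1}, which --- under the GKM-pair hypothesis of the corollary --- expresses $z$ as an $S_k$-linear combination of restrictions to $[m\al,n\al]$ of the explicit sections of the form ${}^{\text{e}}z^{s}_{n\al}$ and ${}^{\text{o}}z^{s}_{n\al}$. By Lemmas~\ref{even_section} and \ref{odd_section} each of these lies in $\Gamma(\{\leq n\al\},\A)$, i.e.\ is a \emph{global} section of the structure sheaf on $\MGst_{\leq n\al}$. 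Taking the same $S_k$-linear combination of these global sections yields an element of $\Gamma(\{\leq n\al\},\A)$ restricting to $z$, which gives the desired surjectivity and hence the flabbiness of $\A$. As a by-product, since $\MGst_{\leq n\al}$ has highest vertex $n\al$, Proposition~\ref{prop_StrucShflabby} then identifies $\A$ with the indecomposable Braden--MacPherson sheaf $\BMP(n\al)$, which is precisely the statement needed in the subgeneric case.

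So this corollary is a purely formal consequence of the three lemmas; the real work, and what I expect to be the main obstacle, lies in those lemmas. Concretely, one must produce in each degree enough explicit sections with prescribed ``top support'' so that the descending induction in the proof of Lemma~\ref{lemma_sections1} goes through, and then verify that the product formulas defining ${}^{\text{e}}z^{s}_{n\al}$ and ${}^{\text{o}}z^{s}_{n\al}$ do satisfy the edge relations of the structure sheaf, i.e.\ the congruences modulo the labels $-\al+(h+k)c$. This divisibility check is exactly where the GKM-pair assumption on $(\MGst_{\leq n\al},k)$ is used, and it is the technically delicate step.
\endproof
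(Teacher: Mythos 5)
Your proposal is correct and follows essentially the same route as the paper: since the vertices of $\MGst_{\leq n\al}$ are totally ordered, any open set is an interval $[m\al,n\al]$, and Lemma \ref{lemma_sections1} together with Lemmas \ref{even_section} and \ref{odd_section} expresses any homogeneous local section as a combination of restrictions of global sections, giving the required extension. Your closing remarks about where the GKM hypothesis enters and the link to Proposition \ref{prop_StrucShflabby} match the paper's surrounding discussion.
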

\proof We have to show that every local section $z\in\Gamma( \I, \A)$, with $\I$ open can  be extended to a global section $\widetilde{z}\in\Gamma(\MGst_{\leq n\alpha}, \A)$. Since the set of vertices of $\MGst$ is totally ordered, then any open set of $\MGst_{\leq n\alpha}$ is actually an interval, that is there exists an $m\in \mathbb{Z}$ such that $\I=[m\al, n\al]$. 

Suppose $z\in\Gamma(\I, \A)_{\{r\}}$, then by Lemma \ref{lemma_sections1}, we can write
\begin{equation}z=\sum_{j=1}^{r}{}^{\text{e}}s^j_{k} (^{\text{e}}z^{r-j}_{k\al})_{k\al\in [ m\al, n\al]} +\sum_{i=0}^{r}{}^{\text{o}}s^i_{k} (^{\text{o}}z^r_{k\al})_{k\al\in [ m\al, n\al]}.
\end{equation}

By Lemma \ref{even_section} and Lemma \ref{odd_section}, $z$ is a sum of extensible sections, and so it is extensible as well. 

\endproof

Finally, we get the following theorem.

\begin{theor} Let $\gaff=\widehat{\mathfrak{sl}_2}$,  $\I$ be a finite interval far enough in the fundamental chamber and let $k$ be such that
$(\MGp_{|\I}, k)$ is a GKM-pair. Then for every finite interval  $\I$, the functor ${\cdot}^{\text{stab}}$ preserves indecomposable canonical sheaves.
\end{theor}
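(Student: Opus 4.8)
The plan is to deduce this from the flabbiness of structure sheaves of finite intervals of $\MGst$, which is the content of the Corollary above, by combining it with the subgeneric multiplicity one statement discussed earlier and with Proposition \ref{prop_StrucShflabby}. Since the vertex set $\mathbb{Z}\al$ of $\MGp$, and hence of $\MGst$, is totally ordered, in the subgeneric case every finite interval already has the regular shape of Lemma \ref{PMGlimit}, so the hypothesis ``far enough'' is automatically satisfied; I write $\I=[a,b]$. By Theorem \ref{theor_FW_BMPindc} it suffices to check that ${}^{\text{stab}}(\BMP^{\text{par}}(w))$ is an \emph{indecomposable} Braden-MacPherson sheaf on $\MGst_{|\I}$ for each vertex $w$ of $\MGp_{|\I}$.

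I would first identify the source object. By (BMP1a) the sheaf $\BMP^{\text{par}}(w)$ is supported on the subinterval $[a,w]$, and by (BMP2) the edge modules attached to edges with an endpoint outside $[a,w]$ vanish; thus $\BMP^{\text{par}}(w)$ is the extension by zero, to $\MGp_{|\I}$, of the indecomposable Braden-MacPherson sheaf with top vertex $w$ on $\MGp_{|[a,w]}$. Since $(\MGp_{|[a,w]},k)$ is a GKM-pair (inherited from $(\MGp_{|\I},k)$), the multiplicity one result for the subgeneric case discussed above (and reproved in Appendix A) identifies this last sheaf with the structure sheaf $\ShZ$ of $\MGp_{|[a,w]}$.

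Next I would compute ${}^{\text{stab}}=g_{\I}^{*}$ directly from Definition \ref{Def_pull}. Because $g_{\I,\V}=\id_{\V}$ is injective and $g_{\I,l,x}=\id$ for every $x$, one reads off that $(g_{\I}^{*}\F)^{x}=\F^{x}$ with unchanged $S_{k}$-action, that $(g_{\I}^{*}\F)^{E}=\F^{E}$ for every edge $E$ of $\MGst_{|\I}$ -- which is precisely a stable edge of $\MGp_{|\I}$, carrying the same label $\check{\al}$ -- and that the restriction maps are left unchanged. In particular $g_{\I}^{*}$ takes the structure sheaf of $\MGp_{|\I}$ to the structure sheaf of $\MGst_{|\I}$; applied to the isomorphism of the previous paragraph it shows that ${}^{\text{stab}}(\BMP^{\text{par}}(w))$ is the extension by zero to $\MGst_{|\I}$ of the structure sheaf of $\MGst_{|[a,w]}$.

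Finally, $\MGst_{|[a,w]}$ has highest vertex $w$, and $(\MGst_{|[a,w]},k)$ is again a GKM-pair, so the Corollary above shows its structure sheaf is flabby, and Proposition \ref{prop_StrucShflabby} then gives an isomorphism between it and $\BMP^{\text{stab}}(w)$, the indecomposable Braden-MacPherson sheaf on $\MGst_{|[a,w]}$. A routine check of the axioms of an indecomposable Braden-MacPherson sheaf -- using that $\I\setminus[a,w]$ is open and that the sheaf vanishes on it -- shows that extension by zero carries $\BMP^{\text{stab}}(w)$ on $\MGst_{|[a,w]}$ to the indecomposable Braden-MacPherson sheaf on $\MGst_{|\I}$; hence ${}^{\text{stab}}(\BMP^{\text{par}}(w))$ is indecomposable Braden-MacPherson, as wanted. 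The only real work in this argument is what has already been carried out in Lemmas \ref{even_section}--\ref{lemma_sections1} and the Corollary, namely writing down by hand enough homogeneous global sections of the structure algebra of an interval of $\MGst$ to establish flabbiness; this is exactly where discarding the non-stable edges, with their badly behaved labels, pays off, and I expect it to be the main obstacle. Everything after that is formal bookkeeping with the pullback functor and with the general theory of Theorem \ref{theor_FW_BMPindc}.
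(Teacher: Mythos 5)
Your proposal is correct and follows essentially the same route as the paper: reduce, via the subgeneric multiplicity one fact and Proposition \ref{prop_StrucShflabby}, to showing that the structure sheaf on intervals of $\MGst$ is flabby, observe that $g_{\I}^{*}$ sends structure sheaf to structure sheaf, and invoke the explicit section constructions of Lemmas \ref{even_section}--\ref{lemma_sections1}. The only cosmetic difference is that you track arbitrary top vertices $w\in\I$ via extension by zero, whereas the paper works directly with $\MGst_{\leq w}$; since Lemma \ref{lemma_sections1} already handles arbitrary intervals $[m\al,n\al]$, this changes nothing of substance.
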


\section{General case}

In order to prove Theorem \ref{theor_stabfunctor}, we have to show that, for any interval $\I$ far enough in the fundamental chamber,
  if $\BMP$ is an indecomposable Braden-MacPherson sheaf on $\MGp_{|\I}$, then $\st\BMP$ is indecomposable and satisfies properties \emph{(BMP1)}, \emph{(BMP2)}, \emph{(BMP3)}, \emph{(BMP4)}.

\begin{lem}\label{lem_reduction} If $\F\in \textbf{Sh}_{\MGp_{|_{\I}}}$ satisfies \emph{(BMP1)}, \emph{(BMP2)}, \emph{(BMP4)}, 
 then also $\st\F$ has the same properties.
\end{lem}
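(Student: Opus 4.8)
The plan is to unwind the definition of the pullback $g_{\I}^{*}$ and observe that $\st{\F}=g_{\I}^{*}\F$ is nothing but the ``restriction'' of $\F$ to the subgraph $\MGst_{|_{\I}}\subseteq\MGp_{|_{\I}}$. Indeed, since $g_{\I}$ is the identity on the common vertex set $\V$ and has $g_{\I,l,x}=\id$ for every $x$, Definition \ref{Def_pull} gives, for each vertex $x$ and each (stable) edge $E\colon x\to y$ of $\MGst_{|_{\I}}$, that $(\st{\F})^{x}=\F^{x}$ (the twist by $g_{\I,l,x}=\id$ being trivial), that $(\st{\F})^{E}=\F^{g_{\I,\E}(E)}=\F^{E}$ (one always lands in the ``otherwise'' branch of \emph{(PULL2)}, because $g_{\I,\V}$ is injective, so $g_{\I,\V}(x)\neq g_{\I,\V}(y)$), and that $(\st{\rho})_{x,E}=\rho_{x,E}$. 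Note that, $\MGst$ being a subgraph of $\MGp$, the labels of a stable edge in the two graphs agree up to a unit of $k$ (both being of the form $\pm\check{\al}$), which is harmless for all the conditions at issue; in particular $\st{\F}$ is automatically a well-defined sheaf, satisfying \emph{(SH1)--(SH3)}.

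Granting this identification, I would dispatch \emph{(BMP1)} and \emph{(BMP2)} immediately: $(\st{\F})^{x}=\F^{x}$ is a free $S$-module because $\F$ satisfies \emph{(BMP1)}; and for a stable edge $E\colon x\to y$ the map $(\st{\rho})_{y,E}=\rho_{y,E}\colon\F^{y}\to\F^{E}$ is surjective with kernel $l_{\MGp}(E)\cdot\F^{y}=l_{\MGst}(E)\cdot\F^{y}$, because $\F$ satisfies \emph{(BMP2)} and the two labels differ only by a unit.

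The one point requiring a (small) argument is \emph{(BMP4)}. Here I would use that a family $(m_{z})_{z\in\V}$ compatible along all edges of $\MGp_{|_{\I}}$ is a fortiori compatible along the stable edges, so there is a tautological inclusion $\Gamma(\F)\hookrightarrow\Gamma(\st{\F})$ commuting with the projections to the $x$-component $(\st{\F})^{x}=\F^{x}$. Since $\F$ satisfies \emph{(BMP4)}, the projection $\Gamma(\F)\to\F^{x}$ is surjective; as it factors through $\Gamma(\st{\F})\to(\st{\F})^{x}$, the latter is surjective as well, which is \emph{(BMP4)} for $\st{\F}$. I do not expect any genuine obstacle in this lemma: it is a bookkeeping consequence of the fact that $g_{\I}$ is a subgraph inclusion that is the identity on vertices and (up to units) on labels. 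The only things to watch are the direction of the inclusion $\Gamma(\F)\subseteq\Gamma(\st{\F})$ — passing to the subgraph relaxes the gluing conditions, hence \emph{enlarges} the module of global sections — and the fact that flabbiness \emph{(BMP3)} is deliberately left out of this statement, to be established later by the translation-functor techniques.
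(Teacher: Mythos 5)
Your proposal is correct and follows essentially the same route as the paper: \emph{(BMP1)} and \emph{(BMP2)} are read off directly from the definition of the pullback along the subgraph inclusion $g_{\I}$ (identity on vertices and labels), and \emph{(BMP4)} is obtained from the inclusion $\Gamma(\I,\F)\subseteq\Gamma(\I,\st\F)$ that arises because deleting the non-stable edges only relaxes the gluing conditions, so the surjection onto each stalk factors through $\Gamma(\I,\st\F)$. Your extra remarks (labels agreeing up to a unit, always landing in the ``otherwise'' branch of \emph{(PULL2)}) are harmless elaborations of what the paper leaves implicit.
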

\proof 
To begin with, let us observe that \emph{(BMP1)} and \emph{(BMP2)} follow immediately from the definition  of the pullback functor, as ${\cdot}^{\text{stab}}=i_{\I}^{*}$.

 Moreover $\MGst_{|\I}$ is obtained from $\MGp_{|\I}$ by deleting the non-stable edges and hence there is an inclusion
\begin{align*}
\Gamma(\I,\F)&=\left\{(m_x)\in \bigoplus_{x\in \mathcal{I}} \mathcal{F}^{x} \mid \begin{array}{c} 
\rho_{x,E}(m_x)=\rho_{y,E}(m_y)\\ \forall E:x-\!\!\!-\!\!\!- y\in\E ,\, x,y\in\mathcal{I}                                                                                                                  
                                                                                                                 \end{array}\right\}\\
&\subseteq\left\{(m_x)\in \bigoplus_{x\in \mathcal{I}} \mathcal{F}^{x}=\bigoplus_{x\in\I}(\st\F)^x \mid \begin{array}{c} 
\rho_{x,E}(m_x)=\rho_{y,E}(m_y)\\ \forall E:x-\!\!\!-\!\!\!- y\in\E_S ,\, x,y\in\mathcal{I}
                                                                                                                 \end{array}\right\}\\
&= \Gamma(\I,\st\F)
\end{align*}
By assumption there is a surjective map $\Gamma(\I,\F)\rightarrow \F^x=(\st\F)^x$ and, from the inclusion above, it follows that the map $\Gamma(\I,\st\F)\rightarrow (\st\F)^x$ is surjective too.

\endproof

Therefore it is only left to show that $\st\BMP$ is a flabby indecomposable sheaf on $\MGst_{|_{\I}}$.

\subsection{Flabbiness} Let $\MGa$ denote the regular Bruhat graph of $\gaff$, while let $\MGper$ be the periodic graph of 
 \S\ref{ssec_perMG}.

It is possible to define a functor ${\cdot}^{\text{per}}:\textbf{Sh}_{\MGa}\rightarrow \textbf{Sh}_{\MGper}$ in a very easy way.
 Let $\F=(\{\F^x\}, \{\F^E\}\{\rho_{x,E}\})$, then  we set $(\F^{\text{per}})^x=\F^x$ for any $x\in\V$, $(\F^{\text{per}})^E=\F^E$for 
any $E\in\E$ and  $\rho^{\text{per}}_{x,E}=\rho_{x,E}$.

 A fundamental step in the proof of  the flabbiness of $\st \BMP$ consists in showing that ${\cdot}^{\text{per}}$ maps canonical sheaves 
to flabby sheaves.

\subsubsection{Translation functors}
 
Here we recall the definition of  (left and right) translation functors and of the corresponding categories of special modules.

Let us denote by $\Z$ the structure algebra of the (affine regular) Bruhat graph $\MGa$. To any simple reflection $s\in\SRA$, 
we associate the two following automorphisms of the structure algebra.

First, let $\sigma_s$ be defined as follows.
\begin{equation*}
 \begin{array}{rccc}
\sigma_s:&\Z&\longrightarrow &\Z\\
&(z_x)_{x\in\WA}&\mapsto& (z'_x)_{x\in\WA}
   \end{array}
\qquad \text{with } z'_x=z_{xs}
\end{equation*}
Secondly, if $\tau_s$ is the automorphism of the symmetric algebra $S_k$ induced by the mapping $\lambda\mapsto s(\lambda)$ 
for all $\lambda\in \haff^*$, we set:
\begin{equation*}
 \begin{array}{rccc}
{}_s\sigma:&\Z&\longrightarrow &\Z\\
&(z_x)_{x\in\WA}&\mapsto& (z'_x)_{x\in\WA}
   \end{array}
\qquad \text{with } z'_x=\tau_s(z_{sx})
\end{equation*}

Let $\Zmod$ be the category of $\Z$-modules which are finitely generated and free as $S_k$-modules. Moreover, let us denote by
 $\Zs$ and $\sZ$
 the $S_k$-sub-modules of $\Z$ consisting of $\sigma_s$- and ${}_s\sigma$-invariants, respectively.  
To start with, let us observe that $\Z$ is a $\Zs$- and $\sZ$-module
under pointwise multiplication.. Let  $c^s=(c^s_x)_{x\in\WA}$ be the element of $\Z$ given by $c^s_x=x(\alpha_s)$, while 
 $\overline{\alpha_s}\in\Z$ denotes the constant section whose  components are all equal to $\alpha_s$. 
Thus the following lemma tells us that $\Z$ decomposes in a nice way as $\Zs$-module, 
resp. $\sZ$-module

\begin{lem}\label{lem_left_invariants}
\begin{itemize}
 \item [(i)] There is a decomposition $\Z=\Zs\oplus c^s\Zs$ of $\Zs$-modules (cf. \cite{Fie08b}, Lemma 5.1)
\item [(ii)] There is a decomposition $\Z=\sZ\oplus \overline{\alpha_s}\Zs$ of $\Zs$-modules (cf. \cite{L12}, Lemma 4.1)
\end{itemize}
\end{lem}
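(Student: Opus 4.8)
The plan is to derive both decompositions from the single observation that $\sigma_s$ (in case (i)) and ${}_s\sigma$ (in case (ii)) is an \emph{involutive ring automorphism} of $\Z$; since $2\in k^\times$, $\Z$ then splits into the $(+1)$- and $(-1)$-eigenspaces of such an automorphism. First I would check that $\sigma_s\colon\Z\to\Z$ is well defined: it preserves $\Z$ because right multiplication by $s$ carries an edge $E\colon x\to s_\al x$ of $\MGa$ to an edge between $xs$ and $s_\al(xs)$ bearing the same label $\check\al$; it is visibly a ring homomorphism; and $\sigma_s^2=\id$ since $s^2=e$. As $\sigma_s$ restricts to the identity on $\Zs$ by definition it is $\Zs$-linear, and writing $z=\tfrac12(z+\sigma_s z)+\tfrac12(z-\sigma_s z)$ exhibits $\Z=\Zs\oplus\Z^-$ as a decomposition of $\Zs$-modules, where $\Z^-:=\{z\in\Z\mid\sigma_s z=-z\}$. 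The identical reasoning applies in case (ii), now with left multiplication by $s$ composed with $\tau_s$ (which carries an edge with label $\check\al$ to one whose label lies on the same line): thus ${}_s\sigma$ is an involutive ring automorphism of $\Z$ restricting to the identity on $\sZ$, and $\Z=\sZ\oplus{}^s\Z^-$ as $\sZ$-modules, ${}^s\Z^-$ denoting its $(-1)$-eigenspace.

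It then remains to identify $\Z^-$ with $c^s\Zs$. A short computation gives $c^s\in\Z$, and $\sigma_s(c^s)=-c^s$ because $c^s_{xs}=xs(\al_s)=-x(\al_s)=-c^s_x$; hence $\sigma_s(c^s a)=-c^s a$ for $a\in\Zs$, so that $c^s\Zs\subseteq\Z^-$. For the reverse inclusion I would argue componentwise: fix $z\in\Z^-$ and a vertex $x$. The $s$-edge through $x$ joins $x$ and $xs$ and its label generates the same ideal of $S_k$ as $c^s_x$, so from $z_x-z_{xs}=2z_x$ lying in that ideal and $2\in k^\times$ one gets $c^s_x\mid z_x$ in $S_k$; put $a_x:=z_x/c^s_x$. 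The identities $z_{xs}=-z_x$ and $c^s_{xs}=-c^s_x$ give $a_{xs}=a_x$, that is $\sigma_s(a)=a$; and $a=(a_x)_x$ lies in $\Z$, because for an edge $E\colon x\to y$ of $\MGa$ either $E$ is the $s$-edge through $x$, whence $a_x=a_y$, or $l(E)$ is coprime in $S_k$ to both $c^s_x$ and $c^s_y$ (if $l(E)$ were proportional to $c^s_x$ or to $c^s_y$ then $E$ would be the $s$-edge through $x$), so that the congruences $z_x\equiv z_y$ and $c^s_x\equiv c^s_y\pmod{l(E)}$ — the latter since $c^s\in\Z$ — force $l(E)\mid c^s_x c^s_y(a_x-a_y)$ and hence $l(E)\mid a_x-a_y$. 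Thus $a\in\Zs$ and $z=c^s a$, which proves $\Z^-=c^s\Zs$ and with it part (i).

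Part (ii) then follows by exactly the same scheme with $c^s$ replaced by the constant section $\overline{\al_s}$, for which ${}_s\sigma(\overline{\al_s})=-\overline{\al_s}$ since $\tau_s(\al_s)=-\al_s$; the only extra point is the divisibility $\al_s\mid z_x$ for $z\in{}^s\Z^-$, which I would obtain by writing $z_x=\tfrac12(z_x-\tau_s z_x)+\tfrac12(z_x+\tau_s z_x)$: the first summand is $\tau_s$-anti-invariant, hence divisible by $\al_s$, and the second equals $z_x-z_{sx}$ (because ${}_s\sigma z=-z$ forces $z_{sx}=-\tau_s z_x$), hence divisible by the label of the edge joining $x$ and $sx$, again on the line of $\al_s$; invertibility of $2$ then yields $\al_s\mid z_x$, and one concludes ${}^s\Z^-=\overline{\al_s}\,\sZ$, i.e. $\Z=\sZ\oplus\overline{\al_s}\,\sZ$ as $\sZ$-modules. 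The main obstacle — the only step beyond the formal eigenspace splitting — is precisely this reverse inclusion: one must verify that dividing each component of a $(-1)$-eigenvector by $c^s_x$ (resp. by $\al_s$) lands back inside $\Z$, and this rests on the compatibility $c^s_x\equiv c^s_y\pmod{l(E)}$ along edges together with coprimality of distinct edge labels, where the root/coroot identification of Section~2 and, where needed, the GKM property (Definition~\ref{Def_GKM}) come in.
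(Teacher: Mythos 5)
The paper itself does not prove this lemma: it is imported from \cite{Fie08b} (Lemma 5.1) and \cite{L12} (Lemma 4.1), and your argument is essentially the proof given in those sources — split $\Z$ into $\pm 1$-eigenspaces of the involution using $2\in k^{\times}$, then identify the $(-1)$-eigenspace by componentwise division, using the edge congruences, the fact that real coroots are non-zero-divisors in $S_k$, and non-proportionality of labels of distinct edges at a vertex (which, beyond characteristic $0$, is exactly where the GKM-type hypotheses of the cited references enter, as you note). Two small points are worth adding. First, in (ii) the phrase ``exactly the same scheme'' hides the one verification that differs from (i): for the quotient section $w=(z_x/\alpha_s)$ you must still check the congruence along the edge joining $x$ and $sx$, whose label is $\check{\alpha}_s$ itself, so it cannot be handled by the coprimality argument; it does hold, because $z_{sx}=-\tau_s(z_x)$ forces $w_{sx}=\tau_s(w_x)$, whence $w_x-w_{sx}=w_x-\tau_s(w_x)$ is divisible by $\alpha_s$, and the same identity gives ${}_s\sigma(w)=w$, i.e.\ $w\in\sZ$. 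Second, what your argument establishes is the decomposition $\Z=\sZ\oplus\overline{\alpha_s}\,\sZ$ as $\sZ$-modules, which is evidently what is intended here (the ``$\overline{\alpha_s}\Zs$'' and ``$\Zs$-modules'' in part (ii) of the statement appear to be typos, and the $\sZ$-version is the one needed for ${}^{s,\text{out}}\theta$ to be well defined).
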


\begin{defin}
\begin{itemize}
 \item[(i)]
\begin{itemize}
 \item The \emph{(right) translation on the wall} is the functor $\theta^{s,on}:\Zmod \rightarrow\Zsmod$ given by 
$M\mapsto \text{Res}_{\Z}^{\Zs} M$.
\item The \emph{(right) translation out of the wall}  is the functor $\theta^{s,out}:\sZmod \rightarrow\Zmod$ defined by 
the $N\mapsto \text{Ind}_{\Z}^{\Zs} N$. Observe that this functor is well--defined due to Lemma \ref{lem_left_invariants}.
\item 
By composition, we get a functor $\theta^s:=\theta^{s,out}\circ\theta^{s,on}: \Zmod\rightarrow \Zmod$ that we call \emph{(right) translation functor}.
(cf. \cite{Fie08b}).
\end{itemize}
 \item[(ii)] \begin{itemize}
 \item The \emph{(left) translation on the wall} is the functor ${}^{s,on}\theta:\Zmod \rightarrow\sZmod$ given by 
$M\mapsto \text{Res}_{\Z}^{\sZ} M$.
\item The \emph{(left) translation out of the wall}  is the functor ${}^{s,out}\theta:\sZmod \rightarrow\Zmod$ defined by 
the $N\mapsto \text{Ind}_{\Z}^{\sZ} N$. Again, this functor is well--defined thanks to Lemma \ref{lem_left_invariants}.
\item 
By composition, we get a functor ${}^{s}\theta:={}^{s,out}\theta^{s,out}\circ{}^{s,on}\theta: \Zmod\rightarrow \Zmod$ that we call
 \emph{(left) translation functor}.
(cf. \cite{L12}). 
\end{itemize} 
\end{itemize}
\end{defin}

\begin{rem}
 The advantage of defining left translation functors is that they can be applied  also in  the parabolic setting. Indeed, if $\Z^J$ is the structure algebra of the Bruhat graph $\MG^J=\MG^J(\g\supseteq \b\supseteq \h, J)$, then the
automorphism  ${}_s\sigma^J:\Z^J\rightarrow \Z^J$ given by $(z_x)\mapsto (z'_x)$ with  $z'_x=\tau_s(z_{sx})$
 is still well-defined for any finitary $J\subseteq \SRA$ (cf. \cite{L12}, where the definition of left translation functors
 lead to a categorification of a parabolic Hecke module). In the case of $J=\SR$, the set of finite simple reflections, 
 we will denote the corresponding translation functor by ${}^s\theta^{\text{par}}$.
\end{rem}

\subsubsection{Flabby $\Z$-modules} 
 For all $k$-moment graphs $\mathcal{K}$, Fiebig defined the localization functor 
\begin{equation*}\mathscr{L}:\Z(\mathcal{K})-\text{mod}\rightarrow\textbf{Sh}_k(\mathcal{K}),
\end{equation*}
whose definition we are going to recall.

First of all we need to set some notation.  Let $Q$ be the quotient field of $S_k$ and,  for any  $M\in\Zmod$, let us
 write $M_{Q}:=M\otimes_{S_k}Q$. If $\mathcal{K}$ is a Bruhat graph, then  (see \S3.1 of \cite{Fie08a})  there is a decomposition $M_{Q}:=M\cap \bigoplus_{x\in \V} M_{Q}^{x}$ and so a canonical inclusion $M\subseteq \bigoplus_{x\in \V} M_{Q}^{x}$. For all subsets  of the set of vertices $\Omega\subseteq \V$, we may define:
\begin{equation*}M_{\Omega}:=M\cap \bigoplus_{x\in \Omega} M_{Q}^{x},\end{equation*}
\begin{equation*}M^{\Omega}:=M/M_{\V\setminus \Omega}=\im\left( M\rightarrow M_{Q}=\bigoplus_{x\in \Omega} M_{Q}^{x}\right).
\end{equation*}

For any vertex $x\in\V$, we set
\begin{equation*}\mathscr{L}(M)^x=M^x
\end{equation*}
For any edge $E:x-\!\!\!-\!\!\!-y$, let us consider  $\Z(E)=\{(z_x,z_x)\in S_k\oplus S_k\,\vert\, z_x-z_y\in l(E)\cdot S_k\}$ and $M(E):=\Z(E)\cdot M^{x,y}$.  For $m=(m_x,m_y)\in M(E)$, let us set $\pi_x((m))=m_x$, $\pi_y((m))=m_y$ and define $\mathscr{L}(M)^E$ as the push--out  in the following diagram of $S_k$-modules:
\begin{displaymath}
\begin{xymatrix}
{M(E) \ar[r]^{\pi_x}\ar[d]_{\pi_y}&M^x \ar[d]_{\rho_{x,E}}\\
M^y\ar[r]^{\rho_{y,E}}&\mathscr{L}(M)^E
}
\end{xymatrix}
\end{displaymath}

In this way, we also get the  maps $\rho_{x,E}$ and $\rho_{y,E}$.

Finally, from \cite[Theorem 3.5]{Fie08a}, the functor $\mathscr{L}$ is left adjoint to the global section functor 
\begin{equation*}\Gamma:\textbf{Sh}_k(\mathcal{K})\rightarrow \Z(\mathcal{K})-\text{mod}.
\end{equation*}

%Since in the definition of the functors $\mathscr{\L}$ and $\Gamma$, the partial order does not play any role, we also get 
% two functors 

%\begin{displaymath}\xymatrix{ \textbf{Sh}_k(\MGper) \ar@/_1.4pc/[rr]_{\mathscr{L}} && \textbf{Sh}_k(\MG')\ar@/_1.4pc/[ll]_{\Gamma}
%}
% \end{displaymath}
From now on, we will deal again with Bruhat moment graphs and we will denote once again by $\Z$  the structure algebra of
 the regularaffine  Bruhat graph $\MGa$.
 First of all, let us notice that, being the structure algebra of a moment graph independent from the partial order on the
set of vertices, $\Z$ coincides with the structure algebra of the periodic moment graph $\MGper$. Therefore we may summarise this setting as
 follows.

 \begin{displaymath}\xymatrix{ \Zmod \ar@/_1.4pc/[rr]_{\mathscr{L}} && \textbf{Sh}_k(\MGa)\ar@/_1.4pc/[ll]_{\Gamma}
}\qquad
\xymatrix{ \Zmod \ar@/_1.4pc/[rr]_{\mathscr{L}} && \textbf{Sh}_k(\MGper)\ar@/_1.4pc/[ll]_{\Gamma}
}
\end{displaymath}

%Moreover, we may identify the
 %ring $\Zs$ with the structure algebra $\Z'$ of the Bruhat graph $\MG^{\{s\}}=\MG(\g, \{s\})$. 

%Using Fiebig's terminology, we may now say that 

% Moreover (cf. \cite{Fie08a}) also the other implication holds, that is $\F\in\text{Sh}_k(\mathcal{K})$ is flabby if $\Gamma(\F)$ is flabby.
Since for a BMP-sheaf $\F$ on $\MGa$ it holds $\mathscr{L}\circ \Gamma(\F)\cong \F$ (cf. \cite{Fie08a}), our claim is equivalent to the fact that 
$\mathscr{L}(\Gamma(\F^{\text{per}}))=\mathscr{L}(\Gamma(\F))^{\text{per}}$  is a flabby sheaf on $\MGper$ if $\F$ is a Braden 
MacPherson sheaf.
 We will prove it using translation functors.

\subsubsection{Three properties of the Bruhat order}\label{ssec_propertiesBruhat}
Let us consider the following properties of the Bruhat order.
\begin{itemize}
\item[(PO1)] The elements $w$ and $tw$ are comparable for all $w\in\WA$ and $t\in\T$. The relations between all such pairs 
$w$, $tw$ generate the partial order.
\item[(PO2)]We have $[w,ws] = \{w,ws\}$ for all $w\in\WA$ and $s\in\SRA$ such that $w<ws$.
\item[(PO3)]
\begin{itemize}
\item[(i)]
For $x, y\in\WA$ such that $x<xs$ and $y\leq xs$ we have $ys\leq xs$.
\item[(ii)] 
For $x, y\in\WA$ such that $xs <x$ and $xs \leq y$ we have $xs \leq ys$.
\end{itemize}
\end{itemize}

From now on,  $\trianglelefteq$ will denote a partial order on $\WA$ having the same properties as above. And let $\MG_{\trianglelefteq}$ be the
 moment graph on $\widehat{\coQ}$ obtained from the regular Bruhat graph $\MGa$ by changing the orientation of the edges according to 
this new partial order. This graph still satisfies the property of not having oriented cycles, thanks to (PO1) above.

\begin{defin}\label{def_tle_flabby}An object $M\in \Z-\text{mod}$ is \emph{$\trianglelefteq$-flabby}  if  the  corresponding sheaf $\mathscr{L}(M)$
 is a flabby sheaf on $\MG_{\trianglelefteq}$.
\end{defin}

In \cite{Fie08b} Fiebig proved that translation functors preserve $\leq$-flabby object, while he was proving them to 
preserve a stronger property of objects in $\Zmod$. In what follows, we are going to recall the main steps of his argument
and to point out that, whenever a proof depends on the Bruhat order on the set of  vertices, it actually depends only on the
 properties we listed before

\subsubsection{The moment graph $\MGa^{s}_{\trianglelefteq}$} A first fundamental observation made by Fiebig is that it is
 possible to identify the
 set of invariants $\Zs$ with the structure algebra of the Bruhat moment graph $\MGa^s=\MG(\gaff\supseteq \baff\supseteq \haff, \{s\})$ 
(cf. \cite[\S 5.1]{Fie08b} ). 
We were able to define Bruhat
moment  graphs, since the Bruhat order is preserved by the quotient map (see \cite[Proposition 2.5.1]{BB}). We want now to show
 that it makes sense to define a quotient graph of $\MG_{\trianglelefteq}$ too, if $J$ consists of only one simple reflection. 
Let $\trianglelefteq$ be a partial order having the above properties, then
for  any $s\in\SRA$, we will denote $\WA^s_{\trianglelefteq}$ the set of minimal
 representatives wrt $\trianglelefteq$ of the equivalence classes of $\WA/\{e,s\}$. Moreover, for any $x\in \WA$
let us set $\overline{x}_{\trianglelefteq}:=\min\{x,xs\}$, where the minimum is taken with respect to $\trianglelefteq$. 

\begin{lem}\label{lem_map_OP}
 The map $\WA\rightarrow \WA^{s}_{\trianglelefteq}$, given by $u\mapsto \overline{u}_{\trianglelefteq}$,
is order preserving.
\end{lem}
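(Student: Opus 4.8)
The plan is to read this as the $\trianglelefteq$-analogue of the classical statement that the projection $\WA\to\WA/\{e,s\}$ onto minimal coset representatives is order preserving, and to reprove it working purely from the axioms (PO1)--(PO3). The very first step is to check that the map is well defined: since $us=(usu^{-1})u$ with $usu^{-1}\in\T$, clause (PO1) guarantees that $u$ and $us$ are $\trianglelefteq$-comparable, and $u\neq us$ because $s\neq e$; hence $\overline{u}_{\trianglelefteq}=\min\{u,us\}$ (minimum with respect to $\trianglelefteq$) exists, and it is precisely the image of $u$ under the projection onto $\WA^{s}_{\trianglelefteq}$. So, given $x\trianglelefteq y$, what must be shown is the inequality $\overline{x}_{\trianglelefteq}\trianglelefteq\overline{y}_{\trianglelefteq}$ in $\WA$.

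The core of the argument is then a short four-way case split according to whether $\overline{x}_{\trianglelefteq}$ is $x$ or $xs$ and whether $\overline{y}_{\trianglelefteq}$ is $y$ or $ys$. If $\overline{x}_{\trianglelefteq}=xs$, that is $xs\triangleleft x$, then $xs\triangleleft x\trianglelefteq y$ gives $xs\trianglelefteq y$ by transitivity; when $\overline{y}_{\trianglelefteq}=y$ this already reads $\overline{x}_{\trianglelefteq}\trianglelefteq\overline{y}_{\trianglelefteq}$, and when $\overline{y}_{\trianglelefteq}=ys$ one applies (PO3)(ii) to the pair $(x,y)$ --- its hypotheses ``$xs\triangleleft x$'' and ``$xs\trianglelefteq y$'' being exactly what is in hand --- to conclude $xs\trianglelefteq ys$. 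If instead $\overline{x}_{\trianglelefteq}=x$ (so $x\triangleleft xs$) and $\overline{y}_{\trianglelefteq}=y$, then $\overline{x}_{\trianglelefteq}=x\trianglelefteq y=\overline{y}_{\trianglelefteq}$ trivially. The one slightly delicate case is $\overline{x}_{\trianglelefteq}=x$ with $\overline{y}_{\trianglelefteq}=ys$, where one needs $x\trianglelefteq ys$: here I would invoke (PO3)(ii) with $xs$ in the role of its ``$x$'' and $y$ in the role of its ``$y$'', so that its hypotheses ``$(xs)s\triangleleft xs$'' and ``$(xs)s\trianglelefteq y$'' become $x\triangleleft xs$ (true, since $\overline{x}_{\trianglelefteq}=x$) and $x\trianglelefteq y$ (true by assumption), and its conclusion ``$(xs)s\trianglelefteq ys$'' becomes $x\trianglelefteq ys$, as required.

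I do not anticipate a genuine obstacle: each step is either transitivity of $\trianglelefteq$ or a single application of (PO3)(ii), with (PO1) used only to make the target set $\WA^{s}_{\trianglelefteq}$ meaningful, and in fact neither (PO2) nor (PO3)(i) enters this particular lemma. One could, as an alternative opening move, first reduce via the ``generating'' clause of (PO1) to checking the inequality only on atomic pairs $x\trianglelefteq tx$ with $t\in\T$; but because (PO3)(ii) already applies to arbitrary comparable pairs, that reduction gains nothing, so I would omit it and present the case analysis directly.
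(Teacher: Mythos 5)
Your proof is correct and follows essentially the same route as the paper's: the only nontrivial cases are handled by exactly the two applications of (PO3)(ii) the paper uses (with $x=us$, $y=v$ when $\overline{u}_{\trianglelefteq}=u$, and with $x=u$, $y=v$ when $\overline{u}_{\trianglelefteq}=us$), the rest being transitivity. Your four-way case split is just a slightly more explicit organisation of the paper's argument, and your remark that (PO2) and (PO3)(i) are not needed is consistent with it.
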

\proof
Let us suppose that $u\trianglelefteq v$ in $\WA$. We want to show that $\overline{u}_{\trianglelefteq} \trianglelefteq \overline{v}_{\trianglelefteq}$.

First of all, since $\overline{u}_{\trianglelefteq}\trianglelefteq u \trianglelefteq v$, if $v=\overline{v}_{\trianglelefteq}$, there is nothing to 
prove. Otherwise,  $\overline{v}_{\trianglelefteq}=vs\triangleleft v$ and again either
 $\overline{u}_{\trianglelefteq}=u\triangleleft us$ or $\overline{u}_{\trianglelefteq}=us\triangleleft u$. In the first case,
 we may apply (PO3) (ii) with $x=us$ and $y=v$ and obtain 
$\overline{u}_{\trianglelefteq}\trianglelefteq \overline{v}_{\trianglelefteq}$.
 On the other hand, if $\overline{u}_{\trianglelefteq}=us\triangleleft u$, by (PO3) (ii) with $x=u$ and $y=v$, it follows
$\overline{u}_{\trianglelefteq}\trianglelefteq \overline{v}_{\trianglelefteq}$.
\endproof

This leads us to the definition of the moment graph $\MGa^{s}_{\trianglelefteq}$ on $\widehat{\coQ}$, having as set of vertices 
$\WA^s_{\trianglelefteq}$, equipped with the partial order $\trianglelefteq$,  edges $x\rightarrow y$ if either 
$x-\!\!\!-\!\!\!- y$ or $x-\!\!\!-\!\!\!-ys$ was
 an edge of $\MG$, in which case the label is the same. It is clear that $\Zs$ coincides with the structure algebra of this 
graph  $\Z'$. Again, we say that $N\in \Z'\!-\!\text{mod}^{\text{f}}$ is $\trianglelefteq$-flabby if $\mathscr{L}(N)$ is a flabby sheaf on 
$\MG^s_{\trianglelefteq}$.

 For all  $\I\subseteq  \WA$, let us denote by $\overline{\I}_{\trianglelefteq}$ the image of $\I$ in 
$\WA^s_{\trianglelefteq}$ under the map $u\mapsto \overline{u}_{\trianglelefteq}$. The following two results do not depend 
on the partial order.

\begin{prop}\label{prop_ths}(cf. \cite{Fie08b}, Proposition 5.3) Let $M\in \Zmod$ and $N\in\Z'\!-\!\text{mod}^{\text{f}}$. Then
for $\I\subseteq \WA$ such that $s\cdot \I=\I$ we have
\begin{equation*}
 (\theta^{s, on}M)^{\overline{\I}_{\trianglelefteq}}= \theta^{s, on}(M^{\I}),\qquad
  (\theta^{s, out}N)^{\I}= \theta^{s, out}(N^{\overline{\I}_{\trianglelefteq}})
\end{equation*}
and
\begin{equation*}
 (\theta^{s, on}M)_{\overline{\I}_{\trianglelefteq}}= \theta^{s, on}(M_{\I}),\qquad
  (\theta^{s, out}N)_{\I}= \theta^{s, out}(N_{\overline{\I}_{\trianglelefteq}})
\end{equation*}
\end{prop}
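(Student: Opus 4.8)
The plan is to deduce all four identities from one localisation picture, exploiting that $\Z$ and $\Z'=\Z^s$ are structure algebras of Bruhat moment graphs and that $\Z$ is free of rank two over $\Z^s$. Write $Q$ for the quotient field of $S_k$ and, for $x\in\WA$, abbreviate $\overline{x}:=\overline{x}_{\trianglelefteq}=\min_{\trianglelefteq}\{x,xs\}$. Since every label of $\MGa$ and of $\MGa^{s}_{\trianglelefteq}$ is a non-zero element of $S_k$, inverting $S_k\setminus\{0\}$ kills the defining congruences, so $\Z_Q=\bigoplus_{x\in\WA}Q$ and $\Z^s_Q=\bigoplus_{\overline{x}\in\WA^{s}_{\trianglelefteq}}Q$ as $Q$-algebras, with primitive idempotents $e_x$, resp. $\overline{e}_{\overline{x}}$. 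The computation I would record first is the \emph{idempotent relation}: under $\Z^s\hookrightarrow\Z$, which after localisation is the diagonal-within-cosets inclusion $\{(w_x)\in\bigoplus_x Q\,:\,w_x=w_{xs}\}\hookrightarrow\bigoplus_x Q$, one has $\overline{e}_{\overline{x}}=e_x+e_{xs}$. From this, for $M\in\Zmod$ we get $(\theta^{s,on}M)^{\overline{x}}_Q=\overline{e}_{\overline{x}}M_Q=M_Q^x\oplus M_Q^{xs}$; and since $\Z=\Z^s\oplus c^s\Z^s$ by Lemma \ref{lem_left_invariants}, for $N\in\Z'\text{-mod}^{\text{f}}$ the module $\theta^{s,out}N=\Z\otimes_{\Z^s}N$ is isomorphic over $S_k$ to $N^{\oplus 2}$ — hence lies in $\Zmod$ — and a block-by-block computation over the cosets $\{x,xs\}$ gives $(\theta^{s,out}N)^x_Q\cong N_Q^{\overline{x}}$ for all $x\in\WA$.

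I would then dispose of $\theta^{s,on}$, which alters only the $\Z$-action and leaves the underlying $S_k$-module — hence the whole localisation — unchanged, merely regrouping the $Q$-decomposition by cosets. The hypothesis on $\I$ says exactly that $\I=\bigsqcup_{\overline{x}\in\overline{\I}_{\trianglelefteq}}\{x,xs\}$, so $\bigoplus_{\overline{x}\in\overline{\I}_{\trianglelefteq}}(\theta^{s,on}M)^{\overline{x}}_Q=\bigoplus_{x\in\I}M_Q^x$ inside $M_Q$; intersecting with $M$ yields $(\theta^{s,on}M)_{\overline{\I}_{\trianglelefteq}}=M_\I=\theta^{s,on}(M_\I)$. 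Since $\WA\setminus\I$ is again a union of such cosets with image $\WA^s_{\trianglelefteq}\setminus\overline{\I}_{\trianglelefteq}$, passing to the matching quotients gives $(\theta^{s,on}M)^{\overline{\I}_{\trianglelefteq}}=M^\I=\theta^{s,on}(M^\I)$.

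For $\theta^{s,out}$ I would use that $\Z$ is faithfully flat over $\Z^s$, so $\theta^{s,out}=\Z\otimes_{\Z^s}-$ is exact, and that $\theta^{s,out}(L)\cong L^{\oplus 2}$ over $S_k$ for any $L$, so $\theta^{s,out}$ sends torsion-free $S_k$-modules to torsion-free ones. From $(\theta^{s,out}N)^x_Q\cong N_Q^{\overline{x}}$ and the full-rank identity $(N_{\overline{\I}_{\trianglelefteq}})_Q=\bigoplus_{\overline{x}\in\overline{\I}_{\trianglelefteq}}N_Q^{\overline{x}}$ it follows that $\theta^{s,out}(N_{\overline{\I}_{\trianglelefteq}})$ and $(\theta^{s,out}N)_\I$ are two sublattices of $(\theta^{s,out}N)_Q$ with the same localisation, the first contained in the second. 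Their quotient is $S_k$-torsion; but it embeds into $(\theta^{s,out}N)/\theta^{s,out}(N_{\overline{\I}_{\trianglelefteq}})\cong\theta^{s,out}(N/N_{\overline{\I}_{\trianglelefteq}})$, which is torsion-free (it is, up to isomorphism, two copies of the submodule $N/N_{\overline{\I}_{\trianglelefteq}}$ of a $Q$-vector space), so the quotient vanishes and $(\theta^{s,out}N)_\I=\theta^{s,out}(N_{\overline{\I}_{\trianglelefteq}})$. Applying the exact functor $\theta^{s,out}$ to $N^{\overline{\I}_{\trianglelefteq}}=N/N_{\overline{(\WA\setminus\I)}_{\trianglelefteq}}$ and invoking the identity just proved for $\WA\setminus\I$ then gives $(\theta^{s,out}N)^\I=\theta^{s,out}(N^{\overline{\I}_{\trianglelefteq}})$.

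The substance of the argument is thus the preparatory material, and that is where I would concentrate the care: identifying $\Z^s$ with the structure algebra of the quotient graph $\MGa^{s}_{\trianglelefteq}$, verifying the idempotent relation $\overline{e}_{\overline{x}}=e_x+e_{xs}$, and reading freeness of $\Z$ over $\Z^s$ off Lemma \ref{lem_left_invariants}. All of this is available from the discussion preceding the proposition, and — as remarked there — nothing in the proof uses (PO1)–(PO3): only that $u\mapsto\overline{u}_{\trianglelefteq}$ partitions $\WA$ into the two-element cosets $\{x,xs\}$ is needed, which is precisely why the statement holds for every admissible partial order $\trianglelefteq$.
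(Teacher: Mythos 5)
Your argument is correct, and it is essentially the intended one: the paper gives no proof of this proposition but quotes Fiebig's Proposition 5.3, whose proof runs along exactly the lines you describe (the generic decomposition $M_Q=\bigoplus_x M_Q^x$, the coset/idempotent relation identifying the $\overline{x}_{\trianglelefteq}$-component with $M_Q^x\oplus M_Q^{xs}$, and the freeness $\Z=\Zs\oplus c^s\Zs$), so your write-up also substantiates the paper's remark that only the coset partition, and none of (PO1)--(PO3), is used. Two small points to polish: the invariance hypothesis should be read as $\I s=\I$, i.e.\ $\I$ is a union of right cosets $\{x,xs\}$, which is what matches $\sigma_s(z)_x=z_{xs}$ and what you in fact use; and for the infinite graph $\MGa$ the statement $\Z_Q=\bigoplus_{x\in\WA}Q$ should be phrased via the finitely many vertices supporting a given $M\in\Zmod$, since the eigenspace projections exist on $M_Q$ even though the idempotents do not literally lie in the localised structure algebra.
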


\begin{lem}\label{lem_GammaL}(cf. \cite{Fie08a}, Lemma 3.3)
For any $M\in \Zmod$, resp. $N\in\Z'\!-\!\text{mod}^{\text{f}}$, and any $\I\in\WA$, resp 
$\mathcal{J}\subseteq \WA^s_{\trianglelefteq}$, it holds $\Gamma(\I, \mathscr{L}(M))=\Gamma(\mathscr{L}(M^{\I}))$,
resp $\Gamma(\mathcal{J}, \mathscr{L}(N))=\Gamma(\mathscr{L}(N^{\mathcal{J}}))$
\end{lem}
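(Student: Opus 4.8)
The plan is to unravel the definitions of the localisation functor $\mathscr{L}$ and of the section functor $\Gamma$, and to show that $\mathscr{L}(M^{\I})$ is nothing but $\mathscr{L}(M)$ ``restricted to $\I$'': it has the same stalks and edge modules over the vertices and edges lying in $\I$, and vanishing stalks outside $\I$. Once this is established, a global section of $\mathscr{L}(M^{\I})$ is exactly a tuple $(m_x)_{x\in\V}$ with $m_x=0$ for $x\notin\I$ whose components are compatible along the edges joining two vertices of $\I$ --- an edge with an endpoint off $\I$ carries a zero stalk on that side and so imposes no condition --- which is precisely the defining datum of $\Gamma(\I,\mathscr{L}(M))$. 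The same argument, run verbatim on the moment graph $\MGa^{s}_{\tle}$ with structure algebra $\Z'=\Zs$ in place of $\MGa$ and $\Z$, yields the parabolic statement, so I would only carry out the first case in detail.

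The technical heart of the matter is a \emph{transitivity of truncation}: if $\Omega\subseteq\I\subseteq\V$, then $(M^{\I})^{\Omega}=M^{\Omega}$, compatibly with the canonical embeddings into $M_Q$. I would prove this from the description $M^{\Omega}=\im\big(M\to\bigoplus_{x\in\Omega}M_Q^x\big)$. First one notes that, since $M\to M_Q$ becomes an isomorphism after $\otimes_{S_k}Q$, the module $M^{\I}$ has generic fibre $(M^{\I})\otimes_{S_k}Q=\bigoplus_{x\in\I}M_Q^x$, so that $(M^{\I})_Q^x=M_Q^x$ for every $x\in\I$. Then the map $M^{\I}\to\bigoplus_{x\in\Omega}(M^{\I})_Q^x=\bigoplus_{x\in\Omega}M_Q^x$ defining $(M^{\I})^{\Omega}$, precomposed with the surjection $M\twoheadrightarrow M^{\I}$, is literally the projection $M\to\bigoplus_{x\in\Omega}M_Q^x$ defining $M^{\Omega}$ (here $\Omega\subseteq\I$ is used to identify the composite of the two projections with the single one), so the two images inside $\bigoplus_{x\in\Omega}M_Q^x$ coincide.

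Granting this, I would conclude as follows. Taking $\Omega=\{x\}$ for $x\in\I$ gives $\mathscr{L}(M^{\I})^x=(M^{\I})^x=M^x=\mathscr{L}(M)^x$, while for $x\notin\I$ one has $(M^{\I})_Q^x=0$, hence $(M^{\I})^x=0$. Taking $\Omega=\{x,y\}$ for an edge $E\colon x-\!\!\!-\!\!\!-y$ with $x,y\in\I$ gives $(M^{\I})^{x,y}=M^{x,y}$ as submodules of $M_Q^x\oplus M_Q^y$; therefore $(M^{\I})(E)=\Z(E)\cdot(M^{\I})^{x,y}=\Z(E)\cdot M^{x,y}=M(E)$, the maps $\pi_x,\pi_y$ are the same coordinate projections, and so the push-out squares defining $\mathscr{L}(M^{\I})^E$ and $\mathscr{L}(M)^E$, together with the structure maps $\rho_{x,E},\rho_{y,E}$, coincide. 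This gives the identification of sheaves announced above, and the lemma follows by the bookkeeping of sections described in the first paragraph.

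The only step I expect to be a genuine obstacle is the transitivity of truncation, and inside it the interchange of ``taking images'' with ``base change to $Q$'': one must check with some care that $M^{\I}$, though defined as an image submodule of $\bigoplus_{x\in\I}M_Q^x$, has exactly that direct sum as its generic fibre, so that a further truncation of $M^{\I}$ is computed against the correct ambient module. Everything else is a routine unfolding of the definitions of $\mathscr{L}$, of $M^{\Omega}$ and $M(E)$, and of $\Gamma$.
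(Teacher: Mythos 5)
Your argument is correct, and in fact the paper offers no proof of this lemma at all: it is quoted from \cite{Fie08a} (Lemma 3.3), with only the remark that the statement is independent of the chosen partial order -- which your proof makes manifest, since no order enters anywhere. The argument behind the citation is essentially the definitional unwinding you carry out: transitivity of truncation $(M^{\I})^{\Omega}=M^{\Omega}$ for $\Omega\subseteq\I$ (your flat base change point, $Q$ being a localisation of $S_k$, is exactly what is needed, together with the observation that $M\twoheadrightarrow M^{\I}$ is $\Z$-equivariant so the vertex decomposition of $(M^{\I})_Q$ is the induced one), then identification of stalks, edge modules and push-outs over $\I$, and vanishing stalks outside. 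One small step you should state explicitly: for an edge $E:x-\!\!\!-\!\!\!-y$ with $x\in\I$ and $y\notin\I$, the vanishing of the stalk at $y$ does not by itself make the compatibility condition vacuous -- you also need $\mathscr{L}(M^{\I})^{E}=0$, which holds because $(M^{\I})(E)\rightarrow(M^{\I})^{x}$ is surjective, so the push-out defining $\mathscr{L}(M^{\I})^{E}$ is the zero module; with that noted, the bookkeeping of sections gives the claimed equality, and the parabolic case is verbatim the same on $\MGa^{s}_{\trianglelefteq}$.
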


Therefore we get 

\begin{lem}\label{lem_thson_flabby}
 Let $M\in \Zmod$ be such that the adjunction morphism $M\mapsto \Gamma(\mathscr{L}(M))$ is an
isomorphism. If $M$ is $\trianglelefteq$-flabby, then also $\theta^{s,on}(M)\in \Z'\!-\!\text{mod}^{\text{f}}$ is
 $\trianglelefteq$-flabby. 
\end{lem}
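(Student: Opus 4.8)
The plan is to reduce the flabbiness of $\theta^{s,on}(M)$ over an open subset to the flabbiness of $M$ over the preimage of that subset, exploiting the compatibility of $\theta^{s,on}$ with the operations $(-)^{\I}$ and $(-)_{\I}$ from Proposition \ref{prop_ths} together with Lemma \ref{lem_GammaL}. First I would recall that $\mathscr{L}(\theta^{s,on}M)$ being flabby on $\MGa^{s}_{\trianglelefteq}$ means: for every open (i.e. upwardly closed with respect to $\trianglelefteq$) subset $\mathcal{J}\subseteq \WA^{s}_{\trianglelefteq}$, the restriction map $\Gamma(\mathscr{L}(\theta^{s,on}M))\rightarrow \Gamma(\mathcal{J},\mathscr{L}(\theta^{s,on}M))$ is surjective. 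By Lemma \ref{lem_GammaL} the target equals $\Gamma(\mathscr{L}((\theta^{s,on}M)^{\mathcal{J}}))$, and the source equals $\Gamma(\mathscr{L}(\theta^{s,on}M))$, which by the hypothesis on the adjunction morphism for $M$ and the fact that $\theta^{s,on}$ is just restriction of scalars $\mathrm{Res}^{\Zs}_{\Z}$ can be identified with $\theta^{s,on}\Gamma(\mathscr{L}(M))=\theta^{s,on}(M)$ (here one uses that $\Gamma\circ\mathscr{L}$ applied to $M$ gives back $M$, and that $\Gamma$ of a sheaf on $\MGa^{s}_{\trianglelefteq}$ only sees the underlying $\Zs$-module structure).

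Next I would set $\I:=$ the preimage of $\mathcal{J}$ under the order-preserving map $u\mapsto \overline{u}_{\trianglelefteq}$ of Lemma \ref{lem_map_OP}; this $\I$ is automatically $s$-stable (since $u$ and $us$ have the same image) and it is open in $\WA$ with respect to $\trianglelefteq$, because $\mathcal{J}$ is open and the quotient map is order-preserving, so $\overline{\I}_{\trianglelefteq}=\mathcal{J}$. Now apply the first identity of Proposition \ref{prop_ths}: $(\theta^{s,on}M)^{\mathcal{J}}=(\theta^{s,on}M)^{\overline{\I}_{\trianglelefteq}}=\theta^{s,on}(M^{\I})$. Therefore the restriction map we must show is surjective becomes, after applying $\Gamma\circ\mathscr{L}$ and Lemma \ref{lem_GammaL} on both sides,
\begin{equation*}
\theta^{s,on}\bigl(\Gamma(\mathscr{L}(M))\bigr)\longrightarrow \theta^{s,on}\bigl(\Gamma(\mathscr{L}(M^{\I}))\bigr)=\theta^{s,on}\bigl(\Gamma(\I,\mathscr{L}(M))\bigr),
\end{equation*}
which is simply $\theta^{s,on}$ applied to the restriction map $\Gamma(\mathscr{L}(M))\rightarrow \Gamma(\I,\mathscr{L}(M))$.

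Since $M$ is $\trianglelefteq$-flabby, $\mathscr{L}(M)$ is flabby on $\MG_{\trianglelefteq}$, so this last restriction map is surjective for the open set $\I$; applying the exact functor $\theta^{s,on}=\mathrm{Res}^{\Zs}_{\Z}$ (restriction of scalars is exact, in particular preserves surjections) we conclude that the map for $\mathcal{J}$ is surjective. As $\mathcal{J}$ was an arbitrary open subset of $\WA^{s}_{\trianglelefteq}$, this proves $\theta^{s,on}(M)$ is $\trianglelefteq$-flabby. The one point requiring care — and the main obstacle — is the bookkeeping of the adjunction isomorphism: one needs $\Gamma(\mathscr{L}(M))\cong M$ (this is exactly the hypothesis placed on $M$), and one needs to know that under this identification $\Gamma$ of the localization on the quotient graph $\MGa^{s}_{\trianglelefteq}$ of an $S_k$-module returns the $\Zs$-module obtained by restriction, so that $\theta^{s,on}$ genuinely commutes with the composite $\Gamma\circ\mathscr{L}$ in the sense used above. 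This is implicit in Fiebig's framework (it is how $\Zs$ is identified with the structure algebra $\Z'$ of $\MGa^{s}_{\trianglelefteq}$, cf. \cite[\S5.1]{Fie08b}), so once that identification is invoked the argument is a direct diagram chase through Proposition \ref{prop_ths} and Lemma \ref{lem_GammaL}.
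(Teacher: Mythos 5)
Your proposal is correct and follows essentially the same route as the paper: reduce to the preimage $\I$ of an open set $\mathcal{J}$ (open and $s$-stable by Lemma \ref{lem_map_OP}), rewrite both sides via Lemma \ref{lem_GammaL} and Proposition \ref{prop_ths}, use the commutation of $\theta^{s,on}$ with $\Gamma\circ\mathscr{L}$, and conclude from the $\trianglelefteq$-flabbiness of $M$ since restriction of scalars preserves surjections. Your extra remark spelling out why $\theta^{s,on}$ commutes with $\Gamma\circ\mathscr{L}$ (via the identification $\Zs\cong\Z'$) only makes explicit a step the paper asserts without comment.
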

\proof
By Lemma \ref{lem_map_OP}, if $\overline{I}_{\trianglelefteq}\subseteq \W^s_{\trianglelefteq}$ be an upwardly closed subset (wrt to $\trianglelefteq$), then 
also its preimage in $\WA$ is upwardly closed wrt $\trianglelefteq$. 

We want to show that the canonical map 
\begin{equation*} \Gamma(\mathscr{L}(\theta^{s,on}(M)))\rightarrow
 \Gamma( \overline{\I}_{\trianglelefteq},\mathscr{L}(\theta^{s,on}(M)))
\end{equation*}
is surjective.

By combining Lemma \ref{lem_GammaL} and Proposition \ref{prop_ths}, we get
\begin{equation*}\Gamma(\overline{\I}_{\trianglelefteq},\mathscr{L}(\theta^{s,on}(M))=\Gamma(\mathscr{L}(\theta^{s,on}(M^{\I})).
\end{equation*}

Because $(\Gamma\circ\mathscr{L})\circ\theta^{s,on}=\theta^{s,on}\circ (\Gamma\circ\mathscr{L})$,  our claim is equivalent to the 
surjectivity of the following map 
\begin{equation*}
 \theta^{s,on}(\Gamma\circ \mathscr{L}(M))\rightarrow \theta^{s, on}(\Gamma\circ \mathscr{L}(M^{\I})).
\end{equation*}

By applying once again Lemma \ref{lem_GammaL}, 
$\theta^{s, on}(\Gamma\circ \mathscr{L}(M^{\I}))=\theta^{s,on}(\Gamma(\I,\mathscr{L}(M)))$
 and hence the statement follows from the assumption of $M$ being $\trianglelefteq$-flabby, 
since $\theta^{s,on}$ is right-exact.
\endproof

\subsubsection{Local definition of translation out of the wall} In order to prove that also $\theta^{s,out}$
 preserves $\trianglelefteq$-flabby objects, we have to give a realization of it in terms of sheaves. Let us consider $N\in\Z'\!-\!\text{mod}^{\text{f}}$ such that it
$\theta^{s,out}(N)$ is isomorphic to  $\Gamma(\mathscr{L}(\theta^{s,out}(N)))$. Let us set
 \begin{equation*}\mathscr{M}:=\mathscr{L}(\theta^{s,out}(N))\in\textbf{Sh}_k(\MG_{\trianglelefteq})
 \qquad
\mathscr{N}:=\mathscr{L}(N)\in\textbf{Sh}_k(\MG^s_{\trianglelefteq})
 \end{equation*}

Let us denote by $p_s:\MGa_{\trianglelefteq}\rightarrow \MGa^s_{\trianglelefteq}$ the morphism of $k$-moment graphs defined as 
$p_{s,\V}:x\mapsto \overline{x}_{\trianglelefteq}$ and $p_{s,l,x}=\id$ for all $x\in\WA$. 

We may now reformulate Lemma 5.4 of \cite{Fie08b} in this notation. The proof only needs (PO1), (PO2), (PO3) of
 $\trianglelefteq$.
\begin{lem}\label{lem_pullths}(cf. \cite{Fie08b}, Lemma 5.4)
 It holds $\mathscr{M}=p_{s}^{*}(\mathscr{N})$.
\end{lem}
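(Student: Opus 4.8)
The plan is to transcribe the proof of \cite[Lemma 5.4]{Fie08b}, replacing the Bruhat order by $\trianglelefteq$ throughout; this substitution is harmless because that proof invokes only the properties (PO1), (PO2), (PO3). The starting point is Lemma \ref{lem_left_invariants}(i), which lets me write $\theta^{s,out}(N)=\Z\otimes_{\Zs}N=N\oplus c^{s}N$ as an $S_{k}$-module. Over the quotient field $Q$ of $S_{k}$ the inclusion $\Zs\hookrightarrow\Z$ becomes the diagonal $\Zs_{Q}=\bigoplus_{\overline{x}_{\trianglelefteq}\in\WA^{s}_{\trianglelefteq}}Q\hookrightarrow\bigoplus_{x\in\WA}Q=\Z_{Q}$, where the $\overline{x}_{\trianglelefteq}$-factor lands diagonally in the factors indexed by $x$ and $xs$; consequently $(\theta^{s,out}N)_{Q}=\Z_{Q}\otimes_{\Zs_{Q}}N_{Q}=\bigoplus_{x\in\WA}N_{Q}^{\overline{x}_{\trianglelefteq}}$, and under this identification the canonical embedding sends $n_{1}+c^{s}\otimes n_{2}$ to the tuple whose $x$-component is $\overline{n_{1}}+c^{s}_{x}\overline{n_{2}}\in N_{Q}^{\overline{x}_{\trianglelefteq}}$, with $c^{s}_{x}=x(\alpha_{s})$. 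Since $c^{s}_{x}\in S_{k}$ and $N^{\overline{x}_{\trianglelefteq}}$ is an $S_{k}$-submodule of $N_{Q}^{\overline{x}_{\trianglelefteq}}$, the image in the $x$-component is exactly $N^{\overline{x}_{\trianglelefteq}}$, so $\mathscr{M}^{x}=(\theta^{s,out}N)^{x}=N^{\overline{x}_{\trianglelefteq}}=\mathscr{N}^{p_{s,\V}(x)}=(p_{s}^{*}\mathscr{N})^{x}$, the $S_{k}$-actions agreeing because $p_{s,l,x}=\id$.

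Next I would match the edge modules. For an edge $E\colon x-\!\!\!-\!\!\!-y$ of $\MGa_{\trianglelefteq}$ with $p_{s,\V}(x)\neq p_{s,\V}(y)$, the edge $p_{s,\E}(E)$ of $\MGa^{s}_{\trianglelefteq}$ carries the same label by construction, and the rational computation above shows $(\theta^{s,out}N)^{x,y}=N^{p_{s,\V}(x),p_{s,\V}(y)}$ (here one uses that $(c^{s}_{x},c^{s}_{y})$ satisfies the defining congruence of $\Z(p_{s,\E}(E))$, i.e. that $c^{s}\in\Z$); hence the push-outs defining $\mathscr{L}(\theta^{s,out}N)^{E}$ and $\mathscr{N}^{p_{s,\E}(E)}$ coincide. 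If instead $p_{s,\V}(x)=p_{s,\V}(y)$ then $\{x,xs\}=\{y,ys\}$, so $y=xs$, which by (PO2) is the unique such edge; its label lies in $c^{s}_{x}S_{k}$, so $(p_{s}^{*}\mathscr{N})^{E}=\mathscr{N}^{\overline{x}_{\trianglelefteq}}/l(E)\mathscr{N}^{\overline{x}_{\trianglelefteq}}=N^{\overline{x}_{\trianglelefteq}}/c^{s}_{x}N^{\overline{x}_{\trianglelefteq}}$, and since $c^{s}_{xs}=-c^{s}_{x}$ the two components of $n_{1}+c^{s}\otimes n_{2}$ are $\overline{n_{1}}\pm c^{s}_{x}\overline{n_{2}}$, so the push-out defining $\mathscr{L}(\theta^{s,out}N)^{E}$, built from $(\theta^{s,out}N)^{x,xs}$ and its two projections, collapses (using $2\in k^{\times}$) to $N^{\overline{x}_{\trianglelefteq}}/c^{s}_{x}N^{\overline{x}_{\trianglelefteq}}$ as well. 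In every case the restriction maps $\rho_{x,E}$ of the two sheaves are the canonical quotient or projection maps through which the above identifications were set up, so they agree; by Definition \ref{Def_ShMG} this gives $\mathscr{M}=p_{s}^{*}(\mathscr{N})$.

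The step that needs the most care is the collapsed edge $y=xs$: one must check that the push-out of $(\theta^{s,out}N)^{x}\leftarrow(\theta^{s,out}N)(E)\rightarrow(\theta^{s,out}N)^{xs}$ is genuinely the quotient $N^{\overline{x}_{\trianglelefteq}}/c^{s}_{x}N^{\overline{x}_{\trianglelefteq}}$ required by (PULL2), with no spurious identifications or torsion; this is where the invertibility of $2$ in $k$ (so that $1,c^{s}$ really do split $\Z$ over $\Zs$) and the fact that $c^{s}_{x}$ is a non-zero-divisor of $S_{k}$ are used. The remaining verifications are bookkeeping with the rational envelopes.
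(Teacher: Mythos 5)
The paper offers no written proof here---it simply cites \cite[Lemma 5.4]{Fie08b} and observes that the argument uses only (PO1)--(PO3)---and your write-out is precisely that argument (the splitting $\Z=\Zs\oplus c^s\Zs$, the generic decomposition over $Q$, comparison of stalks, of uncollapsed edges, and of the collapsed edges $x-\!\!\!-\!\!\!-xs$, where $2\in k^\times$ enters), so it is correct and exactly in the intended spirit. One small imprecision: the intermediate identity $(\theta^{s,out}N)^{\{x,y\}}=N^{\{\overline{x}_{\tle},\overline{y}_{\tle}\}}$ need not hold on the nose, but the push-outs only see $M(E)=\Z(E)\cdot(\theta^{s,out}N)^{\{x,y\}}$, and this does equal $\Z(\overline{E})\cdot N^{\{\overline{x}_{\tle},\overline{y}_{\tle}\}}$ by exactly the observation you yourself supply, namely that $(c^s_x,c^s_y)\in\Z(\overline{E})$ because $c^s\in\Z$ and $l(E)=l(\overline{E})$.
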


Recall that a poset $P$ is \emph{directed} if for any pair of elements $u,v\in P$ there exists a $z\in P$ such that  
$u,v\trianglelefteq z$. The following result is a generalization of \cite[Proposition 2.2.9]{BB}.

\begin{lem}\label{lem_tle_directed} The affine Weyl group $\WA$ equipped with the partial order $\tle$ is a directed poset.
\end{lem}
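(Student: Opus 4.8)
The plan is to reduce directedness to a monotonicity statement for a ``maximal representative'' map, dual to the order‑preserving map $w\mapsto\overline{w}_{\tle}$ of Lemma \ref{lem_map_OP}. For $w\in\WA$ and $s\in\SRA$ one has $ws=(wsw^{-1})w=:tw$ with $t=wsw^{-1}\in\T$, so by \emph{(PO1)} the elements $w$ and $ws$ are $\tle$‑comparable, and we may set
\begin{equation*}
\partial_s(w):=\max{}_{\tle}\{w,ws\}.
\end{equation*}
Then $\partial_s(w)\in\{w,ws\}$, $w\tle\partial_s(w)$, and $\partial_s(w)=\partial_s(ws)$. The heart of the argument is the following claim: \emph{$\partial_s$ is order preserving, i.e. $a\tle b$ implies $\partial_s(a)\tle\partial_s(b)$.}

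To prove the claim I would distinguish cases according to which of $a,as$ equals $\partial_s(a)$ and which of $b,bs$ equals $\partial_s(b)$. If $\partial_s(a)=a$ there is nothing to do, since $\partial_s(a)=a\tle b\tle\partial_s(b)$. Suppose $\partial_s(a)=as$, so $a\triangleleft as$. If $\partial_s(b)=bs$ (so $b\triangleleft bs$), then $a\tle b\tle bs$, and \emph{(PO3)(i)} applied with $x=b$, $y=a$ gives $as\tle bs$. If instead $\partial_s(b)=b$ (so $bs\triangleleft b$), then \emph{(PO3)(i)} applied with $x=bs$, $y=a$ — whose hypotheses ``$x\triangleleft xs$'', i.e.\ $bs\triangleleft b$, and ``$y\tle xs$'', i.e.\ $a\tle b$, both hold — yields $as\tle (bs)s=b$. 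In every case $\partial_s(a)\tle\partial_s(b)$, proving the claim. (Note that only \emph{(PO1)} and \emph{(PO3)} enter; \emph{(PO2)} is not needed here.)

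Granting the claim, I would prove the lemma by induction on $\ell(u)+\ell(v)$, where $\ell$ is the Coxeter length function on $\WA$. If $\ell(u)+\ell(v)=0$ then $u=v=e$ and $z=e$ is a common upper bound. Otherwise, after possibly interchanging $u$ and $v$, we may assume $\ell(v)\geq1$ and choose $s\in\SRA$ with $\ell(vs)<\ell(v)$. By the inductive hypothesis applied to the pair $(u,vs)$ there is $z'\in\WA$ with $u\tle z'$ and $vs\tle z'$. Put $z:=\partial_s(z')$. Then $u\tle z'\tle z$; and from $vs\tle z'$ the claim gives $\partial_s(vs)\tle\partial_s(z')=z$, while $v\tle\max_{\tle}\{vs,v\}=\partial_s(vs)$, so $v\tle z$. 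Hence $z$ is a common upper bound of $u$ and $v$, and $(\WA,\tle)$ is directed. (In fact this argument uses nothing special about affine Weyl groups and goes through for an arbitrary Coxeter group.)

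The only delicate point is the monotonicity claim, and within it the case $\partial_s(a)=as$, $\partial_s(b)=b$: there one must feed $x=bs$ rather than $x=b$ into the lifting property \emph{(PO3)(i)}, which is exactly the place where the hypotheses of \emph{(PO3)} are used. Everything else is a short formal manipulation of $\tle$ together with the elementary fact that a non‑identity element of a Coxeter group admits a length‑decreasing right multiplication by a simple reflection.
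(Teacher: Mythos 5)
Your proof is correct and takes essentially the same route as the paper's: induction on $\ell(u)+\ell(v)$, stripping a simple reflection off one of the two elements and then promoting the inductive upper bound $z'$ to $\max_{\tle}\{z',z's\}$ via exactly the same two applications of (PO3)(i), split according to whether $z'\triangleleft z's$ or $z's\triangleleft z'$. The only difference is organizational — you package the paper's inline case analysis as the monotonicity of $\partial_s=\max_{\tle}\{\,\cdot\,,\cdot\,s\}$, the order-theoretic dual of Lemma \ref{lem_map_OP} — and you correctly attribute the comparability of $w$ and $ws$ to (PO1), where the paper's text cites (PO2).
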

\proof
By induction on $\ell(u)+\ell(v)$. The base step is $u=v=e$ and in this case we can take $z=e$. 
Now let $u=s_1s_2\cdots s_r$ with $s_1,s_2,\ldots,s_r$ simple reflections and $r\geq 1$. Let us
 set $u':=us_r$; since $\ell(u')<\ell(u)$ there exists $z'$ be such that $u',v\trianglelefteq z'$. 
By (PO2), we know that $u$ and $u'$ are comparable, so we have two cases.

If $u\triangleleft u'$ then we take $z=z'$ and our claim follows from the inductive hypothesis.
Otherwise,  $u'\triangleleft u$ and we have again to consider two cases for $z'$, $z's_r$.
Suppose first that $z'\triangleleft z's_r$. Let $x=z'$ and $y=xs_r$ and notice that
$x=z'<z's_r=xs_r$
and $y=us_r\trianglelefteq z'=zs_r$, so we may apply  (PO3) (i) and find $u=ys_r\trianglelefteq
xs_r=z's_r$. So in this case we take $z=z's_r$ since we have also $v\trianglelefteq
z'<z's_r=z$.

Finally suppose $z's_r\triangleleft z'$. Let $x=z's_r$ and $y=us_r$ and notice that
$x\triangleleft xs_r$
and $y=us_r\trianglelefteq z'=xs_r$, so we may again apply (PO3) (i) and find $u=ys_r\trianglelefteq
xs_r=z'$.
So we take $z=z'$ and our claim is proved.
\endproof

For $u,v\in \WA$ we will write $u \dottl v$ if $u \tle v$ and $[u,v]=\{u,v\}$. The following lemma generalises 
\cite[Corollary 2.2.8]{BB}.

\begin{lem}\label{lem_tle_int}
 Let $s\in \SRA$ and $t\in\T$ be such that $s\neq t$. Let moreover $w\in\WA$ be such that $w\dottl ws,wt$. Then $ws,wt\dottl wts$.
\end{lem}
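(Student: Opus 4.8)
The statement to prove is Lemma \ref{lem_tle_int}: given $s \in \SRA$, $t \in \T$ with $s \neq t$, and $w \in \WA$ with $w \dottl ws$ and $w \dottl wt$, we want $ws \dottl wts$ and $wt \dottl wts$. By symmetry of the two conclusions under swapping the roles of $s$ and $t$ (modulo the fact that $t$ need not be simple — see the caveat below), the real content is to establish the interval condition $[ws, wts] = \{ws, wts\}$ together with comparability $ws \tle wts$ (and likewise for $wt$). The plan is to mimic the proof of \cite[Corollary 2.2.8]{BB} but replacing every use of the Bruhat order's defining properties by the axioms (PO1), (PO2), (PO3) that $\tle$ is assumed to satisfy, exactly as the paper has been doing for Lemmas \ref{lem_map_OP}, \ref{lem_pullths}, \ref{lem_tle_directed}.

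First I would record the comparabilities. Since $w \dottl ws$ we have $w \triangleleft ws$, and since $w \dottl wt$ we have $w \triangleleft wt$. I would like to deduce $ws \tle wts$. Here $wt$ and $wts$ are related by right multiplication by $s$, so they are comparable by (PO2). If $wts \triangleleft wt$ then, applying (PO3)(ii) with $x = wt$ (so $xs = wts \triangleleft x$) and $y = ws$: we need $xs \tle y$, i.e. $wts \tle ws$ — but we don't know this a priori, so instead I would argue in the other direction. The cleaner route: apply (PO3)(i) with $x = ws$ (note $w \triangleleft ws$ gives the hypothesis $x' \triangleleft x's$ form after renaming $x = w$, $xs = ws$) — concretely, with $x := w$, $xs := ws$, and $y := wt$. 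The hypothesis of (PO3)(i) is $x \triangleleft xs$ (true: $w \triangleleft ws$) and $y \tle xs$; but $y = wt$ and $xs = ws$, and we do not know $wt \tle ws$. So this also does not directly apply. I would therefore proceed by a case analysis on whether $ws \triangleleft wts$ or $wts \triangleleft ws$ (they are comparable by (PO2)), and in each case use (PO3) together with $w \dottl ws$, $w \dottl wt$ to force $wts$ to be on the correct side and to squeeze the interval.

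The key technical step — and the main obstacle — will be ruling out the existence of an element strictly between $ws$ and $wts$. Suppose $ws \tle z \tle wts$ with $z \neq ws, wts$. The standard Bruhat-order proof uses the subword characterization and the lifting property; here I must instead use (PO1), which says the covering relations $v \dottl tv$ (for $t \in \T$) generate $\tle$, and (PO2)/(PO3) to control multiplication by the simple reflection $s$. The idea is to look at $\overline{z}_{\tle} = \min\{z, zs\}$ and use Lemma \ref{lem_map_OP} (the quotient map $u \mapsto \overline{u}_{\tle}$ to $\WA^s_{\tle}$ is order-preserving): the images of $ws$ and $wts$ under this map are $w$ and $wt$ respectively (since $w \triangleleft ws$ and, from $w \dottl wt$ plus $w \triangleleft ws$ and the interval hypotheses, $wt \triangleleft wts$ — this last fact itself needs (PO3)). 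Then $\overline{z}_{\tle}$ lies in $[w, wt] = \{w, wt\}$ in $\WA^s_{\tle}$, forcing $\overline{z}_{\tle} \in \{w, wt\}$, hence $z \in \{w, ws, wt, wts\}$; combined with $ws \tle z \tle wts$ and $z \neq w$ (as $w \triangleleft ws \tle z$) this pins $z$ down. The delicate point is verifying $wt \triangleleft wts$ and handling the degenerate possibility $wt = ws$, which is excluded since it would give $s = t \in \SRA$, contradicting $s \neq t$ (one must check $t \in \SRA$ in that degenerate case, or rather that $wt = ws \Rightarrow t = s$ regardless).

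A caveat I would flag: since $t$ is a general reflection, the symmetry between $s$ and $t$ is not perfect, so I would prove $ws \dottl wts$ in full and then obtain $wt \dottl wts$ by the analogous but separate argument — using that $wt$ and $wts$ differ by right multiplication by $s$, so (PO2) applies to that pair, while $ws$ and $wts$ differ by left multiplication by $wtw^{-1} \in \T$ on... actually $wts = (wtw^{-1})\cdot ws$, so the covering-type relation there comes from (PO1). I expect assembling these two halves, together with the careful case analysis forced by the one-sidedness of (PO3), to be the bulk of the work, but each individual step is a short deduction from the listed axioms rather than a computation.
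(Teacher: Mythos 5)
Your proposal is a plan rather than a proof, and the part it leaves open is precisely the content of the lemma. You never derive the relations $w\trianglelefteq wts$, $wt\triangleleft wts$ and $ws\trianglelefteq wts$: the two applications of (PO3) you try are, as you yourself note, inapplicable, and you then defer to an unspecified ``case analysis \ldots to force $wts$ to be on the correct side'' without exhibiting the deductions. The missing move (which is how the paper starts) is (PO3)(ii) with $x=ws$ and $y=wt$: then $xs=w\triangleleft ws=x$ and $xs=w\trianglelefteq wt=y$, so the axiom gives $w\trianglelefteq wts$. Since $s\neq t$ forces $wts\neq w$, and $wt$, $wts$ are comparable by (PO1) (not (PO2): $wts=\bigl((wt)s(wt)^{-1}\bigr)wt$ with $(wt)s(wt)^{-1}\in\T$), the hypothesis $[w,wt]=\{w,wt\}$ forces $wt\triangleleft wts$, and (PO2) then gives $wt\dottl wts$; finally (PO3)(i) with $x=wt$, $y=w$ yields $ws\trianglelefteq wts$. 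A case analysis such as you sketch could also be completed (e.g.\ if $wts\triangleleft ws$, then (PO3)(i) with $x=w$, $y=wts$ gives $wt\trianglelefteq ws$, hence $wt\in[w,ws]=\{w,ws\}$, a contradiction), but as written none of these steps appear, so the comparability half of the lemma is a genuine gap.

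For the interval step $[ws,wts]=\{ws,wts\}$ your route differs from the paper's: the paper splits into the cases $z\triangleleft zs$ and $zs\triangleleft z$ and applies (PO3)(i) twice to land $zs$, resp.\ $z$, inside $[w,wt]=\{w,wt\}$, whereas you invoke the order-preserving projection of Lemma \ref{lem_map_OP}. Your idea does work once $wt\triangleleft wts$ is available: from $ws\trianglelefteq z\trianglelefteq wts$ one gets $w=\overline{ws}_{\trianglelefteq}\trianglelefteq\overline{z}_{\trianglelefteq}\trianglelefteq\overline{wts}_{\trianglelefteq}=wt$, so $\overline{z}_{\trianglelefteq}\in\{w,wt\}$ and $z\in\{w,ws,wt,wts\}$, and it is arguably cleaner than the paper's double use of (PO3)(i). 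However, ``this pins $z$ down'' only disposes of $z=w$; you must also exclude $z=wt$, which is not automatic: $z=wt$ would give $ws\trianglelefteq wt$, hence $ws\in[w,wt]=\{w,wt\}$, impossible since $ws\neq w$ and $ws=wt$ would force $s=t$. With that line added, and with the covering relations of the previous paragraph actually supplied, your outline becomes a correct (and partly novel) proof; as submitted, the decisive (PO3) manipulations are missing.
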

\proof The claim follows by applying repeatedly (PO3). 

To start with, by  (PO3) (ii) with $x=ws$ and $y=wt$, we get $w\trianglerighteq wts$. Since $s\neq t$, it holds $wts\neq w$
 and hence $wts\triangleleft wt$. Indeed, if it were not the case, $wts\in [w,wt]\setminus \{w,wt\}$ but by assumption
 $[w,wt,]=\{w,wt\}$. From (PO2) we deduce that $wts \dottl wt$. 

Now we apply (PO3) (i) with $x=wt$ and $y=w$ and it follows $ws\trianglerighteq wts$. It is only left to show that
 $[ws,wts]=\{ws,wts\}$.

Suppose there were a $z\in[ws,wts]\setminus \{ws,wts\}$. By  (PO1), either $z\triangleleft zs$ or $zs \triangleleft z$.  In the first case, 
by applying twice (PO3) (i)
(once with $y=ws$ and $x=z$  and once with $y=z$ and $x=wts$)
we obtain $zs\in [w,wt]=\{w,wt\}$, which is a contradiction, since we supposed $zs\neq ws,wts$. On the other hand, if $zs\triangleleft z$, 
we repeat the same argument as before, once $z$ has been substituted by $zs$, and we get $z\in[w,wt]$. It follows that either 
$z=w$ or $z=wt$, which is not possible because $w\triangleleft ws$ and $wt\triangleleft wts$, while we assumed $zs\triangleleft z$.
\endproof

 For $z\tle v\in \WA$, a \emph{maximal chain} from $z$ to $v$ of length 
$k$ is a sequence  $z=z_0\dottl z_1\dottl \ldots \dottl z_{k-1} \dottl z_k =v$, where $z_i\in\WA$.

\begin{lem}\label{lem_tle_chain}
 Let $u,v\in\WA$ be such that $u\triangleleft v$ and let $s\in\SRA$ be such that $vs\triangleleft v$ and $us\triangleleft u$. Then for
any   $z\in (us,v]\setminus \{\trianglerighteq u\}$ which is minimal, any maximal chain from $z$ to $v$ of length $k$ induces a
 maximal chain from $us$ to $v$ of the same length.
\end{lem}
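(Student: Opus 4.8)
The plan is to transport the given maximal chain across the simple reflection $s$, cutting it at the place where $s$ turns from a $\trianglelefteq$-ascent into a $\trianglelefteq$-descent and re-gluing it there by means of Lemma~\ref{lem_tle_int}.

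First I would extract from the minimality of $z$ the covering relation $us\dottl z$: if some $w$ satisfied $us\triangleleft w\triangleleft z$, then $w$ would belong to $(us,v]$ and lie strictly below $z$, so minimality of $z$ would force $u\trianglelefteq w$, whence $u\triangleleft z$, contradicting $z\not\trianglerighteq u$. In particular $z\neq u$, while $us\dottl u$ by (PO2). By (PO1) the cover $us\dottl z$ admits a right-reflection presentation, so write $z=us\cdot t$ with $t\in\T$, necessarily $t\neq s$. Now apply Lemma~\ref{lem_tle_int} with $w=us$ (so that $ws=us\cdot s=u$ and $wt=z$): it gives that $u$ and $z$ both cover $us\cdot t\cdot s=zs$. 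In particular $z=z_0$ satisfies $z_0\triangleleft z_0\cdot s$, i.e.\ $s$ is a $\trianglelefteq$-ascent of $z_0$, whereas $s$ is a $\trianglelefteq$-descent of $v$ by hypothesis.

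Given the maximal chain $z=z_0\dottl z_1\dottl\cdots\dottl z_k=v$, let $m\geq 1$ be the least index with $z_m\cdot s\triangleleft z_m$; then $z_0,\dots,z_{m-1}$ all have $s$ as a $\trianglelefteq$-ascent. Applying (PO3)(i) to the cover $z_{m-1}\dottl z_m$ (with $z_m\cdot s\triangleleft z_m$) yields $z_{m-1}\cdot s\trianglelefteq z_m$; since $z_{m-1}\triangleleft z_{m-1}\cdot s\trianglelefteq z_m$ and $z_{m-1}\dottl z_m$, this forces $z_m=z_{m-1}\cdot s$. I would then push the lower segment through $s$ inductively: knowing $z_{i-1}\dottl z_{i-1}\cdot s$ (true for $i=1$ by the previous paragraph), and observing that $z_i\neq z_{i-1}\cdot s$ because $z_i$ has $s$ as an ascent while $z_{i-1}\cdot s$ has $s$ as a descent, Lemma~\ref{lem_tle_int} applied with $w=z_{i-1}$ gives $z_{i-1}\cdot s\dottl z_i\cdot s$ and $z_i\dottl z_i\cdot s$, for $1\leq i\leq m-1$. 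Combining $u\dottl z_0\cdot s$ (from the previous paragraph), the chain of covers $z_0\cdot s\dottl z_1\cdot s\dottl\cdots\dottl z_{m-1}\cdot s$ just produced, the identity $z_{m-1}\cdot s=z_m$, and the untouched upper segment $z_m\dottl z_{m+1}\dottl\cdots\dottl z_k=v$, one obtains the saturated chain
\begin{equation*}
u\;\dottl\;z_0\cdot s\;\dottl\;z_1\cdot s\;\dottl\;\cdots\;\dottl\;z_{m-1}\cdot s\;=\;z_m\;\dottl\;z_{m+1}\;\dottl\;\cdots\;\dottl\;z_k\;=\;v,
\end{equation*}
which has $1+m+(k-m)=k+1$ vertices and is therefore a maximal chain of length $k$ ending at $v$ and issuing from $u$; prepending the cover $us\dottl u$ relates it to a chain issuing from $us$, as required.

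The step I expect to be most delicate is the inductive transport just described: at each application of Lemma~\ref{lem_tle_int} one must know exactly which of $z_i$ and $z_i\cdot s$ is the smaller, so that the hypotheses ``$s\neq t$'' and ``$w\dottl ws,\,wt$'' are genuinely satisfied; this amounts to a careful monitoring, via (PO1) and (PO3)(i)--(ii), of the $\trianglelefteq$-ascent/descent status of $s$ along the chain, together with the verification that the reflected lower segment attaches to the upper segment precisely at the switching vertex $z_m=z_{m-1}\cdot s$, so that neither length is gained nor lost in the splice. A secondary point is the justification, from (PO1), that every cover in question can be written in the form $w\dottl w\cdot t$ with $t\in\T$, which is the shape required by Lemma~\ref{lem_tle_int} and (PO3).
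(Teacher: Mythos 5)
Your construction is essentially the paper's own argument: both proofs start from the cover $us\dottl z$ forced by minimality, write $z=ust_0$ via (PO1), and transport the chain $z=z_0\dottl\cdots\dottl z_k=v$ by right multiplication by $s$ using Lemma~\ref{lem_tle_int}, splicing the translated chain back into the original one where $s$ stops being an ascent; your write-up is in fact more careful than the paper's, since you identify the merging index $m$ (the first $s$-descent along the chain), prove $z_m=z_{m-1}s$ via (PO3)(i), and verify the length count, whereas the paper only says ``if $z_{i+1}=z_is$ we are done, otherwise proceed'' without showing the merge must occur. The one real issue is your closing sentence: what your splice produces is a maximal chain from $u$ to $v$ of length $k$, and prepending $us\dottl u$ gives a chain from $us$ of length $k+1$, not $k$, so ``as required'' is not literally justified. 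That, however, is a defect of the statement rather than of your construction: as printed the lemma is false --- for $\WA$ of type $\widehat{\mathfrak{sl}_2}$ with $\trianglelefteq$ the Bruhat order, $s=s_1$, $u=s_1$, $v=s_0s_1$, the only admissible $z$ is $s_0$ with $k=1$, while every maximal chain from $us=e$ to $v$ has length $2$; correspondingly, the first step $us\dottl z_0s$ of the chain displayed in the paper's proof is not a cover, since $us\triangleleft z_0\triangleleft z_0s$. The conclusion that both arguments actually establish, and the one needed for Corollary~\ref{cor_tle_chain} (which compares $d(z,v)$ with $d(u,v)$), is precisely the length-$k$ maximal chain from $u$ to $v$ that you build; so state your conclusion in that form instead of claiming the prepended chain has the same length.
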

\proof By the minimality assumption, $us \dottl z=z_0$ and,  (PO1), there exists a $t_0\in\T$
 such that $z_0=ust_0$. From Lemma \ref{lem_tle_int} with $w=us$ and $t=t_0$, we get $z_0,u\dottl z_0s$. If $z_1=z_0s$, we obtain
a maximal chain
\begin{equation*}
 us\dottl z_0s\dottl \ldots \dottl z_{k-1} \dottl z_k =v
\end{equation*}
Otherwise, we apply again Lemma \ref{lem_tle_int} with $z_0=w$ and $t=t_1$. Once again, if $z_2=z_1s$, we have done and if not we 
proceed as before. In this way we get a chain from $us$ to $v$ of length $k$.

Observe that all the $z_is$ are still in the interval, since $z_is\trianglerighteq v$, by (PO3) (i) with $x=vs$ and $y=z_i$.
\endproof

For any $u,v\in\WA$ with $w\trianglelefteq v$ let us denote by  $d(u,v)$ the supremum of the lengths of all maximal chains from $u$
 to $v$.

\begin{cor}\label{cor_tle_chain}
Let $u,v\in\WA$ be such that $u\triangleleft v$ and let $s\in\SRA$ be such that $vs\triangleleft v$ and $us\triangleleft u$. Let
$z\in (us,v]\setminus \{\trianglerighteq u\}$ be not minimal, then $d(z,v) < d(u,v)$.
\end{cor}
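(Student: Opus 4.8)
The plan is to reduce the statement to the minimal case already handled by Lemma \ref{lem_tle_chain}, using that $u$ is itself one of the minimal elements of $(us,v]$: by (PO2) we have $[us,u]=\{us,u\}$, so $us\dottl u$. The argument will rest on two elementary facts about the number $d(\cdot,v)$: it is superadditive along concatenation of maximal chains (if $x\trianglelefteq y\trianglelefteq v$, concatenating a longest maximal chain from $x$ to $y$ with a longest one from $y$ to $v$ gives a maximal chain from $x$ to $v$, whence $d(x,v)\ge d(x,y)+d(y,v)$), and $d(x,v)\ge 1$ whenever $x\triangleleft v$. Both use only that the interval $[us,v]$ is finite (equivalently, that $\ell$ is bounded on it), which also guarantees the minimal elements invoked below exist.

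First I would pick, using finiteness of $[us,v]$, a minimal element $z'$ of the poset $(us,v]\setminus\{\trianglerighteq u\}$ with $z'\trianglelefteq z$ and $z'\neq z$ (such a $z'$ exists precisely because $z$ is \emph{not} minimal there). Superadditivity then gives $d(z',v)\ge d(z',z)+d(z,v)\ge 1+d(z,v)$. Next I would apply Lemma \ref{lem_tle_chain} to the minimal element $z'$: a longest maximal chain from $z'$ to $v$ induces a maximal chain from $us$ to $v$ of the corresponding length, and — this is the point to extract from the construction in the proof of Lemma \ref{lem_tle_chain}, where the $s$-conjugation supplied by Lemma \ref{lem_tle_int} forces the chain off of $z'$ and onto $u=(us)\!\cdot\! s$ — that induced chain has $u$ as the vertex immediately after $us$. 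Deleting the initial step $us\dottl u$ therefore produces a maximal chain from $u$ to $v$ whose length is at least $d(z',v)$. Combining, $d(u,v)\ge d(z',v)\ge 1+d(z,v)>d(z,v)$, which is the claim. (One must also check that the intermediate vertices of the transferred chain stay in $(us,v]$, but this is already part of the content of Lemma \ref{lem_tle_chain}, where (PO3)(i) is used for exactly that purpose.)

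The step I expect to be the main obstacle is the second one: making rigorous the passage from a maximal chain \emph{starting at the incomparable vertex $z'$} to a maximal chain \emph{through $u$} without losing length. The reduction in the first paragraph and the superadditivity/finiteness bookkeeping are purely formal; what requires care is verifying that the $s$-shift procedure of Lemma \ref{lem_tle_chain} applied to $z'$ yields a chain that genuinely begins $us\dottl u\dottl\cdots$ and does not shorten, so that the chain from $u$ inherits length $\ge d(z',v)$ rather than $d(z',v)-1$. Concretely this means re-reading (or re-running) the proof of Lemma \ref{lem_tle_chain} while tracking the second vertex of the output chain; once that is pinned down, the strict inequality $d(z,v)<d(u,v)$ follows immediately.
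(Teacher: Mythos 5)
Your strategy is the intended one: the paper states this corollary without a separate proof, as an immediate consequence of Lemma \ref{lem_tle_chain}, and the implicit deduction is exactly your reduction (pass to a minimal $z'\trianglelefteq z$ of $(us,v]\setminus\{\trianglerighteq u\}$, use concatenation of saturated chains to get a chain from $z'$ to $v$ of length $\geq 1+d(z,v)$, and feed it into the lemma); your appeal to finiteness of $[us,v]$ is on a par with the paper's tacit assumptions, since the induction on $d(u,v)$ in the subsequent proposition already presupposes it.

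The one step you leave open resolves exactly as you predict, and you can close it by inspecting the proof of Lemma \ref{lem_tle_chain}: there Lemma \ref{lem_tle_int}, applied with $w=us$ and $t=t_0$, gives $u\dottl z_0s$ and $z_0\dottl z_0s$, and the chain actually constructed is $u\dottl z_0s\dottl z_1s\dottl\cdots$ ending at $z_k=v$, of the same length $k$ as the original chain $z_0\dottl\cdots\dottl z_k$. (The first vertex in the displayed chain of that proof must be read as $u$, not $us$: since $us\dottl u\triangleleft z_0s$, the element $z_0s$ does not cover $us$.) So the lemma, correctly read, already produces a maximal chain from $u$ to $v$ of length $k$ — equivalently, from $us$ of length $k+1$ passing through $u$ — and no deletion step is needed: $d(u,v)\geq d(z',v)\geq 1+d(z,v)$, which is the strict inequality. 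Your caution was justified: if one took the lemma's wording literally (a chain from $us$ of the same length, not through $u$), the argument would only bound $d(us,v)$, which does not suffice; and a same-length chain through $u$ would, after deleting $us\dottl u$, only give $d(u,v)\geq d(z,v)$. A minor simplification: you need neither a longest chain from $z'$ nor the superadditivity bookkeeping — take any maximal chain from $z$ to $v$ of length $m$, prepend a saturated chain from $z'$ to $z$, apply the lemma, and take the supremum over $m$.
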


\begin{prop}
 Let $N\in\Z'\!-\!\text{mod}^{\text{f}} $ be such that the adjunction morphism 
$N\mapsto \Gamma(\mathscr{L}(N))$ is an isomorphism. If $N$ is $\trianglelefteq$-flabby,
 then also $\theta^{s,out}(N)\in \Zmod$ is $\trianglelefteq$-flabby. 
\end{prop}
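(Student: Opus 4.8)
The plan is to translate everything to the sheaf side. Write $\mathscr{N}:=\mathscr{L}(N)$, which is a flabby sheaf on $\MGa^{s}_{\trianglelefteq}$ by hypothesis, and $\mathscr{M}:=\mathscr{L}(\theta^{s,out}(N))$; by Lemma~\ref{lem_pullths} one has $\mathscr{M}=p_{s}^{*}(\mathscr{N})$, so it suffices to prove that the pullback along $p_{s}\colon\MGa_{\trianglelefteq}\to\MGa^{s}_{\trianglelefteq}$ of a flabby sheaf is flabby. Fix a $\trianglelefteq$-open subset $\I\subseteq\WA$; we must show $\Gamma(\mathscr{M})\to\Gamma(\I,\mathscr{M})$ is surjective. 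It helps to record what a local section of $\mathscr{M}=p_{s}^{*}\mathscr{N}$ over a subset $\mathcal{S}\subseteq\WA$ is: a family $(m_{x})_{x\in\mathcal{S}}$ with $m_{x}\in\mathscr{N}^{\overline{x}_{\trianglelefteq}}$ satisfying, along each ``collapsed'' edge $E\colon x-\!\!\!-\!\!\!-xs$ with $x,xs\in\mathcal{S}$, the congruence $m_{x}-m_{xs}\in l(E)\mathscr{N}^{\overline{x}_{\trianglelefteq}}$, and along each remaining edge the usual compatibility $\rho^{\mathscr{N}}_{\overline{x}_{\trianglelefteq},F}(m_{x})=\rho^{\mathscr{N}}_{\overline{y}_{\trianglelefteq},F}(m_{y})$ in $\mathscr{N}^{F}$, where $F$ is the image edge in $\MGa^{s}_{\trianglelefteq}$.

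I would first dispose of the case $\I s=\I$. Then $\overline{\I}_{\trianglelefteq}$ is $\trianglelefteq$-open in $\WA^{s}_{\trianglelefteq}$ by Lemma~\ref{lem_map_OP}, and chaining Lemma~\ref{lem_GammaL}, Proposition~\ref{prop_ths} and the intertwining $(\Gamma\circ\mathscr{L})\circ\theta^{s,out}=\theta^{s,out}\circ(\Gamma\circ\mathscr{L})$ (the analogue for $\theta^{s,out}$ of the identity used in the proof of Lemma~\ref{lem_thson_flabby}) gives $\Gamma(\I,\mathscr{M})\cong\theta^{s,out}(\Gamma(\overline{\I}_{\trianglelefteq},\mathscr{N}))$ and, using the hypothesis $\Gamma(\mathscr{N})\cong N$, also $\Gamma(\mathscr{M})\cong\theta^{s,out}(\Gamma(\mathscr{N}))$, compatibly with the restriction maps. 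Since $N$ is $\trianglelefteq$-flabby the map $\Gamma(\mathscr{N})\to\Gamma(\overline{\I}_{\trianglelefteq},\mathscr{N})$ is surjective, and $\theta^{s,out}=\Z\otimes_{\Z'}-$ is right exact (indeed exact, $\Z$ being free over $\Z'$ by Lemma~\ref{lem_left_invariants}(i)); applying it preserves the surjection, which settles this case.

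For a general $\trianglelefteq$-open $\I$ the strategy is to extend a given $m\in\Gamma(\I,\mathscr{M})$ one vertex at a time until the domain becomes $s$-stable (or exhausts the support), and then invoke the previous step. Since $N$ is finitely generated over $S_{k}$ we may work inside a fixed interval $\{\trianglelefteq v\}$, so the set to be filled in is finite; the vertex $z$ to be added next should be chosen, among the maximal elements of that set, so that the chain-length induction of Corollary~\ref{cor_tle_chain} strictly decreases, and adding it keeps the domain $\trianglelefteq$-open. To define the new value one must produce $m_{z}\in\mathscr{M}^{z}=\mathscr{N}^{\overline{z}_{\trianglelefteq}}$ meeting the constraints recalled above coming from the edges at $z$ that already reach the filled part. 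If $zs$ is already present one puts $m_{z}:=m_{zs}$ inside $\mathscr{N}^{\overline{z}_{\trianglelefteq}}$ and checks, from the definition of $\MGa^{s}_{\trianglelefteq}$ and the fact that $m$ is a section, that the collapsed-edge congruence and the remaining compatibilities hold; if $zs$ is not yet present the only surviving constraints are of compatibility type, and they are met by using the flabbiness of $\mathscr{N}$ to extend the induced local section of $\mathscr{N}$ over $\overline{\I}_{\trianglelefteq}$ (open by Lemma~\ref{lem_map_OP}) across the single vertex $\overline{z}_{\trianglelefteq}$. That such a $z$ is always available, that the auxiliary sets stay open, and that maximal chains can be rerouted through $zs$ are exactly the statements packaged in Lemma~\ref{lem_tle_directed}, Lemma~\ref{lem_tle_int} and Lemma~\ref{lem_tle_chain}.

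The main obstacle is this last, non-$s$-stable step: organising the one-vertex-at-a-time extension so that the congruence along the collapsed edge $z-\!\!\!-\!\!\!-zs$ and the $\mathscr{N}$-compatibility conditions along the other edges at $z$ hold simultaneously, and so that the order-theoretic reduction terminates. This is precisely where the directedness of $(\WA,\trianglelefteq)$ and the chain lemmas---rather than any special feature of the Bruhat order---are needed, and the real content of the argument is to verify that Fiebig's reasoning for the Bruhat order in \cite{Fie08b} goes through once only properties (PO1)--(PO3) are invoked.
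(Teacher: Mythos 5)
Your setup matches the paper's: both pass to $\mathscr{M}=p_{s}^{*}\mathscr{N}$ via Lemma \ref{lem_pullths}, and your treatment of the $s$-invariant case ($\I s=\I$) is exactly the argument the paper has in mind when it says this case is handled as in Lemma \ref{lem_thson_flabby}. The gap is in the general case, which is where all the difficulty lies and which you yourself flag as ``the main obstacle'' without resolving it. Your one-vertex-at-a-time scheme breaks at its key step: when the new vertex $z$ has $zs$ outside the filled region, you propose to meet the compatibility constraints ``by using the flabbiness of $\mathscr{N}$ to extend the induced local section of $\mathscr{N}$ over $\overline{\I}_{\trianglelefteq}$.'' But a local section $(m_{x})$ of $\mathscr{M}=p_{s}^{*}\mathscr{N}$ does not induce a local section of $\mathscr{N}$: at a pair $x,xs$ both in the domain one only has $m_{x}\equiv m_{xs}$ modulo $l(E)$, not $m_{x}=m_{xs}$, so there is no well-defined element of $\Gamma(\overline{\I}_{\trianglelefteq},\mathscr{N})$ to which flabbiness of $\mathscr{N}$ could be applied. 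Saying that ``the real content of the argument is to verify that Fiebig's reasoning goes through'' and then not verifying it leaves precisely the part of the proposition that is new (the dependence only on (PO1)--(PO3)) unproved.

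The paper's actual route avoids this problem by a different organisation. First, by \cite[Proposition 4.2]{Fie08a} flabbiness is reduced to surjectivity of $\Gamma(\{\trianglerighteq u\},\mathscr{M})\rightarrow\Gamma(\{\triangleright u\},\mathscr{M})$ for each $u$, a reduction absent from your proposal. Then two cases are treated. If $us\triangleright u$, one uses (PO2)--(PO3) to see that $\{\trianglerighteq u\}\setminus\{u,us\}$ is open and $s$-stable, extends there by the $s$-invariant case, subtracts, and is left with a section supported only at $us$; since $\mathscr{M}^{u}=\mathscr{M}^{us}$ (pullback along $p_{s}$), one simply copies the value to $u$ --- no appeal to flabbiness of $\mathscr{N}$ at a single vertex is needed, which is exactly the move your sketch is missing. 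If $us\triangleleft u$, the paper runs an induction on the chain length $d(u,v)$ (directedness, Lemma \ref{lem_tle_int}, Lemma \ref{lem_tle_chain}, Corollary \ref{cor_tle_chain}) to extend the section to all non-minimal vertices of $(us,v]\setminus\{\trianglerighteq u\}$, shows each remaining minimal vertex $z$ satisfies $z\triangleleft zs$ so the previous case applies there, and finally observes that $\{\trianglerighteq us\}\setminus\{u,us\}$ is $s$-stable, so the $s$-invariant case finishes the extension down to $u$. To repair your write-up you would need to supply this case analysis (or an equivalent mechanism for handling a vertex whose $s$-partner is missing), rather than deferring it.
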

\proof  Let us keep the same notation as before. We claim that, for any  set $\I\subseteq \WA$ upwardly closed 
 wrt $\trianglelefteq$ the canonical map $\Gamma(\mathscr{M}))\rightarrow
 \Gamma( \I_{\trianglelefteq},\mathscr{M})$ is surjective.

If $\I s=\I$, the proof is very similar to the one of Lemma \ref{lem_thson_flabby}, therefore we omit it.

Let us set $\{ \trianglerighteq u\}=\{ v\in \WA \mid  v\trianglerighteq u \}$, and analogously for $\{ \triangleright u\}$ then by
 \cite[Proposition 4.2]{Fie08a}  it is enough to demonstrate the surjectivity of the following map  for all $u\in \WA$ 
\begin{equation*}
 \Gamma(\{\trianglerighteq u\},\mathscr{M})\rightarrow
 \Gamma( \{\triangleright u\},\mathscr{M})
\end{equation*}

Let us consider $m\in\Gamma(\{\triangleright u\}, \mathscr{M}\})$.

We want to show that it is possible to extend it to a section $\tilde{m}\in \Gamma(\{\trianglerighteq u\},\mathscr{M})$. 

If $us\triangleright u$, then, from (PO3) (ii) with $x=us$ and $y=v$, if $v\triangleright u$ it holds  
$vs\triangleright u$ too. It follows that $\{\trianglerighteq u\}\setminus\{u,us\}$ is $s$-invariant. Moreover it is upwardly closed
because the interval $[u,us]=\{u,us\}$ by (PO2). Hence we can apply what we have already proven and get that 
if we restrict $m$ to $\{\trianglerighteq u \}\setminus \{u,us\}$ it extends to a global section. We restricted 
ourselves to the case in which $m$ is  supported at most on the vertex $us$, that is $m_v=0$ for all
 $x\in\{\triangleright u\}\setminus\{us\}$. By definition of pullback functor and Lemma \ref{lem_pullths},
 $\mathscr{M}^u=\mathscr{M}^{us}$ and so we can extend $m$ to the section $\tilde{m}\in\{\trianglerighteq u\}$,
by the setting $\tilde{m}_u:=\tilde{m_{us}}$. 

Let us now assume $us\triangleleft u$ and consider the following set
\begin{equation*}
 \text{supp}(\mathscr{M})=\{v\in\WA \mid \mathscr{M}^v\neq 0 \}.
\end{equation*}
Observe that $\text{supp}(\mathscr{M})$ is finite, since otherwise $\Gamma(\mathscr{M})$ would not be a finitely generated
 $S_k$-module. Therefore, by Lemma \ref{lem_tle_directed}, we find a $v\in \WA$ such that 
$\text{supp}(\mathscr{M})\subseteq \{\trianglelefteq 
v\}$. Let us observe that we may assume $vs\triangleright v$ without loss of generality.

Let us now proceed by induction on $d(u, v)$. The base step is $u=v$ and it is trivial. 
Otherwise, by  Corollary \ref{lem_tle_chain}, we may suppose by induction that $m$ extends to any vertex 
$z\in (us, v]\setminus \{us,u\}$ which is not minimal. We get in this way a section $m'$.

 Next, let us suppose $z\dottl us$ and $z\neq u$. In this case $z\triangleleft zs$. Indeed, suppose it were not the case. By (PO1) $z$ and $zs$ 
are comparable and by (PO3) (ii) with $x=u$ and $y=z$, it holds $us\trianglelefteq zs$.
 This contradicts the assumptions $z\dottl us$ and $z\neq u$. We have 
already proven that it is possible to extend $m'_{|{\triangleright z}}$ to the vertex $z$. Since the minimal vertices are not comparable,
 there are not edges between them and hence we extended $m'$ to a section $m''\in\Gamma(\{\trianglerighteq us\}\setminus\{u,us\}, \mathscr{M})$. 
But $\{\trianglerighteq us\}\setminus\{u,us\}$ is now an $s$-stable  set and therefore $m''$ extends to a global section and in particular
to the vertex $u$.
\endproof

Finally, as $\theta^{s}=\theta^{s,out}\circ \theta^{s,on}$ and since Lusztig proved that the generic order has 
also properties (PO1), (PO2) and (PO3) (cf. \cite{Lu80}), we get the following theorem.
\begin{theor}\label{theor_sthper_flabby}$\theta^s:\Zmod \rightarrow \Zmod$ preserves both $\leq$-flabby and
 $\preccurlyeq$-flabby objects.
\end{theor}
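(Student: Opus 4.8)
The plan is to obtain the statement by composing Lemma \ref{lem_thson_flabby} with the Proposition on $\theta^{s,out}$ proved just above, after checking that both $\leq$ and $\preccurlyeq$ are legitimate choices for the partial order $\trianglelefteq$ appearing there, and after making sure the adjunction-isomorphism hypotheses of those two results are met along the way.

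First I would verify that the Bruhat order $\leq$ on $\WA$ has properties (PO1), (PO2) and (PO3). Property (PO1) asserts that $w$ and $tw$ are always comparable and that such relations generate the order; this is the standard reflection description of the Bruhat order. Property (PO2) reads $[w,ws]=\{w,ws\}$ for $w<ws$, which is immediate from the length function, and (PO3) is precisely the lifting property of the Bruhat order. All three are classical, cf. \cite{BB}. For the generic order $\preccurlyeq$ the analogous statements were established by Lusztig in \cite{Lu80} and are recalled in \S\ref{ssec_propertiesBruhat}. Hence $\trianglelefteq\in\{\leq,\preccurlyeq\}$ lies within the scope of all the auxiliary results of this subsection, with $\MG_{\trianglelefteq}$ equal to the regular Bruhat graph $\MGa$ or the periodic graph $\MGper$, respectively; in particular a $\leq$-flabby object is a flabby $\Z$-module in the usual sense, while a $\preccurlyeq$-flabby object is one whose localization is flabby on $\MGper$.

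Now let $M\in\Zmod$ be $\trianglelefteq$-flabby. For such $M$ the adjunction morphism $M\to\Gamma(\mathscr{L}(M))$ is an isomorphism, so Lemma \ref{lem_thson_flabby} applies and gives that $\theta^{s,on}(M)\in\Z'\!-\!\text{mod}^{\text{f}}$ is $\trianglelefteq$-flabby; being flabby, it again satisfies the adjunction isomorphism, so the Proposition on $\theta^{s,out}$ above applies with $N=\theta^{s,on}(M)$ and shows that $\theta^{s,out}(\theta^{s,on}(M))=\theta^s(M)$ is $\trianglelefteq$-flabby. Specialising $\trianglelefteq$ to $\leq$ and then to $\preccurlyeq$ yields the two halves of the theorem.

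The real content thus lies in the two results being composed, and the main obstacle is ensuring that their proofs invoke only (PO1)--(PO3) and no order-specific feature of $\leq$: one must re-examine the identification of $\Zs$ with the structure algebra $\Z'$ of $\MGa^s_{\trianglelefteq}$ together with the order-preservation of $u\mapsto\overline u_{\trianglelefteq}$ (Lemma \ref{lem_map_OP}), the compatibility of localization with $\theta^{s,on}$ and $\theta^{s,out}$ at the level of subsets $\I\subseteq\WA$ and $\overline\I_{\trianglelefteq}\subseteq\WA^s_{\trianglelefteq}$ (Proposition \ref{prop_ths}, Lemma \ref{lem_GammaL}, Lemma \ref{lem_pullths}), and the combinatorial machinery (Lemma \ref{lem_tle_directed}, Lemma \ref{lem_tle_int}, Lemma \ref{lem_tle_chain}, Corollary \ref{cor_tle_chain}) supporting the induction on $d(u,v)$ in the $\theta^{s,out}$ step. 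A secondary point deserving a short separate argument is that $\trianglelefteq$-flabbiness does imply the adjunction isomorphism used above; for the modules the theorem is ultimately applied to, namely global sections of Braden--MacPherson sheaves (for which $\mathscr{L}\circ\Gamma\cong\mathrm{id}$), this is already available, and one checks that it persists after applying $\theta^{s,on}$ and restricting to $\Z'$.
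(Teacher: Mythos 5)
Your proposal follows essentially the same route as the paper: the theorem is obtained exactly by composing Lemma \ref{lem_thson_flabby} for $\theta^{s,on}$ with the preceding proposition for $\theta^{s,out}$, using that the Bruhat order satisfies (PO1)--(PO3) classically and the generic order $\preccurlyeq$ does so by Lusztig's results in \cite{Lu80}. Your extra care about the adjunction-isomorphism hypothesis (which in the intended applications holds because the modules are global sections of Braden--MacPherson sheaves) is a welcome precision that the paper leaves implicit, but it does not change the argument.
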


\subsubsection{Special modules}
Let $B_e\in\Zmod$ be the free $S_k$-module of rank one on which $z=(z_{x})_{x\in \WA}$ acts via multiplication by $z_e$.

\begin{defin} \item[(i)] The category of (right) \emph{special modules} is the full subcategory $\H$ of $\Zmod$ whose objects 
 are isomorphic to a direct summand of a direct sum of modules of the form
$\theta^{s_{i_1}}\circ\ldots \circ\theta^{s_{i_r}}(B^J_e)\langle n\rangle$, where $s_{i_1}, \ldots, s_{i_r}\in\SRA$ and 
$n\in\mathbb{Z}$.
\item[(ii)] The category of  \emph{parabolic special modules} is the full subcategory $\Hp$ of $\Zpmod$ whose objects  are
 isomorphic to a direct summand of a direct sum of modules of the form ${}^{s_{i_1}}\theta^{\text{par}}\circ\ldots \circ{}^{s_{i_r}}\theta^{\text{par}}(B^J_e)\langle n\rangle$, where $s_{i_1}, \ldots, s_{i_r}\in\SRA$ and $n\in\mathbb{Z}$.
\end{defin}

A fundamental characterisation of special modules is the following one.

\begin{theor}[cf. \cite{Fie08b}]\label{theor_Fie08bSM_BMP} Let $M\in\Zmod$. Then $M\in\H$  if and only if it is isomorphic to the
 space of global sections of a  BradenMacPherson sheaf on $\MGa$.
\end{theor}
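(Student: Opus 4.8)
The plan is to prove both inclusions: that every special module is the module of global sections of a Braden--MacPherson sheaf on $\MGa$, and conversely. Call $M\in\Zmod$ \emph{BMP-representable} if $M\cong\Gamma(\BMP)$ for some BMP-sheaf $\BMP$ on $\MGa$. I would work throughout with the adjunction $\mathscr{L}\dashv\Gamma$ between $\Zmod$ and $\textbf{Sh}_k(\MGa)$, using the known identity $\mathscr{L}\circ\Gamma(\BMP)\cong\BMP$ for BMP-sheaves $\BMP$, the compatibilities of $\theta^s,\theta^{s,on},\theta^{s,out}$ with $\mathscr{L}$ and $\Gamma$ recorded in Lemmas \ref{lem_pullths}, \ref{lem_GammaL} and Proposition \ref{prop_ths}, and the fact that, since $\Gamma$ commutes with finite direct sums and shifts and (by Theorem \ref{theor_FW_BMPindc}) BMP-sheaves are exactly the finite direct sums of shifted indecomposables $\BMP(w)$, the BMP-representable modules form a class closed under finite direct sums, direct summands and shifts --- exactly the closure properties enjoyed by $\H$.

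For the inclusion ``$\H\subseteq$ BMP-representable'', I would first observe that $B_e\cong\Gamma(\BMP(e))$: since $e$ is Bruhat-minimal, $\BMP(e)$ is the skyscraper at $e$ with stalk $S_k$, and its global sections carry precisely the $\Z$-action defining $B_e$. Given the closure properties above, it then suffices to show that $\theta^s$ preserves BMP-representability, i.e.\ that $\mathscr{L}(\theta^s\Gamma(\BMP))$ is a BMP-sheaf for every BMP-sheaf $\BMP$ and every $s\in\SRA$. Set $M=\Gamma(\BMP)$. Since $\mathscr{L}(M)\cong\BMP$ is flabby, $M$ is $\leq$-flabby, so $\mathscr{L}(\theta^s M)$ is flabby by Theorem \ref{theor_sthper_flabby} --- this is (BMP3). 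For (BMP1) and (BMP2) I would use the local description of $\theta^{s,on}$ as restriction along the quotient map $p_s\colon\MGa\to\MGa^s$ and of $\theta^{s,out}$ as the pullback $p_s^*$ on localizations (Lemma \ref{lem_pullths}, Proposition \ref{prop_ths}): since $\BMP$ already satisfies (BMP1), (BMP2) and $p_s$ preserves labels, one reads off that every stalk of $\mathscr{L}(\theta^s M)$ is free and every downward edge morphism is surjective with kernel the label times the source stalk. Finally (BMP4) would follow from flabbiness together with the unit isomorphism $\theta^s M\cong\Gamma(\mathscr{L}(\theta^s M))$ and Lemma \ref{lem_GammaL}. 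Hence $\mathscr{L}(\theta^s M)$ is a BMP-sheaf, $\theta^s M$ is BMP-representable, and --- invoking the closure properties --- so is every object of $\H$.

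For the reverse inclusion, I would use Theorem \ref{theor_FW_BMPindc}(ii) to reduce to showing $\Gamma(\BMP(w))\in\H$ for each $w\in\WA$, and argue by induction on $\ell(w)$. The base case $w=e$ is $\Gamma(\BMP(e))\cong B_e\in\H$. For $\ell(w)>0$ I would pick $s\in\SRA$ with $ws<w$; by induction $\Gamma(\BMP(ws))\in\H$, hence $\theta^s(\Gamma(\BMP(ws)))\in\H$ by the very definition of $\H$, and by the first part it is BMP-representable, say $\cong\Gamma(\F)$ with $\F$ a BMP-sheaf, hence (Theorem \ref{theor_FW_BMPindc}) a direct sum of shifted indecomposables. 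A computation of the stalk at $w$ --- using that $ws<w$ forces $p_s(w)=ws$ and that $\mathscr{L}(\Gamma(\BMP(ws)))^{ws}\cong\BMP(ws)^{ws}\cong S_k$ --- should give $\F^w\cong S_k$, so that $\BMP(w)$ occurs in $\F$ with multiplicity one and trivial shift. Then $\Gamma(\BMP(w))$ is a direct summand of $\theta^s(\Gamma(\BMP(ws)))\in\H$, hence lies in $\H$, completing the induction.

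The hard part will be the verification of the \emph{local} axioms (BMP1), (BMP2) for $\mathscr{L}(\theta^s M)$ and, in the inductive step, the multiplicity-one identity $\F^w\cong S_k$: flabbiness is delivered by Theorem \ref{theor_sthper_flabby} and the preparatory results on $\theta^{s,on}$ and $\theta^{s,out}$, but controlling the stalks and edge maps after translating out of the wall requires the precise local identification of $\theta^{s,out}$ with $p_s^*$ (Lemmas \ref{lem_pullths}, \ref{lem_GammaL}, Proposition \ref{prop_ths}) together with a careful rank count along the fibres of $p_s$. By contrast, the organisation of the argument --- the two reductions and the induction --- is formal, relying only on the adjunction $\mathscr{L}\dashv\Gamma$, the classification Theorem \ref{theor_FW_BMPindc}, and the closure properties of $\H$.
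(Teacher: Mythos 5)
The paper does not actually prove this statement: it is quoted from \cite{Fie08b} and used as a black box (e.g.\ in Proposition \ref{prop_flabbyStab} and Proposition \ref{prop_indp_BMPst}), so there is no in-paper argument to compare yours with. Your overall architecture --- the base case $B_e\cong\Gamma(\BMP(e))$, induction over translation functors for the inclusion $\H\subseteq\{\Gamma(\BMP)\}$, and the extraction of $\Gamma(\BMP(w))$ as a multiplicity-one, unshifted summand of $\theta^s\Gamma(\BMP(ws))$ for the converse --- is indeed the standard route and in outline matches Fiebig's.

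However, as written the proposal has genuine gaps precisely where the content of the theorem lies. First, the claim that (BMP1) and (BMP2) for $\mathscr{L}(\theta^sM)$ can be ``read off'' is not a formal consequence of the quoted results: Lemma \ref{lem_pullths} only rewrites $\mathscr{L}(\theta^{s,out}N)$ as $p_s^*\mathscr{L}(N)$, so to control stalks and edge maps you must first show that $\mathscr{L}(\theta^{s,on}\Gamma(\BMP))$ is itself a Braden--MacPherson sheaf on the quotient graph $\MGa^s$ (free stalks, surjective downward edge maps with kernel the label, flabbiness) and that $p_s^*$ preserves these properties; neither is established, and the first is genuinely nontrivial --- e.g.\ identifying $(\theta^{s,on}M)^{\{\overline{x}\}}$ with $\Gamma(\{x,xs\},\BMP)$ and proving its freeness already uses flabbiness of $\BMP$ and (BMP2), not just the counit isomorphism $\mathscr{L}\Gamma(\BMP)\cong\BMP$. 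Second, the flabbiness statements you invoke (Lemma \ref{lem_thson_flabby} and its $\theta^{s,out}$ counterpart, hence Theorem \ref{theor_sthper_flabby}) carry the hypothesis that the adjunction unit $M\rightarrow\Gamma(\mathscr{L}(M))$ is an isomorphism; this is automatic for $M=\Gamma(\BMP)$ but must be proved for $\theta^{s,on}M$ and $\theta^sM$ before you may cite them, and your (BMP4) argument also presupposes it. Third, closure of the BMP-representable modules under direct summands does not follow from Theorem \ref{theor_FW_BMPindc} and additivity of $\Gamma$ alone: you need $\Gamma$ to be fully faithful on BMP-sheaves (via the adjunction and $\mathscr{L}\Gamma\cong\id$), splitting of idempotents in $\ShMGk$, and the fact that a sheaf direct summand of a BMP-sheaf again satisfies (BMP1)--(BMP4); similarly the ``trivial shift'' in the multiplicity-one step needs explicit grading bookkeeping for $\theta^{s,out}=\Z\otimes_{\Zs}(-)$. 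Until these points are supplied, what you have is a correct plan for Fiebig's proof rather than a proof.
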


Thanks to the above result, we are now able to prove the following, which will make clear why we are dealing with such objects.

\begin{prop}\label{prop_flabbyStab}Let $\F$ be a Braden-MacPherson sheaf on $\MGa$  then $\F^{\text{per}}$ is a flabby sheaf 
on $\MGper$.
\end{prop}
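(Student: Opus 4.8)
\proof
The plan is to deduce the statement from Theorem \ref{theor_Fie08bSM_BMP} and Theorem \ref{theor_sthper_flabby}. As already observed in the paragraph preceding this proposition, since $\mathscr{L}\circ\Gamma(\F)\cong\F$ for a Braden--MacPherson sheaf $\F$ on $\MGa$, and since $\MGa$ and $\MGper$ share the same underlying unoriented labelled graph --- hence the same structure algebra $\Z$ and the same localisation functor $\mathscr{L}$ --- the sheaf $\F^{\text{per}}$ is nothing but $\mathscr{L}(\Gamma(\F))$ regarded over the graph $\MGper$. Because the generic order $\preccurlyeq$ satisfies (PO1), (PO2) and (PO3) by \cite{Lu80}, Definition \ref{def_tle_flabby} applies to $\MGper=\MG_{\preccurlyeq}$, so it is enough to prove that $M:=\Gamma(\F)$ is $\preccurlyeq$-flabby. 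Now $M$ is a finitely generated free $S_k$-module, so by Theorem \ref{theor_Fie08bSM_BMP} it lies in the category $\H$ of special modules, i.e. $M$ is a direct summand of a finite direct sum of modules of the form $\theta^{s_{i_1}}\circ\cdots\circ\theta^{s_{i_r}}(B_e)\langle n\rangle$ with $s_{i_j}\in\SRA$ and $n\in\ZZ$.

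The core of the argument is an induction on $r$. For the base case I would check directly that $B_e$ is $\preccurlyeq$-flabby: the action of $\Z$ on $B_e$ factors through evaluation at $e$, so $(B_e)_Q^x=0$ for every $x\neq e$, whence $\mathscr{L}(B_e)$ is the skyscraper sheaf at $e$, with stalk $S_k$ and vanishing costalk on every edge; for such a sheaf the map $\Gamma(\mathscr{L}(B_e))\rightarrow\Gamma(\I,\mathscr{L}(B_e))$ is an isomorphism when $e\in\I$ and has zero target otherwise, hence is surjective for every $\I$, so $B_e$ is flabby on $\MGper$ (indeed on any moment graph). The inductive step is precisely Theorem \ref{theor_sthper_flabby}, which states that each $\theta^s$ carries $\preccurlyeq$-flabby objects to $\preccurlyeq$-flabby objects; a grading shift $\langle n\rangle$ plainly preserves $\preccurlyeq$-flabbiness. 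Finally I would use that $\mathscr{L}$ is additive and so commutes with finite direct sums, that a finite direct sum of flabby sheaves is flabby, and that a direct summand of a flabby sheaf is flabby --- the last point because surjectivity of $\Gamma(\F_1\oplus\F_2)\rightarrow\Gamma(\I,\F_1\oplus\F_2)$ is equivalent to surjectivity in each of the two summands. Assembling these, $M\in\H$ is $\preccurlyeq$-flabby, and therefore $\F^{\text{per}}$ is a flabby sheaf on $\MGper$.

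The genuine difficulty has in fact already been dealt with: it is contained in Theorem \ref{theor_sthper_flabby}, whose proof carries out the local-to-global analysis of $\theta^{s,on}$ and $\theta^{s,out}$ for an \emph{arbitrary} partial order satisfying (PO1)--(PO3), so that it may be applied to $\preccurlyeq$ and not only to the Bruhat order. Granting that theorem, the present proposition is essentially formal; the only further points requiring a little care are the base case $B_e$ and --- slightly more delicate --- the identification of $\mathscr{L}$ over $\MGa$ with $\mathscr{L}$ over $\MGper$, which rests on the localisation functor depending only on the structure algebra and the set of unoriented edges, not on the chosen orientation.
\endproof
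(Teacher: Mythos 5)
Your proof is correct and follows essentially the same route as the paper: reduce to showing $\Gamma(\F)$ is $\preccurlyeq$-flabby as an object of $\Zmod$, invoke Theorem \ref{theor_Fie08bSM_BMP} to see it is special, and then induct along translation functors using Theorem \ref{theor_sthper_flabby}. The only difference is that you spell out details the paper leaves implicit (the skyscraper description of $\mathscr{L}(B_e)$ for the base case, and the passage to shifts, finite direct sums and direct summands), which are harmless and correct additions.
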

\proof We want to show that $F=\Gamma(\F)$ is a flabby object in $\Zpermod$. By Theorem \ref{theor_Fie08bSM_BMP},
 we know that $F\in \H$, so we may prove our result by induction. If $F=B_e$, there is nothing to prove. We have to show now that,
 if the claim is true for $M\in\H$, then it holds also for $\theta_s(M)\in\H$, that, again by Theorem \ref{theor_Fie08bSM_BMP}, 
is still isomorphic to the global sections of a Braden-MacPherson sheaf on $\MGa$. But
 now by the inductive hypothesis we get that $M$ is a flabby object in $\Zpermod$ and so, by applying Theorem 
\ref{theor_sthper_flabby}, $\theta_s(M)=\theta_s^{\text{per}}(M)$  is also a  flabby object in $\Zpermod$.

\endproof

\subsubsection{Decomposition of the functor ${\cdot}^{\text{stab}}$}

The functor ${\cdot^{\text{stab}}}$ may be obtained by composing the five following functors.
\begin{displaymath}\begin{xymatrix}{
\textbf{Sh}_{\MGp_{|_{\I}}}\ar[r]^{i_{*}}\ar@/_1.6pc/[rrrrr]<-1.2ex>_{{\cdot}^{\text{stab}}}&\textbf{Sh}_{\MGp}\ar[r]^{p_{\text{par}}^{*}}
&\textbf{Sh}_{\MGa} \ar[r]^{{\cdot}^\text{per}}&\textbf{Sh}_{\MGper}\ar[r]^{j^*}&\textbf{Sh}_{\MGper_{|_{\I}}}\ar[r]^{{\cdot}^{\text{opp}}}&\textbf{Sh}_{\MGst_{|_{\I}}}\\
}\end{xymatrix}
\end{displaymath}
%Recall that we were interested in the behaviour of $\st \BMP$ as sheaf on finite intervals far enough in the fundamental chamber. Consider a Braden-MacPherson sheaf $\F$ on $\MGp$. Then, by Corollary \ref{BMP_piJ}, $\pi_{\text{par}}(\F)$ is again a Braden-MacPherson sheaf and so it is in particular flabby. From Proposition \ref{prop_flabbyStab}, the functor ${\cdot}^{\text{per}}$ preserves the flabbiness.
Where
\begin{itemize}
\item
$i:\MGp_{|_{\I}}\hookrightarrow \MGp$  and $j:\MGper_{|_{\I}}\hookrightarrow\MGper
$
are the morphisms of moment graphs induced by the corresponding inclusions of subgraphs
\item $ p_{\text{par}}:\MG\rightarrow\MGp$ is the quotient homomorphism defined by
 $p_{\text{par}, \V}:x\mapsto x^{\text{par}}$, the minimal representative of $x\W$, and $p_{\text{par}, l,x}=\id_{\widehat{\coQ}}$ for all 
$x$ 
\item ${\cdot}^{\text{opp}}$ is the pullback of the isomorphism of moment graphs $f:\MGst_{|_{\I}}\rightarrow \MGper_{|_{\I}}$ defined as
 $f_{\V}=\id_{\widehat{\coQ}}$ and $f_{l,x}(\lambda)=x^{-1}(\lambda)$ for all $x\in\I$ and $\lambda\in\coQ$ 
%(this is proved to be an isomorphism in Lemma \ref{lem_w^{-1}}).
\end{itemize}

Some properties of the above functors are needed.

\begin{prop} The functors $p_{\text{par}}^*$ and ${\cdot}^{\text{opp}}$ preserve indecomposable BMP-sheaves.
\end{prop}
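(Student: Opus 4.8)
The plan is to treat the two functors separately; the assertion for ${\cdot}^{\text{opp}}$ will be formal once Lemma \ref{pullbackBMP} is available, whereas the assertion for $p_{\text{par}}^{*}$ will rest on results of \cite{L11}. For ${\cdot}^{\text{opp}}$, which is the pullback along the isomorphism of $k$-moment graphs $f:\MGst_{|\I}\to\MGper_{|\I}$ with $f_{\V}=\id$, I would argue as follows. Fix $w\in\I$ and set $[A,w]:=\{x\in\I\mid x\leq w\}$; the restriction of $f$ to the full subgraphs supported on $[A,w]$ is again an isomorphism of $k$-moment graphs, and both $\MGst_{|[A,w]}$ and $\MGper_{|[A,w]}$ have $w$ as unique maximal vertex, so Lemma \ref{pullbackBMP} identifies the pullback of the indecomposable Braden--MacPherson sheaf with top vertex $w$ on $\MGper_{|[A,w]}$ with the one on $\MGst_{|[A,w]}$. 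Since $f$ is the identity on vertices, pullback along $f$ commutes with restriction to this subgraph, and the Braden--MacPherson sheaf $\BMP(w)$ on $\MGper_{|\I}$ is supported on $[A,w]$; hence $(\BMP(w))^{\text{opp}}\cong\BMP(w)$ on $\MGst_{|\I}$. Additivity of the pullback, its compatibility with grading shifts, and Theorem \ref{theor_FW_BMPindc} then upgrade this to the statement that ${\cdot}^{\text{opp}}$ sends every Braden--MacPherson sheaf to a Braden--MacPherson sheaf and preserves indecomposability.

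For $p_{\text{par}}^{*}$, the plan is to recall the pertinent statement of \cite{L11} and to sketch why it applies here. Properties (BMP1) and (BMP2) for $p_{\text{par}}^{*}\F$ follow immediately from the definition of the pullback, exactly as in the proof of Lemma \ref{lem_reduction}. For (BMP4) I would exploit that the fibres of $p_{\text{par},\V}$ are the left cosets $x\langle\SR\rangle$ and that $p_{\text{par},l,x}=\id$: the assignment $(n_{y})_{y}\mapsto(n_{x^{\text{par}}})_{x}$ defines an injection $\Gamma(\MGp,\F)\hookrightarrow\Gamma(\MGa,p_{\text{par}}^{*}\F)$, through which surjectivity of $\Gamma(\MGp,\F)\to\F^{x^{\text{par}}}$ transfers to surjectivity of $\Gamma(\MGa,p_{\text{par}}^{*}\F)\to(p_{\text{par}}^{*}\F)^{x}$. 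What remains is the flabbiness (BMP3) of $p_{\text{par}}^{*}\BMP^{\SR}(w)$ and its indecomposability; this is precisely the content of \cite{L11}. There one checks that the preimage under $p_{\text{par},\V}$ of an upwardly closed subset of $\WA^{\SR}$ is upwardly closed in $\WA$ and uses the parabolic Bruhat order identity $\{x\in\WA\mid x^{\text{par}}\leq w\}=\{x\leq ww_{0}\}$ valid for $w\in\WA^{\SR}$ (with $w_{0}$ the longest element of $\W$), thereby identifying $p_{\text{par}}^{*}\BMP^{\SR}(w)$ with the indecomposable Braden--MacPherson sheaf $\BMP(ww_{0})$ on $\MGa$; additivity again settles the case of a general Braden--MacPherson sheaf.

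I expect the main obstacle to be exactly this last point: the flabbiness, and with it the indecomposability, of $p_{\text{par}}^{*}\BMP^{\SR}(w)$. Showing that a local section over an upwardly closed subset of $\WA$ extends to a global section is the delicate step, and it is there that the combinatorial analysis of the parabolic projection carried out in \cite{L11} is indispensable; the case of ${\cdot}^{\text{opp}}$, by contrast, involves no such subtlety once Lemma \ref{pullbackBMP} is granted.
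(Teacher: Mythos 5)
Your proposal is correct and follows essentially the same route as the paper: the claim for ${\cdot}^{\text{opp}}$ is reduced to the fact that $f$ is an isomorphism of $k$-moment graphs together with Lemma \ref{pullbackBMP}, and the claim for $p_{\text{par}}^{*}$ is reduced to the pullback theorem of \cite{L11} (Theorem 6.2, reformulated in \cite{L12}), with your extra verifications of (BMP1), (BMP2), (BMP4) and the restriction to intervals with a unique maximal vertex being routine elaborations of what the paper leaves implicit.
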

\proof
The fact that  $p_{\text{par}}^*$ maps BMP-sheaves to BMP-sheaves is just a particular case of Theorem  6.2 of \cite{L11} in the 
reformulation we gave in \cite{L12}, Theorem 6.1.

 In Lemma 5.2 of \cite{L11}, we proved that $f$ defined as above is an isomorphism and hence, by Lemma \ref{pullbackBMP}, the pullbak 
 ${\cdot}^{\text{opp}}$ preserves Braden-MacPherson sheaves.
\endproof

\begin{theor} Let $\F\in\textbf{Sh}_{\MGp_{|_{\I}}}$ be a Braden-MacPherson sheaf, then ${\F}^{\text{per}}\in\textbf{Sh}_{\MGst_{|_{\I}}}$ is a flabby sheaf.
\end{theor}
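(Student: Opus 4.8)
The plan is to establish flabbiness of $\F^{\text{stab}}=g_{\I}^*(\F)$ by pushing $\F$ through the five--step factorisation of ${\cdot}^{\text{stab}}$ displayed above,
\[
\F^{\text{stab}}\;\cong\;\Bigl(j^*\bigl((p_{\text{par}}^*(i_*\F))^{\text{per}}\bigr)\Bigr)^{\text{opp}},
\]
keeping track of which property survives each arrow. By Theorem \ref{theor_FW_BMPindc} every Braden--MacPherson sheaf is a finite direct sum of indecomposable ones and all five functors are additive, so I may assume $\F=\BMP(B)$ is indecomposable. Since $i_*$ preserves indecomposable Braden--MacPherson sheaves (by \cite{L11}), $i_*\F$ is a Braden--MacPherson sheaf on $\MGp$; and by the proposition established just above, $p_{\text{par}}^*$ preserves indecomposable Braden--MacPherson sheaves, so $\BMP':=p_{\text{par}}^*(i_*\F)$ is a Braden--MacPherson sheaf on the regular affine Bruhat graph $\MGa$.

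Now Proposition \ref{prop_flabbyStab} applies and tells us that $(\BMP')^{\text{per}}$ is a flabby sheaf on the periodic graph $\MGper$. The key subtlety is that $(\BMP')^{\text{per}}$ is in general \emph{not} a Braden--MacPherson sheaf on $\MGper$: axiom (BMP2) is tied to the orientation of the edges, and the periodic orientation is dictated by $\preccurlyeq$ rather than by the Bruhat order, so from this point on the only property one can propagate is flabbiness. The last two functors cause no trouble: ${\cdot}^{\text{opp}}$ is the pullback along the isomorphism of $k$-moment graphs $f\colon\MGst_{|_{\I}}\to\MGper_{|_{\I}}$, hence an equivalence of sheaf categories which identifies open subsets on the two sides and transports spaces of sections bijectively, so it preserves flabbiness tautologically. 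Thus the theorem reduces to the single assertion that $j^*\bigl((\BMP')^{\text{per}}\bigr)$ is a flabby sheaf on $\MGper_{|_{\I}}$.

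I expect this to be the main obstacle, and it is not a formality: $\I=[A,B]$ is only locally closed in $\MGper$ (it equals $\{\succcurlyeq A\}\cap\{\preccurlyeq B\}$, not an open subset), and $(\BMP')^{\text{per}}$ is no longer Braden--MacPherson, so one cannot simply quote the preservation of indecomposable Braden--MacPherson sheaves under $j^*$. I would handle it on the module side, via Fiebig's localisation functor $\mathscr{L}$: as $\BMP'$ is a Braden--MacPherson sheaf on $\MGa$, one has $\BMP'\cong\mathscr{L}(M)$ with $M:=\Gamma(\BMP')$ a special module ($M\in\H$ by Theorem \ref{theor_Fie08bSM_BMP}), and since $\mathscr{L}$ and $\Gamma$ do not see the partial order the same identifications give $(\BMP')^{\text{per}}\cong\mathscr{L}(M)$ as a sheaf on $\MGper$ and $j^*\bigl((\BMP')^{\text{per}}\bigr)\cong\mathscr{L}(M^{\I})$, where $M^{\I}$ is now a module over the structure algebra of $\MGper_{|_{\I}}$. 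Using Lemma \ref{lem_GammaL} to rewrite $\Gamma(\Omega,\mathscr{L}(M))$ as $\Gamma(\mathscr{L}(M^{\Omega}))$ for upwardly closed $\Omega\subseteq\I$, together with the stability of the truncation operations $M\mapsto M^{\Omega}$, $M\mapsto M_{\Omega}$ under $\mathscr{L}$, the required flabbiness of $\mathscr{L}(M^{\I})$ reduces to the statement that $M^{\I}$ is $\preccurlyeq$-flabby; and this in turn follows from $M$ itself being $\preccurlyeq$-flabby — which is the content of Proposition \ref{prop_flabbyStab} (whose proof establishes precisely this via Theorem \ref{theor_sthper_flabby} and the definition of special module) — by a short upward induction over $\I$ patterned on the proofs of Lemma \ref{lem_thson_flabby} and of the proposition preceding Theorem \ref{theor_sthper_flabby}. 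Combining the surviving implications then yields that $\F^{\text{stab}}$ is flabby, which is exactly the claim.
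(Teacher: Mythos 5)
Your proposal follows the paper's proof in all but one place: the paper's argument is precisely the five--functor factorisation you use, quoting the proposition that $p_{\text{par}}^{*}$ and ${\cdot}^{\text{opp}}$ preserve indecomposable Braden--MacPherson sheaves and Proposition \ref{prop_flabbyStab} for ${\cdot}^{\text{per}}$, and then finishing with the one--line remark that $i_{*}$ and $j^{*}$ ``clearly'' map flabby sheaves to flabby sheaves. You diverge only at the $j^{*}$ step, which you single out as the real obstacle and attack through Fiebig's localisation functor. Your caution there is well founded: $\I$ is only locally closed in $\MGper$, and restriction to an interval does \emph{not} preserve flabbiness for an arbitrary flabby sheaf (one can build a three--vertex example where a section over an open subset of the interval fails to extend downwards inside it although the ambient sheaf is flabby), so the step genuinely uses that the sheaf is $\mathscr{L}(M)$ for a special module $M$ (Theorem \ref{theor_Fie08bSM_BMP}); moreover your identifications $j^{*}\bigl(\mathscr{L}(M)\bigr)\cong\mathscr{L}(M^{\I})$ and the translation of the goal into ``$M^{\I}$ is $\preccurlyeq$-flabby'' via Lemma \ref{lem_GammaL} are correct, as is your observation that $(\BMP')^{\text{per}}$ is no longer Braden--MacPherson so only flabbiness can be propagated. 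The weak point is that your decisive claim --- that $\preccurlyeq$-flabbiness of $M$ yields $\preccurlyeq$-flabbiness of $M^{\I}$ ``by a short upward induction patterned on'' Lemma \ref{lem_thson_flabby} and the proposition preceding Theorem \ref{theor_sthper_flabby} --- is asserted rather than proved, and those arguments do not transfer verbatim: they concern translation functors and rest on Proposition \ref{prop_ths}, which requires $s$-invariant subsets, a hypothesis an interval $[A,B]$ never satisfies; nor does ambient flabbiness alone produce the extension of a section from $\{\succ u\}\cap\I$ to $u$, since such a section satisfies fewer edge relations than one over all of $\{\succ u\}$. So your proposal reproduces the paper's route and correctly isolates its most compressed point, but the replacement you offer for the paper's ``clearly'' still stops short of a complete argument for that point.
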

\proof The statement follows by combining the results of this section, together with the fact that the functors $i_*$ and  $j^*$ 
clearly map flabby sheaves to flabby sheaves.
\endproof

\subsection{Indecomposability}

Here we prove the only step missing in the proof of Theorem \ref{theor_stabfunctor}. From now on,
 we will denote by $\WAp$ the set of minimal representatives for the equivalence classes of $\WA/\W$.

\subsubsection{Localisation of special $\Zp$-modules}\label{sssec_loc_SM}

Let $\beta\in\Delta_{+}$, we consider the following localisation of the symmetric algebra $S_k$:
\begin{equation}S_{k}^{\beta}:=S_k[(\al+n\delta)^{-1}\,\vert\,\al\in\Delta_{+}\!\setminus\{\beta\},\,n\in\mathbb{Z}]
\end{equation}

Fiebig used this localisation in \cite{Fie07a}, in order to relate the category of regular special modules to a category 
introduced by Andersen, Jantzen and Soergel in \cite{AJS}. 

Let us denote by  $\WA_{\beta}$ the subgroup of $\WA$ generated by the affine reflections $s_{\beta,n}$, for $n\in\mathbb{Z}$, 
and by $\WA^{\beta}$ the set of orbits for the left action of $\WA_{\beta}$ on $\WAp$. Remark that the group $\WA_{\beta}$ 
is isomorphic to $\widetilde{A_1}$.  For any subset $\Omega\subseteq \WAp$, let us write moreover
 $\Z^{\text{par},\beta}(\Omega):=\Z^{\text{par}}(\Omega)\otimes_{S_k}S_{k,\beta}$.

\begin{lem}[cf. \cite{Fie07a}, Lemma 3.1]Let $\Omega\subset \WAp$ be finite, then 
\begin{equation*}\Z^{\text{par},\beta}(\Omega)=\left\{(z_x)\in\bigoplus_{x\in\Omega}S_k^{\beta}\,\Big\vert 
\begin{array}{c}
z_{x}\equiv z_{y} \,\,\,(\!\!\!\!\mod \check{(\beta+n\delta)})\\
\text{\small{if }} \exists \,w\in\W,n\in\mathbb{Z} \text{\small{ s.t. }}y\,w\,x^{-1}=s_{\beta,n} 
\end{array}\right \}=\bigoplus_{\Theta\in\W_{\beta}}\Z^{\text{par},\beta}(\Omega\cap \Theta)
\end{equation*}
\end{lem}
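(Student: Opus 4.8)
By definition, $\Z^{\text{par}}(\Omega)$ is the space of global sections of the structure sheaf of $\MGp$ restricted to $\Omega$, namely
\begin{equation*}
\Z^{\text{par}}(\Omega)=\Big\{(z_x)\in\bigoplus_{x\in\Omega}S_k \ \Big|\ z_x-z_y\in l(E)\cdot S_k \text{ for every edge } E: x-\!\!\!-\!\!\!-y \text{ of }\MGp_{|_{\Omega}}\Big\},
\end{equation*}
so that, since $\Omega$ is finite, $\Z^{\text{par}}(\Omega)$ is the intersection over the finitely many edges $E$ of $\MGp_{|_{\Omega}}$ of the kernels of the maps $\bigoplus_{x\in\Omega}S_k\to S_k/l(E)S_k$ sending $(z_x)$ to $z_x-z_y \bmod l(E)$. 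The plan is to first tensor with $S_k^\beta$: since $S_k^\beta$ is flat over $S_k$, this operation commutes with those finitely many kernels and with their intersection, so $\Z^{\text{par},\beta}(\Omega)$ is the submodule of $\bigoplus_{x\in\Omega}S_k^\beta$ cut out by the congruences $z_x\equiv z_y \pmod{l(E)}$, now read inside $S_k^\beta$, one for each edge of $\MGp_{|_{\Omega}}$.

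The second step is to decide which of these congruences remain non-trivial. An edge $E:x-\!\!\!-\!\!\!-y$ of $\MGp_{|_{\Omega}}$ carries the label $l(E)=\check{\alpha}$, where $\alpha\in\Dre_{+}$ is the unique positive real root with $ywx^{-1}=s_{\alpha}$ for some $w\in\W$ (uniqueness being forced by well-definedness of the label function). Using the identification $\check{\alpha}=\frac{2\alpha}{(\alpha,\alpha)}$ and \eqref{eqn_aff_lables}, a direct inspection of the multiplicative set inverted in $S_k^\beta$ shows that $\check{\alpha}$ is a unit in $S_k^\beta$ exactly when $\alpha\notin\{\beta\}\cup\{\pm\beta+m\delta\mid m\in\mathbb{Z}_{>0}\}$, equivalently exactly when $s_{\alpha}\notin\WA_{\beta}$. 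For such edges the congruence $z_x\equiv z_y\pmod{\check{\alpha}}$ is vacuous; for the remaining ones $s_{\alpha}=s_{\beta,n}$ for a unique $n\in\mathbb{Z}$, and the congruence becomes $z_x\equiv z_y\pmod{\check{(\beta+n\delta)}}$. Conversely, if $x,y\in\Omega$ satisfy $ywx^{-1}=s_{\beta,n}$ for some $w\in\W$ and $n\in\mathbb{Z}$, then by the lifting property of the parabolic Bruhat order (cf.\ \cite[Proposition 2.5.1]{BB}) $x$ and $y$ are comparable, hence joined by an edge of $\MGp_{|_{\Omega}}$ with label $\check{(\beta+n\delta)}$. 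Collecting these facts gives the first equality of the statement.

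Finally, I would observe that a surviving congruence never relates vertices lying in different $\WA_{\beta}$-orbits: from $ywx^{-1}=s_{\beta,n}$ with $w\in\W$ one gets $y\W=s_{\beta,n}(x\W)$, and since $s_{\beta,n}\in\WA_{\beta}$ and $\WAp$ is identified with $\WA/\W$, the vertices $x$ and $y$ lie in a common orbit $\Theta\in\WA^{\beta}$. Splitting $\bigoplus_{x\in\Omega}S_k^\beta=\bigoplus_{\Theta}\big(\bigoplus_{x\in\Omega\cap\Theta}S_k^\beta\big)$ accordingly, the system of congruences defining $\Z^{\text{par},\beta}(\Omega)$ decomposes as a direct sum over the finitely many orbits $\Theta$ meeting $\Omega$; applying the first equality to $\Omega\cap\Theta$ in place of $\Omega$ identifies the $\Theta$-summand with $\Z^{\text{par},\beta}(\Omega\cap\Theta)$, which yields $\Z^{\text{par},\beta}(\Omega)=\bigoplus_{\Theta\in\WA^{\beta}}\Z^{\text{par},\beta}(\Omega\cap\Theta)$.

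The step I expect to be the main obstacle is the unit computation in the second paragraph: one must reconcile the multiplicative set inverted in $S_k^\beta$, presented via the affine roots $\alpha+n\delta$, with the coroot labels $\check{(\alpha+n\delta)}=\check{\alpha}+\frac{2n}{(\alpha,\alpha)}c$ of $\MGp$, and verify that precisely the labels attached to reflections of $\WA_{\beta}$ remain non-units — including the sign bookkeeping that identifies the set $\{s_{\beta,n}\mid n\in\mathbb{Z}\}$ with the reflections whose positive root belongs to the $\beta$-family. Everything else is a routine adaptation to the parabolic quotient $\WAp$ of the argument of \cite[Lemma 3.1]{Fie07a}.
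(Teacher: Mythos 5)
Your argument is correct, and it is essentially the proof the paper has in mind: the paper simply omits it, citing \cite[Lemma 3.1]{Fie07a}, whose argument is exactly this localisation reasoning --- flat base change turns the defining congruences of $\Z^{\text{par}}(\Omega)$ into congruences over $S_k^{\beta}$, all labels not of the form $\pm\check{(\beta+n\delta)}$ become units so their conditions are vacuous, and the surviving conditions only relate vertices in a common $\WA_{\beta}$-orbit, giving the direct sum decomposition. Your flagged root/coroot and sign bookkeeping is indeed the only point requiring care, and it works out as you describe, so no further comment is needed.
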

\proof Omitted, since Fiebig's proof of \cite[Lemma 3.1]{Fie07a} works exactly the same  in this parabolic setting too.
\endproof
For $M\in\Hp$, we set $M^{\beta}:=M\otimes_{S_k}S_{k,\beta}$. Because any special module is a module on $\Z(\Omega)$ 
for some $\Omega\subset \WAp$ finite (see \cite[Lemma 4.3]{L12}), the decomposition of the previous lemma  induces the following decomposition.
\begin{equation}M^{\beta}=\bigoplus_{\Theta\in\W^{\beta}}M^{\beta, \Theta}
\end{equation}

Next we are going to show that this localisation procedure preserves special modules.
 In particular, we prove that, under the localisation, a special module having support on a finite interval far enough in the 
fundamental chamber splits in a direct sum of special modules for the subgeneric parabolic structure algebra, that is the one
 corresponding to the case $\g=\mathfrak{sl}_2$.

\begin{lem}\label{lem_locPSM} Let $M\in\Hp$ such that $\Zp$ acts on it via $\Zp(\I)$, for $\I$ a finite interval far enough
 in $\C^+$  and  $M^{\beta}=\bigoplus_{\Theta\in\W^{\beta}} M^{\beta,\Theta}$, then, for any $\Theta\in\W^{\beta}$, $M^{\beta,\Theta}$
 is isomorphic to a $\Z^{\text{par}}(\widehat{\frak{sl}_2})$-special module.
\end{lem}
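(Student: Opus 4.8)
The plan is to follow the strategy Fiebig uses in \cite[\S 3]{Fie07a} to compare regular special modules with the Andersen--Jantzen--Soergel category, now transported to the parabolic setting. Since the localisation functor ${(-)}^{\beta}:={(-)}\otimes_{S_k}S_{k,\beta}$ is exact (base change along a flat map), commutes with arbitrary direct sums, sends direct summands to direct summands, and since the decomposition $M^{\beta}=\bigoplus_{\Theta\in\W^{\beta}}M^{\beta,\Theta}$ is functorial in $M$, the first step is to reduce to the generators of $\Hp$: it suffices to prove the statement for $M={}^{s_{i_1}}\theta^{\text{par}}\circ\cdots\circ{}^{s_{i_r}}\theta^{\text{par}}(B^J_e)$ with $s_{i_1},\dots,s_{i_r}\in\SRA$, and then to argue by induction on $r$.

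Next I would pin down the block structure. By the previous lemma (the parabolic analogue of \cite[Lemma 3.1]{Fie07a}), for a finite interval $\I$ far enough in $\C^+$ one has $\Zp(\I)^{\beta}=\bigoplus_{\Theta\in\W^{\beta}}\Zp(\I\cap\Theta)^{\beta}$, and on each summand the defining congruences involve only the labels $\check{(\beta+n\delta)}$, $n\in\mathbb{Z}$. Here the hypothesis ``far enough in $\C^+$'' enters through Section 4: by Lemma \ref{PMGlimit} every edge of $\MGp$ joining two vertices of a single $\WA_{\beta}$-orbit inside $\I$ is a stable edge $x-\!\!\!-\!\!\!-xs_{\beta+n\delta}$, and by Lemma \ref{lem_refl_label_root} its label is $\check{(\beta+n\delta)}$, independently of the orbit. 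Hence $\MGp_{|_{\I\cap\Theta}}$, relabelled by $f_{l,x}(\lambda)=x^{-1}(\lambda)$ as in the definition of ${\cdot}^{\text{opp}}$, is isomorphic as a $k$-moment graph to a finite interval of the parabolic Bruhat graph of $\WA_{\beta}\cong\widetilde{A_1}$, so that $\Zp(\I\cap\Theta)^{\beta}$ is identified with a localisation of the subgeneric parabolic structure algebra $\Z^{\text{par}}(\widehat{\mathfrak{sl}_2})$; it then makes sense to ask whether $M^{\beta,\Theta}$ is special for the latter.

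The base case $r=0$ is immediate: $(B^J_e)^{\beta}$ is free of rank one over $S_{k,\beta}$ with $(z_x)$ acting by $z_e$, so $M^{\beta,\Theta}$ is the localised subgeneric $B^J_e$ on the orbit $\Theta$ containing $e$ and is zero on the remaining orbits. For the inductive step I would show that the localised left translation functors respect the identification above: using Lemma \ref{lem_left_invariants} to split $\Zp$ (and $\Z^J$) as a module over ${}^s\Zp$ and tensoring with $S_{k,\beta}$, the invariants ${}^s\Zp\otimes_{S_k}S_{k,\beta}$ decompose along the $\WA_{\beta}$-orbits into subgeneric $\widetilde{A_1}$-invariant subalgebras; under this decomposition ${}^{s,on}\theta$ and ${}^{s,out}\theta$, hence ${}^{s}\theta^{\text{par}}$, act on each block either as an equivalence identifying a pair of orbits — when left multiplication by $s$ together with the twist $\tau_s$ moves the block — or as the subgeneric left translation functor ${}^{s}\theta^{\text{par}}$ for $\widetilde{A_1}$ when it preserves the block. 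Since the subgeneric $\Hp$ is by definition closed under the subgeneric ${}^{s}\theta^{\text{par}}$, in either case subgeneric parabolic special modules go to subgeneric parabolic special modules, and the inductive hypothesis finishes the proof.

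The hard part will be this last step: carefully tracking how left multiplication by a simple reflection $s$, combined with the twist $\tau_s$ on $S_k$, permutes the $\WA_{\beta}$-orbits that meet $\I$, and matching the localised invariant subalgebra ${}^s\Zp\otimes_{S_k}S_{k,\beta}$ with a product of subgeneric $\widetilde{A_1}$-invariant subalgebras in a way compatible with restriction and induction. This bookkeeping is precisely what forces the localised translation functors to decompose into subgeneric ones, and it relies crucially on the explicit description of finite intervals of $\MGp$ far enough in $\C^+$ obtained in Section 4 (Lemma \ref{PMGlimit}, Lemma \ref{lem_refl_label_root}, Lemma \ref{lem_transl_label}).
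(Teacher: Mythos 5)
Your proposal follows essentially the same route as the paper: induction along the construction of special modules starting from $B_e$, with the inductive step for ${}^s\theta^{\text{par}}$ split into the two cases where the $\WA_{\beta}$-orbit block is preserved (the localised functor acts as the subgeneric translation functor, via the splitting of ${}^s\Zp\subset\Zp$ after localisation) or is swapped with another block, exactly as in the paper's adaptation of Fiebig's argument from \cite[Lemma 3.5]{Fie07a}, and concluding with Lemma \ref{PMGlimit} to identify $\I\cap\Theta$ with a finite parabolic interval of the subgeneric moment graph. The approach and the key ingredients coincide with the paper's proof.
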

\proof We prove by induction that any $M^{\beta, \Theta}$ is a special module for the structure algebra of
 $\MGp_{|_{\Theta}}$. If $M=B_e$, there is nothing to prove. Suppose the lemma holds for $M\in\Hp$; we have to show that it is 
true also for $\sth(M)=\bigoplus_{\Theta\in\W^{\beta}}\sth(M)^{\beta, \Theta}$.
%, since $\sth$ is exact.
 Thus it is enough to demonstrate it for one module $M^{\beta, \Theta}$. 

In order to do this, we follow the proof of \cite[Lemma 3.5]{Fie07a}.
  If $\Theta=\Theta s$, 
then $\sth(M)^{\beta, \Theta}=M^{\beta, \Theta}\otimes_{{}^s\Z^{\text{par}\beta}(\Theta)}\Z^{\text{par},\beta}(\Theta)$, 
since, by \cite[Lemma 4.3]{L12}, the inclusion ${}^s\Z^{par,\beta}(\Omega)\subset \Z^{par,\beta}(\Omega)$ contains
 ${}^s\Z^{\text{par}, \beta}(\Theta)\subset \Z^{\text{par}, \beta}(\Theta)$ as a direct summand. Otherwise,
  $\Theta\neq \Theta s$ and the inclusion
 ${}^s\Z^{\text{par}, \beta}(\Theta\cup \Theta s)\subset \Z^{\text{par}, \beta}(\Theta)\oplus \Z^{\text{par}, \beta}(\Theta s)$
 is an isomorphism on each direct summand. Therefore we get the equality
 $\sth(M)^{\text{par},\beta}=M^{\beta, \Theta}\oplus M^{\beta, \Theta s}$. 
In both cases, we obtain the claim by applying the inductive hypothesis because $\Z^{\text{par}, \beta}$ acts on
 $M^{\beta, \Theta}$ via $\Z^{\text{par}, \beta}(\I\cap \Theta)$ and  clearly 
$\Z^{\text{par}, \beta}(\I\cap \Theta)=\Z^{\text{par}}(\I\cap \Theta)$.

Now the statement follows because by Lemma \ref{PMGlimit}, for any finite interval $\I$ far enough in the fundamental chamber 
and any $\Theta\in\W^{\beta}$, the interval $\I\cap \Theta$ is isomorphic (as moment graph) to a finite parabolic interval of 
the  subgeneric moment graph.
\endproof

\subsubsection{A property of indecomposable BMP-sheaves} Before proving our last result, we first  have to recall an intrinsic property of indecomposable canonical sheaves, 
which has been used already by Braden and MacPherson in \cite{BM01} to show the existence of these objects. We follow a 
reformulation due to Fiebig.

 For any $k$-moment graph $\MG=(\V,\E,\tle,l)\in\MGYk$ and for any $x\in\V$, we denote (cf. \cite[\S 4.2]{Fie08a})
\begin{equation*}\E_{\delta x}:=\big\{  E\in\E\,\,|\,\, E:x\rightarrow y         \big\}
\end{equation*}
\begin{equation*}\V_{\delta x}:=\big\{ y\in \mathcal{V}\,\,|\,\, \exists E\in\E_{\delta x}  \text{ such that } E:x\rightarrow y       \big\}
\end{equation*}

Consider  $\F\in\ShMGk$ and define $\F^{\delta x}$ to be the image of $\Gamma(\{\triangleright x\},\F)$ under the composition $u_x$ of the following maps
\begin{equation} \label{Eqn_ux}
\begin{xymatrix}{
\Gamma(\{\triangleright x\},\F) \ar@{^{(}->}[r]\ar@/_2pc/[rrr]_{u_x}&\bigoplus_{y\triangleright  x}\!\F^y\ar[r]&\bigoplus_{y\in \V_{\delta x}}\!\!\F^y\ar[r]^{\oplus \rho_{y,E}}&\bigoplus_{E\in \E_{\delta x}} \!\!\F^E
}\end{xymatrix}
\end{equation}

Moreover, let us denote 

\begin{equation*}\begin{xymatrix}{
d_x:=(\rho_{x,E})_{E\in\E_{\delta x}}^T: \F^x\ar[r]&\bigoplus_{E\in \E_{\delta x}} \!\!\F^E
}\end{xymatrix}
\end{equation*}

If $\BMP$ is an indecomposable \emph{BMP}-sheaf, that is $\BMP=\BMP(w)$ for some $w\in \V$, then conditions \textit{(BMP3)} 
and \textit{(BMP4)} of Definition \ref{defin_BMP} may be replaced by the following condition (cf. \cite[Theorem 1.4]{BM01})

\vspace{2.3mm}
\textit{\hspace{-5mm}(BMP3') for all $x\in\V$, with $x\triangleleft w$,  $d_x: \BMP(w)^x\rightarrow \BMP(w)^{\delta x}$ is a projective cover in the category of graded $S_k$-modules}
\vspace{2.3mm}

Since a (graded) finitely generated $S_k$-module is projective if and only if it  is free, $\BMP(w)^x$ is isomorphic to the 
graded free $S_{k}$-module with minimal number of generators which maps surjectively to $\BMP(w)^{\delta x}$.

\begin{prop}\label{prop_indp_BMPst} Let $\I$ be a finite interval of $\MGp$ far enough in $\C^+$ and let 
$\BMP\in\textbf{Sh}_{\MGp_{|_{\I}}}$ be an indecomposable Braden-MacPherson sheaf. Then $\st\BMP$ is also indecomposable as 
sheaf on $\MGst_{|_{\I}}$.
\end{prop}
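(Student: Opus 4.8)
The plan is to prove that $\st\BMP$ is \emph{indecomposable} by showing that $\text{End}(\st\BMP)$ has no non‑trivial idempotent, localising at the ideals of \S\ref{sssec_loc_SM} in order to reduce the question to the subgeneric case settled in Section~6. First I would record the cheap part: by the flabbiness statement of the preceding subsection together with Lemma~\ref{lem_reduction}, the sheaf $\st\BMP$ satisfies \emph{(BMP1)}--\emph{(BMP4)}, hence is a Braden--MacPherson sheaf on $\MGst_{|_{\I}}$, and by Theorem~\ref{theor_FW_BMPindc}(ii) it splits as a finite direct sum of shifted indecomposable canonical sheaves. Let $w$ be the maximal vertex of the support of $\BMP$, so that $\BMP\cong\BMP(w)$ and $(\st\BMP)^{w}=\BMP^{w}\cong S_{k}$ is free of rank~$1$. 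Reading off this stalk, exactly one summand of $\st\BMP$ is the indecomposable canonical sheaf $\BMP^{\text{stab}}(w)$ on $\MGst_{|_{\I}}$ (with no grading shift), and the complementary summand $\mathcal R$ is supported in $\{\,<w\,\}$. It remains to prove $\mathcal R=0$, i.e.\ that the idempotent $e\in\text{End}(\st\BMP)$ projecting onto $\mathcal R$ vanishes; and since everything is graded and finitely generated over the graded‑local ring $S_{k}$, it is enough to treat idempotents of degree~$0$.

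Next I would localise. Fix a finite positive root $\beta$ and extend scalars to $S_{k}^{\beta}$. Since $\Gamma(\BMP)$ is a parabolic special module on which $\Zp$ acts through $\Zp(\I)$, Lemma~\ref{lem_locPSM} together with the orbit decomposition $\Z^{\text{par},\beta}(\I)=\bigoplus_{\Theta\in\W^{\beta}}\Z^{\text{par},\beta}(\I\cap\Theta)$ yields a splitting $(\st\BMP)^{\beta}=\bigoplus_{\Theta}(\st\BMP)^{\beta,\Theta}$: a stable edge of $\MGst_{|_{\I}}$ whose root is not $\beta$ acquires an invertible label over $S_{k}^{\beta}$, so only the stable edges internal to the $\WA_{\beta}$‑orbits remain, and these are precisely the edges of the $\widehat{\mathfrak{sl}_2}$‑\emph{stable} moment graph on $\I\cap\Theta$. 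By Lemma~\ref{PMGlimit} each such $\I\cap\Theta$ is an interval of the type analysed in Section~6, where $({\cdot})^{\text{stab}}$ was shown to preserve indecomposable canonical sheaves; hence each $(\st\BMP)^{\beta,\Theta}$ is a direct sum of structure sheaves on subintervals of $\I\cap\Theta$, each one indecomposable, with a unique maximal vertex and with all stalks free of rank~$1$.

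The final step is the gluing. The localisation of $e$ at $\beta$ is a compatible family $(e^{\beta,\Theta})_{\Theta}$ of degree‑$0$ idempotents. Using the explicit description of degree‑$0$ homomorphisms between structure sheaves on the connected graph $\MGst_{|_{\I\cap\Theta}}$ (each such $\Hom$‑space is $k$ or $0$, according to the relative position of the supporting subintervals) and, above all, the fact that $\BMP^{\text{stab}}(w)$ satisfies condition \emph{(BMP3')} — which, orbit by orbit, forces its stalks to exhaust those of $(\st\BMP)^{\beta,\Theta}$ — one deduces that $\text{supp}(e):=\{x\in\I\mid e^{x}\neq 0\}$ is a union of $\WA_{\beta}$‑orbits, for \emph{every} $\beta$. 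Because a stable edge always joins two vertices lying in a common $\WA_{\beta}$‑orbit, $\text{supp}(e)$ is then closed under stable edges; as $\MGst_{|_{\I}}$ is connected (the stable edges alone connect a far‑enough interval, cf.\ \S\ref{ssec_stabMG}) and $(\st\BMP)^{w}$ has rank~$1$, this forces $\text{supp}(e)\in\{\emptyset,\I\}$, hence $e\in\{0,\id\}$. Therefore $\mathcal R=0$, $\st\BMP$ is indecomposable, and — together with Lemma~\ref{lem_reduction} and the flabbiness result — this completes the proof of Theorem~\ref{theor_stabfunctor}.

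I expect this last step to be the genuine obstacle. For root systems larger than $A_{1}$ the localised pieces $(\st\BMP)^{\beta,\Theta}$ are in general \emph{non‑trivial} direct sums of structure sheaves, so their endomorphism rings are not local and one cannot simply assert that $e^{\beta,\Theta}$ equals $0$ or the identity; making the argument work requires exploiting simultaneously the \emph{(BMP3')}‑characterisation of $\BMP^{\text{stab}}(w)$, the indecomposability of $\BMP$ on $\MGp_{|_{\I}}$, and a careful count of multiplicities made available by the rank‑one‑stalk property in the $\widehat{\mathfrak{sl}_2}$‑case.
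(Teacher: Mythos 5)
Your first two steps (reducing to the vanishing of a complementary summand of $\st\BMP$, and localising $\Gamma(\BMP)$ at a finite root $\beta$ so that, by Lemma~\ref{lem_locPSM} and Lemma~\ref{PMGlimit}, everything splits into subgeneric pieces handled by Section~6) coincide with the paper's strategy. The gap is exactly where you suspect it: the ``gluing'' claim that $\text{supp}(e)$ is a union of $\WA_{\beta}$-orbits for every $\beta$ is not a consequence of the ingredients you list. After localisation each piece $(\st\BMP)^{\beta,\Theta}$ is a direct sum of (shifted) structure sheaves supported on order ideals of $\I\cap\Theta$, so its degree-zero idempotents can perfectly well have support equal to a proper sub-ideal of the orbit piece; the condition \emph{(BMP3')} for $\BMP^{\text{stab}}(w)$ says nothing about the complementary summand $\mathcal R$, and the indecomposability of $\BMP$ never actually enters your sketch except through the rank-one top stalk. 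In effect, establishing your orbit-support claim directly is as hard as the proposition itself, and your closing paragraph concedes that the argument as written does not go through. Moreover, by passing to endomorphisms of $\st\BMP$ as a sheaf on $\MGst_{|_{\I}}$ you have thrown away the non-stable edges, which is precisely the information needed to rule out extra summands.

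The paper closes this gap differently: it fixes a decomposition $\st\BMP=\C\oplus\mathcal{D}$ with $\C^{w}\cong S_k$, $\mathcal{D}^{w}=0$, and picks a \emph{maximal} vertex $y$ with $\mathcal{D}^{y}\neq 0$. Maximality plus \emph{(BMP2)} kill $\rho_{y,E}(\mathcal{D}^{y})$ for every upward stable edge $E\in\E_{\delta y}\cap\E_{S}$; for an upward non-stable edge $F$ with $l(F)=\beta+n\delta$ one localises at $\beta$ and uses the subgeneric computation (where, by Section~6, all stalks in the relevant orbit are rank one and the edge maps in the $\beta$-direction behave uniformly) to see that $\rho_{y,F}(\mathcal{D}^{y})\neq 0$ would force nonvanishing along the stable $\beta$-edges as well, a contradiction. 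Hence $d_{y}$ annihilates $\mathcal{D}^{y}$, so $u_y(\C^{y})=\BMP^{\delta y}$, and the projective-cover property \emph{(BMP3')} of the \emph{original} indecomposable sheaf $\BMP=\BMP(w)$ on $\MGp_{|_{\I}}$ at $y$ --- a condition computed with \emph{all} edges of the parabolic graph, stable and non-stable --- forces $\mathcal{D}^{y}=0$, a contradiction. This re-entry of the non-stable edges through the minimality of $\BMP^{y}\rightarrow\BMP^{\delta y}$ is the idea missing from your proposal, and it is what replaces your unproven orbit-support statement.
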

\proof Since $\BMP$ is indecomposable, by Theorem \ref{theor_FW_BMPindc}, $\BMP=\BMP(w)$ for some $w\in\I$, that implies 
$\BMP(w)^x=0=\stab{\BMP}{x}$ for all   $x> w$ ($x\in\I$) and $\BMP(w)^w\cong S_k\cong\stab{\BMP}{w}$.  Suppose that
 $\st\BMP=\C\oplus\mathcal{D}$, then for what we have just observed, we may take $\C$ and $\mathcal{D}$ such that 
$\C^x=\mathcal{D}^x=0$ for all $x>w$, $\C^w\cong S$ and $\mathcal{D}^w=0$. Let $y\in\I$ be a maximal vertex such that
$\mathcal{D}^y\neq 0$. For any $E:y\rightarrow z\in\E_{\delta y}$, by definition of Braden-MacPherson sheaf,
 $\rho_{z,E}:\BMP(w)^z=\stab{\BMP}{z}=\C^z\rightarrow \BMP^E=\stab{\BMP}{E}$ is surjective with kernel 
$l(E)\cdot\BMP^z=l(E)\C^z$ and this implies $\mathcal{D}^E=0$. 

We now localise $\Gamma(\BMP)$ at a finite simple root $\beta$. Remark that, since we are representing the parabolic Bruhat graph using alcoves, we are 
taking the quotient of $\MG^{\text{opp}}$ instead of $\MG$. It means that we have to twist the action of $S_k$ on any 
vertex $x$ by $x^{-1}$. However, once the action of the symmetric algebra is twisted, the two previous results still work 
in the same way.  By combining Theorem \ref{theor_Fie08bSM_BMP} and Lemma \ref{lem_locPSM} we know that 
$\mathscr{L}(\Gamma(\BMP)^{\beta})$ is a direct sum of Braden-MacPherson sheaves on certain moment graphs, each one of 
them is  isomorphic to a finite interval of the parabolic Bruhat graph for $\widetilde{A_1}$. From the definition of $\mathscr{L}$, it follows that $\mathscr{L}(\Gamma(\st\BMP)^{\beta})=(\mathscr{L}(\Gamma(\BMP)^{\beta}))^{\text{stab}}$.

We have already proven that $\rho_{y,E}(\mathcal{D}^y)=0$ for any $E\in\E_{\delta y}\cap \E_{S}$ and we want to show that
 $\rho_{y,E}(\mathcal{D}^y)=$ for any $E\in\E_{\delta y}$. If it were not the case, there would be a non-stable edge 
$F\in\E_{\delta y}\cap \E_{NS}$ such that $\rho_{y,E}(D^y)\neq 0$. Let $\beta\in\Delta_{+}$ be such that $l(F)=\beta+n\delta$ for some $n\in\mathbb{Z}$. Localising at $\beta$, we would get $\rho_{y,F}^{\beta}(\mathcal{D}^{y,\beta})\neq 0$ and from the $\widetilde{A_1}$ case, it follows that $\rho_{y,E}^{\beta}(\mathcal{D}^{y,\beta})\neq 0$ for all $E\in\E_{\delta y}$ in $\beta$-direction, but we proved that this is not the case. 

%Let us consider the $\widetilde{A_1}$-interval where $y$ lies. We may suppose that $\beta $ is such that there are at least two vertices greater than $y$ in this  interval. A finite root with such a property exists always; indeed, if it were not the case, then we would have  $y=w$ or $y=ws$  for some simple reflection $s\in\SA$. But it would imply $\BMP^{y}\cong S$, that is a contradiction. Now we are ready to use the $\widetilde{A_1}$-result of \S\ref{sec_stabA1}. From it, as for all $E\in\E_{\delta}\cap\E_{S}$ $\rho_{y,E}(\BMP^y)=\rho_{y,E}^{\C}(\C^y)\oplus\rho_{y,E}^{\mathcal{D}}(\mathcal{D}^y)$, it follows that, for any $E\in\E_{\delta}\cap\E_{NS}$,  $\rho_{y,E}^{\beta}:\BMP^y\rightarrow \BMP^{E}$ maps $\mathcal{D}^{y, \beta}$ to zero, that is $\mathcal{D}^{y, \beta}\subset \ker \rho_{y,E}^{\beta}$. Since $\ker \rho_{y,E}=\cap  \ker\rho_{y,E}^{\beta}$,we get that ${\rho_{y,E}}$ maps also $\mathcal{D}^y$ to zero. 

We are now ready to conclude. From what we showed, it follows that $u_y(\C^y)=\BMP^{\delta y}$ and this implies $\mathcal{D}^y=0$, since $(\BMP^{y}, u_y)$ is a projective cover of $\BMP^{\delta y}$.

\endproof

\appendix
\section{Finite parabolic intervals in the subgeneric case}

In this appendix we want to study the behaviour of indecomposable BMP-sheaves on finite intervals of $\MGp$ in the subgeneric
 case, that is $\gaff=\widehat{\mathfrak{sl}_2}$. In particular, we want to interpret in the moment graph setting the fact that the corresponding parabolic Kazhdan-Lusztig polynomials are all trivial. More precisely, we want to show that the structure sheaf $\A$ and the canonical sheaf $\BMPp$  are in this case isomorphic.
 Hence to prove our claim, it will be enough to define a surjective map $S_k\rightarrow {\BMPp(w)}^{\delta x}$ for any pair
 of vertices $x< w$ in $\MGp$. 

 Recall that the set of vertices is in this case totally ordered, so we may enumerate the vertices as follows,
 once identified the finite root $\al$ with the corresponding coroot $\check{\al}$: $v_{0}=0$, $v_{1}=\alpha$, $v_{2}=-\alpha$, ... , $v_{h}=(-1)^{h+1}[{\frac{h+1}{2}}]\alpha$, $\ldots$ .

From now on, we denote the edges as $E_{h,k}:v_h-\!\!\!-\!\!\!-v_k$ and the labels as  $l_{h,k}:=l(E_{h,k})$; we 
write moreover $l_{h,k}=\alpha+n_{h,k} c$. Actually, the label of an edge $E_{h,k}$ is by definition $\pm l_{h,k}$; however, there exists  an isomorphic $k$-moment graph with same sets of vertices  and edges, but this other label function and, by Lemma \ref{pullbackBMP}, the corresponding indecomposable canonical sheaves are isomorphic.

We will prove in several steps that, if $v_j\leq v_i$ and $(\MGp_{|[v_{j}, v_{i}]},k)$ is a GKM-pair, then $(\BMPp(v_i))^{v_j}\cong S_k$ by induction on $i-j$.

Let us fix once and for all $\I=\{v_i, v_{i-1}, \ldots, v_{j+1}\}$.

\begin{lem}\label{subgen_lemma1} Let $r\in \NN$ be such that $r<i-j$. If $(\MGp, k)$ is a GKM-pair, and $z\in \Gamma(\I, \BMPp(v_i))_{\{ r\}}$, then $z$ is uniquely determined by its first $r+1$ components, that is the restriction map 
\begin{equation*}\Gamma(\I, \BMPp(v_i))_{\{r\}}\longrightarrow \Gamma(\{v_i, v_{i-1}, \ldots, v_{i-r}\}, \BMPp(v_i))_{\{ r\}}
\end{equation*}
 is injective.
\end{lem}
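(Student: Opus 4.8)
The plan is to argue by a descending induction along the totally ordered chain of vertices, exploiting the very rigid structure of the $\widehat{\mathfrak{sl}_2}$ parabolic moment graph recalled in \S\ref{sssec_sl2PMG}: the vertices $v_i, v_{i-1}, \dots$ form a complete graph, but more importantly, for any local section $z = (z_{v_i}, z_{v_{i-1}}, \dots)$ the compatibility condition along an edge $E_{h,k}$ forces $z_{v_h} \equiv z_{v_k} \pmod{l_{h,k}}$, where the labels $l_{h,k} = \al + n_{h,k}c$ have the explicit form dictated by \eqref{eqn_A1_label}. First I would observe that it suffices to prove the following: if $z \in \Gamma(\I, \BMPp(v_i))_{\{r\}}$ has $z_{v_i} = z_{v_{i-1}} = \dots = z_{v_{i-r}} = 0$, then $z = 0$. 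Indeed, injectivity of the restriction map is equivalent to triviality of its kernel, and the kernel consists exactly of such sections.

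So suppose $z_{v_i} = \dots = z_{v_{i-r}} = 0$, and let $v_m$ (with $m \le i-r-1$, i.e. $v_m \in \I$ strictly below $v_{i-r}$) be an arbitrary vertex; I want to show $z_{v_m} = 0$ by downward induction on $m$ (equivalently, upward induction on the distance from $v_i$). The key point is that $v_m$ is connected by an edge to each of $v_i, v_{i-1}, \dots, v_{i-r}$, so the compatibility conditions give
\begin{equation*}
z_{v_m} \equiv z_{v_{i-t}} = 0 \pmod{l_{m, i-t}} \qquad \text{for all } t = 0, 1, \dots, r,
\end{equation*}
hence $l_{m,i}, l_{m,i-1}, \dots, l_{m,i-r}$ all divide $z_{v_m}$ in $S_k$. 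These are $r+1$ edge-labels emanating from the common vertex $v_m$; by the GKM hypothesis on $(\MGp_{|[v_j,v_i]}, k)$, any two distinct ones span intersecting lines meeting only in $0$, so they are pairwise non-associate primes in $S_k = k[\al, c]$ (each $\al + n_{h,k}c$ being a degree-one polynomial, hence prime). Since $z_{v_m}$ has degree $2r$ (recall the doubling convention: $(S_k)_{\{2\}} = Y_k$, so a degree-$r$ module element is a degree-$r$ polynomial), and it is divisible by $r+1$ pairwise coprime linear forms, a degree count forces $z_{v_m} = 0$.

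The main obstacle — and the one point requiring genuine care rather than bookkeeping — is verifying that the $r+1$ labels $l_{m,i}, \dots, l_{m,i-r}$ are genuinely pairwise distinct (non-associate) linear forms, since otherwise the divisibility argument only produces a bound like $r'+1 \le \deg z_{v_m}$ with $r'$ the number of distinct labels. Here one uses $r < i-j$ together with the explicit formula $l(n\al \mathrel{-\!\!\!-\!\!\!-} m\al) = \pm(\al - (n+m)c)$: the labels $l_{m,i-t}$ are determined by the sums of the corresponding integer coordinates, and for $r < i - j$ the relevant coordinates $m, i, i-1, \dots, i-r$ lie in a range small enough that these sums are pairwise distinct; the GKM-pair assumption is precisely what guarantees this translates into non-associateness in $S_k$ (it rules out the degenerate coincidences that the $\widehat{\mathfrak{sl}_2}$ graph would otherwise exhibit when two labels become equal). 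Once this is in hand the degree count closes the induction, and since $v_m$ was arbitrary below $v_{i-r}$ we conclude $z = 0$, proving injectivity.
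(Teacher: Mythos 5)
Your argument is in substance exactly the paper's proof: reduce to showing the kernel of the restriction map is trivial, note that any vertex $v_h$ with $h<i-r$ is joined by an edge to each of $v_i,\dots,v_{i-r}$ (the graph is complete), so the section conditions force $z_{v_h}\equiv 0$ modulo each of the $r+1$ labels $l_{h,i},\dots,l_{h,i-r}$; the GKM hypothesis makes these pairwise coprime, so their product (of degree $r+1$) divides the degree-$r$ polynomial $z_{v_h}$, forcing $z_{v_h}=0$. The paper does this directly for every such $h$; your ``downward induction'' is harmless but vacuous, since each component is killed using only the already-vanishing top $r+1$ components.

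Two points in your write-up are off as stated, though they do not affect the substance. First, the degree bookkeeping: you write that $z_{v_m}$ has degree $2r$, and with that reading the count fails ($r+1\le 2r$ divisors do not force vanishing); the count only closes with the convention actually used in the appendix (and in your own parenthetical), namely that components of $\Gamma(\I,\BMPp(v_i))_{\{r\}}$ are homogeneous polynomials of degree $r$ in $\al,c$, so that divisibility by a degree-$(r+1)$ product gives $z_{v_m}=0$. Second, the role you assign to $r<i-j$ is misplaced: the $r+1$ labels at the common vertex $v_m$ are automatically pairwise distinct over $\ZZ$, because distinct vertices $v_k\neq v_l$ correspond to distinct integers and the label only depends on the sum of the two integer coordinates; what the GKM-pair assumption supplies is that these integrally distinct labels remain non-proportional (hence coprime) after passing to $k$. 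The hypothesis $r<i-j$ is only needed so that $\{v_i,\dots,v_{i-r}\}\subseteq\I$ and there remain lower vertices to control.
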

\proof   Let $z\in \Gamma(\I, \BMPp(v_i))_{\{ r\}}$ such that $z_{v_i}=z_{v_{i-1}}=\ldots=z_{v_{i-r}}=0$. Observe that for any $j+1\leq h<i-r \leq k \leq i$ one has $z_{v_h}\equiv z_{v_k}=0\, (\!\!\mod -\alpha + n_{h,k}c )$.

From  the GKM-property it follows that  $GCD(-\alpha+n_{h,k}c, -\alpha+ n_{h,l}c)=1$ for any $i-r\leq k\neq l \leq i$. Since $S_k$ is an UFD, $z_{v_h}$ has to be divisible by $(-\alpha+n_{h,i-r}c)(-\alpha+n_{h,i-r+1}c)\ldots (-\alpha+n_{h,i}c)$. This is a polynomial of degree $r+1$ while $z_{v_h}$ was a polynomial of degree $r$, so $z_{v_h}=0$.
 \endproof

\begin{lem}\label{subgen_lemma2} Let $r\in \NN$ be such that $r<i-j$. We have $\text{dim}_{k} \Gamma(\I, \BMPp(v_i))_{\{ r\}}=\binom{r+2}{2}$.
\end{lem}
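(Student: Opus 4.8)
The plan is to prove, by induction on $i-j$, the sharper statement that $\Gamma(\I,\BMPp(v_i))$ is a free graded $S_k$-module with $\rk\Gamma(\I,\BMPp(v_i))=1+q+\cdots+q^{i-j-1}$. Granting this, $\Gamma(\I,\BMPp(v_i))\cong\bigoplus_{d=0}^{i-j-1}S_k\{-d\}$, so its degree-$r$ component has $k$-dimension $\sum_{d=0}^{i-j-1}\dim_k(S_k)_{\{r-d\}}$; since $S_k$ is a polynomial ring in two variables, $\dim_k(S_k)_{\{m\}}=m+1$ for $m\geq 0$ and $0$ for $m<0$, so when $r<i-j$ this sum equals $\sum_{d=0}^{r}(r-d+1)=\binom{r+2}{2}$, which is the assertion of the lemma. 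Throughout, every vertex $v_h$ of $\I$ satisfies $i-h<i-j$ (for $h=i$ this reads $\BMPp(v_i)^{v_i}\cong S_k$, from Theorem~\ref{theor_FW_BMPindc}; for $h<i$ it is the induction hypothesis of this section), so $\BMPp(v_i)^{v_h}\cong S_k$; combined with \textit{(BMP2)} this gives $\BMPp(v_i)^{E_{h,k}}\cong S_k/l_{h,k}S_k$ for all edges inside $\I$, with the map $\rho_{v_k,E_{h,k}}$ from the larger endpoint being the canonical quotient.

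For $i-j=1$, $\I=\{v_i\}$ and $\Gamma(\I,\BMPp(v_i))=\BMPp(v_i)^{v_i}\cong S_k$, of graded rank $1$. For the inductive step, let $x:=v_{j+1}$ be the minimal vertex of $\I$ and set $\I'=\{v_i,\dots,v_{j+2}\}$. Both $\I'$ and $\I$ are upward closed in $\{\leq v_i\}$, the support of $\BMPp(v_i)$, so flabbiness (\textit{(BMP3)}) gives a surjection $\Gamma(\{\leq v_i\},\BMPp(v_i))\twoheadrightarrow\Gamma(\I',\BMPp(v_i))$ which factors through $\Gamma(\I,\BMPp(v_i))$; hence $\Gamma(\I,\BMPp(v_i))\twoheadrightarrow\Gamma(\I',\BMPp(v_i))$ is surjective. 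Its kernel $K$ consists of the sections supported on $\{x\}$, i.e.\ of the $z\in\BMPp(v_i)^{x}\cong S_k$ lying in $\bigcap_{v_k\in\I'}\ker\rho_{x,E_k}$, where $E_k\colon x\to v_k$ denotes the edge from $x$ to $v_k$ and $l_k:=l(E_k)=\al+n_kc$.

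The crucial point is that each $\rho_{x,E_k}\colon S_k=\BMPp(v_i)^{x}\to\BMPp(v_i)^{E_k}=S_k/l_kS_k$ is surjective, hence — being multiplication by a unit followed by the quotient — has kernel exactly $l_kS_k$. To see this, note first that $\Gamma(\{\triangleright x\},\BMPp(v_i))\twoheadrightarrow\BMPp(v_i)^{v_k}$ by flabbiness together with \textit{(BMP4)}, and composing with the surjection $\rho_{v_k,E_k}$ of \textit{(BMP2)} shows that the $E_k$-component of $u_x$ is onto, so $\BMPp(v_i)^{\delta x}\twoheadrightarrow\BMPp(v_i)^{E_k}$; on the other hand \textit{(BMP3')} (cf.\ \cite[Theorem~1.4]{BM01}) says $d_x\colon\BMPp(v_i)^{x}\to\BMPp(v_i)^{\delta x}$ is a projective cover, in particular surjective, and $\rho_{x,E_k}$ is $d_x$ followed by the projection to the $E_k$-component, whence $\rho_{x,E_k}$ is onto. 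Therefore $K=\bigcap_{v_k\in\I'}l_kS_k$; since $\I'$ has $i-j-1$ vertices there are $i-j-1$ such edges, and by the GKM hypothesis (Definition~\ref{Def_GKM}) the linear forms $l_k$ are pairwise non-proportional, hence pairwise coprime in the UFD $S_k$, so $K=\bigl(\prod_{v_k\in\I'}l_k\bigr)S_k\cong S_k\{-(i-j-1)\}$, free of graded rank $q^{i-j-1}$.

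Finally, the short exact sequence $0\to K\to\Gamma(\I,\BMPp(v_i))\to\Gamma(\I',\BMPp(v_i))\to 0$ splits, because by the inductive hypothesis $\Gamma(\I',\BMPp(v_i))$ is free, hence projective; so $\Gamma(\I,\BMPp(v_i))\cong K\oplus\Gamma(\I',\BMPp(v_i))$ is free of graded rank $q^{i-j-1}+(1+q+\cdots+q^{i-j-2})=1+q+\cdots+q^{i-j-1}$, which closes the induction. I expect the only delicate point to be the surjectivity of the maps $\rho_{x,E_k}$ out of the minimal vertex: it is precisely there that \textit{(BMP2)}, \textit{(BMP3')}, \textit{(BMP4)}, flabbiness of $\BMPp(v_i)$, and the section-wide induction hypothesis must all be brought to bear, the remainder being routine bookkeeping with the GKM property and with graded ranks. (If one only wanted the inequality $\dim_k\Gamma(\I,\BMPp(v_i))_{\{r\}}\leq\binom{r+2}{2}$, Lemma~\ref{subgen_lemma1} would let one reduce to the sub-interval $[v_{i-r},v_i]$; but the argument above yields both inequalities at once.)
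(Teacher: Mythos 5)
Your proof is correct, but it follows a genuinely different route from the paper's. The paper argues degree by degree: Lemma \ref{subgen_lemma1} reduces the computation of $\Gamma(\I,\BMPp(v_i))_{\{r\}}$ to the sub-interval $\{v_i,\dots,v_{i-r}\}$, and the dimension is then obtained as $(r+1)^2$ (the ambient space $\bigoplus_0^r(S_k)_{\{r\}}$) minus the number $\binom{r+1}{2}$ of edge conditions, whose linear independence is verified by exhibiting, for each edge, an explicit tuple violating exactly that one condition. You instead peel off the minimal vertex $x=v_{j+1}$: flabbiness gives the surjection $\Gamma(\I,\BMPp(v_i))\twoheadrightarrow\Gamma(\I',\BMPp(v_i))$, the kernel is identified with $\bigl(\prod_k l_k\bigr)S_k\cong S_k\{-(i-j-1)\}$ via the surjectivity of the maps $\rho_{x,E_k}$ out of the lower endpoint — which you correctly extract from \textit{(BMP2)}, \textit{(BMP4)} and \textit{(BMP3')}, since \textit{(BMP2)} alone only controls the upper endpoint — together with the GKM coprimality, and the sequence splits because the quotient is free by your induction on the size of the interval. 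This buys a stronger conclusion: $\Gamma(\I,\BMPp(v_i))$ is graded free of rank $1+q+\dots+q^{i-j-1}$, i.e.\ you get the dimension in every degree, not only for $r<i-j$, in a style close to Fiebig's graded-rank computations; the price is that you invoke essentially all the Braden--MacPherson axioms, whereas the paper's count is more elementary, needing only Lemma \ref{subgen_lemma1} and the GKM condition. Both arguments use, exactly as the appendix's induction on $i-j$ permits, that the stalks over $\I$ are free of rank one, and both use the appendix's effective grading convention in which the labels have degree one, so your numerology agrees with the paper's.
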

\proof By Lemma \ref{subgen_lemma1}, $\text{dim}_{k} \Gamma(\I, \BMPp(v_i))_{\{ r\}}=\text{dim}_{k} \Gamma(\{v_i, v_{i-1}, \ldots, v_{i-r}\}, \BMPp(v_i))_{\{ r\}}$.

Clearly, $\Gamma(\{v_i, v_{i-1}, \ldots, v_{i-r}\}, \BMPp(v_i))_{\{ r\}}\subseteq \bigoplus_{0}^r (S_k)_{\{r\}}$ and $\text{dim}_{k}\bigoplus_{0}^r (S_k)_{\{r\}}=(r+1)^2$.

By definition an element $m\in \bigoplus_{0}^r (S_k)_{\{r\}}$ is in $\Gamma(\{v_i, v_{i-1}, \ldots, v_{i-r}\}, \BMPp(v_i))_{\{ r\}}$ if it satisfies some (linear) conditions given by the labels of the edges. If we prove that such conditions are linearly independent, then we know that \begin{equation*}\text{dim}_{k} \Gamma(\{v_i, v_{i-1}, \ldots, v_{i-r}\}, \BMPp(v_i))_{\{ r\}}=\text{dim}_{k}\bigoplus_{0}^r (S_k)_{\{r\}}- \sharp \text{ edges}.
\end{equation*}

We noticed in \S\ref{sssec_sl2PMG} that in the $\widehat{\mathfrak{sl}_2}$ case all the vertices are connected, so the number of edges is equal to the  number of pairs of different elements in a set with $r+1$ elements, that is $\binom{r+1}{2}$. Then,
\begin{equation*}\text{dim}_{k} \Gamma(\{v_i, v_{i-1}, \ldots, v_{i-r}\}, \BMPp(v_i))_{\{ r\}}=(r+1)^2-\binom{r+1}{2}=\binom{r+2}{2}.
\end{equation*}

It is left to show that the conditions are linearly independent.

Let $i-r\leq h<k\leq i$ and define the element $(m^{(h,k)})\in \bigoplus_{0}^r(S_k)_{\{ r\}}$ in the following way:
\begin{equation*}m^{(h,k)}_{v_l}:=\Big\{
\begin{array}{ll}
c\prod_{m\in\{i, i-1, \ldots, i-r\}\setminus\{h,k\}}(-\alpha+n_{h,m}c)&\text{if } l =h\\
0&\text{otherwise}\\
\end{array}
\end{equation*}

Now $m^{(h,k)}_{v_l}=m^{(h,k)}_{v_m}$ for any $l,m\neq h$ and $c\prod(-\al+n_{h,m}c)\equiv 0\, (\!\!\!\!\mod \al+n_{h,m}c )$. By the GKM-property, $l_{h,k}$ does not divide  $m^{(h,k)}_{v_h}$, while  $m^{(h,k)}_{v_k}=0$. 

So for any condition coming from the edge $E_{l,m}$ we built a $r+1$-tuple which verifies all conditions except the $E_{l,m}$-th. It follows that all conditions are linearly independent.

\endproof

Let us denote by $m_{\alpha}, m_{c}\in \Gamma(\I, \BMPp(v_i))_{\{ 1\}}$ the constant  sections $m_{\al,v}=\alpha$, $m_{c,v}=c$ for all $v\in \I$. Denote moreover by  $u_{v_j}:=\oplus \rho_{v_h, E_{h,j}}$, where $\rho_{v_h, E_{h,j}}:S_k\rightarrow S_k/ (E_{h,j}\cdot S_k)$ are the canonical quotient maps.

\begin{lem}\label{subgen_lemma3}Let $r\in \NN$ and let $(\MGp, k)$ be a GKM-pair. The vector subspace of  $(\BMPp_{v_i})^{ v_j}$ generated by 
\begin{equation*}u_{v_j}(\ma^{r}), u_{v_j}(\ma^{r-1}\mc) \ldots  u_{v_j}(\ma\mc^{r-1}),  u_{v_j}(\mc^{r})
\end{equation*}
has dimension equal to $r+1$  if $r<i-j$  or dimension equal to $i-j$ otherwise.
\end{lem}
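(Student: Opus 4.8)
The plan is to turn the statement into an elementary fact about truncated Vandermonde matrices, whose relevant minors will be non-zero-divisors precisely because of the GKM-condition at $v_j$. First I would record the shape of the local data. For $\gaff=\widehat{\mathfrak{sl}_2}$ the affine coroot lattice $\widehat{\coQ}$ is free of rank two with basis $\{\al,c\}$ ($\al$ the finite coroot, $c$ the canonical central element), so $S_k=k[\al,c]$. Since $\MGp$ is a complete graph, $\E_{\delta v_j}$ consists exactly of the $i-j$ edges $E_{h,j}$ for $h=j+1,\dots,i$; by the inductive hypothesis of this section $(\BMPp(v_i))^{v_h}\cong S_k$ for $j<h\le i$, hence $(\BMPp(v_i))^{E_{h,j}}\cong S_k/(l_{h,j}S_k)$ by \emph{(BMP2)}. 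Modulo $l_{h,j}=\al+n_{h,j}c$ one has $\al\equiv -n_{h,j}c$, so $S_k/(l_{h,j}S_k)$ is $k$-free on the images $\bar c^{\,0},\bar c^{\,1},\bar c^{\,2},\dots$ of the powers of $c$, with degree-$r$ part the line $k\,\bar c^{\,r}$.

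Next I would compute the vectors in play. Since $\ma^{\,r-m}\mc^{\,m}$ is by definition the constant section all of whose components equal $\al^{r-m}c^{m}$,
\[
u_{v_j}(\ma^{\,r-m}\mc^{\,m})=\left(\al^{r-m}c^{m}\bmod l_{h,j}\right)_{h=j+1}^{i}=\left((-n_{h,j})^{r-m}\,\bar c^{\,r}\right)_{h=j+1}^{i}.
\]
Thus all $r+1$ of these elements lie in the free rank-$(i-j)$ submodule $\bigoplus_{h=j+1}^{i}k\,\bar c^{\,r}$ of $\bigoplus_{E\in\E_{\delta v_j}}(\BMPp(v_i))^{E}$, and — discarding the per-vector unit $(-1)^{r-m}$ — their coordinate vectors in $k^{\,i-j}$ are the rows $\big(n_{h,j}^{\,d}\big)_{h=j+1}^{i}$, $d=0,1,\dots,r$, of the $(r+1)\times(i-j)$ truncated Vandermonde matrix with nodes $n_{j+1,j},\dots,n_{i,j}$.

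The key input is the GKM-condition at $v_j$: for $h\neq h'$ one has $k\cdot l_{h,j}\cap k\cdot l_{h',j}=\{0\}$, and since $l_{h,j}-l_{h',j}=(n_{h,j}-n_{h',j})c$, this forces $n_{h,j}-n_{h',j}$ to be a non-zero-divisor of $k$ (otherwise $a(n_{h,j}-n_{h',j})=0$ with $a\neq0$ would put the nonzero element $a\,l_{h,j}=a\,l_{h',j}$ into the intersection). Hence every square submatrix of the Vandermonde matrix has determinant $\prod_{s<s'}(n_{h_{s'},j}-n_{h_s,j})$, a non-zero-divisor, and therefore — via $\operatorname{adj}(M)\,M=(\det M)\,\mathrm{id}$ — defines an injective $k$-linear map, so the rows meeting it are $k$-independent. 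If $r<i-j$, i.e. $r+1\le i-j$, all $r+1$ rows are independent, so the span they generate has rank $r+1$ (and $\le r+1$, being generated by $r+1$ elements). If $r\ge i-j$, the first $i-j$ rows are already independent, so the span has rank $\ge i-j$; being contained in $k^{\,i-j}$ it has rank exactly $i-j$. Either way the span has rank $\min(r+1,\,i-j)$, which is the asserted dimension.

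The only delicate point is that $k$ need not be a field, so "dimension" must be read as rank and the Vandermonde determinants are only guaranteed to be non-zero-divisors rather than units; the adjugate argument is exactly what bridges this gap. Everything else is the bookkeeping above, together with the explicit isomorphism $S_k/(l_{h,j}S_k)\cong k[\bar c]$.
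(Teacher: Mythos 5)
Your proof is correct and takes essentially the same route as the paper: identify each edge module $(\BMPp(v_i))^{E_{h,j}}\cong S_k/(l_{h,j}\cdot S_k)$ with $k[\bar c]$, compute $u_{v_j}(\ma^{r-m}\mc^{m})$ as $(\pm n_{h,j})^{r-m}\bar c^{\,r}$ on each component, and read off a Vandermonde matrix whose nodes are pairwise distinct thanks to the GKM-condition, giving rank $\min(r+1,\,i-j)$. Your added care for a non-field local ring $k$ (deducing from GKM that the differences $n_{h,j}-n_{h',j}$ are non-zero-divisors and then invoking the adjugate) refines the paper's blunt assertion that the Vandermonde matrix is nonsingular, and your one small over-statement --- that \emph{every} square submatrix has a pure Vandermonde determinant --- is harmless, since the submatrices you actually use take the consecutive powers $0,\dots,m-1$ and so are genuine Vandermonde matrices.
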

\proof 
 As first notice that $(\BMPp(v_i))^{E_{j,k}}=S_k/(l_{j,k}\cdot S_k)\cong k[c] $ by the  mapping $\al\mapsto n_{j,k} c$. Then 
\begin{equation*}u_{v_j}(\ma^k\mc^{t-k})=(n_{j,i}^k,n_{j,i-1}^k, \ldots, n_{j,j+1}^k)c.
\end{equation*}

We obtain the following matrix
\begin{equation*}N=\left(\begin{matrix}
1&1&\ldots&1\\
n_{j,i}& n_{j,i-1}&\ldots &n_{j,j+1}\\
n_{j,i}^2& n_{j,i-1}^2&\ldots &n_{j,j+1}^2\\
\vdots&\vdots& &\vdots\\
n_{j,i}^t& n_{j,i-1}^t&\ldots &n_{j,j+1}^t
\end{matrix}\right)
\end{equation*}

By the GKM-property it follows that $n_{j,k}\neq n_{j,h}$ for all pair $j+1\leq k\neq h\leq i$ and $N$ is a Vandermonde matrix. In particular, such a matrix is not singular and so it has maximal rank, that is $rk(N)=t+1$ if $t<i-j$ and $rk(N)=i-j$ otherwise.
\endproof

\begin{lem}\label{subgen_lemma4}There exists a section $m_0\in \Gamma(\I, \BMPp(v_i))_{\{ 1\}}$ such that $u_{v_j}(m_0)=0$ and $m_{0,v}\neq 0$ for all $v\in \I$.
\end{lem}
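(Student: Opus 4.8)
\textbf{Proof proposal for Lemma \ref{subgen_lemma4}.}

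The plan is to construct $m_0$ explicitly as a section supported "off the kernel of $u_{v_j}$" but taking a nonzero value at every vertex. The natural candidate lives in degree $1$, so its components are of the form $a\alpha + b c$ for scalars $a,b\in k$; the constant sections $m_\alpha$ and $m_c$ are the two obvious degree-one global sections, and by Lemma \ref{subgen_lemma2} we have $\dim_k \Gamma(\I,\BMPp(v_i))_{\{1\}} = \binom{3}{2} = 3$, so there is exactly one extra degree-one section beyond the span of $m_\alpha$ and $m_c$. First I would identify this third section concretely: since in the $\widehat{\mathfrak{sl}_2}$-case the label of the edge $E_{h,k}$ is (up to sign) $l_{h,k} = -\alpha + (h'+k')c$ where $h',k'$ are the integers encoding $v_h,v_k$ (cf. \eqref{eqn_A1_label} and \S\ref{sssec_sl2PMG}), the compatibility condition forces $z_{v_h} - z_{v_k} \in l_{h,k}S_k$, which in degree $1$ means $z_{v_h} - z_{v_k}$ is a scalar multiple of $-\alpha + (h'+k')c$; writing $z_{v_h} = a_h\alpha + b_h c$ this reads $a_h - a_k = -(b_h - b_k)/(h'+k') \cdot$ something — more precisely $a_h - a_k = -r_{h,k}$ and $b_h - b_k = r_{h,k}(h'+k')$ for some scalar $r_{h,k}$, forcing $b_h - b_k = -(h'+k')(a_h-a_k)$. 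Choosing $a_h = h'$ (the integer attached to $v_h$) and solving gives a genuine section $m_0$ with $m_{0,v_h} = h'\alpha - \tfrac{1}{2}(h')^2 c$ or similar; I would pin down the exact normalisation so that $m_{0,v_h} \neq 0$ for every $v_h\in\I$ (this holds because the coefficient of $\alpha$ is the nonzero integer $h'$, except possibly at $v_0 = 0$ where one must check the $c$-coefficient is nonzero, which can be arranged by adding a suitable multiple of $m_c$).

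Next I would verify $u_{v_j}(m_0) = 0$. Recall $u_{v_j} = \bigoplus_{h} \rho_{v_h, E_{h,j}}$ and $(\BMPp(v_i))^{E_{j,k}} = S_k/(l_{j,k}S_k) \cong k[c]$ via $\alpha \mapsto n_{j,k}c$. So $\rho_{v_h,E_{h,j}}(m_{0,v_h})$ is the image of $a_h\alpha + b_h c$ under $\alpha \mapsto n_{h,j}c$, i.e. $(a_h n_{h,j} + b_h)c$. By the defining compatibility of a section, $m_{0,v_h} \equiv m_{0,v_j} \pmod{l_{h,j}}$, and since $m_{0,v_j}$ has the form $a_j\alpha + b_jc$ with image $(a_j n_{h,j} + b_j)c$ under the same quotient, the two images agree. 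Hence $u_{v_j}(m_0)$ equals the \emph{constant} tuple $(a_j n_{h,j} + b_j)c$ ranging over $h$ — but wait, $a_j, b_j$ depend on $j$ only, so this is a single element repeated. Thus it suffices to arrange $a_j n_{h,j} + b_j = 0$ for all $h$; since the $n_{h,j}$ vary with $h$ (GKM-property), this forces $a_j = b_j = 0$, i.e. $m_{0,v_j} = 0$. So the real constraint is simply: I need the third degree-one section, suitably normalised, to vanish at the vertex $v_j$ itself while being nonzero at all of $\I = \{v_i,\ldots,v_{j+1}\}$. Since $v_j \notin \I$, this is not contradictory.

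Reassembling: I would actually work with sections over $\I \cup \{v_j\}$, take the unique-up-to-scalar degree-one section $\tilde m$ vanishing at $v_j$ (it exists because $\Gamma(\I\cup\{v_j\}, \BMPp(v_i))_{\{1\}}$ is $3$-dimensional by Lemma \ref{subgen_lemma2} and the condition "$z_{v_j} = 0$" cuts out at most a $2$-dimensional — in fact nonzero — subspace, while $m_\alpha, m_c$ already span a $2$-dimensional complement not meeting it), restrict to $\I$ to get $m_0$, and then \textbf{check nonvanishing at each vertex of $\I$} directly from the explicit formula $m_{0,v_h} = r_{h,j}(-\alpha + (h'+j')c)$ type expression obtained by iterating the compatibility relation from $v_j$ outward. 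The coefficient $r_{h,j}$ is forced to be nonzero once $h \neq j$ because a nonzero section vanishing at two comparable vertices of this totally-ordered graph would, by the argument in Lemma \ref{subgen_lemma1}, have to vanish everywhere. The main obstacle is this last nonvanishing verification: one must rule out that $m_{0,v_h}$ accidentally becomes zero for some intermediate $h$, which amounts to checking $-\alpha + (h'+j')c \neq 0$ in $S_k$ — automatic since it is a nonzero linear polynomial — together with $r_{h,j}\neq 0$, which follows from the irreducibility/UFD argument already deployed in Lemma \ref{subgen_lemma1}. Once $m_0$ is in hand with these two properties, the lemma is proved.
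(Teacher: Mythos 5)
Your opening computation is headed in the right direction: solving the degree-one edge congruences by undetermined coefficients, and then imposing $u_{v_j}(m_0)=0$, forces (up to scalar and up to adding multiples of $m_{\alpha}$, $m_c$) exactly the section the paper writes down, namely $m_{0,v_h}=(r-s)\,l_{j,h}=(r-s)(-\alpha+(r+s)c)$ for $v_j=r\alpha$, $v_h=s\alpha$; the paper then verifies the congruences inside $\I$ by a two-line computation, notes that each component is by construction a multiple of $l_{j,h}$ and hence is killed by $u_{v_j}$, and that each component is nonzero. You never carry this out (``or similar'', ``I would pin down the exact normalisation''), and the argument you actually assemble in its place is circular.

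The circularity is this: your existence-and-normalisation step works over $\I\cup\{v_j\}$ — you apply Lemma \ref{subgen_lemma2} to that larger interval to get a three-dimensional space of degree-one sections, and you use the compatibility $m_{0,v_h}\equiv m_{0,v_j}\ (\mathrm{mod}\ l_{h,j})$ with $m_{0,v_j}=a_j\alpha+b_jc$. Both steps presuppose that the stalk $(\BMPp(v_i))^{v_j}$ is a copy of $S_k$ with the standard quotient maps to the edge stalks, which is precisely the statement the appendix's induction (Theorem \ref{theor_tA1}) is in the middle of proving; at this stage only the stalks over $\I=\{v_i,\dots,v_{j+1}\}$ are known to be free of rank one, and Lemmas \ref{subgen_lemma1}--\ref{subgen_lemma5} are stated for $\I$, excluding $v_j$, for exactly this reason. (Two smaller slips: the components of $u_{v_j}(m_0)$ are $(a_jn_{h,j}+b_j)c$ and vary with $h$ through $n_{h,j}$, so they are not ``a single element repeated''; and the basic third section is $h'\alpha-(h')^2c$, not $h'\alpha-\tfrac12(h')^2c$.) The repair is the route you started: stay on $\I$, observe that $u_{v_j}(m_0)=0$ simply means each component is a $k$-multiple of $l_{j,h}$, and exhibit the multiples $(r-s)$ explicitly as above; the nonvanishing of the components is then read off the formula (and your GKM-style remark, in the spirit of Lemma \ref{subgen_lemma1}, that a nonzero degree-one section killed by $u_{v_j}$ cannot vanish at any vertex of $\I$, is a valid supplement once a nonzero such section exists — but the existence is exactly the content that must be supplied by the explicit construction).
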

\proof Let $v_j=r\alpha$. Define $m_{0,v_h}:=(r-s)l_{j,h}=(r-s)(-\alpha+(r+s)c)$ if $v_h=s \alpha$.

Notice that $(m_0)\in \Gamma(\I, \BMPp(v_i))$; indeed for any pair of vertices $v_h=s\alpha$, $v_k=t\alpha$, one has $l_{h,k}=\alpha+ (s+t)c$ and 
\begin{align*}
m_{0,v_h}-m_{0,v_k}&=(r-s)(-\alpha+(r+s)c)-(r-t)(-\alpha+(r+t)c)\\
&=s\alpha-s^2c-t\alpha+t^2 c\\
&=\alpha(s-t)+c (t^2-s^2)\\
&=(s-t)(-\alpha+(s+t)c)\\
&\equiv 0 \,\,\,\,(\!\!\!\!\mod -\alpha+ (s+t)c).
\end{align*}
 
Finally, by definition $m_{0,v_h}\neq 0$ for any $v_h\in \I$ and $u_{v_j}((m_0))=0$.

\endproof

\begin{lem}\label{subgen_lemma5}Let $r\in \NN$ be such that $r<i-j$. The collection of monomials $\{ m_{\alpha}^{l}m_{c}^h m_0^k\,|\,l,h,k\geq 0,\, l+h+k=r\}$ is a basis of $\Gamma(\I,\BMPp(v_i))_{\{r\}}$ as $k$-vector space.
\end{lem}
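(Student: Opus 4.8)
The plan is to show that the monomials $\{m_\alpha^l m_c^h m_0^k \mid l,h,k \geq 0,\ l+h+k=r\}$ both span $\Gamma(\I,\BMPp(v_i))_{\{r\}}$ and are linearly independent, using the dimension count from Lemma \ref{subgen_lemma2}. First I would count the monomials: the number of triples $(l,h,k)$ of non-negative integers with $l+h+k=r$ is exactly $\binom{r+2}{2}$, which by Lemma \ref{subgen_lemma2} equals $\dim_k \Gamma(\I,\BMPp(v_i))_{\{r\}}$ (recall $r<i-j$). So it suffices to prove linear independence; spanning is then automatic.

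For linear independence, the natural tool is Lemma \ref{subgen_lemma1}: a section in degree $r$ is determined by its first $r+1$ components, i.e. by its restriction to $\{v_i,v_{i-1},\dots,v_{i-r}\}$. So I would set up a linear combination $\sum_{l+h+k=r} a_{l,h,k}\, m_\alpha^l m_c^h m_0^k = 0$ and evaluate at the vertices $v_i, v_{i-1},\dots,v_{i-r}$. Here one uses that $m_\alpha$ and $m_c$ are the constant sections $\alpha$ and $c$, so $m_\alpha^l m_c^h m_0^k$ has $v$-component $\alpha^l c^h (m_{0,v})^k$, and $m_{0,v}$ is a nonzero linear polynomial in $\alpha,c$ varying with $v$ (by Lemma \ref{subgen_lemma4}, with the explicit formula $m_{0,v_p} = (r_0-p)(-\alpha+(r_0+p)c)$ when $v_j = r_0\alpha$ and $v_p = p\alpha$). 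Grouping by the exponent $k$ of $m_0$, the vanishing at a fixed vertex $v$ reads $\sum_{k=0}^r (m_{0,v})^k \big(\sum_{l+h=r-k} a_{l,h,k}\,\alpha^l c^h\big) = 0$ in $S_k = k[\alpha,c]$. Since $m_{0,v}$ is a polynomial of degree $1$ and the inner sums are homogeneous of degree $r-k$, the total is homogeneous of degree $r$; but to separate the $k$-strata I would instead argue that the $m_{0,v}$ for distinct $v$ are pairwise non-proportional linear forms (this follows from the GKM-property, exactly as in the Vandermonde argument of Lemma \ref{subgen_lemma3}), so evaluating the single homogeneous degree-$r$ identity at $r+1$ generic specializations of the ratio $\alpha/c$ — or directly using that $\{1, m_{0,v}, m_{0,v}^2, \dots\}$ interpolate — forces all the inner polynomials $\sum_{l+h=r-k} a_{l,h,k}\,\alpha^l c^h$ to vanish, hence all $a_{l,h,k}=0$.

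The cleanest way to organize the separation step is a double induction. I would induct on $r$; for fixed $r$, note that $m_0$ vanishes nowhere, while the ``boundary'' where it might matter is handled by the GKM-condition. Concretely: restrict the identity to the subgraph on $\{v_i,\dots,v_{i-r}\}$, and observe that the map sending a degree-$r$ section to the tuple of its values $(s_{v_i},\dots,s_{v_{i-r}})$ is injective (Lemma \ref{subgen_lemma1}); since each value is a homogeneous degree-$r$ element of $k[\alpha,c]$ and there are $r+1$ vertices each contributing potentially $r+1$ coefficients but cut down by $\binom{r+1}{2}$ edge relations to total dimension $\binom{r+2}{2}$, a monomial basis of the right cardinality that is linearly independent must be a basis. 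For the independence itself, suppose a nontrivial relation; let $k_0$ be the largest $k$ with some $a_{l,h,k_0}\neq 0$, and consider the section $m_0^{-k_0}\cdot(\text{relation})$ — but $m_0$ is not invertible in $S_k$, so instead I would localize at the multiplicative set generated by the (non-constant, pairwise coprime) linear forms $m_{0,v}$, or simply pass to the fraction field $Q$ of $S_k$ where $m_0$ becomes an invertible section and the $Q$-components are units; then the relation becomes $\sum_k (m_{0,v}/1)^k P_k(\alpha,c) = 0$ in $Q$ for each $v$, and varying $v$ over $r+1$ vertices with pairwise distinct ratios gives a Vandermonde system (in the ``variable'' $m_{0,v}$ after dividing through) forcing $P_k \equiv 0$.

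The main obstacle I expect is making the ``Vandermonde in $m_{0,v}$'' step rigorous, since $m_{0,v}$ is itself a polynomial in $\alpha,c$ rather than a scalar: one must be careful that the relevant determinant is a nonzero element of $S_k$ (equivalently, that the linear forms $m_{0,v}$ are pairwise non-associate, which is precisely where the GKM-hypothesis on $(\MGp,k)$ enters, just as in Lemma \ref{subgen_lemma3}). Once that is in hand, the dimension count closes the argument: the $\binom{r+2}{2}$ monomials $m_\alpha^l m_c^h m_0^k$ are linearly independent in a space of dimension $\binom{r+2}{2}$, hence form a basis of $\Gamma(\I,\BMPp(v_i))_{\{r\}}$.
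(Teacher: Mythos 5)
Your proposal is correct, but the linear-independence step follows a genuinely different route from the paper's. Both arguments reduce the lemma to linear independence via the dimension count of Lemma \ref{subgen_lemma2} (there are exactly $\binom{r+2}{2}$ monomials). The paper then argues by induction on $r$, working at the vertex $v_j$ lying just below $\I$: applying the map $u_{v_j}$ into the edge spaces at $v_j$, the identity $u_{v_j}(m_0)=0$ of Lemma \ref{subgen_lemma4} kills every monomial containing $m_0$, the scalar Vandermonde of Lemma \ref{subgen_lemma3} forces the $m_0$-free coefficients to vanish, and the remaining relation is divided by $m_0$ (all of whose components are nonzero) so that the inductive hypothesis applies. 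You instead stay entirely inside $\I$: you evaluate the relation at the top $r+1$ vertices, group by powers of $m_0$, and solve the resulting Vandermonde system in the pairwise distinct linear forms $m_{0,v}$ over the fraction field of $S_k$, the GKM-condition (together with $m_{0,v}\neq 0$ from Lemma \ref{subgen_lemma4}) guaranteeing that the determinant $\prod_{v\neq v'}(m_{0,v}-m_{0,v'})$ is a nonzero element of the domain $S_k$. This is a valid one-shot argument that dispenses with the induction on $r$ and with Lemma \ref{subgen_lemma3}, at the price of invoking the explicit formula for $m_0$ and a localisation; the paper's route reuses Lemmas \ref{subgen_lemma3} and \ref{subgen_lemma4} exactly in the form they were proved and stays inside $S_k$. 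Two small caveats: your intermediate suggestion of specialising the ratio $\alpha/c$ at a single vertex cannot separate the $m_0$-strata (one identity $\sum_k m_{0,v}^k P_k=0$ at a single $v$ has nontrivial solutions, e.g. $P_0=-m_{0,v}P_1$, $P_1$ arbitrary), so it is really the system over $r+1$ distinct vertices—which you do give—that carries the argument; and the appeal to Lemma \ref{subgen_lemma1} is superfluous, since the relation already vanishes at every vertex of $\I$.
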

\proof

Since the number of monomials in three variables of degree $r$ is $\binom{r+2}{2}$ and by Lemma \ref{subgen_lemma2} 
$\text{dim}_k\Gamma(\I, \BMPp(v_i))= \binom{r+2}{2} $  as well, it is enough to prove that all monomial in $\ma$, $\mc$, $m_0$ are linearly independent. We prove the claim by induction on $r$.

Let $r=1$. If $x\ma+y\mc+zm_0=0$, then clearly $0=u_{v_j}(x\ma+y\mc+zm_0)=xu_{v_j}(\ma)+yu_{v_j}(\mc)+zu_{v_j}(m_0)$. By Lemma \ref{subgen_lemma4}  $u_{v_j}(m_0)=0$, so $xu_{v_j}(\ma)+yu_{v_j}(\mc)=0$. But by Lemma \ref{subgen_lemma3} $u_{v_j}(\ma)$ and $u_{v_j}(\mc)$ generate a vector space of dimension $2$, then $x=y=0$. Finally, from $zm_0=0$ and Lemma \ref{subgen_lemma4} it follows $z=0$.

 Now let $r>1$. Let $z=\sum_{l+m+n=r}x_{l,m,n}\ma^l\mc^m m_0^n=0$. We can write $z=z_1+z_0 m_0$,  where $z_1$ is such that $m_0$ does not appear. Then by Lemma \ref{subgen_lemma3} $ u_{v_j}(z)=u_{v_j}(z_1)+u_{v_j}(z_0)u_{v_j}(m_0)=u_{v_j}(z_1)=0$. From Lemma \ref{subgen_lemma3} we know that all $u_{v_j}(\ma^l \mc^{r-l})$ are linearly independent and so $$0=u_{v_j}(z_1)=u_{v_j}(\sum_{l+m=r}x_{l,m,0}\ma^l \mc^m)=\sum_{l+m=r}x_{l,m,0}u_{v_j}(\ma^{l}\mc^m)$$ implies $x_{l,m,0}=0$ for all pair $l,m$,that is $c_1=0$. Thus we obtain $c_0m_0=0$ and we conclude by Lemma \ref{subgen_lemma4} that  $c_0=0$. Finally, $0=c_0=\sum_{l+m+n=r-1} x_{l,m,n+1}\ma^l\mc^m m_0^n$ is a linear combination of monomials in $\ma, \mc, m_0$ of degree $r-1$ and so by the inductive hypothesis we have $x_{l,m,n+1}=0$ for all $l,m,n$.

\endproof

\begin{theor}\label{theor_tA1}If $v_j\leq v_i$ and $(\MGp_{|_{[v_{j}, v_{i}]}},k)$ is a GKM-pair, then $(\BMPp(v_i))^{v_j}\cong S_k$. 
\end{theor}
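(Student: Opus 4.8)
The plan is to induct on $i-j$ and, at each step, to use the intrinsic characterisation (BMP3$'$) of the indecomposable Braden--MacPherson sheaf: the map $d_{v_j}\colon\BMPp(v_i)^{v_j}\to\BMPp(v_i)^{\delta v_j}$ is a projective cover in the category of graded $S_k$-modules, so it suffices to show that $\BMPp(v_i)^{\delta v_j}$ is a cyclic $S_k$-module generated in degree $0$ (then its minimal free cover is $S_k$ itself). Write $B:=\BMPp(v_i)$ and $N:=i-j$. The base case $i=j$ is (BMP1a). For the inductive step, observe that for every $v_h\in\I$ the sub-interval $[v_h,v_i]$ is again a GKM-pair (the GKM-condition only constrains pairs of edges at a common vertex, so it survives passing to subgraphs) and $i-h<N$; hence by induction $B^{v_h}\cong S_k$, the relevant stalk being unchanged because, by (BMP3$'$), $B^{v_h}$ is determined by the restriction of $B$ to the up-set $\{\triangleright v_h\}$, which for a chain does not see the vertices below $v_j$. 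Then (BMP2) gives, for each edge $E\colon v_j\to v_h$, an isomorphism $B^{E}\cong B^{v_h}/l(E)B^{v_h}\cong S_k/l_{j,h}S_k$, so $B^{\delta v_j}$ is the image of $u_{v_j}$ inside $\bigoplus_{h}S_k/l_{j,h}S_k$.

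Next I would identify this image with the cyclic submodule $M:=S_k\cdot u_{v_j}(\mathbf 1)$, where $\mathbf 1=(1,\dots,1)\in\Gamma(\I,B)$ is the constant section. By Lemma~\ref{subgen_lemma3} (with $r=0$) we have $u_{v_j}(\mathbf 1)\neq 0$, so $M$ is a nonzero cyclic submodule generated in degree $0$; since $u_{v_j}$ is $S_k$-linear, $M_{\{r\}}$ is spanned by the classes $u_{v_j}(\ma^{\,l}\mc^{\,r-l})=\alpha^{l}c^{\,r-l}u_{v_j}(\mathbf 1)$ for $0\le l\le r$, hence $\dim_k M_{\{r\}}=\min(r+1,N)$ by Lemma~\ref{subgen_lemma3}. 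To prove $B^{\delta v_j}=M$ I would compare graded dimensions degree by degree. For $r<N$, Lemma~\ref{subgen_lemma5} expresses every $z\in\Gamma(\I,B)_{\{r\}}$ as a $k$-linear combination of monomials $\ma^{\,l}\mc^{\,m}m_0^{\,n}$; by the defining formula for $m_0$ (Lemma~\ref{subgen_lemma4}) one has $l_{j,h}\mid m_{0,v_h}$, so $u_{v_j}$ annihilates every monomial involving a factor $m_0$, whence $u_{v_j}(z)\in M_{\{r\}}$ and therefore $B^{\delta v_j}_{\{r\}}=M_{\{r\}}$. For $r\ge N$, the space $M_{\{r\}}$ already has dimension $N$, which is the full graded dimension of the ambient module $\bigoplus_h S_k/l_{j,h}S_k$ (each summand being $\cong k[c]$ and all the labels $l_{j,h}$ having the same degree), so again $M_{\{r\}}=B^{\delta v_j}_{\{r\}}$. (Equivalently, for $r\ge N$ one can solve the congruences $p\equiv z_h\pmod{l_{j,h}}$ directly by Lagrange interpolation at the $N$ pairwise distinct points furnished by the GKM-property, realising $u_{v_j}(z)=p\cdot u_{v_j}(\mathbf 1)\in M$.) Thus $B^{\delta v_j}=M$ is cyclic, generated in degree $0$.

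To finish, the surjection $S_k\twoheadrightarrow B^{\delta v_j}$, $1\mapsto u_{v_j}(\mathbf 1)$, has kernel a proper homogeneous ideal, hence contained in the graded maximal ideal $(\alpha,c)S_k$; so this kernel is superfluous and $S_k\to B^{\delta v_j}$ is a projective cover. By uniqueness of projective covers together with (BMP3$'$) we obtain $B^{v_j}=\BMPp(v_i)^{v_j}\cong S_k$, which closes the induction.

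I expect the genuine obstacle to be the identification $B^{\delta v_j}=M$ — that the sections of $B$ over $\I$ produce nothing beyond the cyclic module $S_k\cdot u_{v_j}(\mathbf 1)$. The point is that the edge labels $l_{j,h}$ at $v_j$ are pairwise coprime but not comaximal, so the ambient module $\bigoplus_h S_k/l_{j,h}S_k$ is strictly larger than any cyclic module in the low degrees $r<N$, and it is exactly the compatibility conditions built into $\Gamma(\I,B)$ — packaged into the explicit sections of Lemmas~\ref{subgen_lemma1}--\ref{subgen_lemma5} — that cut it down to $M$. The remaining ingredients (the reduction to cyclicity via (BMP3$'$), stability of the GKM-condition under passing to sub-intervals, and the behaviour of $u_{v_j}$ on $\ma$, $\mc$, $m_0$) are routine once that is in place.
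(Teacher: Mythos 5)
Your proposal is correct and follows essentially the same route as the paper's appendix proof: it uses the induction on $i-j$, the monomial basis of Lemma \ref{subgen_lemma5} together with $u_{v_j}(m_0)=0$ (Lemma \ref{subgen_lemma4}) to handle degrees $r<i-j$, the Vandermonde/dimension count of Lemma \ref{subgen_lemma3} for $r\geq i-j$, and then concludes via the surjection $S_k\twoheadrightarrow (\BMPp(v_i))^{\delta v_j}$ and the projective-cover characterisation (BMP3'). The only differences are expository (you make the inductive identification of the higher stalks and the projective-cover step more explicit), not mathematical.
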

\proof We prove that $(\BMPp(v_i))^{\delta v_i}$ coincides with the $u_{v_i}$ image of the ring generated by $\ma$ and $\mc$. If $r<i-j$, by \ref{subgen_lemma5}, $\Gamma(\I,\BMPp(v_i))_{\{r\}}$ is generated by $\{ m_{\alpha}^{l}m_{c}^h m_0^k\,|\,l,h,k\geq 0,\, l+h+k=r\}$. From \ref{subgen_lemma4} it follows $(\BMPp(v_i))^{\delta v_i}=u_{v_i}(\Gamma(\mathcal{I},\BMPp(v_i))_{\{r\}})$ is contained in the ring generated by $u_{v_i}((\ma))$ and $u_{v_i}((\mc))$.

Otherwise, $r\geq i-j$ and $\bigoplus_{E_{i,k}\in\E_{\delta v_i}}(\BMPp(v_i))^{E_{i,k}}\cong k[c]^{i-j}$, having dimension $i-j$.  Then by Lemma \ref{subgen_lemma3} $u_{v_i}(\ma)$ and $u_{v_i}(\mc)$ generate $(\BMPp(v_i))^{\delta v_i}$

Thus we have a surjective  map $S_k\rightarrow \bigoplus_{E_{i,k}\in \E_{\delta v_i}} (\BMPp(v_i))^{E_{i,k}}$ given by the mapping $\alpha\mapsto \ma$ and $c\mapsto \mc$. Then $(\BMPp(v_i))^{v_j}\cong S_k$.
\endproof

\end{document}